\newcommand\Red[1]{\textcolor{red}{#1}}
\definecolor{darkblue}{rgb}{0,0,0.7}
\newcommand\Blue[1]{\textcolor{darkblue}{\textbf{#1}}}
\newtheorem{algorithm}{Algorithm}
\begin{document}

\title{Iterative regularization for ensemble data assimilation in reservoir models
}


\author{Marco A. Iglesias  
}


\institute{
M. A. Iglesias \at
              University of Nottingham \\
             \email{marco.iglesias@nottingham.ac.uk}   
}

\date{Received: date / Accepted: date}

\maketitle

\begin{abstract}

We propose the application of iterative regularization for the development of ensemble methods for solving Bayesian inverse problems. In concrete, we construct (i) a variational iterative regularizing ensemble Levenberg-Marquardt method (IR-enLM) and (ii) a derivative-free iterative ensemble Kalman smoother (IR-ES). The aim of these methods is to provide a robust ensemble approximation of the Bayesian posterior. The proposed methods are based on fundamental ideas from iterative regularization methods that have been widely used for the solution of deterministic inverse problems \cite{Iterative}. In this work we are interested in the application of the proposed ensemble methods for the solution of Bayesian inverse problems that arise in reservoir modeling applications. The proposed ensemble methods use key aspects of the regularizing Levenberg-Marquardt scheme developed by Hanke \cite{Hanke} and that we recently applied for history matching in \cite{LM}. Unlike standard methods where the stopping criteria and regularization parameters are typically selected heuristically, in the proposed ensemble methods the discrepancy principle is applied for (i) the selection of the regularization parameters and (ii) the early termination of the scheme. The discrepancy principle is key for the theory of iterative regularization and the purpose of the present work is to apply this principle for the development of ensemble methods defined as iterative updates of solutions to linear ill-posed inverse problems. 

The regularizing and convergence properties of iterative regularization methods for deterministic inverse problems have long been established. However, the approximation properties of the proposed ensemble methods in the context of Bayesian inverse problems is an open problem. In the case where the forward operator is linear and the prior is Gaussian, we show that the tunable parameters of the proposed IR-enLM and IR-ES can be chosen so that the resulting schemes coincide with the standard randomized maximum likelihood (RML) and the ensemble smoother (ES), respectively. Therefore, the proposed methods sample from the posterior in the linear-Gaussian case. Similar to RML and ES methods, in the nonlinear case, one may not conclude that the proposed methods produce samples from the posterior. The present work provides a numerical investigation of the performance of the proposed ensemble methods at capturing the posterior. In particular, we aim at understanding the role of the tunable parameters that arise from the application of iterative regularization techniques. The numerical framework for our investigations consists of using a state-of-the art MCMC method for resolving the Bayesian posterior from synthetic experiments. The resolved posterior via MCMC then provides a gold standard against to which compare the proposed IR-enLM and IR-ES. Our numerical experiments show clear indication that the regularizing properties of the regularization methods applied for the computation of each ensemble have significant impact of the approximation properties of the proposed ensemble methods at capturing the Bayesian posterior. Furthermore, we provide a comparison of the proposed regularizing methods with respect to some unregularized standard methods that have been typically used in the literature. Our numerical experiments showcase the advantage of using iterative regularization for obtaining more robust and stable approximation of the posterior than standard unregularized methods.

\keywords{First keyword \and Second keyword \and More}
\end{abstract}

\section{Introduction}\label{Intro}

We use key elements of \textit{iterative regularization} techniques to develop robust ensemble methods for solving Bayesian inverse problems that arise in subsurface flow. A novel aspect of these ensemble methods is that the \textit{discrepancy principle} is used for (i) the selection of the regularization parameter that stabilizes the update of each ensemble member and (ii) the stopping criteria that avoids data overfitting. These strategies have been used for theoretically establishing the convergence and regularizing properties of some well-known iterative regularization methods aimed at solving deterministic nonlinear ill-posed inverse problems. In this work we extend these strategies for the solution of Bayesian inverse problems where the aim is to capture the Bayesian posterior. In particular, we use the aforementioned elements from iterative regularization to improve the robustness of RML-based and Kalman-based methodologies. The ensemble methods proposed in the present manuscript constitute a proof-of-concept which demonstrates the potential advantages of incorporating fundamental components of iterative regularization such as the aforementioned discrepancy principle. This principle, and more generally, iterative regularization, can be potentially combined with existing frameworks for ensemble data assimilation. It is our conjecture that when standard methods are combined with systematic approaches that take into account the mathematical structure of the underlying inverse problem, further increase in robustness, stability and accuracy for capturing the Bayesian posterior may be obtained. While the present work is focused in large-scale inverse problems that arise in reservoir flow modeling, the proposed methods can be applied for the solution of generic PDE-constrained inverse problems.

\subsection{Background of the proposed methods}\label{sec0}
In a recent publication \cite{LM} we studied the application of iterative regularization methods for computing \textit{inverse estimates} of unknown geologic properties by means of history matching. More concretely, we developed an application of the regularizing Levenberg-Marquardt (LM) scheme \cite{Hanke} for solving history matching problems posed as the minimization of
\begin{eqnarray}\label{eq:A}
\Phi(u)\equiv\vert\vert \Gamma^{-1/2}(y-G(u))\vert\vert_{Y}^{2} 
\end{eqnarray}
where $u$ is the unknown geologic property, $y$ denotes production data, $\Gamma$ is the measurements error covariance. $G:X\to Y$ is the forward operator that arises from the reservoir model; $G$ maps the parameter space $X$ (of admissible geologic properties) to the observation space $Y$. In typical reservoir modeling applications, geologic properties need to be defined on thousands or even millions of gridblocks. Due to the resulting large size of the space $X$, the computation of a minimizer of (\ref{eq:A}) is unstable (ill-posed) in the sense that an arbitrarily small data misfit (\ref{eq:A}) may not necessarily correspond to an estimates $u$ that is close to the optimal. This ill-posedness, that arises from the mathematical structure of $G$, requires regularization. In the approach proposed in \cite{LM}, stable approximations to the minimizer of (\ref{eq:A}) were generated with an application of the regularizing LM scheme. In that application, the regularizing LM scheme was initialized with the prior mean $\overline{u}$. Additionally, it used the prior error covariance $C$ for the regularization that was built into the iterative scheme. 

The regularizing LM scheme uses the \textit{discrepancy principle} \cite{Groetsch} to impose a limit on how close the model predictions must fit the data. More precisely, for some $\tau >1$, the regularizing LM scheme terminates whenever at the $m$ iteration we find that
\begin{eqnarray}\label{discrepancy}
\vert\vert \Gamma^{-1/2} (y-G(u_{m+1}))\vert\vert_{Y}\leq \tau\eta\leq \vert\vert \Gamma^{-1/2} (y-G(u_{m}))\vert\vert_{Y}
\end{eqnarray}
where $u_{m}$ is the LM estimate at the $m$-th iteration and the \textit{noise level} $\eta$ is defined by
\begin{eqnarray}\label{eq:noise}
\eta \equiv \vert\vert \Gamma^{-1/2}(y-G(u^{\dagger}))\vert\vert_{Y}
\end{eqnarray}
where $u^{\dagger}$ denotes the truth properties. In other words, the noise level $\eta$ is the weighted data misfit for the truth $u^{\dagger}$. The discrepancy principle is the key to ensure the regularizing properties of iterative regularization methods \cite{Iterative} and in particular of the  regularizing LM scheme that we applied in \cite{LM} for computing inverse estimates of geologic properties by means of history matching. Although the truth $u^{\dagger}$ is unknown, it is reasonable to assume that knowledge of the noise level $\eta$ is available (see discussion in \cite[Section 3]{LM}).

While \cite{LM} provides numerical evidence of the robustness and accuracy of the regularizing LM scheme for recovering $u^{\dagger}$, a minimizer of (\ref{eq:1.1}) may not be unique. In other words, different approximations to a minimizer of (\ref{eq:1.1}) may be found. For example, initializing the regularizing LM scheme with a different (from the prior mean $\overline{u}$) initial guess may lead to different approximation to the minimizer of (\ref{eq:1.1}). Similarly, a small perturbation of the production data $y$ may result in a significantly different, yet stable, estimate of the geologic properties. Both uncertainties in prior knowledge and observational noise give rise to uncertainty in the computation of inverse estimates of the geologic properties that we often perform with least-squares. The quantification of this uncertainty is essential for the optimal management and decision-making involved in reservoir applications. In this paper we extend the approach of \cite{LM} and apply fundamental ideas from iterative regularization techniques to develop ensemble methods that, within the Bayesian framework \cite{Andrew}, quantify uncertainty of these inverse estimates. More precisely, we provide numerical evidence that this methods produce useful information of the posterior that arises from the Bayesian inverse problem of conditioning the geologic properties to data from the reservoir dynamics.


\subsection{The Bayesian posterior and its ensemble approximation}
The most standard approach to quantify inverse estimates of the geologic properties is the Bayesian framework. The aim is to merge uncertainties, both in prior knowledge and observational noise, with the mathematical model that describes the reservoir flow. The prior uncertainty in geologic properties is incorporated in terms of a {\em prior} probability distribution ${\mathbb P}(u)$. The data $y$ and the geologic properties $u$ are related via the forward operator $G$ by
\begin{eqnarray}\label{eq:1.1}
y=G(u)+\xi
\end{eqnarray}
where $\xi$ is a vector of random noise. The {\em likelihood} of the measured data $y$ given a particular instance of $u$ is denoted by ${\mathbb P}(y|u)$. In the Bayesian framework, the uncertainty of the inverse estimates of $u$ given $y$ is quantified with the conditional {\em posterior} probability ${\mathbb P}(u|y)$, which from Bayes' rule, is given by 
\begin{equation}\label{eq:1.2}
\frac{{\mathbb P}(u|y)}{{\mathbb P}(u)} \propto {\mathbb P}(y|u).
\end{equation}
Since the forward operator $G$ that arises from typical reservoir models is highly nonlinear and/or the prior distribution of geologic properties ${\mathbb P}(u)$ is not Gaussian, the posterior ${\mathbb P}(u|y)$ cannot be described with a few parameters. In general, the characterization of the posterior may be conducted by means of sampling. Unfortunately, this approach often requires millions of forward model evaluations which is only feasible for small problems where a relatively coarse grid is used to discretize the geologic properties \cite{EmeRey,Oliver2}. The full characterization of the posterior by means of sampling is therefore impractical. However, sampling can be used for the purpose of benchmarking and assessing methods that can be actually used in practice \cite{EmeRey,Oliver2,Evaluation}. That is the case of \textit{ensemble methods} which have become the paradigm for capturing features of the posterior ${\mathbb P}(u|y)$ within a reasonable amount of computing resources and time. The general objective of these methods is to provide fast, accurate and robust estimates of the posterior (and/or its mapping under the forward operator) by means of an ensemble $\{u^{(j)}\}_{j=1}^{N_{e}}$ of $N_{e}$ inverse estimates of the geologic properties of the reservoir. From this ensemble we may compute statistical information of the unknown $u$ as well as quantities of interest related to the model predictions $G(u)$. Furthermore, an ensemble of reservoir properties that accurately captures the posterior can be used to quantify the uncertainty in future production scenarios (e.g. drilling new wells) and/or additional subsurface applications (e.g. injection of CO$_2$ in a depleted reservoir). Moreover, efficient ensemble methods may be combined with optimal control modules that closes the loop of reservoir management and monitoring \cite{Binghuai}. It is clear that ensemble methods that provide a robust and accurate approximation of the Bayesian posterior are fundamental for geologic uncertainty quantification in subsurface modeling applications.

Fully rigorous Bayesian sampling is very expensive and often impractical;
to overcome this difficulty there exist two main frameworks to generate an ensemble approximation of the posterior which, although not rigorously justifiable
in general, are widely used within the reservoir modeling community: (i) the randomized maximum likelihood (RML) method and (ii) Kalman-based techniques. The RML method consist of an ensemble of realizations computed by minimizing a randomized version of the cost functional that defines the maximum  a posteriori (MAP) estimate \cite{Oliver}. In other words, each ensemble member of RML involves an optimization problem that may be addressed with similar methods to the ones used for computing the MAP \cite{svdRML,EmeRey,Oliver2,Evaluation}.  When a relatively small number of measurements is available, the computation of the derivative of the forward operator (or sensitivity) may be feasible provided the adjoint of the derivative can be computed. In this case, standard Gauss-Newton or Levenberg-Marquardt methods can be potentially applied for the minimization of each ensemble member of RML \cite{Evaluation,Li}. However, in some cases, large amount of data need to be assimilated and the computation of the sensitivity matrix becomes computationally prohibitive. This limitation of Newton-type methods can be addressed by the application of quasi-Newton methods where only the action of the sensitivity is needed. In \cite{Gao,Tavakoli} and \cite{EmeRey}, for example, the LBFGS was applied for the computation of the MAP and the ensemble of RML, respectively. In some cases, a SVD decomposition of the sensitivity matrix of the MAP may be used as a reparameterization of the search space of the iterative scheme (e.g. LM or LBFGS) used for the optimization. The effect of this reparameterization is twofold. On the one hand, a truncated SVD parameterization reduces the parameter space and could potentially alleviate the ill-posedness of the inverse problem. On the other hand, the aforementioned SVD decomposition may be computed by means of a Lanczos algorithm with the aid of an adjoint method, thus avoiding the explicit computation of the sensitivity matrix \cite{svdRML,Tavakoli}.

Ensemble Kalman-based methods use the Kalman formula for generating an ensemble of inverse estimates of the geologic properties \cite{EnKF_US}. Each ensemble member may be updated in iterative or noniterative and in sequential (filtering) or all-at-once (smoothing) basis \cite{EnKFReview}.  As stated before, variational methods often require an adjoint code for the efficient computation of the sensitivity matrix (or its multiplication by a vector). In contrast, ensemble Kalman-based methods are derivate-free approaches where the ensemble updates are based on simple computations involving covariances and crosscovariances computed directly from ensemble usually initialized with samples from the prior. Thus, ensemble Kalman methods can be used in a black-box fashion. However, it is widely known the lack of stability (or roughness) of ensemble Kalman methods when the ensemble size is small with respect to the number of parameters or measurements. Therefore, in the context of reservoir applications, attention has been recently given to the regularization of ensemble Kalman based methods by means of localization \cite{Emerick3,OliverLocal} and multiple data assimilation \cite{Emerick20133}. For a recent review and comparison of ensemble methods we refer the reader to the work of \cite{EmeRey}. It is also important to mention the work of \cite{EnKF_US} that uses Kalman updates within an iterative ensemble scheme to solve PDE-constrained parameter identification problems. By a subspace property proven in \cite{EnKF_US}, the space of admissible solutions is defined as the subspace generated by the prior ensemble. Thus, the regularization results from defining the search space on a compact subspace. However, the numerical results of \cite{EnKF_US} suggest that an early termination of the scheme is additionally required to further stabilize the iterative scheme.

Methods that combine ideas from both RML and Kalman-based techniques have been developed in \cite{YanChen,YanChenLM,IterativeEnKF,Gu}. The essence of these methods is to preserve the properties of RML to capture the posterior, while using a derivative-free ensemble-based optimization method. For instance, in the LM iteration step for the minimization of each ensemble member in RML, the LM-enRML approach of \cite{YanChenLM} replaces the sensitivity matrix with an adjoint-free approximation computed from the ensemble. Similarly, LM-enRML uses the prior ensemble to approximate the prior error covariance that appears in the LM scheme. The results reported in \cite{YanChenLM} indicate that LM-enRML provides a good approximation of the posterior for a small problem where the posterior can be computed analytically. However, the approximation properties of LM-enRML for capturing the posterior for large problems is still an open problem. 

\subsection{Contribution of this work}

In this paper we contribute to both mainstreams of ensemble methods previously described. On the one hand, we extend our implementation of the regularizing LM of \cite{LM} and develop a variational Iterative Regularizing ensemble LM method (IR-enLM). The aim of this method is to generate a stabilized ensemble of inverse estimates computed from a randomized least-squares minimization. On the other hand, we apply fundamental ideas from iterative regularization and construct an Iterative Regularizing ensemble (Kalman) Smoother (IR-ES). For these proposed ensemble methods the computation of each ensemble member can be posed as the solution of a deterministic nonlinear ill-posed inverse problem that we address by means of iterative regularization methods. The resulting ensemble, and the main goal of these methods is to provide a robust approximation of the Bayesian posterior. However, the regularization of each ensemble member and the resulting approximation of the Bayesian posterior are two independent aspects that combined yield the proposed techniques. The main objective of the present work is to explore the connection between these two aspects and determine the capabilities of these ensemble methods for capturing the posterior. This objective requires understanding the role of the tunable parameters that arise from the application of iterative regularization methods. While such role has been established both theoretical and numerical the context of deterministic inverse problems, here we provide an extensive numerical study to understand these tunable parameters in the context of Bayesian inverse problems that arise in reservoir modeling applications. Our aims is to obtain a range of tunable parameters that corresponds to accurate and computationally feasible approximations of the posterior. 


Although the proposed methods are based on iterative regularization for which a convergence theory is available in the context of deterministic inverse problems \cite{Iterative} the corresponding numerical analysis of the approximation for the solution of Bayesian inverse problems is still an open problem beyond the scope of the present manuscript. In the trivial case where the forward operator is linear and the prior is Gaussian, we provide specific conditions for which the proposed methods provide the proper sampling of the posterior. For the nonlinear case, we use the framework of \cite{Evaluation} to assess the performance of the proposed method for recovering the mean and variance of the posterior. In particular, we use a state-of-the-art MCMC method for functions \cite{David} that enable us to sample posteriors that result from synthetic data from moderate size experiments.

We reiterate that a key objective of the present work is to develop and promote the use of ideas from iterative regularization for the design of ensemble approximation of the posterior that arises from reservoir modeling applications. Thus, the proposed IR-enLM and IR-ES are a proof-of-concept of iterative regularization ideas for ensemble approximations to the Bayesian posterior. We do not suggest to {\em replace} existing methods but rather to demonstrate a methodology to {\em enhance} those methods by the careful choice of regularization parameters and stopping criteria; we show the potential advantages of taking into account methodologies that have been rigorously constructed for the solution of deterministic inverse problems. In particular, the proposed IR-enLM and IR-ES are based on the discrepancy principle for the early termination and selection of parameters. Further applications of iterative regularization can be developed in combination with state-of-the-art frameworks such as the SVD parameterization of \cite{svdRML} and the LM approaches of \cite{YanChenLM}. Those applications, however, are beyond the scope of the present work.

The paper is organized as follows. Relevant aspects of variational iterative regularization are discussed in Section \ref{sec:var}. In particular, a brief description of RML is presented in subsection \ref{sec:RML}. The proposed IR-enLM algorithm is introduces in subsection \ref{sec:IR-enLM}. The linear-Gaussian case is addressed in subsection \ref{linear}. Differences between IR-enLM and a standard implementation of RML is discussed in \ref{sec:RML-LM}. In Section \ref{sec:IR-ESA} we study Kalman-based ensemble methods. More concretely, \ref{sec:ES} we briefly introduce the standard ES. A regularized version of this ES algorithm is introduce in \ref{sec:R-ES}. The proposed IR-ES is then presented in subsection \ref{sec:IR-ES}. Numerical examples of the proposed methods are presented in Section \ref{Numerics}. In subsection \ref{sec:fm} we describe the generation of synthetic data for the experiments. The resulting posteriors are resolved with the methodology described in subsection \ref{sec:pos}. The numerical experiments of the implementation of IR-enLM and IR-ES are displayed in subsections \ref{sec:numIR-enLM} and \ref{sec:numIR-ES}, respectively. Comparison with standard unregularized methods is presented in \ref{comp}. Final remarks are provided in Section \ref{Conclusions}. The forward models under consideration are described in the Appendix.

\section{Variational iterative regularization}\label{sec:var}

In this section we define a variational ensemble method that quantifies uncertainty of inverse estimates of geologic properties. The proposed method is analogous to the randomized maximum likelihood (RML) method of \cite{Oliver} which we review in the following subsection. For simplicity, throughout the rest of this document we consider that the prior distribution of the geologic properties is the Gaussian measure ${\mathbb P}(u)=N(\overline{u},C)$ where as before,  $C$ is the prior covariance and $\overline{u}$ is the prior mean. In addition, we consider that the observational noise $\xi$ in (\ref{eq:1.1}) is centered Gaussian with covariance $\Gamma$. Under these assumptions, (\ref{eq:1.2}) becomes
\begin{equation}\label{eq:1.2B}
{\mathbb P}(u|y) \propto \exp \{-J(u,y)\}
\end{equation}
where
\begin{eqnarray}\label{eq:sa}
J(u,y)\equiv  \frac{1}{2}\vert\vert \Gamma^{-1/2}(y-G(u))\vert\vert_{Y}^{2} +\frac{1}{2}\vert\vert C^{-1/2}(u-\overline{u})\vert\vert_{X}^2 
\end{eqnarray}

\subsection{The randomized maximum likelihood (RML) method}\label{sec:RML}
From (\ref{eq:1.2B})-(\ref{eq:sa}) is easy to see that the posterior distribution ${\mathbb P}(u|y)$ is maximized for $u\in X$ that minimizes
\begin{eqnarray}\label{eq:sa2}
J(u,y)\equiv  \frac{1}{2}\vert\vert \Gamma^{-1/2}(y-G(u))\vert\vert_{Y}^{2} +\frac{1}{2}\vert\vert C^{-1/2}(u-\overline{u})\vert\vert_{X}^2 
\end{eqnarray}
Under certain conditions, such a $u$ exist \cite{Andrew} and is often referred as the maximum a posteriori (MAP) estimate. Since the operator $G$ is nonlinear for the case of interest for the present application, the MAP estimator may not be unique and so multiple modes of the posterior may exist. In this case, the computation of one minimizer of (\ref{eq:sa}) does not provide sufficient information to fully characterize the posterior. In the context of Bayesian subsurface inverse problems \cite{Oliver}, RML has been proposed to approximate the posterior with an ensemble $\{u_{RML}^{(j)}\}_{j=1}^{N_{e}}$ of $N_{e}$ realizations  defined by
\begin{eqnarray}\label{eq:RML}
 u_{RML}^{(j)} = {\rm argmin}_{u} J_{RML}^{(j)}(u)
\end{eqnarray}
where 
\begin{eqnarray}\label{eq:RMLB}
 J_{RML}^{(j)} = \frac{1}{2}\vert\vert \Gamma^{-1/2}(y^{(j)}-G(u))\vert\vert_{Y}^2+\frac{1}{2}\vert\vert C^{-1/2}(u-u^{(j) })\vert\vert_{X}^{2} 
\end{eqnarray}
and
\begin{eqnarray}\label{eq:RML2}
u^{(j)}\sim N(\overline{u},C),\qquad y^{(j)} =y+\xi^{(j)},\qquad \xi^{(j)} \sim N(0,\Gamma). 
\end{eqnarray}
In the case where $G(u)$ is a linear operator (and since we assume that $\mathbb{P}(u)$ is Gaussian) the ensemble obtained with (\ref{eq:RML}) are samples of the posterior distribution \cite{Oliver}. In the nonlinear case, however, the analysis of the approximation properties of RML is still an open problem. Nevertheless, multiple methods for computing or approximating the minimizer of (\ref{eq:RML}) have been proposed in recent years \cite{svdRML,YanChenLM,IterativeEnKF}. As stated in the preceding section, the main focus of these RML-based ensemble methods has been the numerical efficiency and computational feasibility that is essential in practice. While some of these methods may be more efficient and suitable than others, they are all expected to provide an approximation of the posterior similar to the one provided by the ensemble defined in (\ref{eq:RML}). Even though most RML-based methods have been assessed in terms of their capabilities for data-fitting, some recent publications have established the numerical efficacy of some RML implementations for capturing the Bayesian posterior. In concrete, \cite{EmeRey,Evaluation,Oliver2} have used MCMC to fully characterize the posterior which was, in turn, used to assess the performance of ensemble methods including some implementations of RML. For problems where the dimension of the parameter space is very small, it has been found that some of those variational implementations exhibits the best performance at approximating some moments of the posterior \cite{EmeRey,Oliver2}. However, for larger problems, the work in \cite{Evaluation} reported an implementation of RML that overestimated the variance of the posteriors from some test problems which were, in turn, fully resolved with a state-of-the-art MCMC method for functions. This suboptimal behavior of RML can be attributed to (i) the larger size of experiments considered in \cite{Evaluation} and (ii) the optimization method used for the minimization of (\ref{eq:sa}). More specifically, \cite{Evaluation} uses a standard ``unregularized'' LM proposed used in \cite{Li} for minimizing (\ref{eq:sa}) (see also \cite{Oliver} for this standard application of LM). 

In the general nonlinear case, it is quite clear that the RML ensemble approximation of the Bayesian posterior will strongly depend on the numerical technique that employed for the minimization of (\ref{eq:RML}). On the other hand, this minimization that we perform to compute each ensemble member is nothing but a regularized version of a deterministic nonlinear ill-posed inverse problems. Therefore, it should come as no surprise that (\ref{eq:RML}) may be subject to the numerical instabilities discussed in \cite{LM} for the computation of the MAP estimate. More concretely, for each ensemble member, the stable computation of (\ref{eq:RML}) requires that the ``prior term'' $\frac{1}{2}\vert\vert C^{-1/2}(u-u^{(j)})\vert\vert_{X}^{2} $ provides enough regularization to the ill-posed problem of minimizing $\frac{1}{2}\vert\vert \Gamma^{-1/2}(y^{(j)}-G(u))\vert\vert_{Y}^2$. The stabilization of (\ref{eq:RML}) thus relies on the proper selection of the operators $\Gamma$ and $C$. These, however, are typically chosen according to measurement and prior geological information, respectively. Therefore, for some choices of $C$ and $\Gamma$, numerical instabilities in the computation of (\ref{eq:RML}) could potentially arise unless some additional form of regularization is applied. In \cite{LM} we showed synthetic experiments where the computation of the MAP estimator (i.e. minimizer of (\ref{eq:sa})) with a standard unregularized LM method did not ensure the proper regularization of the history matching problem. That is, even though data were successfully fitted, the corresponding estimates of the geologic properties were significantly  different from the truth. While recovering the truth was the main focus of \cite{LM}, in the present work we are interested in the solution to the Bayesian inverse problem, i.e.  the posterior distribution. In Section \ref{Numerics} of the present document we therefore show that, when the standard unregularized LM method of \cite{Oliver} is implemented to minimize (\ref{eq:RML}) in RML, suboptimal approximations of the Bayesian posterior may be obtained. This motivates our application of iterative regularization for the development of a robust and stable variational ensemble approximations of the posterior. It is worth mentioning that the aforementioned instabilities that may arise from the minimization of the MAP (or alternatively RML) has been also observed and addressed in some implementations of incremental variational data assimilation methods for numerical weather prediction applications \cite{TELA:TELA527}.

It is important to remark that some recent implementations of RML use a truncated parameterization of the SVD of the sensitivity matrix \cite{svdRML} within a LM algorithm. This non-standard LM approach has been developed for computational efficiency because it avoids the explicit computations of the sensitivity that is needed in standard LM methods. However, the truncation of the aforementioned SVD parametrization is an additional for of regularization \cite{Hansen}. This truncation has the potential disadvantage of removing eigenvalues of the sensitivity matrix that may be relevant for the proper characterization of petrophysical properties in complex geologies. It is also worth mentioning the recent work of \cite{YanChenLM} where an RML-based approach was used in conjunction with an LM method that, as indicated in Section \ref{Intro} uses ensemble approximations of the sensitivity and prior error covariance. While the approach of \cite{YanChenLM} uses the standard stoping criteria and regularizing parameters, it also incorporates truncated SVD of the variables of interest, thereby inducing a regularization. Moreover, \cite{YanChenLM} reported the use localization as an additional form of regularization. In the following section we present an approach where neither truncation of the spectrum nor localization is used. Instead, basic ideas from iterative regularization are applied to stabilize the computations of the ensemble. Nonetheless, as indicated earlier, the ideas from iterative regularization that we develop below may be potentially to these existing methods where some type of parameterizations (e.g. TSVD basis) and approximations (e.g. of sensitivities) are performed.


\subsection{An Iterative Regularizing ensemble LM (IR-enLM) method}\label{sec:IR-enLM}

We propose a variational ensemble method that aims at providing stable and robust ensemble approximations of the Bayesian posterior. Each member $u_{IR}^{(j)}$ of the ensemble is a stable approximation of a minimizer of
\begin{eqnarray}\label{eq:IR}
\Phi^{(j)}(u)\equiv \vert\vert \Gamma^{-1/2}(y^{(j)}-G(u^{(j)}))\vert\vert_{Y},\qquad j\in\{1,\dots.N_{e}\}
\end{eqnarray}
computed with the regularizing LM scheme, initialized with a random sample from the prior, i.e. $u_{0}^{(j)}\equiv u^{(j)}\sim N(\overline{u},C)$. The data $y^{(j)}$ in (\ref{eq:IR}) is a perturbation of the original data $y$ as defined in (\ref{eq:RML2}). Note that (\ref{eq:IR}) excludes the prior term of (\ref{eq:RML}). For the sake of clarity, in the subsequent lines we briefly outline key aspects of the regularizing LM scheme applied to the computation of stable minimizers of (\ref{eq:IR}). For full details on the implementation of this method for history matching, the reader is referred to \cite{LM}. The theory of the regularizing properties and the convergence of the regularizing LM scheme are found in \cite{Hanke}. This includes the conditions on the forward model $G$ for which the theory ensure convergence to stable solutions (see also \cite{LM}).

The regularizing LM scheme applied to each ensemble member $j\in\{1,\dots.N_{e}\}$ at each iteration level $m$ involves the computation of $u_{m+1}^{(j)}=u_{m}^{(j)}+\Delta u_{m}^{(j)}$, where the increment $\Delta u_{m}^{(j) }$ is defined as the minimizer of 
\begin{eqnarray}\label{eq:1.4}
J_{LM}^{(j)}(w)=\frac{1}{2}\vert\vert \Gamma^{-1/2}(y^{(j)}-G(u_{m}^{(j)})-DG(u_{m}^{(j)})w)\vert\vert_{Y}^{2}+\frac{1}{2}\alpha_{m}^{(j)}\vert\vert C^{-1/2}w\vert\vert_{X}^2
\end{eqnarray}
which from standard arguments can be expressed as
\begin{eqnarray}\label{eq:1.7}
\Delta u_{m}^{(j)}(\alpha_{m}^{(j)})=C \, DG^{*}(u_{m}^{(j)})[DG(u_{m}^{(j)}) \, C \, DG^{\ast}(u_{m}^{(j)})+\alpha_{m}^{(j)} \Gamma]^{-1}[y^{(j)}-G(u_{m}^{(j)})]
\end{eqnarray}
The regularizing LM scheme of Hanke has two main components that ensure the regularization of the minimization of (\ref{eq:IR}): (i) the selection of the regularization parameter $\alpha_{m}^{(j)}$ and (ii) the stopping criteria. These are both based on the discrepancy principle. In concrete, according to the theory of Hanke \cite{Hanke}, $\alpha_{m}^{(j)}$ must satisfy
\begin{eqnarray}\label{eq:1.6}
\vert\vert \Gamma^{-1/2} (y^{(j)}-G(u_{m}^{(j)})-DG(u_{m}^{(j)})\Delta u_{m}^{(j)}(\alpha_{m}^{(j)}))\vert\vert_{Y}^{2}
\ge \rho^{2} \vert\vert \Gamma^{-1/2} (y^{(j)}-G(u_{m}^{(j)}))\vert\vert_{Y}^{2} \nonumber\\
\end{eqnarray}
for some $\rho\in (0,1)$. From (\ref{eq:1.7}) the previous expression can be written as 
\begin{eqnarray}\label{eq:1.8}
\alpha_{m}^{(j)}\vert\vert  \Gamma^{1/2}[DG(u_{m}^{(j)}) \, C \, DG^{\ast}(u_{m}^{(j)})+\alpha_{m}^{(j)}\Gamma]^{-1}[y^{(j)}-G((u_{m}^{(j)})]\vert\vert_{Y}\nonumber\\\ge \rho\vert\vert  \Gamma^{-1/2}(y^{(j)}-G(u_{m}^{(j)}))\vert\vert_{Y}
\end{eqnarray}
The existence of such $\alpha_{m}^{(j)}$ has been shown in \cite{Iterative} and its actual computation can be carried out with a simple iterative scheme \cite{LM} (see also Algorithm \ref{IR-enLM} below). The selection of $\alpha_{m}^{(j)}$ based on (\ref{eq:1.8}) ensures the regularization of the minimizer of (\ref{eq:1.4}). However, to fully stabilize the computation of a minimizer of (\ref{eq:IR}), the iterative regularizing LM scheme is terminated according to the following discrepancy principle
\begin{eqnarray}\label{discrepancyB}
\vert\vert \Gamma^{-1/2} (y^{(j)}-G(u_{k+1}^{(j)}))\vert\vert_{Y}\leq \tau\eta^{(j)}\leq \vert\vert \Gamma^{-1/2} (y^{(j)}-G(u_{k}^{(j)}))\vert\vert_{Y}
\end{eqnarray}
which is the application of (\ref{discrepancy}) for minimizing (\ref{eq:IR}). In the previous expression $\eta^{(j)}$ is the noise level corresponding to the data set $y^{(j)}$. In other words, 
\begin{eqnarray}\label{eq:1.9}
\eta^{(j)}\equiv \vert\vert \Gamma^{-1/2}(y^{(j)}-G(u^{\dagger}))\vert\vert_{Y}
\end{eqnarray}

Since we do not have access to the truth, (and therefore to $G(u^{\dagger})$) and estimate of (\ref{eq:1.9}) is required. Let us recall from our definition (\ref{eq:RML2}) that $y^{(j)}=y+\xi^{(j)}$ and so (\ref{eq:1.9}) becomes
\begin{eqnarray}\label{eq:1.9B}
\eta^{(j)}\equiv \vert\vert \Gamma^{-1/2}(y^{(j)}-G(u^{\dagger}))\vert\vert_{Y}\leq \vert\vert \Gamma^{-1/2}(y-G(u^{\dagger}))\vert\vert_{Y}+\vert\vert \Gamma^{-1/2}\xi^{(j)}\vert\vert_{Y}\nonumber\\
=\eta +\vert\vert \Gamma^{-1/2}\xi^{(j)}\vert\vert_{Y}
\end{eqnarray}
where we have used the definition for the noise level (\ref{eq:noise}). 
On the other hand, from the definition of the perturbed observations (\ref{eq:RML2}) we have
\begin{eqnarray*}
\frac{1}{N_{e}}\sum_{j=1}^{N_{e}} \Gamma^{-1/2}(y^{(j)}-G(u^{\dagger})) =\frac{1}{N_{e}}\sum_{j=1}^{N_{e}} \Gamma^{-1/2}(y+\xi^{(j)}-G(u^{\dagger}))= \Gamma^{-1/2}(y-G(u^{\dagger}))
\end{eqnarray*}
where we have assumed that the finite ensemble of data perturbations has mean zero. From the previous expression it follows that
\begin{eqnarray}\label{eq:1.9B1}
\eta\equiv \vert\vert  \Gamma^{-1/2}(y-G(u^{\dagger})) \vert\vert_{Y}\leq \frac{1}{N_{e}}\sum_{j=1}^{N_{e}} \vert\vert \Gamma^{-1/2}(y^{(j)}-G(u^{\dagger})) \vert\vert_{Y}
\end{eqnarray}
From (\ref{eq:1.9B1}) and (\ref{eq:1.9B}) we therefore find
\begin{eqnarray}\label{eq:1.9B2}
\eta\leq \frac{1}{N_{e}}\sum_{j=1}^{N_{e}} \eta^{(j)}\leq\eta + \frac{1}{N_{e}}\sum_{j=1}^{N_{e}} \vert\vert \Gamma^{-1/2}\xi^{(j)}\vert\vert_{Y}
\end{eqnarray}
Note that an estimate of $\eta^{(j)}$ in the interval
\begin{eqnarray}\label{eq:1.9B3}
 \Big[\eta,\eta+ \vert\vert \Gamma^{-1/2}\xi^{(j)}\vert\vert_{Y}\Big]
\end{eqnarray}
will be consistent with (\ref{eq:1.9B2}). For the present work we propose the midpoint of (\ref{eq:1.9B3}) as an estimate of $\eta^{(j)}$. In other words, 
\begin{eqnarray}\label{estima}
\eta^{(j)}\equiv \eta+ \frac{1}{2}\vert\vert \Gamma^{-1/2}\xi^{(j)}\vert\vert_{Y}
\end{eqnarray}
Since $\xi^{(j)}$ is the (known) data perturbation (\ref{eq:RML2}), our estimate of $\eta^{(j)}$ can be computed. Additionally, as discussed in Section \ref{Intro} we assume that an estimate of the noise level $\eta$ is available. For the computation of the noise level $\eta$ we refer the reader to the discussion in \cite[Section 3]{LM}. 

We now combine the previous ideas in the following iterative method that generates an ensemble of inverse estimates of geologic properties.
\begin{algorithm}[iteratively regularized ensemble LM]\label{IR-enLM} {~~}\\
Let us consider $u_{0}^{(j)}\equiv u^{(j)}\sim N(\overline{u},C)$, $y^{(j)}$ according to (\ref{eq:RML2}). Let $\rho<1$ and $\tau>1/\rho$. Use the following implementation of the regularizing LM scheme to compute an approximation to (\ref{eq:IR}) for $j\in\{1,\dots,N_{e}\}$. For $m=1,\dots$
\begin{itemize}
\item[(1)] \textbf{Stopping rule (Discrepancy Principle)}. If
\begin{eqnarray}\label{discrepancy2}
\vert\vert \Gamma^{-1/2}(y^{(j)}-G(u_m^{(j)}))\vert\vert_{Y}\leq \tau \big(\eta +\frac{1}{2}\vert\vert \Gamma^{-1/2}\xi^{(j)}\vert\vert_{Y}\big)
\end{eqnarray}
stop. Output: $u_{m}^{(j)}$. 
\item[(2)] \textbf{Selection of $\alpha_{m}^{(j)}$}. Let $\alpha_{m,n}^{(j)}=2^{n}\alpha_{m,0}^{(j)}$ for $n\geq 0$ with $\alpha_{m,0}^{(j)}=1$. Let $N\in \mathbb{N}\cup\{0\}$ be the minimum such that $\alpha_{m}^{(j)}\equiv \alpha_{m,N}^{(j)}$ satisfies
\begin{eqnarray}\label{eq:1.10}
\rho^{2}\vert\vert \Gamma^{-1/2}(y^{(j)}-G(u_{m}^{(j)}))\vert\vert_{Y}^2\leq (\alpha_{m}^{(j)})^2 \vert\vert  \Gamma^{1/2}[DG(u_{m}^{(j)}) \, C \, DG^{\ast}(u_{m}^{(j)})+\alpha_{m}^{(j)}\Gamma]^{-1}[y^{\eta}-G(u_{m}^{(j)})]\vert\vert_{Y}^2 \nonumber\\
\end{eqnarray}
\item[(3)]\textbf{Update}. Define 
\begin{eqnarray}\label{eq:1.11}
u_{m+1}^{(j)}=u_{m}^{(j)} +C \, DG^{*}(u_{m}^{(j)})[DG(u_{m}^{(j)}) \, C \, DG^{\ast}(u_{m}^{(j)})+\alpha_{m,N}^{(j)} \Gamma]^{-1}[y^{(j)}-G(u_{m}^{(j)})]~~~
\end{eqnarray}
\end{itemize}
\end{algorithm}
Note that in the right-hand side of (\ref{discrepancy2}) we have used our estimate of the noise level ( \ref{estima}). Also note that ensemble members are independent from one another. Thus, the computation of the ensemble members in Algorithm \ref{IR-enLM} can be easily parallelized. In addition, the iterative selection of $\alpha_{m}^{(j)}$ (step 2 of Algorithm \ref{IR-enLM}) generates an increasing sequence which, as proven in \cite{LM}, ensures that (\ref{eq:1.10}) is satisfied for some $\alpha_{m,N}^{(j)}$ for $N$ finite. 

We reiterate that the proposed IR-enLM algorithm generates an ensemble of minimizers of (\ref{eq:IR}) that aim at capturing the Bayesian posterior $P(u\vert y)$. The main goal of the present work is to use numerical experiments to understand the effect of the tunable parameters $\rho$ and $\tau$ in the associated approximation properties of the proposed method.

\subsection{The tunable parameters of IR-enLM} \label{tunable}

Both $\rho$ and $\tau$ are tunable parameters of the proposed IR-enLM which arise from the application of the regularizing LM scheme of \cite{Hanke}. The requirements $\rho\in (0,1)$ and $\tau>1/\rho$ are part of the set of sufficient conditions for the convergence of the regularizing LM scheme to a stable solution of the deterministic (least-square) inverse problem in the small noise limit. Therefore, these conditions will ensure the convergence to a stable solution of the deterministic (least-square) inverse problem associated to the computation of each ensemble in expression  (\ref{eq:IR}). However, we emphasize that the convergence analysis of the ensemble approximation of Bayesian posterior is nonexistent. In particular, the aforementioned conditions on $\rho$ and $\tau$ have not been studied in the context of solving the Bayesian inverse problem. In Section \ref{Numerics} of the present manuscript we will provide extensive numerical studies to understand role of $\rho$ and $\tau$ in terms of approximating the mean and variance of synthetic posteriors. In the paragraph below we provide a brief discussion to gain intuition on the effect of the tunable parameters on the computation of each ensemble member of IR-enLM.

The parameter $\rho$ in (\ref{eq:1.8}) appears from the application of the discrepancy principle. This controls the output on the linearized data misfit (\ref{eq:1.6}) and prevents data overfitting. For a thorough discussion of the discrepancy principle applied to the selection of $\alpha_{m}^{(j)}$ we refer the reader to \cite{LM}. However, it is important to emphasize that the parameter $\rho$ in (\ref{eq:1.8}) controls the increment $\Delta u_{m}^{(j)}$ on each step of the regularizing  LM scheme applied for the minimization of (\ref{eq:IR}). More precisely, the larger $\rho$ the larger $\alpha_{m}^{(j)}$. Alternatively, we observe from (\ref{eq:1.4}) that larger $\alpha_{m}^{(j)}$'s are associated to smaller minimizers (or increments) $\Delta u_{m}^{(j)}$. Thus, we note that the Tikhonov term (multiplied by $\alpha_{m}^{(j)}$) in (\ref{eq:1.4}) is essential to control the large values that $\Delta u_{m}^{(j)}$ may potentially take because of the ill-posedness of the linearized inverse problem of minimizing (\ref{eq:1.4}) without the regularization term.  Note, however, that the control on the increment $\Delta u_{m}^{(j)}$ given by (\ref{eq:1.8}) is proportional to the data misfit, which is in turn, decreases with the number of iterations. Therefore, the regularizing LM scheme ensures smaller steps of the LM at the beginning of the iterations. For smaller values of $\rho$ (with $\rho\in (0,1)$) the increment $\Delta u_{m}^{(j)}$ can be potentially large. However, in this case (of a smaller $\rho$) larger values of $\tau$ are expected in the discrepancy principle (\ref{discrepancy2}) which, in turn, will ensure stability by the early termination of the scheme. In the latter case the accuracy of the scheme may be compromised. In other words, the smaller $\rho$ the larger $\tau>1/\rho$  and the earlier we may have to stop the IR-enLM according to (\ref{discrepancy2}), leading to potential inaccurate inverse estimates. 

\subsection{Computational cost of IR-enLM} 

On each iteration level and for each ensemble member of IR-enLM, the main computational cost is the explicit computation of $DG(u_{m}^{(j)}) \, C \, DG^{\ast}(u_{m}^{(j)})$. This is however, computationally feasible provided that the adjoint $DG^{\ast}(u_{m}^{(j)}$ is available and that a small number of observations are assimilated. In that case, the cost of constructing $DG(u_{m}^{(j)}) \, C \, DG^{\ast}(u_{m}^{(j)})$ is proportional to the number of observations times the computational cost of solving the adjoint problem of the linearized forward model (i.e. the cost of computing $DG^{\ast}(u_{m}^{(j)})$ which depends on the particular reservoir model under consideration. In addition, IR-enLM requires the evaluation of forward model at each iteration. Therefore, the cost of an $N_{e}$ size IR-enLM is around $N_{e}J$ forward model evaluations plus $N_{e}JN_{M}$ adjoint solves, where $J$ is the number of iterations to achieve convergence. We remark that the working assumption of small number of measurements is essential for the computational feasibility of the proposed IR-enLM. However, this is not a restrictive assumption for other iterative regularization methods. In \cite{LM} we discuss other iterative regularization methods that avoid the explicit computation of $DG(u_{m}^{(j)}) \, C \, DG^{\ast}(u_{m}^{(j)})$ and that can be potentially used in the ensemble-base framework proposed here. However, investigating the use of additional iterative regularization methods is beyond the scope of this manuscript.

\subsection{The linear case}\label{linear}
When $G$ is a linear operator, the tunable parameters in the regularizing LM scheme for history matching can be chosen so that the resulting approximation coincides with the MAP estimator \cite{LM}. Analogously, in the linear case, the following proposition provides conditions of the tunable parameters so that the proposed IR-enLM  Algorithm \ref{IR-enLM} coincides with RML (\ref{eq:RML}), which as proven in \cite{Oliver}, generates samples of the posterior. 

\begin{proposition}\label{propo1}
Let $G$ be a linear operator $G(u)\equiv Gu$. Consider $u_{0}^{(j)}$ and $y^{(j)}$ as in Algorithm \ref{IR-enLM}. Assume that 
\begin{eqnarray}\label{eq:1.12}
\vert\vert\Gamma^{-1/2}(y^{(j)}-Gu_{0}^{j})\vert\vert_{Y} > \Big[\eta +\frac{1}{2}\vert\vert \Gamma^{-1/2}\xi^{(j)}\vert\vert_{Y}\Big]
\end{eqnarray}
for all $j\in \{1,\dots,N_{e}\}$. Let $\rho\in \mathbb{R}$ be such that
\begin{eqnarray}\label{eq:1.13}
\rho < \frac{1}{\vert\vert [G \, C \,G^{\ast}+\Gamma]^{1/2} \Gamma^{-1/2}\vert\vert^{2}}
\end{eqnarray}
and 
\begin{eqnarray}\label{eq:1.14}
\tau> \max\Big\{ \vert\vert [G \, C \,G^{\ast}+\Gamma]^{-1/2} \Gamma^{1/2}\vert\vert^{2}, \frac{1}{\rho}\Big\}  \frac{ \max_{j}\vert\vert\Gamma^{-1/2}(y^{(j)}-Gu_{0}^{j})\vert\vert_{Y}}{\eta +\frac{1}{2}\min_{j}\vert\vert \Gamma^{-1/2}\xi^{(j)}\vert\vert_{Y}}\nonumber\\
\end{eqnarray}
Then, IR-enLM produces samples of the posterior distribution. 
\end{proposition}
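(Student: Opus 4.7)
The plan is to reduce the linear-Gaussian case to the classical RML result by showing that, under hypotheses (\ref{eq:1.12})--(\ref{eq:1.14}), Algorithm \ref{IR-enLM} terminates after exactly one update with the choice $\alpha_{0}^{(j)}=1$, and that this single update reproduces the closed-form RML minimizer. Since in the linear-Gaussian setting RML already generates samples from the posterior \cite{Oliver}, the conclusion then follows immediately from the coincidence of the two algorithms.

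I would organise the argument in four short steps. \emph{Step 1 (single-step match).} Differentiating (\ref{eq:RMLB}) in the linear case and applying the standard matrix identity yields the closed form $u_{RML}^{(j)} = u^{(j)} + C G^{*}[G C G^{*}+\Gamma]^{-1}(y^{(j)}-Gu^{(j)})$, which coincides exactly with the IR-enLM update (\ref{eq:1.11}) at $m=0$ once $u_{0}^{(j)}=u^{(j)}$ and $\alpha_{0}^{(j)}=1$. \emph{Step 2 (admissibility of $\alpha_{0}^{(j)}=1$).} Setting $A=GCG^{*}+\Gamma$, $v=y^{(j)}-Gu_{0}^{(j)}$ and $B=\Gamma^{-1/2}A\Gamma^{-1/2}$, inequality (\ref{eq:1.10}) with $\alpha=1$ becomes $\rho\|\Gamma^{-1/2}v\|_{Y} \le \|B^{-1}\Gamma^{-1/2}v\|_{Y}$, which follows from the spectral bound $\|B^{-1}w\| \ge \|w\|/\|B\|$ together with the identity $\|B\| = \|A^{1/2}\Gamma^{-1/2}\|^{2}$ and hypothesis (\ref{eq:1.13}). \emph{Step 3 (termination at $m=1$).} A direct computation using $I - GCG^{*}A^{-1} = \Gamma A^{-1}$ gives $y^{(j)}-Gu_{1}^{(j)} = \Gamma A^{-1}v$, whence $\|\Gamma^{-1/2}(y^{(j)}-Gu_{1}^{(j)})\|_{Y} = \|B^{-1}\Gamma^{-1/2}v\|_{Y} \le \|B^{-1}\|\,\|\Gamma^{-1/2}v\|_{Y}$; combining this with $\|B^{-1}\| = \|A^{-1/2}\Gamma^{1/2}\|^{2}$ and the threshold (\ref{eq:1.14}), the discrepancy criterion (\ref{discrepancy2}) is satisfied for every $j$, so the iteration stops after the first update. \emph{Step 4.} Hypothesis (\ref{eq:1.12}) rules out the degenerate case in which the algorithm could exit on the initial iterate $u_{0}^{(j)}$, guaranteeing that the update in Steps 2--3 is actually performed.

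The main technical point to be handled carefully is the identification of the two operator-norm factors appearing in (\ref{eq:1.13}) and (\ref{eq:1.14}), namely $\|[GCG^{*}+\Gamma]^{1/2}\Gamma^{-1/2}\|^{2}$ and $\|[GCG^{*}+\Gamma]^{-1/2}\Gamma^{1/2}\|^{2}$, with the spectral radii $\|B\|$ and $\|B^{-1}\|$ of the symmetric positive-definite operator $B=\Gamma^{-1/2}(GCG^{*}+\Gamma)\Gamma^{-1/2}$. Once this correspondence is fixed, the assumptions on $\rho$ and $\tau$ are the sharp thresholds needed for Steps 2 and 3 respectively, and the rest of the argument is routine linear algebra; I do not anticipate any substantive obstacle beyond this bookkeeping.
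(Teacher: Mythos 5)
Your proposal is correct and follows essentially the same route as the paper's proof: verify that $\alpha_{0}^{(j)}=1$ is admissible under (\ref{eq:1.13}), observe that the resulting single update is exactly the closed-form RML minimizer, and use (\ref{eq:1.14}) to show the discrepancy stopping rule fires after that one step, so the conclusion follows from the linear-Gaussian RML sampling result of \cite{Oliver}. The only difference is that you spell out the spectral identification $\|B\|=\|[GCG^{*}+\Gamma]^{1/2}\Gamma^{-1/2}\|^{2}$ explicitly where the paper delegates this step to Proposition 1 of \cite{LM}.
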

\begin{proof}
Note that if assumption (\ref{eq:1.12}) does not hold, then the proposed algorithm will terminate and the prior ensemble yields the solution. However, from this assumption and (\ref{eq:1.14}) we find
\begin{eqnarray}\label{eq:1.15}
\tau> \frac{1}{\rho}\frac{ \max_{j}\vert\vert\Gamma^{-1/2}(y^{(j)}-Gu_{0}^{j})\vert\vert_{Y}}{\eta +\frac{1}{2}\min_{j}\vert\vert \Gamma^{-1/2}\xi^{(j)}\vert\vert_{Y}}\ge  \frac{1}{\rho} \frac{ \vert\vert\Gamma^{-1/2}(y^{(j)}-Gu_{0}^{j})\vert\vert_{Y}}{\eta +\frac{1}{2}\vert\vert \Gamma^{-1/2}\xi^{(j)}\vert\vert_{Y}}>\frac{1}{\rho}.
\end{eqnarray}
Additionally, with a similar argument to the one used in \cite[Proposition 1]{LM} we can show that $\rho$ in (\ref{eq:1.13}) satisfies $\rho<1$ and that the initial guess $\alpha_{0,0}^{(j)}=1$ satisfies inequality in (\ref{eq:1.10}). Therefore, $N=0$, $\alpha_{0}^{(j)}=1$ and so expression (\ref{eq:1.11}) becomes 
\begin{eqnarray}\label{eq:1.16}
u_{1}^{(j)}=u_{0}^{(j)} +C \, G^{*}[G \, C \, G^{\ast}+ \Gamma]^{-1}[y^{(j)}-G u_{0}^{(j)}]
\end{eqnarray}
which is the minimizer of (\ref{eq:RMLB}) with $G(u)=Gu$. In other words, $\{u_{1}^{(j)}\}_{j=1}^{N_{e}}$ is an ensemble generated with the RML method described above. Therefore, as proven in \cite{Oliver}, $\{u_{1}^{(j)}\}_{j=1}^{N_{e}}$ are samples of the posterior. We now show that $u_{1}^{(j)}$ satisfies the stopping criteria with $\tau$ given by (\ref{eq:1.14}). Note that
\begin{eqnarray}\label{eq:1.21}
\vert\vert \Gamma^{-1/2}(y^{(j)}-Gu_1^{(j)})\vert\vert_{Y} =\vert\vert \Gamma^{1/2}[G \, C \,G^{\ast}+\Gamma]^{-1} (y^{(j)}-Gu_o^{(j)})\vert\vert_{Y}\nonumber\\
\leq \vert\vert [G \, C \,G^{\ast}+\Gamma]^{-1/2}\Gamma^{1/2}\vert\vert^2 \vert\vert \Gamma^{-1/2}(y^{(j)}-Gu_o^{(j)})\vert\vert_{Y}\nonumber\\
\leq  \vert\vert [G \, C \,G^{\ast}+\Gamma]^{-1/2}\Gamma^{1/2}\vert\vert^2 \max_{j}
\vert\vert \Gamma^{-1/2}(y^{(j)}-Gu_o^{(j)})\vert\vert_{Y}\nonumber\\<\tau  \Big(\eta +\frac{1}{2}\min_{j}\vert\vert \Gamma^{-1/2}\xi^{(j)}\vert\vert_{Y}\Big)\leq \tau  \Big(\eta +\frac{1}{2}\vert\vert \Gamma^{-1/2}\xi^{(j)}\vert\vert_{Y}\Big)
\end{eqnarray}
which implies that $\{u_{1}^{(j)}\}_{j=1}^{N_{e}}$ satisfies our stoping criteria (\ref{discrepancy2}). Therefore, in the linear case, each ensemble of the proposed algorithm terminates after one iteration and the resulting ensemble members are samples of the posterior distribution. $\Box$
\end{proof}
\begin{remark}
While the linear case is not relevant for most reservoir applications, the previous result provides conditions on the tunable parameters under which the consistency of the IR-enLM for sampling the (Gaussian) posterior can be established. Note that the conditions (\ref{eq:1.13}) and (\ref{eq:1.14}) are consistent to the conditions required by the regularizing LM scheme to ensure stability (i.e. $\rho \in (0,1)$ and $\tau>1/\rho)$. However, we emphasize that the conditions (\ref{eq:1.13}) and (\ref{eq:1.14}) do not ensure that, in the nonlinear case, the ensemble IR-enLM provides the proper sampling of the posterior. In Section \ref{Numerics} we provide a numerical study, on a nonlinear reservoir model, to understand the approximation properties of IR-enLM and its dependence on the choice of the tunable parameters described earlier.
\end{remark}

\subsection{Differences between IR-enLM and the standard unregularized LM method for RML}\label{sec:RML-LM}

We note that when $G$ is linear, the ensemble generated with RML can be obtained explicitly by formula (\ref{eq:1.16}). In this case we were also able to shown in Proposition \ref{propo1} an equivalence between RML and the proposed IR-enLM method. In the nonlinear case relevant for the present application such equivalence is nonexistent. Moreover, any comparison between RML and the proposed IR-enLM depends on the particular implementation for the minimization of (\ref{eq:RMLB}). For the present work we consider what we refer as the ``standard unregularized LM scheme for RML'' (i.e. for minimizing (\ref{eq:RMLB})). This ``unregularized'' method does not add any additional form of regularization, like for example,  a truncated SVD parameterization which, as stated before, has the effect of regularizing the LM step. We emphasize, however, that this standard unregularized LM is applied to the minimization of (\ref{eq:RMLB}) which is already regularized. The main point of this section and the associated numerical experiments of subsection \ref{comp} is to show that the intrinsic regularization in (\ref{eq:RMLB}) combined with the standard methods of optimization may generate unstable ensemble members which are, in turn, detrimental to the robustness for recovering/approximating the Bayesian posterior.

The standard unregularized LM method applied for generating ensembles of RML is defined by $u_{m+1}^{(j)}=u_{m}^{(j)}+\Delta u^{(j)}$ where the step $\Delta u^{(j)}$ satisfies
\begin{eqnarray}\label{eq:1.25}
\Big[ DG^{\ast}(u_{m}^{(j)})\Gamma^{-1}DG(u_{m}^{(j)})+C^{-1}+\lambda_{m}^{(j)} C^{-1}\Big]\Delta u^{(j)}= DG^{\ast}(u_{m}^{(j)})\Gamma^{-1}[y^{(j)}-G(u_{m}^{(j)})-C^{-1}(u_{m}^{(j)}-u^{(j)})]\nonumber\\
\end{eqnarray}
We reiterate that this unregularized version of the LM method excludes truncated SVD parametrization and further localization/inflation of the associated matrices in (\ref{eq:1.25}). 

Most applications of both regularized and unregularized LM approaches for RML-based methods \cite{svdRML,Oliver,YanChen,Li,Tavakoli} consistently choose the initial regularization LM parameter $\lambda=\lambda_{0}$ in (\ref{eq:1.25}) as $\Lambda_{0}\equiv \min\{\sqrt{J(u_{0})/N_{D}},J(u_{0})/N_{D}\}$. For $m\ge 0$, $\lambda_{m+1}$ is then chosen according to
\begin{eqnarray}\label{eq:1.26}
\lambda_{m+1}=\left\{\begin{array}{cc}
\lambda_{m}/\kappa& \textrm{if}~~J(u_{m+1})< J(u_{m})\\
\kappa\lambda_{m} & \textrm{if}~~J(u_{m+1})\ge J(u_{m})\end{array}\right.
\end{eqnarray}
where $\kappa=10$ is the typical selection \cite{svdRML,Oliver,YanChen,Li,Tavakoli}. Moreover, all these approaches use the following stopping criteria
\begin{eqnarray}\label{eq:1.21B}
\frac{\vert J(u_{m+1})-J(u_{m})\vert }{J(u_{m+1})}\leq \epsilon_{0},\qquad \frac{\vert\vert u_{m+1}-u_{m}\vert\vert_{X} }{ \vert\vert u_{m+1} \vert\vert_{X} }\leq \epsilon_{1}\label{eq:3.9D}
\end{eqnarray}
In addition, it has been often claim that a successful application of these methods requires to produce estimates that satisfy
\begin{eqnarray}\label{eq:1.22}
J_{RML}^{(j)}(u^{(j)})\leq  N_{D}+5\sqrt{2N_{D}}
\end{eqnarray}
where $N_{D}$ is the number of measurements (i.e. the dimension of $Y$). While  (\ref{eq:1.22}) has been proven under the linear assumption on $G$, for the general case, an analogous estimate is nonexistent. It is important to reiterate that, a sufficient decrease in the data misfit may not be associated with a decrease of the error with respect to the truth. 

For a thorough discussion of the technical differences between the standard unregularized LM method and the regularizing LM scheme we refer the reader to the discussion of \cite[section 3.3]{LM}. In subsection \ref{comp} we display experiments where the standard unregularized LM implementation of RML leads to ensembles that exhibit suboptimal performance in capturing the posterior distribution. This lack of stability and the resulting uncontrolled estimates of this scheme arise from (i) the selection of the regularization parameter $\lambda_{m}^{(j)}$  and (ii) the stropping criteria that do not prevent overfitting of the data. In contrast, we observe that the regularizing LM scheme implemented within our IR-enLM uses the discrepancy principle to control selection of the LM parameter as well as the early termination of the algorithm. Therefore, even though (\ref{eq:1.21B}) and (\ref{eq:1.22}) have been shown to produce acceptable numerical results in cases where the problem has been further regularized (e.g. like with truncated SVD \cite{svdRML}), the general application of the selection of regularization parameters and stopping criteria should be taken with caution.

\section{Ensemble Kalman-based methods}\label{sec:IR-ESA}

Using the derivative of the forward operator in variational models often results in more accurate inverse estimates of the subsurface properties compared to the ones generated with ensemble Kalman-based methods. However, as indicated earlier, Kalman-based methods are often easier to implement and possibly the only choice when adjoint codes are not available. Based on ideas from iterative regularization methods, in this section we construct an ensemble Kalman smoother suitable for an all-at-once formulation consistent with our forward operator $G$ that comprises all data (\ref{eq:1.1}). Nevertheless, the ideas presented here can be developed in a sequential formulation by redefining the forward problem. 

\subsection{The standard unregularized ensemble smoother}\label{sec:ES}
The standard smoother of \cite{evensen2009data} consist of generating an ensemble 
\begin{eqnarray}\label{eq:1.27}
u^{(j,a)} =u^{(j,f)}+C^{uw}(C^{ww} +\Gamma   )^{-1}(y^{(j)}-w^{(j,f)})
\end{eqnarray}
where $u^{(j,f)}\sim N(\overline{u},C)$, $y^{(j)}$ is defined as in (\ref{eq:RML2}), and
\begin{eqnarray}\label{eq:1.28}
w^{(j,f)}\equiv G(u^{(j,f)}),\qquad \overline{w}^{f}=\frac{1}{N_{e}}\sum_{j=1}^{N_{e}}w^{(j,f)},\qquad \overline{u}^{f}=\frac{1}{N_{e}}\sum_{j=1}^{N_{e}}u^{(j,f)} \nonumber\\
C^{ww}=\frac{1}{N_{e}}\sum_{j=1}^{N_{e}}(w^{(j,f)}-\overline{w}^{f})(w^{(j,f)}-\overline{w}^{f})^T,\qquad C^{uw} = \frac{1}{N_{e}}\sum_{j=1}^{N_{e}} (u^{(j,f)}-\overline{u}^{f})(w^{(j,f)}-\overline{w}^{f})^T\nonumber\\
\end{eqnarray}

Similar to straightforward applications of EnKF, the implementation of ES often provides poor data misfit and/or very large/rough values of the ensemble members. Moreover, these standard Kalman-based methods usually underestimates the variance due to the collapse of the ensemble members towards the ensemble mean \cite{Evaluation,EmeRey}. As described in Section \ref{Intro}, the aforementioned behavior has become more evident when a small ensemble size is considered compared to the observations or the dimensions of the parameter space. Several forms of regularization to alleviate this ill-behavior have been proposed in terms of covariance localization, covariance inflation and/or redefining the filter in a square-root fashion. In the present section we demonstrate how an alternative form of regularization can be achieved by introducing ideas from iterative regularization to the definition of the ES. 

\subsection{The regularizing ES}\label{sec:R-ES}

Let us note that model predictions can also be updated according to the following formula
\begin{eqnarray}\label{eq:1.29}
w^{(j,a) }=w^{(j,f)}+C^{ww}(C^{ww} +\Gamma   )^{-1}(y^{(j)}-w^{(j,f)})
\end{eqnarray}
Therefore, if we define
\begin{eqnarray}\label{eq:1.30}
z=\left(\begin{array}{c}
u\\
w\end{array}\right),\qquad \Xi(z)=\left(\begin{array}{c}
u\\
G(u)\end{array}\right)\qquad Z\equiv X\times Y
\end{eqnarray}
we can now express (\ref{eq:1.27}) and (\ref{eq:1.29}) as 
\begin{eqnarray}\label{eq:1.31}
z^{(j,a) }=z^{(j,f)}+C^{f} H^{T} \Big(  HC^fH^{T}+\Gamma\Big)^{-1}(y^{(j)}-Hz^{(j,f)})
\end{eqnarray}
 where $H=(0,I)$ and 
\begin{eqnarray}\label{eq:1.32}
C^f=\left(\begin{array}{cc}
C^{uu}& C^{uw}\\
(C^{uw})^T&C^{ww}\end{array}\right)
\end{eqnarray}
From the expression (\ref{eq:1.31}) it follows that 
\begin{eqnarray}\label{eq:1.33}
z^{a }\equiv \frac{1}{N_{e}}\sum_{j=1}^{N_{e}}z^{(j,a) }=z^{f}+C^{f} H^{T} \Big(  HC^fH^{T}+\Gamma\Big)^{-1}(y-Hz^{f})
\end{eqnarray}
where $z^{f }\equiv \frac{1}{N_{e}}\sum_{j=1}^{N_{e}}z^{(j,f) }$. It is straightforward to show that (\ref{eq:1.33}) is equivalent to 
\begin{eqnarray}\label{eq:1.34}
z^{a}=\textrm{argmin}_{z}\Big(\vert\vert \Gamma^{-\frac{1}{2}}(y-Hz)\vert\vert_{Y}^2+\vert\vert (C^{f})^{-\frac{1}{2}} (z-z^{f})\vert\vert_{Z}^2\Big)
\end{eqnarray}
If we now define,
\begin{eqnarray}\label{eq:1.35}
w^{a}\equiv (C^{f})^{-\frac{1}{2}}(z^{a}-z^{f}),\qquad w\equiv (C^{f})^{-\frac{1}{2}}(z-z^{f}),\nonumber\\
d\equiv y-Hz^{f},\qquad L\equiv H (C^{f})^{\frac{1}{2}}
\end{eqnarray}
and substitute these expression in (\ref{eq:1.34}) we obtain
\begin{eqnarray}\label{eq:1.36}
w^{a}=\textrm{argmin}_{w}\Big(\vert\vert \Gamma^{-\frac{1}{2}}(d-Lw)\vert\vert_{Y}^2+\vert\vert  w\vert\vert_{Z}^2\Big)
\end{eqnarray}
which we recognize as the Tikhonov regularization method (with Tikhonov-parameter $\alpha=1$) applied to the following linear inverse problem:
\begin{eqnarray}\label{eq:ip}
 \textrm{Given~~$d$, ~~find  ~~$w\in Z$ ~~ such~  that ~~  $Lw=d$}
\end{eqnarray}
In subsection \ref{comp} we provide examples to show that the intrinsic choice of $\alpha=1$ in (\ref{eq:1.36}) does not necessarily provide enough regularization to this linear inverse problem. Therefore, this unregularized standard ES produces rough estimates with large data misfit like the ones that have been consistently reported in the literature \cite{Evaluation,EmeRey}. Moreover, this lack of regularization is, in turn, detrimental to the characterization of the posterior obtained from the ensemble approximation. 

The essence of the propose regularizing ES is to include a regularization parameter $\alpha$ so that the resulting problem (\ref{eq:1.36}) takes the form
\begin{eqnarray}\label{eq:1.37}
w^{a}(\alpha)=\textrm{argmin}_{w}\Big(\vert\vert \Gamma^{-\frac{1}{2}}(d-Lw)\vert\vert_{Y}^2+\alpha\vert\vert  w\vert\vert_{Z}^2\Big)
\end{eqnarray}
More importantly, we propose this $\alpha$ to satisfy a discrepancy principle similar to the one used in the regularizing LM scheme (\ref{eq:1.6}). More precisely, we require
\begin{eqnarray}\label{eq:1.38}
\vert\vert \Gamma^{-1/2}(d-Lw^{a}(\alpha))\vert\vert_{Y}\ge \rho\vert\vert \Gamma^{-1/2}d\vert\vert_{Y}
\end{eqnarray}
for some choice of $\rho\in (0,1)$. Let us now define the ``true'' parameters 
\begin{eqnarray}\label{eq:1.39}
w^{\dagger}\equiv (C^{f})^{-\frac{1}{2}}(z^{\dagger}-z^{f}),\qquad z^{\dagger}\equiv \left[\begin{array}{c} u^{\dagger}\\ G(u^{\dagger})\end{array}\right]
\end{eqnarray}
where $u^{\dagger}$ denotes the true geologic property. Furthermore, we define the true data misfit
\begin{eqnarray}\label{eq:1.40}
d^{\dagger}\equiv Lw^{\dagger} =G(u^{\dagger})-Hz^{f}.
\end{eqnarray}
From (\ref{eq:1.35}) and (\ref{eq:1.40}) we find
\begin{eqnarray}\label{eq:1.41}
\vert\vert \Gamma^{-1/2}(d-d^{\dagger})\vert\vert_{Y}=\vert\vert \Gamma^{-1/2}(y-G(u^{\dagger}))\vert\vert_{Y}\equiv \eta
\end{eqnarray}
where we have used the definition of the noise level (\ref{eq:noise}). Therefore, if we assume that 
\begin{eqnarray}\label{eq:1.42}
\eta \leq \rho \vert\vert \Gamma^{-1/2}d\vert\vert_{Y},
\end{eqnarray}
our choice of $\alpha$ in (\ref{eq:1.38}) yields
\begin{eqnarray}\label{eq:1.43}
\vert\vert \Gamma^{-1/2}(d-d^{\dagger})\vert\vert_{Y}= \eta\leq \rho \vert\vert\Gamma^{-1/2}d\vert\vert_{Y}\leq 
\vert\vert \Gamma^{-1/2}(d-Lw^{a}(\alpha))\vert\vert_{Y}
\end{eqnarray}
which is a discrepancy principle applied to the inverse problem (\ref{eq:ip}). Recall that a similar application of the discrepancy principle is used for the selection of the regularizing LM scheme (see expression (16) in LM). Therefore, by means of (\ref{eq:1.43}) our selection of $\alpha$ controls the ensemble update, so that the resulting mean $w^{a}$ does not produce a data fit better than the noise level. Thus, we seek to avoid data overfitting which may, in turn, cause the lack of stability previously discussed. 

In order to produce an ensemble whose transformed mean satisfies (\ref{eq:1.38}) we define
\begin{eqnarray}\label{eq:1.44}
z^{(j,a) }=z^{(j,f)}+C^{f} H^{T} \Big(  HC^fH^{T}+\alpha\Gamma\Big)^{-1}(y^{(j)}-Hz^{(j,f)})
\end{eqnarray}
whose mean satisfies
\begin{eqnarray}\label{eq:1.45}
z^{a}=\textrm{argmin}_{z}\frac{1}{2}\Big(\vert\vert \Gamma^{-\frac{1}{2}}(y-Hz)\vert\vert^2+\alpha\vert\vert (C^{f})^{-\frac{1}{2}} (z-z^{f})\vert\vert^2\Big)
\end{eqnarray}
We now use (\ref{eq:1.35}) to rewrite (\ref{eq:1.38}) as
\begin{eqnarray}\label{eq:1.46}
\rho \vert\vert \Gamma^{-1/2}(y-Hz^{f})\vert\vert_{Y}\leq  \vert\vert \Gamma^{-1/2}(y-Hz^{a}))\vert\vert_{Y}
\end{eqnarray}
which in terms of (\ref{eq:1.45}) and (\ref{eq:1.30})-(\ref{eq:1.32}) can be written as follows 
\begin{eqnarray}\label{eq:1.47}
\rho\vert\vert \Gamma^{-1/2}(y-\overline{w}^{f})\vert\vert_{Y}\leq \alpha\vert\vert \Gamma^{1/2}(C^{ww,f} +\alpha\Gamma   )^{-1}(y-\overline{w}^{f})\vert\vert_{Y}
\end{eqnarray}
which shows that the actual computation of $\alpha$ is analogous to the one in (\ref{eq:1.10}) for the IR-enLM algorithm defined in the preceding section. Finally, we use (\ref{eq:1.35}) to rewrite the assumptions (\ref{eq:1.42}) as
\begin{eqnarray}\label{eq:1.46}
\eta \leq \rho \vert\vert \Gamma^{-1/2}(y-Hz^{f})\vert\vert_{Y}=\rho\vert\vert \Gamma^{-1/2}(y-\overline{w}^{f})\vert\vert_{Y}
\end{eqnarray}
This assumption motivates the stopping criteria of our Iterative Regularizing ES (IR-ES) that we describe in the following sections.

\subsection{The Iterative Regularizing ensemble Smoother (IR-ES)}\label{sec:IR-ES}

The previous regularizing ES may alleviate the ill-posedness of the linear inverse problem (\ref{eq:ip}). While the aim of the proposed ensemble methods is to avoid data overfitting, the aforementioned linearized inversion may not necessarily produce estimates with a reasonably small data misfit  that properly explore the regions of the posterior with large probability. Therefore, in order to allow for more accurate approximations of the posteriors without the risk of overfitting data, in the following algorithm we define an iterative regularized ES scheme terminated according to the discrepancy principle applied to the mean of the updated (analyzed) model predictions $w_{m}^{(j,a)}$ at a given iteration.

\begin{algorithm}[Iterative regularizing ensemble Kalman smoother]\label{IR-ES}~~
\newline  \textbf{Prior ensemble and perturbed noise.} Let $\rho<1$ and $\tau>1/\rho$. Generate
\begin{eqnarray}\label{eq:2.1}
u_{0}^{(j)}\sim \mu_0,\qquad y^{(j)}\equiv y+\xi^{(j)},\qquad \xi^{(j)}\sim N(0,\Gamma), ~~~j\in \{1,\dots,N_{e}\}
\end{eqnarray}
 \textbf{Iterative Smoother.} For $m=0,\dots$
\begin{itemize}
\item[(1)] \textbf{Prediction Step:} Evaluate 
\begin{eqnarray}\label{eq:2.3}
w_{m}^{(j,f)}= G(u_{m}^{(j)})\qquad j\in \{1,\dots,N_{e}\}
\end{eqnarray}
\item[(2)] \textbf{Stoping criteria:} Compute 
$\overline{w}_{m}^{f}=\frac{1}{N_{e}}\sum_{j=1}^{N_{e}}w_{m}^{(j,f)}$. If
\begin{eqnarray}\label{eq:2.2}
\vert\vert \Gamma^{-1/2}(y-\overline{w}_{m}^{f}))\vert\vert_{Y}\leq \tau \eta,
\end{eqnarray}
stop. Output: $\{u_{m}^{(j)} \}_{j=1}^{N_{e}}$. Compute
\begin{eqnarray}\label{cross1}
\overline{u}_{m}=\frac{1}{N_{e}}\sum_{j=1}^{N_{e}}u_{m}^{(j)}\qquad C_{m}^{uw} = \frac{1}{N_{e}}\sum_{j=1}^{N_{e}} (u_m^{(j)}-\overline{u}_m)(w_m^{(j,f)}-\overline{w}_m^{f})^T\\
 C_{m}^{ww}=\frac{1}{N_{e}}\sum_{j=1}^{N_{e}}(w_m^{(j,f)}-\overline{w}_m^{f})(w_m^{(j,f)}-\overline{w}_m^{f})^T
\label{cross2}
\end{eqnarray}
\item[(3)] \textbf{Analysis step:} Compute the updated ensembles
\begin{eqnarray}\label{eq:2.4}
u_{m+1}^{(j)} =u_{m}^{(j)}+C_{m}^{uw}(C_{m}^{ww} +\alpha_{m}\Gamma   )^{-1}(y^{(j)}-w_{m}^{(j,f)})
\end{eqnarray}
\begin{eqnarray}\label{eq:2.5}
w_{m+1}^{(j,a) }=G(u_{m}^{(j)})+C_{m}^{ww}(C_{m}^{ww} +\alpha_{m}\Gamma   )^{-1}(y^{(j)}-w_{m}^{(j,f)})
\end{eqnarray}
for $\alpha_{m}$ such that
\begin{eqnarray}\label{eq:2.6}
\alpha_{m}\vert\vert \Gamma^{1/2}(C_{m}^{ww,f} +\alpha_{m}\Gamma   )^{-1}(y^{\eta}-\overline{w}_{m}^{f})\vert\vert_{Y}\leq \rho\vert\vert \Gamma^{-1/2}(y^{\eta}-\overline{w}_{m}^{f})\vert\vert_{Y}
\end{eqnarray}
(compute $\alpha_{m}$ with the increasing sequence of Algorithm \ref{IR-enLM} having an initial guess $\alpha_{m}=1$).
\end{itemize}
\end{algorithm}
\begin{remark}\label{remaIR}
Note that the stoping criteria (\ref{eq:2.2}) is consistent with the assumption in (\ref{eq:1.46}). Indeed, if at a given iteration level $m$, $\overline{w}_{m}^{f}$ satisfies
\begin{eqnarray}\label{eq:1.46B}
\eta \tau< \vert\vert \Gamma^{-1/2}(y-\overline{w}_{m}^{f}))\vert\vert_{Y},
\end{eqnarray}
for some $\tau$ with $\tau>1/\rho>1$, then
\begin{eqnarray}\label{eq:1.46BBC}
\eta<\rho \vert\vert \Gamma^{-1/2}(y-\overline{w}_{m}^{f}))\vert\vert_{Y},
\end{eqnarray}
which is assumption (\ref{eq:1.46}) which, in turn, ensures we select the regularization parameter according to the discrepancy principle. Note that, in contrast to IR-enLM where the regularizing LM scheme was applied to each ensemble computed as the minimizer of (\ref{eq:IR}), in the IR-ES the regularization is conducted over the mean of IR-ES. Since the ensemble of perturbed observation is assumed centered, the resulting equation (\ref{eq:1.37}) only contains the unperturbed (i.e. original) data. Thus, the discrepancy principle is applied with the original noise level $\eta$. 

\end{remark}
Similar to IR-enLM, the proposed IR-ES aims at providing an ensemble approximation of the Bayesian posterior $P(u\vert y)$. However, the convergence properties of the proposed IR-ES are an open problem beyond the scope of this paper. Nevertheless, in the linear case (recall we assume a Gaussian prior), the proposed algorithm coincides with the standard ES which, in turn, generates samples of the posterior as the ensemble size goes to infinity. 
\begin{proposition}\label{prp:2}
Let $G$ be a linear operator $G(u)\equiv Gu$. Assume that the mean of the prior ensemble satisfies
\begin{eqnarray}\label{eq:1.47}
\vert\vert\Gamma^{-1/2}(y-G\overline{u}_{0})\vert\vert_{Y}>\eta
\end{eqnarray}
where $\eta$ is the noise level defined by (\ref{eq:noise}). Let $\rho\in \mathbb{R}$ be such that
\begin{eqnarray}\label{eq:1.48}
\rho < \frac{1}{\vert\vert [G \, C_{0}^{uu} \,G^{T}+\Gamma]^{1/2} \Gamma^{-1/2}\vert\vert^{2}}
\end{eqnarray}
and 
\begin{eqnarray}\label{eq:1.49}
\tau> \max\Big\{ \vert\vert [G \, C_{0}^{uu} \,G^{T}+\Gamma]^{-1/2} \Gamma^{1/2}\vert\vert^{2}, \frac{1}{\rho}\Big\}  \frac{ \vert\vert\Gamma^{-1/2}(y-G\overline{u}_{0})\vert\vert_{Y}}{\eta}\nonumber\\
\end{eqnarray}
Then, this selection of $\rho$ and $\tau$ ensure that the IR-ES method samples the posterior as $N_{e}\to \infty$.
\end{proposition}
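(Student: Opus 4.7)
The plan is to mirror the proof of Proposition \ref{propo1} almost verbatim, exploiting the fact that in the linear-Gaussian case the standard ES update (\ref{eq:1.27}) is known to sample from the posterior as $N_{e}\to\infty$. The structure has three pieces: (i) verify that the algorithm does not terminate at $m=0$, (ii) verify that $\alpha_{0}=1$ satisfies the discrepancy condition (\ref{eq:2.6}), so that the first analysis step collapses to the standard ES update, and (iii) verify that after this one iteration the stopping criterion (\ref{eq:2.2}) is triggered. The large-ensemble limit enters by replacing the sample covariances $C_{0}^{uw},\,C_{0}^{ww}$ by their population counterparts $C_{0}^{uu}G^{T}$ and $G\,C_{0}^{uu}\,G^{T}$, and $\overline{w}_{0}^{f}$ by $G\overline{u}_{0}$; this is where Gaussianity of the prior and linearity of $G$ is used.

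First I would show that the initial stopping rule fails. By (\ref{eq:1.49}) we have $\tau>\|\Gamma^{-1/2}(y-G\overline{u}_{0})\|_{Y}/\eta$, so $\tau\eta<\|\Gamma^{-1/2}(y-G\overline{u}_{0})\|_{Y}$, and in the large-ensemble limit $\overline{w}_{0}^{f}\to G\overline{u}_{0}$, contradicting (\ref{eq:2.2}). Second, I would verify that the initial guess $\alpha_{0}=1$ satisfies the selection criterion (\ref{eq:2.6}). This is exactly the step carried out in Proposition \ref{propo1} (which invokes \cite[Proposition 1]{LM}): condition (\ref{eq:1.48}) on $\rho$ is identical in form to (\ref{eq:1.13}), and replacing $G\,C\,G^{\ast}$ by $G\,C_{0}^{uu}\,G^{T}$ yields the same algebraic manipulation showing that $\alpha=1$ already makes (\ref{eq:2.6}) hold.

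Third, with $\alpha_{0}=1$ the update (\ref{eq:2.4}) reduces to the standard ES analysis formula (\ref{eq:1.27}); in the linear-Gaussian setting, as $N_{e}\to\infty$, the resulting ensemble $\{u_{1}^{(j)}\}$ consists of samples from the Gaussian posterior, by the classical result of \cite{evensen2009data} also cited in \cite{Oliver}. Fourth, I would verify that this iterate satisfies the termination criterion, reproducing the chain of inequalities in (\ref{eq:1.21}) but for the analyzed mean: since
\begin{eqnarray*}
\Gamma^{-1/2}(y-G\overline{u}_{1})=\Gamma^{1/2}\bigl[G\,C_{0}^{uu}\,G^{T}+\Gamma\bigr]^{-1}(y-G\overline{u}_{0}),
\end{eqnarray*}
one gets $\|\Gamma^{-1/2}(y-G\overline{u}_{1})\|_{Y}\leq\|[G\,C_{0}^{uu}\,G^{T}+\Gamma]^{-1/2}\Gamma^{1/2}\|^{2}\,\|\Gamma^{-1/2}(y-G\overline{u}_{0})\|_{Y}$, and the choice of $\tau$ in (\ref{eq:1.49}) was designed precisely so that this last quantity is bounded by $\tau\eta$; thus the algorithm halts and returns samples of the posterior.

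The main obstacle is cosmetic rather than conceptual: keeping careful track of the large-$N_{e}$ limit so that statements like ``the update coincides with standard ES'' and ``the stopping criterion is triggered after one iteration'' are stated for the population version of the algorithm, while the sampling-from-posterior conclusion is the standard asymptotic statement for ES in the linear-Gaussian case. The genuine content is the alignment of the two sufficient conditions (\ref{eq:1.48}) and (\ref{eq:1.49}) with the discrepancy-principle conditions on $\alpha_{0}$ and on the stopping rule; this alignment is engineered in direct analogy with the IR-enLM proof, so no new idea beyond Proposition \ref{propo1} is required.
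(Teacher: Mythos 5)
Your overall route is the same as the paper's: show that $\alpha_{0}=1$ already satisfies the discrepancy condition (\ref{eq:2.6}) (by the same argument as in Proposition \ref{propo1}), so the analysis step collapses to the standard ES update; show via the resolvent identity $\Gamma^{-1/2}(y-G\overline{u}_{1})=\Gamma^{1/2}[G\,C_{0}^{uu}G^{T}+\Gamma]^{-1}(y-G\overline{u}_{0})$ that the stopping rule fires after one iteration; and then pass to $N_{e}\to\infty$ to identify the limit ensemble with posterior samples. Steps two through four of your proposal reproduce the paper's proof essentially verbatim (the paper additionally writes out $\overline{u}_{1}$ and $C_{1}^{uu}$ explicitly and observes $C_{0}^{uu}\to C$, rather than citing the classical ES result, but this is the same content).

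The problem is your first step. From (\ref{eq:1.49}) and (\ref{eq:1.47}) you have $\tau>\Vert\Gamma^{-1/2}(y-G\overline{u}_{0})\Vert_{Y}/\eta$, and multiplying by $\eta>0$ gives $\tau\eta>\Vert\Gamma^{-1/2}(y-G\overline{u}_{0})\Vert_{Y}$ — the \emph{opposite} of the inequality $\tau\eta<\Vert\Gamma^{-1/2}(y-G\overline{u}_{0})\Vert_{Y}$ that you assert. Since $\overline{w}_{0}^{f}=G\overline{u}_{0}$ exactly by linearity (no large-ensemble limit is needed here), the correct direction of the inequality shows that the stopping criterion (\ref{eq:2.2}) is in fact already met at $m=0$, so under a literal reading of Algorithm \ref{IR-ES} the scheme would halt with the prior ensemble rather than perform the analysis step. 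The paper's own proof does not check the initial stopping test at all and implicitly assumes one update is carried out; you attempted to close that gap, which is a reasonable instinct, but the algebra as written is reversed and the claim it is meant to establish is actually false under the stated hypotheses. You should either flag this as a defect of the proposition/algorithm indexing (the stopping test should be applied only after the first analysis step, as the structure of the proof of Proposition \ref{propo1} suggests), or drop the step and follow the paper in assuming the first update is performed; as it stands, your step (i) is not a correct justification.
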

\begin{proof}
From the linearity of $G$ if follows that 
\begin{eqnarray}\label{eq:1.50}
\overline{w}_{0}^{f}= G\overline{u}_{0},\qquad C_{0}^{uw} =C_{0}^{uu}G^{T},\qquad C_{0}^{ww}  =GC_{0}^{uu}G^{T}
\end{eqnarray}
where
\begin{eqnarray*}
C_{0}^{uu} =\frac{1}{N_{e}}\sum_{j=1}^{N_{e}} (u_0^{(j)}-\overline{u}_{0})(u_0^{(j)}-\overline{u}_0)^{T}
\end{eqnarray*}
With similar arguments to those used in the proof of Proposition \ref{propo1} (see also Proposition 1 in \cite{LM}) we can show that $\rho$ and $\tau$ in (\ref{eq:1.48})-(\ref{eq:1.49}) satisfy $\rho<1$, $\tau>1/\rho$. Moreover, these choices of $\rho$ and $\tau$ also imply that $\alpha_{0}=1$ satisfies (\ref{eq:2.6}). Therefore, (\ref{eq:2.4}) becomes
\begin{eqnarray*}
u_{1}^{(j)} =u_{0}^{(j)}+C_{0}^{uw}(C_{0}^{ww} +\alpha_{0}\Gamma   )^{-1}(y^{(j)}-Gu_{0}^{(j)})
\end{eqnarray*}
which from (\ref{eq:1.50}) can be written as 
\begin{eqnarray}\label{eq:1.51}
u_{1}^{(j)} =u_{0}^{(j)}+C_{0}^{uu}G^{T}(GC_{0}^{uu}G^{T} +\Gamma   )^{-1}(y^{(j)}-Gu_{0}^{(j)})
\end{eqnarray}
In addition, 
\begin{eqnarray*}
w_{1}^{(j,a) }=Gu_{0}^{(j)}+GC_{0}^{uu}G^{T}(GC_{0}^{uu}G^{T}+\Gamma   )^{-1}(y^{(j)}-Gu_{0}^{(j)})
\end{eqnarray*}
and therefore
\begin{eqnarray*}
\overline{w}_{1}^{a}= \frac{1}{N_{e}}\sum_{j=1}^{N_{e}}w_{1}^{(j,a)} =G\overline{u}_{0}+C_{0}^{uu}G^{T}(GC_{0}^{uu}G^{T} +\Gamma   )^{-1}(y-G\overline{u}_{0})
\end{eqnarray*}
Thus,
\begin{eqnarray*}
\vert\vert \Gamma^{-1/2}(y-\overline{w}_1^a)\vert\vert_{Y} =\vert\vert \Gamma^{-1/2}(y-G\overline{u}_1)\vert\vert_{Y} =\vert\vert \Gamma^{1/2}[G \, C_{0}^{uu} \,G^{\ast}+\Gamma]^{-1} (y-G\overline{u}_0)\vert\vert_{Y}\nonumber\\
\end{eqnarray*}
which from some simple computations implies
\begin{eqnarray*}
\vert\vert \Gamma^{-1/2}(y-\overline{w}_1^a)\vert\vert_{Y} 
\leq \vert\vert [G \, C_{0}^{uu} \,G^{\ast}+\Gamma]^{-1/2}\Gamma^{1/2}\vert\vert^2 \vert\vert \Gamma^{-1/2}(y-G\overline{u}_0)\vert\vert_{Y}\nonumber\\
\leq  \vert\vert [G \, C_{0}^{uu} \,G^{\ast}+\Gamma]^{-1/2}\Gamma^{1/2}\vert\vert^2 
\vert\vert \Gamma^{-1/2}(y-G\overline{u}_0)\vert\vert_{Y}< \tau \eta
\end{eqnarray*}
where in the last inequality we have used (\ref{eq:1.49}). Therefore, the ensemble (\ref{eq:1.51}) satisfies the stopping criteria and so it is the output of the IR-ES scheme. Moreover, we note that
$$\overline{u}_{1}=\overline{u}_{0}+C_{0}^{uu}G^{T}(GC_{0}^{uu}G^{T} +\Gamma   )^{-1}(y-G\overline{u}_{0})$$
and
$$C_{1}^{uu}=\frac{1}{N_{e}}\sum_{j=1}^{N_{e}}(u_{1}^{(j)}-\overline{u}_{1})(u_{1}^{(j)}-\overline{u}_{1})^{T}=C_{0}^{uu}-C_{0}^{uu}G^{T}(GC_{0}^{uu}G^{T}+\Gamma)^{-1}GC_{0}^{uu}.$$ Then, formally, as $N_{e}\to \infty$ we obtain that $C_{0}^{uu}\to C$ and so the mean and covariance of the updated ensemble converges to the mean and covariance of the posterior \cite{Andrew}. In other words, in the linear case, the proposed IR-ES method generates samples of the posterior distribution as $N_{e}\to \infty$. $\Box$ 
\end{proof}

\subsection{Computational cost of IR-ES}\label{rem3}
 For the present application, the cost of computing (\ref{cross1})-(\ref{cross2}) at each iteration is negligible compared to the cost of the evaluation of the forward model (\ref{eq:2.3}) (for each ensemble member). In addition, in the case where a relatively small number (around $10^3$) of observations are assimilated, the cost of inverting $C_{m}^{ww,f} +\alpha_{m}\Gamma $ is also negligible compared to (\ref{eq:2.3}). Therefore, the main computational cost of IR-ES per iteration and per ensemble is due to (\ref{eq:2.3}). The total cost of a $N_{e}$-size ensemble of IR-ES is approximately $N_{e} J$ forward model evaluations where $J$ is the number of iterations to converge. For the forward models considered in Section \ref{Numerics}, our numerical experiments indicate that $J$ is typically between 10 and 20 iterations. Thus, for large models, the computational efficiency of Algorithm \ref{IR-ES} may be comparable to the one of the standard ES computed with a large ensemble (e.g. $10^3$). On the other hand, large ensembles with standard ES methods provide better approximations of the posterior (see subsection \ref{comp}). One may then conclude that the proposed IR-ES is equivalent to a standard ES with a large ensemble. However, the advantage of the proposed method is that, at the $m$th iteration level, each ensemble update in the analysis step will be controlled by the parameter $\alpha_{m}$ which is chosen according to the discrepancy principle. It is then reasonable to expect small changes in the covariances and crosscovariances defined in (\ref{cross1})-(\ref{cross2}). In other words, we assume that changes in the updated ensemble members are sufficiently small (due the regularization) so that the ensemble of model predictions (\ref{eq:2.5}) approximates the forward model evaluated at the update ensemble (\ref{eq:2.3}). Then, in order to reduce the computational cost of Algorithm \ref{IR-ES}, for $m>0$ we  propose to replace (\ref{eq:2.3}) by
\begin{eqnarray}\label{eq:4.10}
w_{m}^{(j,f)}= \left\{\begin{array}{cc}
G(u_{m}^{(j)}) &  \textrm{if}~~ \textrm{mod}(m,M_{ES})=0\\
w_{m}^{(j,a)}& \textrm{otherwise}\end{array}\right.
\end{eqnarray}
Clearly, the aforementioned assumption on small changes in the ensemble updates is valid for only a few number of iterations $M_{ES}$ after which the forward model needs to be evaluated (\ref{eq:2.3}). Note that in Algorithm \ref{IR-ES} we always compute the evaluation of the forward model at the initial ensemble. However, the next evaluation of the forward model at the updated ensemble is done whenever $\textrm{mod}(m,M_{ES})=0$. Therefore, when we use (\ref{eq:4.10}) instead of (\ref{eq:2.3}) in Algorithm \ref{IR-ES}, the computational cost becomes to $N_{e}(1+\textrm{ floor} (J/M_{ES}))$
It is important to remark that (\ref{eq:4.10}) does not affect the results of Proposition \ref{prp:2} since for the linear case the algorithm stops after the first iteration. The effect of the additional parameter $M_{ES}$ will be investigated numerically in subsection \ref{sec:numIR-ES}. 

For previous exposition we have assumed that the prior distribution is Gaussian $N(\overline{u},C)$. For the numerical experiments of the subsequent section we use these priors to define our initial ensemble. Moreover, the corresponding covariances are used in the formulas of IR-enLM Algorithm \ref{IR-enLM} (Note that $C$ does not appear in IR-ES). However, it is important to emphasize that the Gaussian assumption on the prior distribution is not fundamental for the application the proposed methods. Note that an initial ensemble can be potentially sampled from a non-Gaussian prior. In addition, recall that IR-enLM is posed as the minimization of (\ref{eq:IR}) iteratively regularized by (\ref{eq:1.4}). If a non-Gaussian prior is considered, the second term in (\ref{eq:1.4}) can be replaced by other type of regularization that enforces the prior knowledge of such non-Gaussian prior. In that case, however the resulting solution of the minimizer of (\ref{eq:1.4}) may not be computed in a closed form as in (\ref{eq:1.7}).


\section{Numerical Results }\label{Numerics}

In this section we present numerical examples of the application of the proposed ensemble methods for capturing the Bayesian posterior. In concrete,  we consider the posterior distributions associated to two sets of synthetic data from prototypical oil-water reservoir models. These posteriors are fully resolved with a state-of-the-art MCMC for functions. Our MCMC results provide a gold-standard that we use to investigate the performance of IR-enLM and IR-ES at capturing aspects of the Bayesian posterior. Furthermore, we display the advantage of the proposed methods by comparing them with some standard unregularized approaches.

\subsection{The forward operator, the prior and the synthetic data}\label{sec:fm}
We consider a 2D incompressible oil-water reservoir model described by expressions (\ref{eq:2.7})-(\ref{eq:2.7B}) presented in the Appendix. The reservoir is defined on a squared domain discretized on a $60\times 60$ grid. A water flood is considered on a time interval of 3 years discretized in 30 time steps. The numerical discretization of the PDEs was conducted with the numerical schemes and the MATLAB implementation discussed in \cite{Evaluation}. The geologic property of interest is the absolute permeability of the reservoir. However, the approach can be extended to incorporate additional geologic properties, as well as other model parameters. For the present work we consider two different well constrains. Model A consist of production wells constrained to prescribed bottom hole pressure (BHP) and injectors operated with prescribed rates.  For Model B we consider production wells operated under prescribed total flow rate and injectors operated under BHP. For both models we define 30 measurement collection times (at each time step). In addition, we consider a well configuration of nine production wells $P_{1},\dots,P_{9}$ and four injection wells $I_{1},\dots,I_{4}$. Well locations are displayed in Figure \ref{Figure1} (top-middle). The forward operators $G_{A}$ and $G_{B}$ that arise from these two well models, reservoir dynamics and well locations are presented in the Appendix. The forward operator $G_{A}$ corresponds to the forward operator used in \cite{LM} for history matching with the regularizing LM scheme.

We recall that the prior distribution is assumed Gaussian. For the present experiments we consider a mean $\overline{u}=\log(5\times 10^{-13}\textrm{m}^2)$ constant over the domain of the reservoir. The prior covariance $C$ is a spherical covariance function \cite{Geos,Oliver} with maximum (resp. minimum) range of $10^3\textrm{m}$ (resp. $5\times 10^2\textrm{m}$). We consider an angle of $\pi/2$ along the direction of maximum correlation. In order to generate synthetic data we define the true permeability denoted  by $u^{\dagger}$ and displayed in Figure \ref{Figure1} (top-left). This permeability field is a draw from the prior distribution described before. In other words, we consider the best-case-scenario where our prior knowledge includes the truth. The subsequent procedure to generate synthetic data is analogous to both forward operators and so for the sake of clarity we base our discussion only on $G_{A}$.

For simplicity, we consider a diagonal measurement error covariance $\Gamma_{A}$. Note that the diagonal of $\Gamma_{A}$ corresponds to the variance of the data described by the forward operator $G_{A}$. Let us recall that $G_{A}$ has coordinates associated to either BHP (at the injection wells) or water rates (at the producers). For the $k$-th entry of $\Gamma_{A}$ that corresponds to a BHP measurement, we define the standard deviation (i.e. the square-root of $(\Gamma_{A})_{k,k}$) as $10\%$ of the corresponding $k$-th component of vector $G_{A}(u^{\dagger})$. For the entries associated to water rates (which are zero before water breakthrough), we select the square-root of $(\Gamma_{A})_{k,k}$ as $3\%$ (before breakthrough) or $7\%$ (after breakthrough) of the nominal value of the total flow rate at the associated production well and measurement time. With $\Gamma_{A}$ defined as above, synthetic data $y_{A}$ is generated by
\begin{eqnarray}\label{eq:1.1V2}
y_{A}=G_{A}(u^{\dagger})+\xi_{A}
\end{eqnarray}
with $\xi_{A}\sim N(0,\Gamma_{A})$. The noise level for this synthetic experiment can be computed as 
\begin{eqnarray}\label{eq:1.1V3}
\eta_{A}=\vert\vert \Gamma_{A}^{-1/2}\xi_{A}\vert\vert_{Y}
\end{eqnarray}
With the observational noise defined in (\ref{eq:1.1V2}), the resulting noise level is $12\%$ of the weighted norm of the noise-free observations $\vert\vert \Gamma_{A}^{-1/2}G_{A}(u^{\dagger})\vert\vert_{Y}$. In order to illustrate the capabilities of the proposed methods in a realistic scenario where the exact value of $\eta_{A}$ will be unknown (since the truth $u^{\dagger}$ is unknown), instead of the exact value (\ref{eq:1.1V3}) here we use the an estimate of $ \eta_{A}$. Since the covariance $\Gamma_{A}$ is diagonal, the discussion of \cite{LM} suggests that the estimate $ \eta_{A}=\sqrt{M}$ is appropriate where $M$ is the total number of measurements. For this case we have 13 wells measured at each of the 30 time steps and so  $M=390$.

\begin{figure}
\includegraphics[scale=0.25]{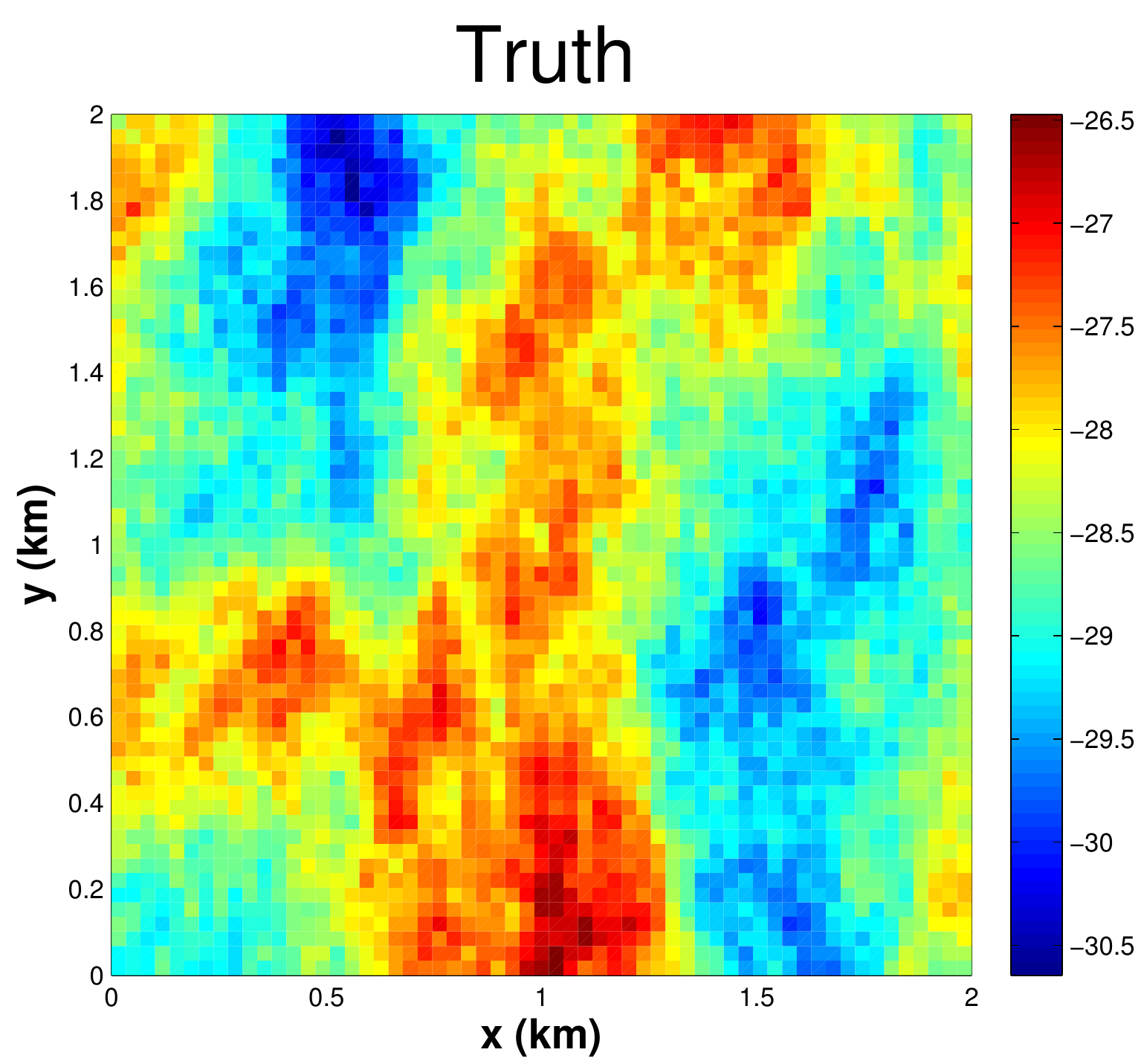}
\includegraphics[scale=0.25]{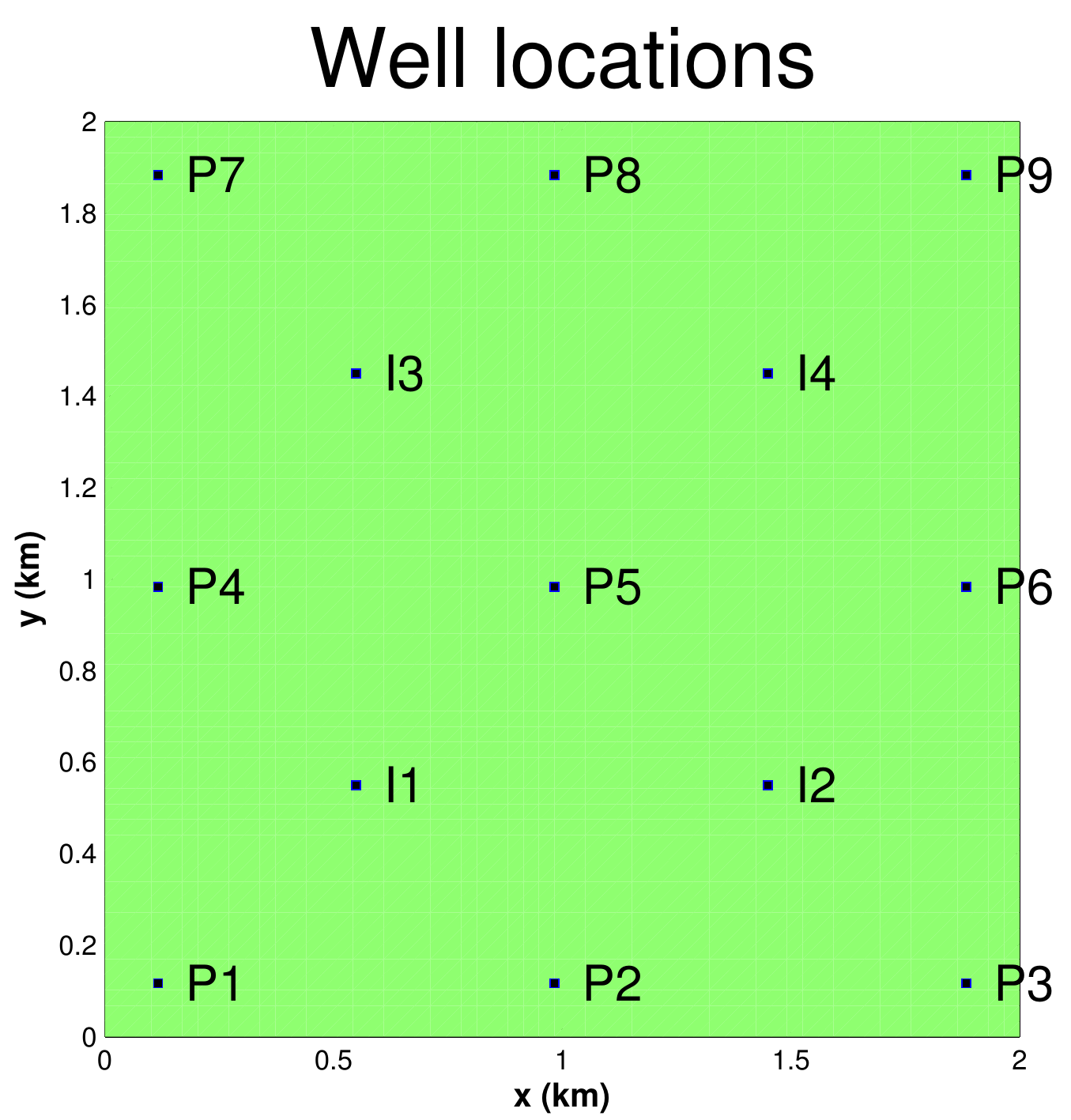}
\includegraphics[scale=0.25]{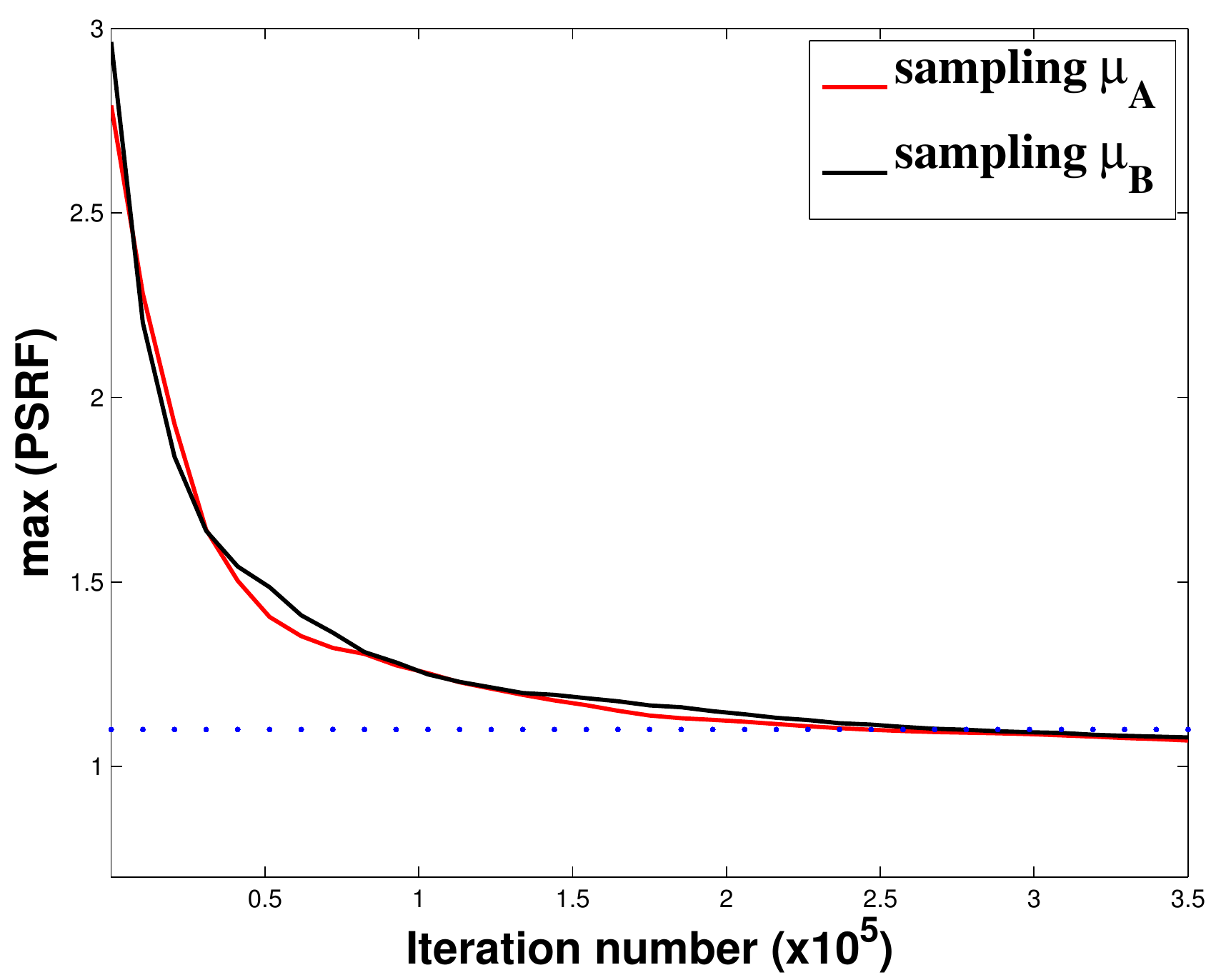}\\
\includegraphics[scale=0.25]{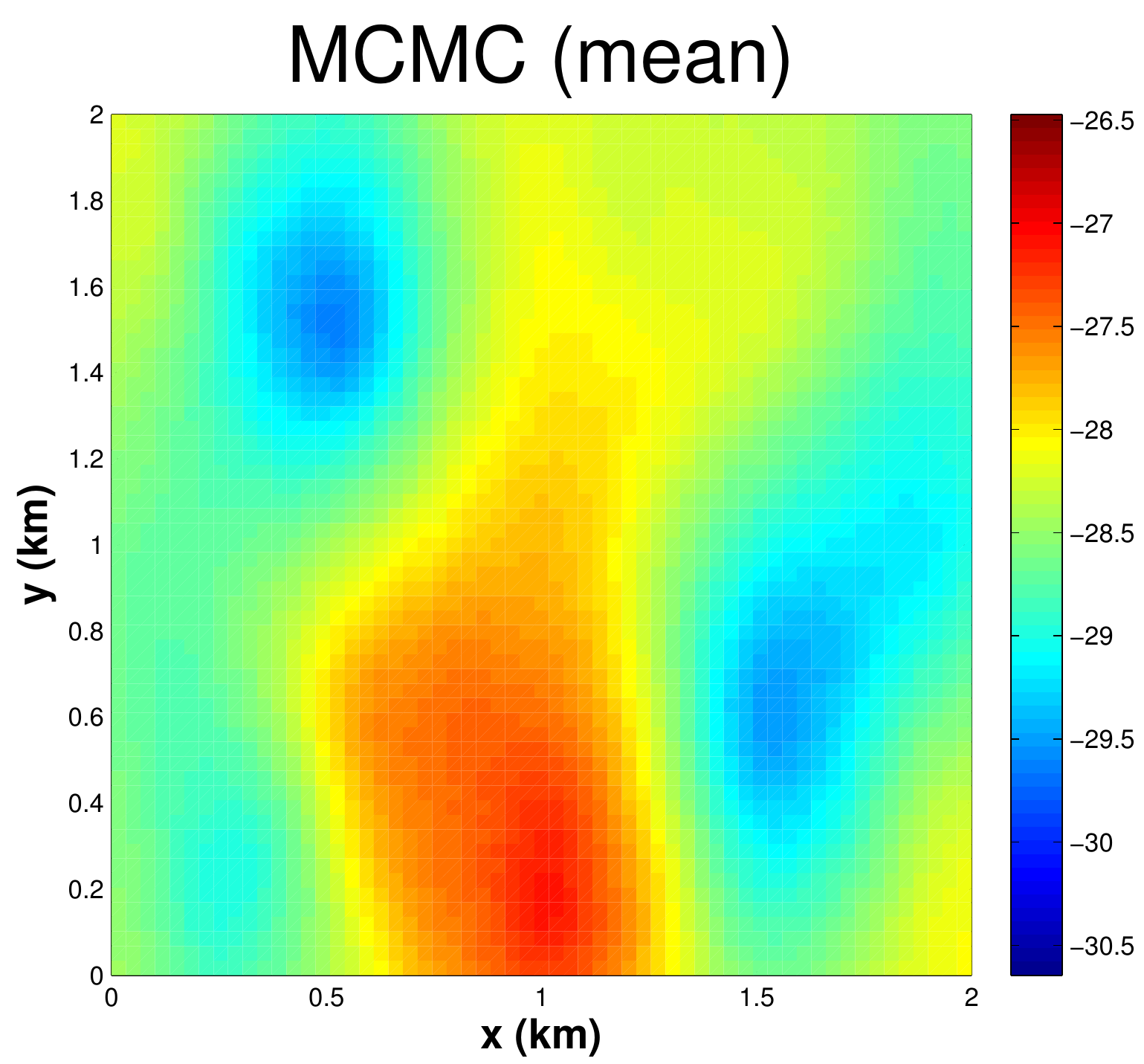}
\includegraphics[scale=0.25]{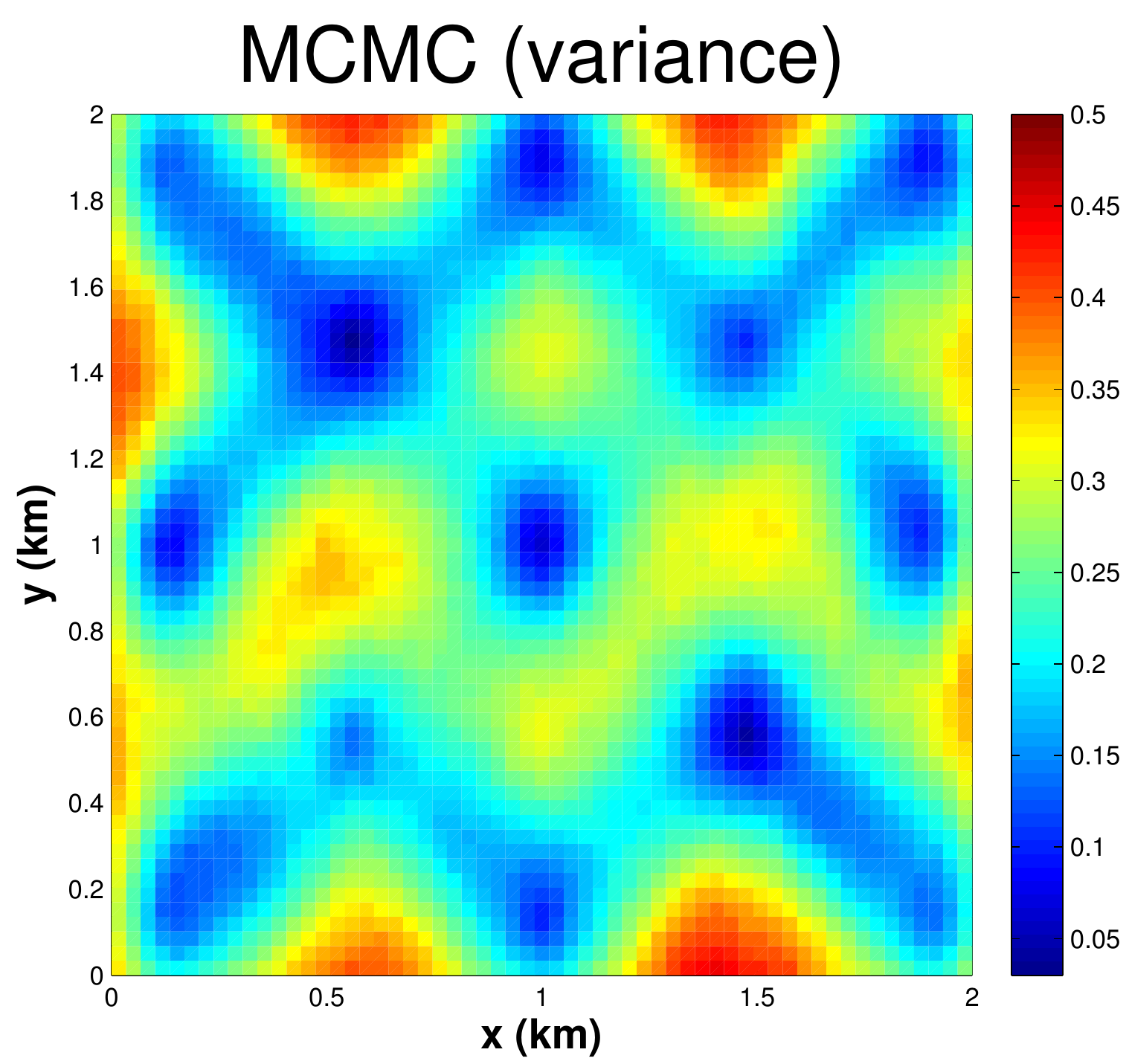}
\includegraphics[scale=0.25]{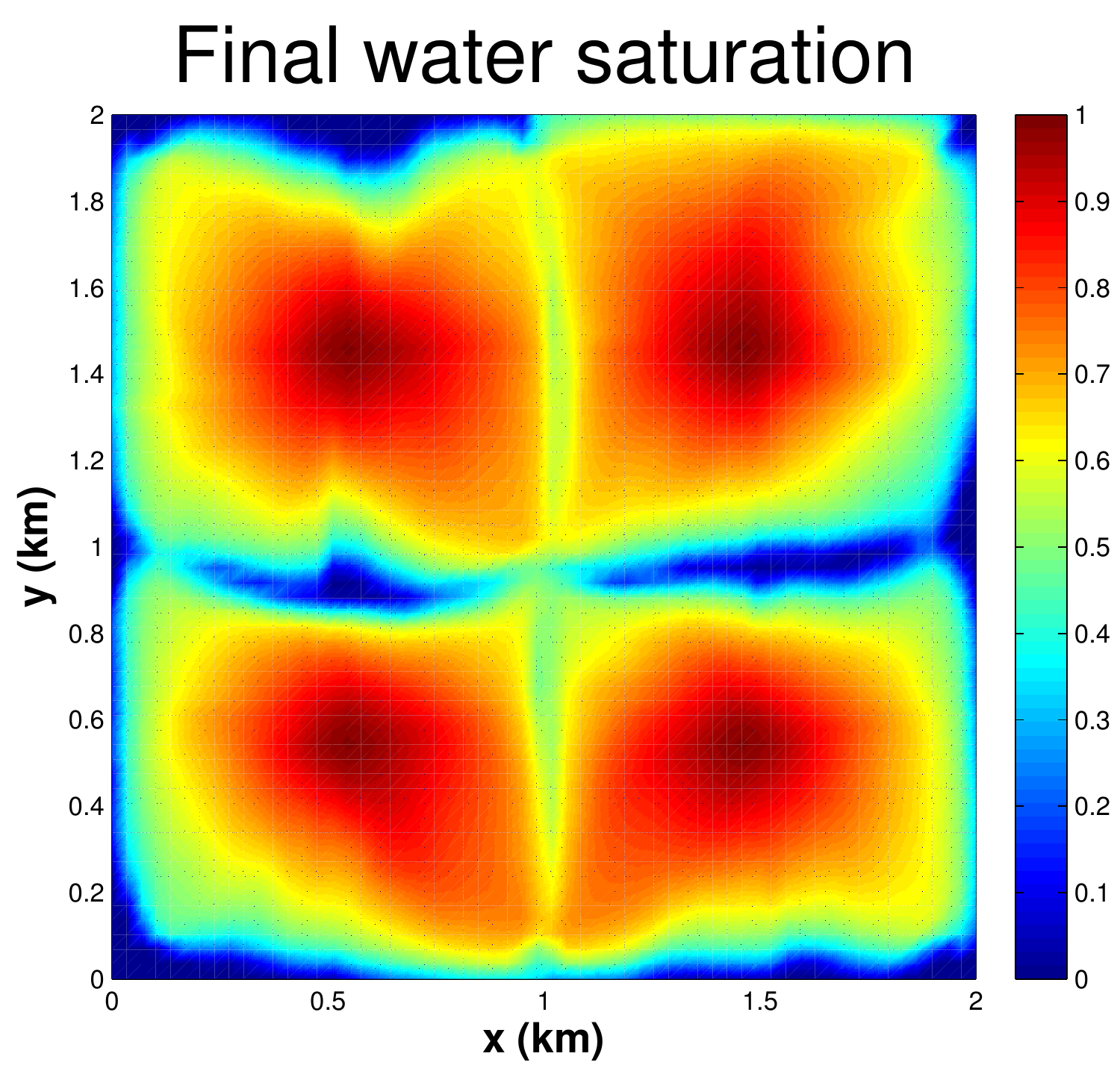}\\
\includegraphics[scale=0.25]{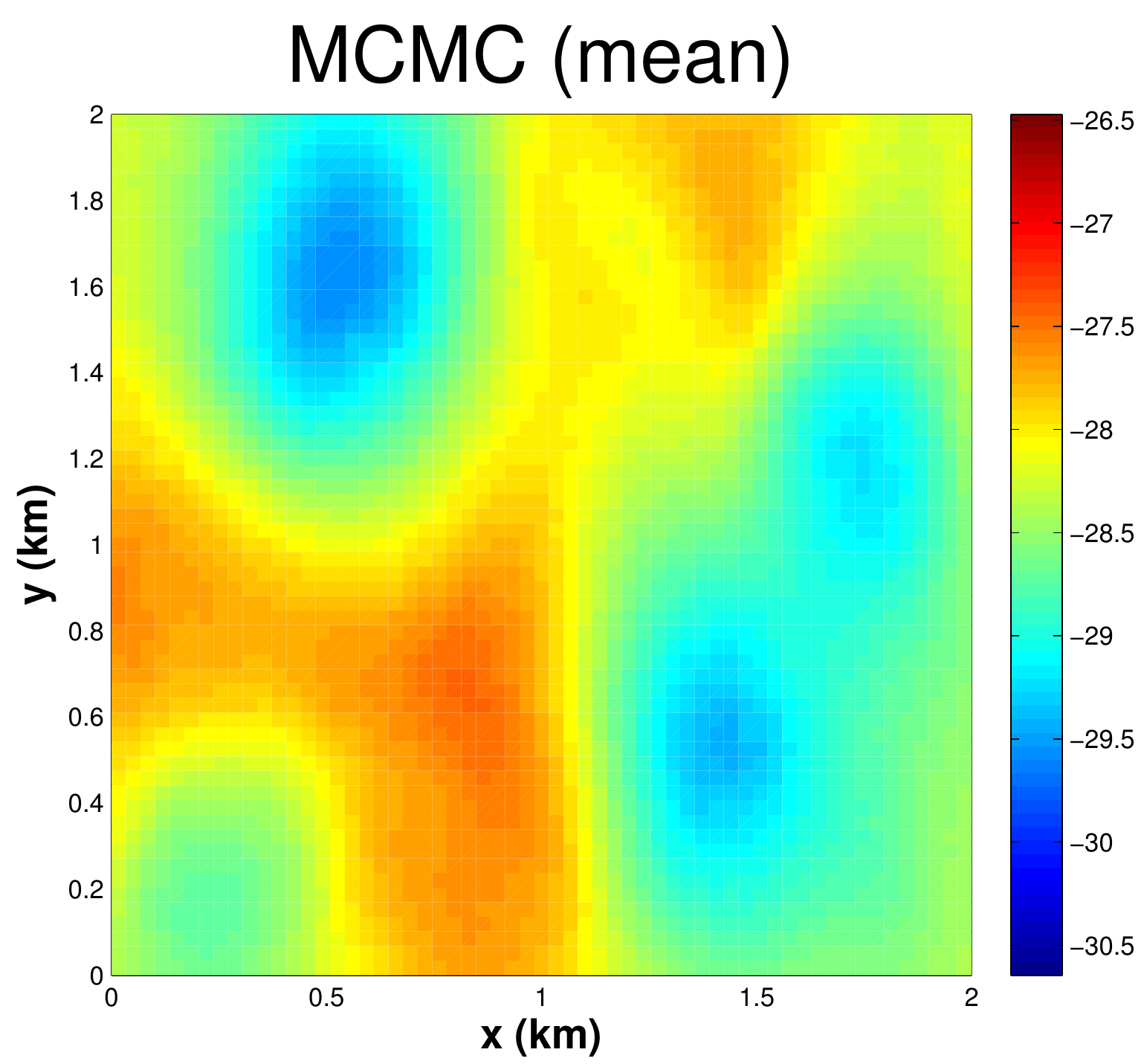}
\includegraphics[scale=0.25]{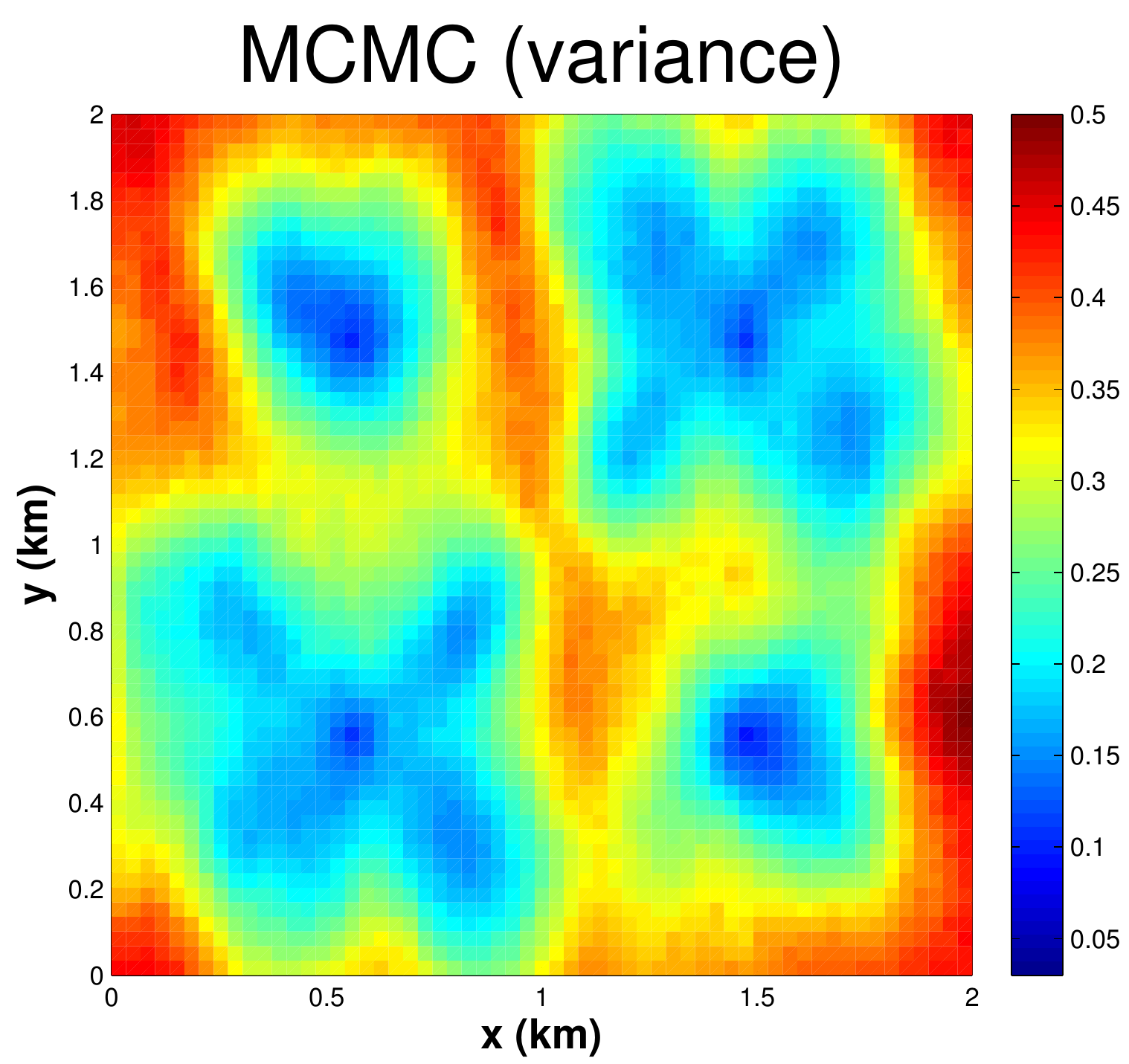}
\includegraphics[scale=0.25]{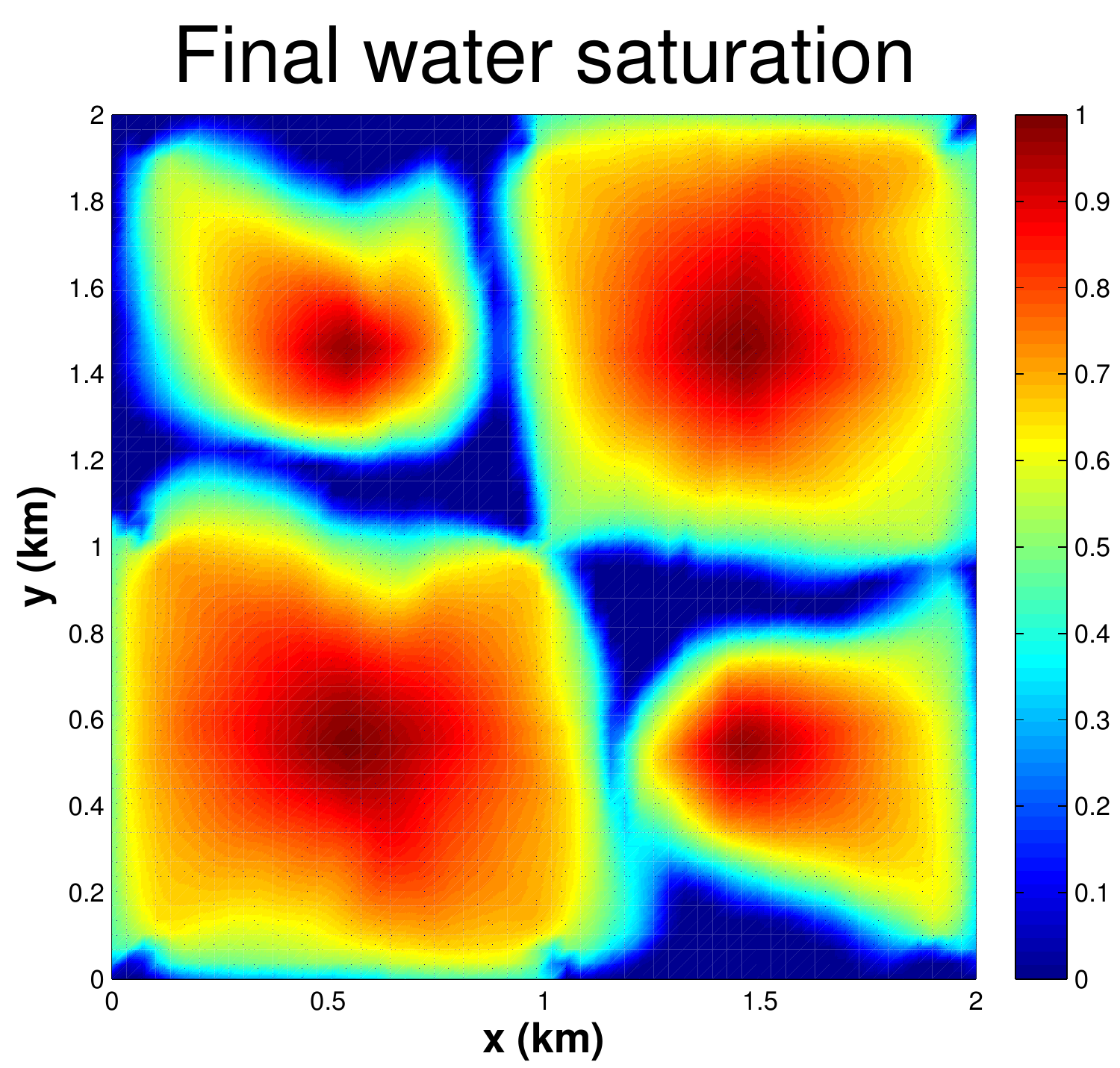}

\caption{Top-left: True log-permeability [$\log{\textrm{m}^2}$]. Top-middle: Well configuration. Top-Right: Gelman-Rubin diagnostic. Middle-left: mean of $\mu_{A}$. Middle-middle: variance of $\mu_A$. Middle-right: Final water saturation from Model A. Bottom-left: mean of $\mu_{B}$. Bottom-middle: variance of $\mu_B$. Bottom-right: Final water saturation from Model B. }  
\label{Figure1}
\end{figure}

The synthetic data $y_{B}$ for the forward model $G_{B}$ is generated with the procedure previously described for the analogous choice of $\Gamma_{B}$. For both forward operators we consider the same prior described above. However, the forward operators $G_{A}$ and $G_{B}$ and the corresponding synthetic data $y_{A}$ and $y_{B}$ give rise to two different posteriors $\mu_{A}=\mathbb{P}( u\vert y_{A})$ and $\mu_{B}=\mathbb{P}(u\vert y_{B})$ defined by (\ref{eq:1.2}). The aim of this section is to study the numerical performance of the proposed ensemble methods at capturing the mean and variance of these posteriors.

\subsection{Resolving the posteriors}\label{sec:pos}
In contrast to deterministic inverse problems where the aim is to recover the truth, in Bayesian inverse problems, the objective is to characterize the posterior distribution (\ref{eq:1.2}). Therefore, in order to assess the performance of the proposed ensemble-methods for approximating the Bayesian posterior, a fully resolved (accurate) posterior needs to be computed. However, as we discussed in Section \ref{Intro}, the application of standard MCMC methods for sampling the posterior that arises from subsurface flow models is usually restricted to small-size problems. While small-size problems, say from coarse 1D models, may be useful to establish Benchmarks \cite{EmeRey}, such small size may be detrimental to our ability of properly assessing the performance of the proposed methods under more stringent conditions that require proper regularization. In concrete, the well-known regularizing effect of discretization \cite{Iterative} may alleviate the ill-posedness intrinsic to this inverse problem. This, in turn, may overshadow the proper assessment of the regularizing effect of the proposed methods. Fortunately, grid invariant MCMC methods have been recently developed for sampling the Bayesian posterior that arises in large-scale PDE-constrained inverse problems \cite{David}. In particular, in \cite{Evaluation} the preconditioned Crank-Nicolson MCMC (pcn-MCMC) method has been applied to resolve the Bayesian posterior associated to inverse problems in reservoir models of moderate-size similar to the ones considered in the present work. The reader is refer to the work of \cite{Evaluation} for further details on the pcn-MCMC algorithm and its application for benchmarking Bayesian inverse problems in subsurface flow models.

By applying the  aforementioned pc-MCMC method we resolve the posteriors $\mu_{A}$ and $\mu_{B}$ that we introduced in the previous paragraphs and that we aim to approximate with the proposed ensemble methods. For the sampling of each of these posteriors we consider $80$ independent MCMC chains of length $3\times 10^{5}$ initialized with random draws from the prior distribution (recall the prior is the same for both $\mu_{A}$ and $\mu_{B}$). The convergence and mixing of these independent chains are evaluated with the Gelman-Rubin diagnostic based on the potential scale reduction factor (PSRF) \cite{Gelman}. In Figure \ref{Figure1} (top-right) we display this factor as a function of the MCMC iteration for the sampling of both posteriors $\mu_{A}$ and $\mu_{B}$. Approximate convergence (PSRF$<1.1$) is reached after $2.5\times 10^{5}$ iterations ensuring that our chains have run sufficiently long so that all the $80\times 3\times 10^5= 2.4\times 10^{7}$ samples from all these chain can be combined to characterize the target distribution. In the second (resp. third) row of Figure \ref{Figure1}  we display the mean (right) and the variance (middle) of the posterior distribution $\mu_{A}$ (resp. $\mu_{B}$). Some independent samples (from different MCMC chains) of $\mu_{A}$ are displayed in the top row of Figure \ref{Figure2}. While some of the main spatial features of the truth are replicated, sufficient variability can be also appreciated. We use our samples of the posterior $\mu_{A}$ to display in Figure \ref{Figure2} the water rates and BHP from some of the production and injection wells, respectively. In these figures, the red curve correspond to the truth (i.e. $G_{A}(u^{\dagger})$) and the vertical line separates the assimilation from the prediction time. Figure \ref{Figure3} shows analogous quantities for the posterior $\mu_{B}$ associated to the well Model B. 

In Figure \ref{Figure1} we display the mean (middle-left) and the variance (middle-middle) of the posterior $\mu_{A}$ characterized with pc-MCMC. Analogously, Figure \ref{Figure1} shows the mean (bottom-left) and the variance (bottom-middle) of the posterior $\mu_{B}$. Note that, even though the underlaying reservoir dynamics are the same for both forward operators, the difference in the well constraints has an important effect on the resulting posterior distribution. In particular we note significant differences in the variance of $\mu_{A}$ and $\mu_{B}$. Note that for Model A whose productions wells are operated under prescribed BHP, the associated well model (expression (\ref{eq:2.10})) that, in turn, defines the measurement operator depends on the permeability at the grid block containing the well. Therefore, these measurements may reduce the uncertainty of the permeability at those locations. In contrast, in Model B the production wells are operated under prescribed flow rate. In this case, we see from (\ref{eq:2.17}) that the measurement functional is independent of the log-permeability. It then comes as no surprise that larger variances are obtained in these locations. Furthermore, in Figure \ref{Figure1} we display the water saturation at the final time (for the assimilation period) associated to Model A (middle-right) and Model B (bottom-right), respectively. We note that, for Model B, larger variances are observed in the regions where the water front has not yet arrived.

\begin{figure}
\begin{center}
\includegraphics[scale=0.134]{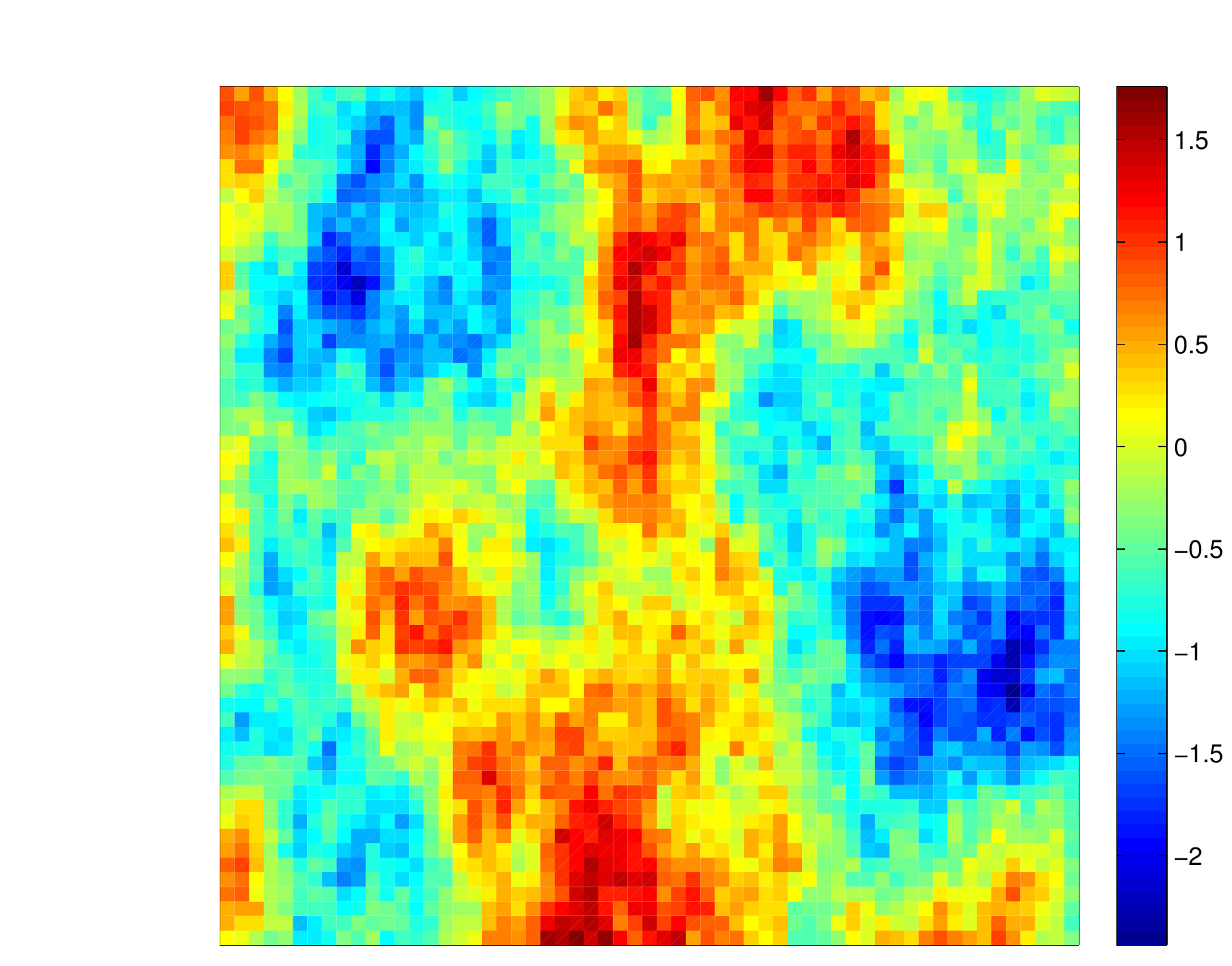}
\includegraphics[scale=0.134]{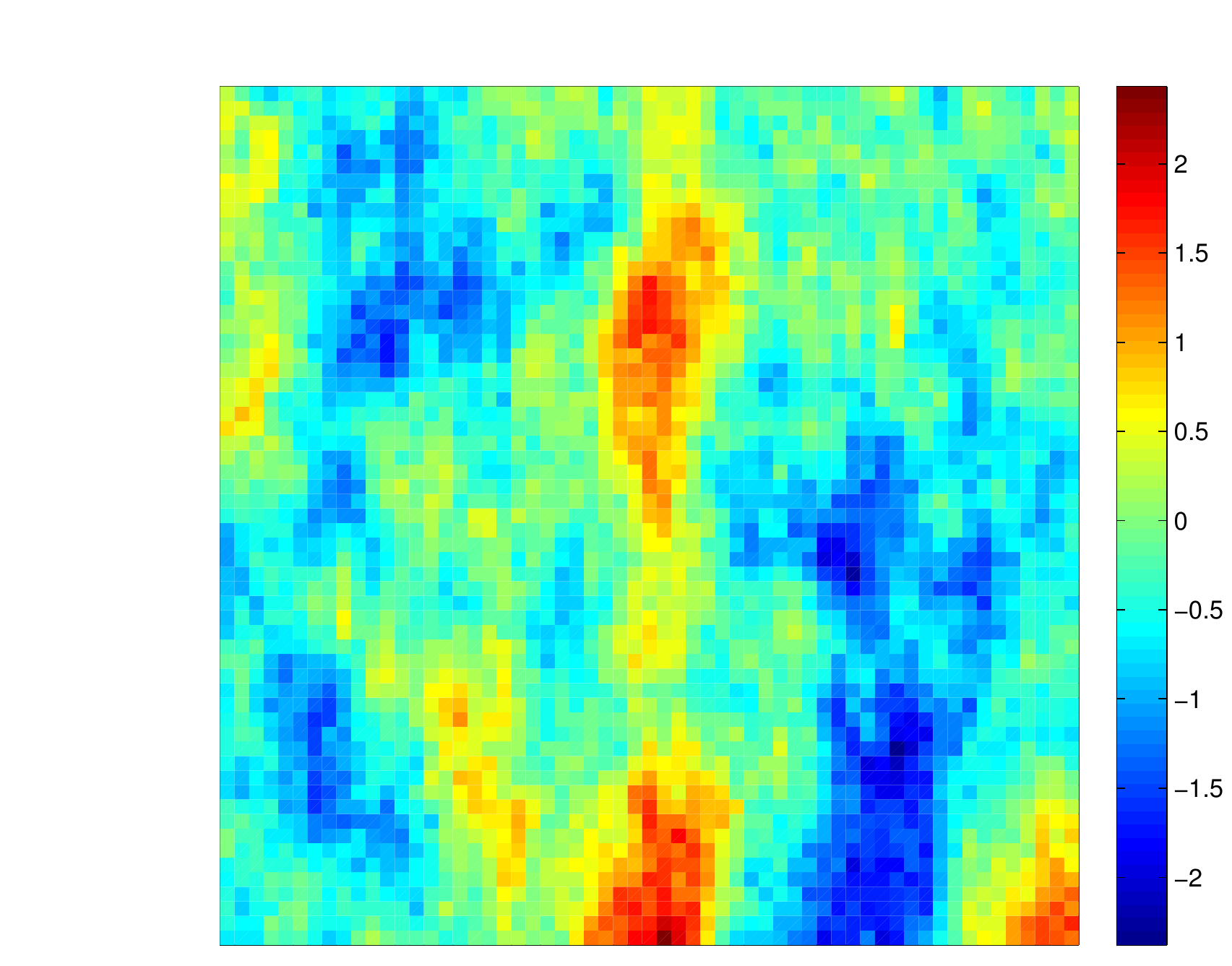}
\includegraphics[scale=0.134]{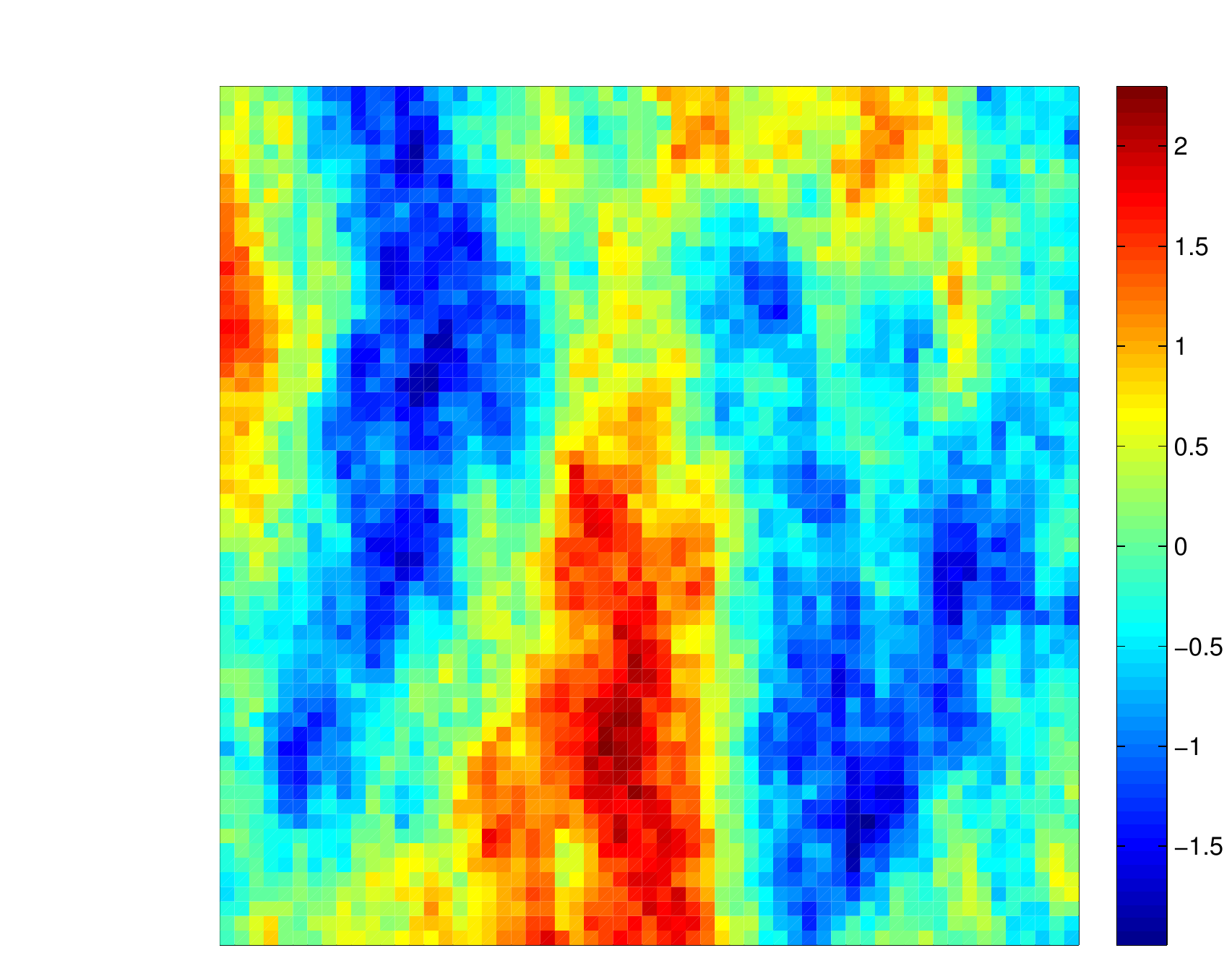}
\includegraphics[scale=0.134]{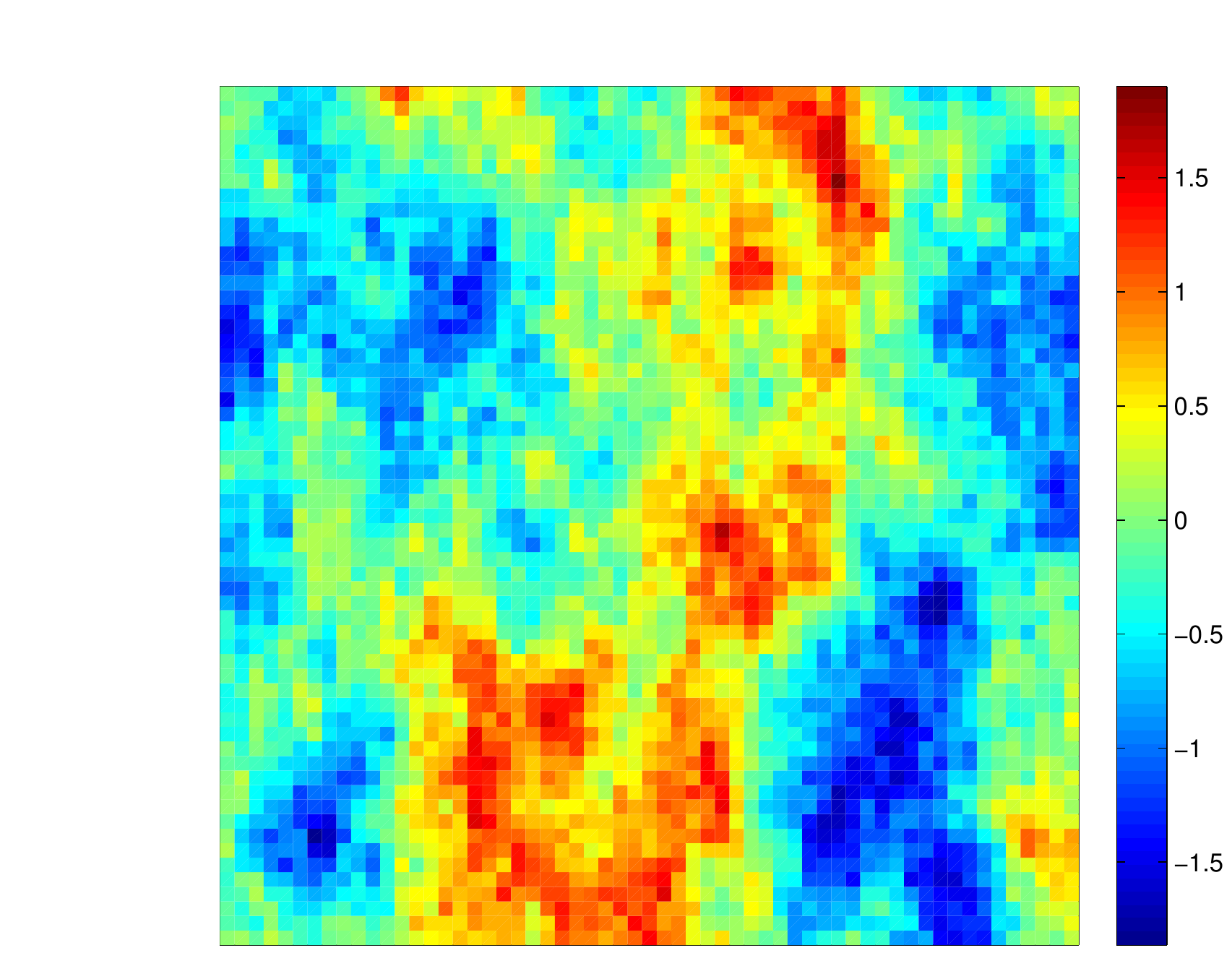}
\includegraphics[scale=0.134]{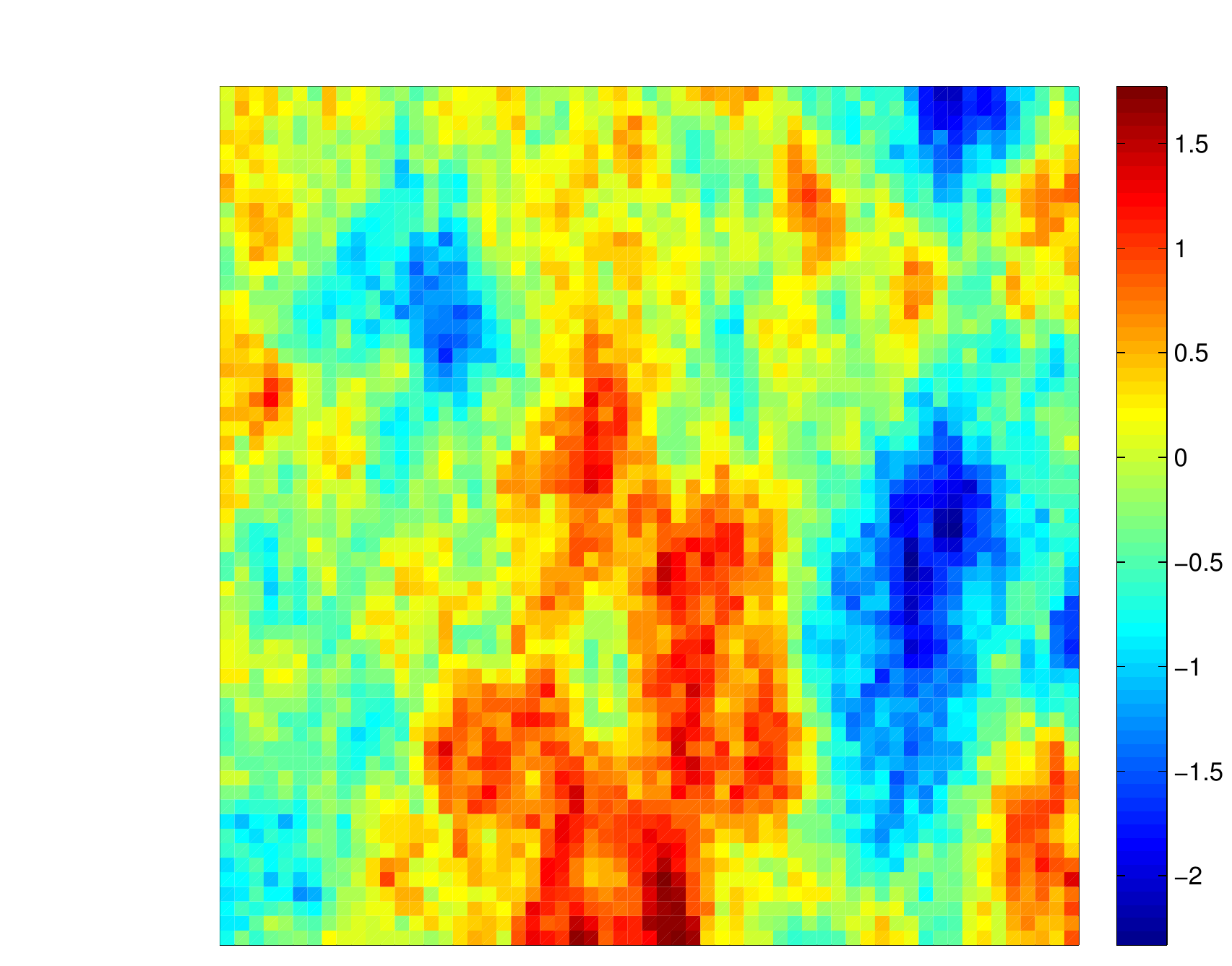}\\
\includegraphics[scale=0.165]{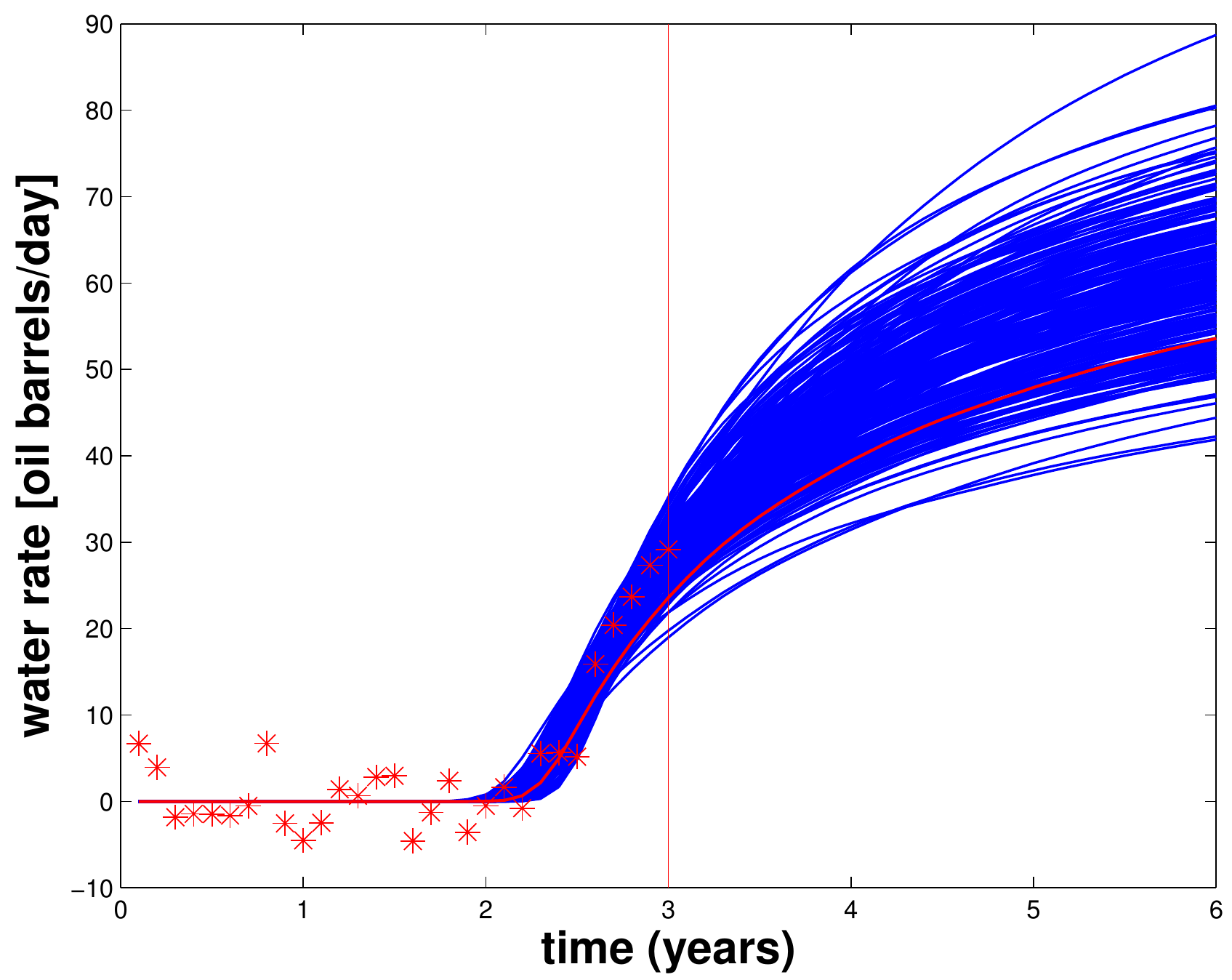}
\includegraphics[scale=0.165]{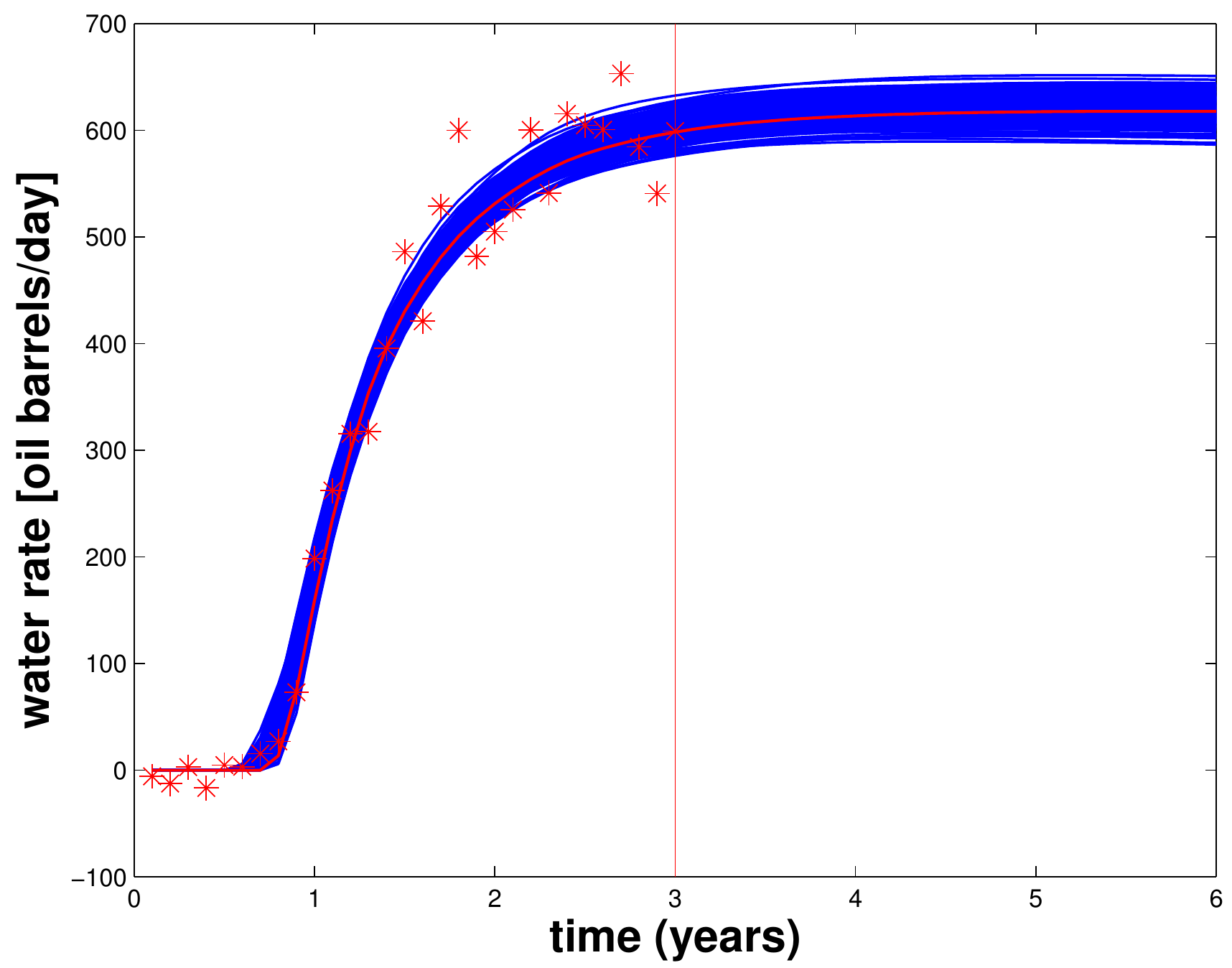}
\includegraphics[scale=0.165]{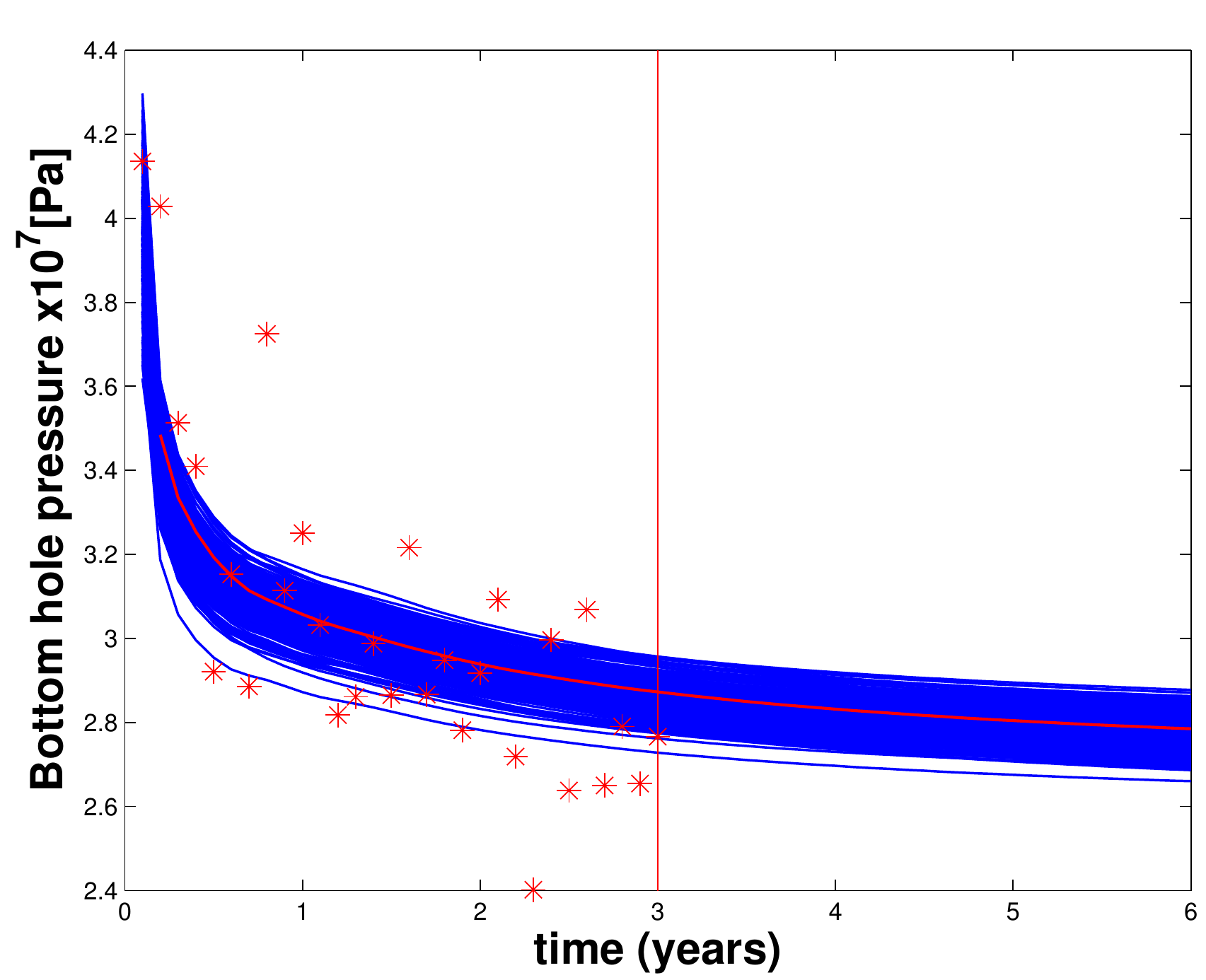}
\includegraphics[scale=0.165]{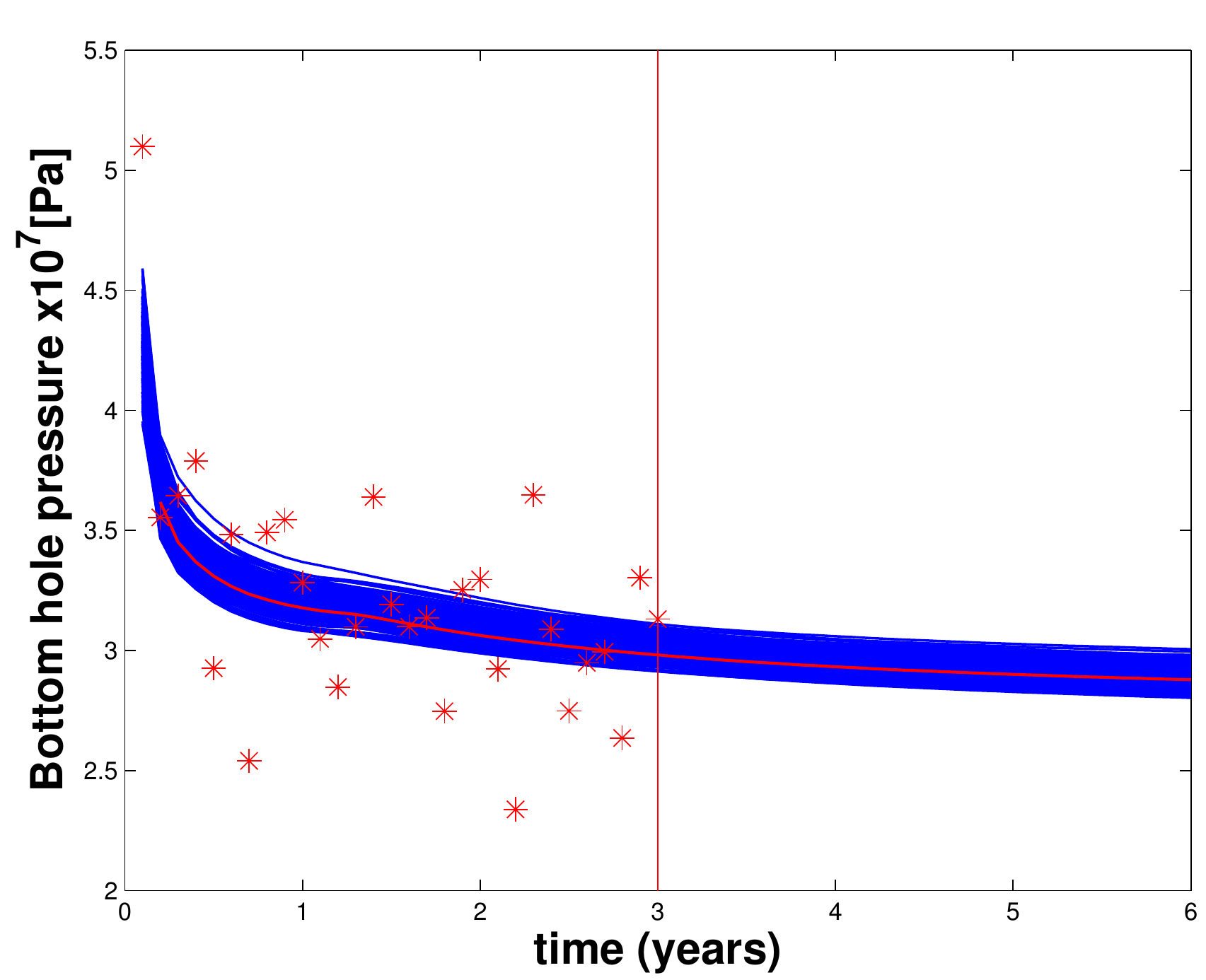}

\caption{ Top: Samples of the posterior distribution $\mu_{A}$ (characterized with pcn-MCMC) [$\log{\textrm{m}^2}$]. Bottom: (from left to right) water rates at wells $P_{1}$, $P_{2}$ and BHP from $I_{2}$ and $I_{3}$}  
\label{Figure2}
\end{center}
\end{figure}

\begin{figure}
\begin{center}
\includegraphics[scale=0.134]{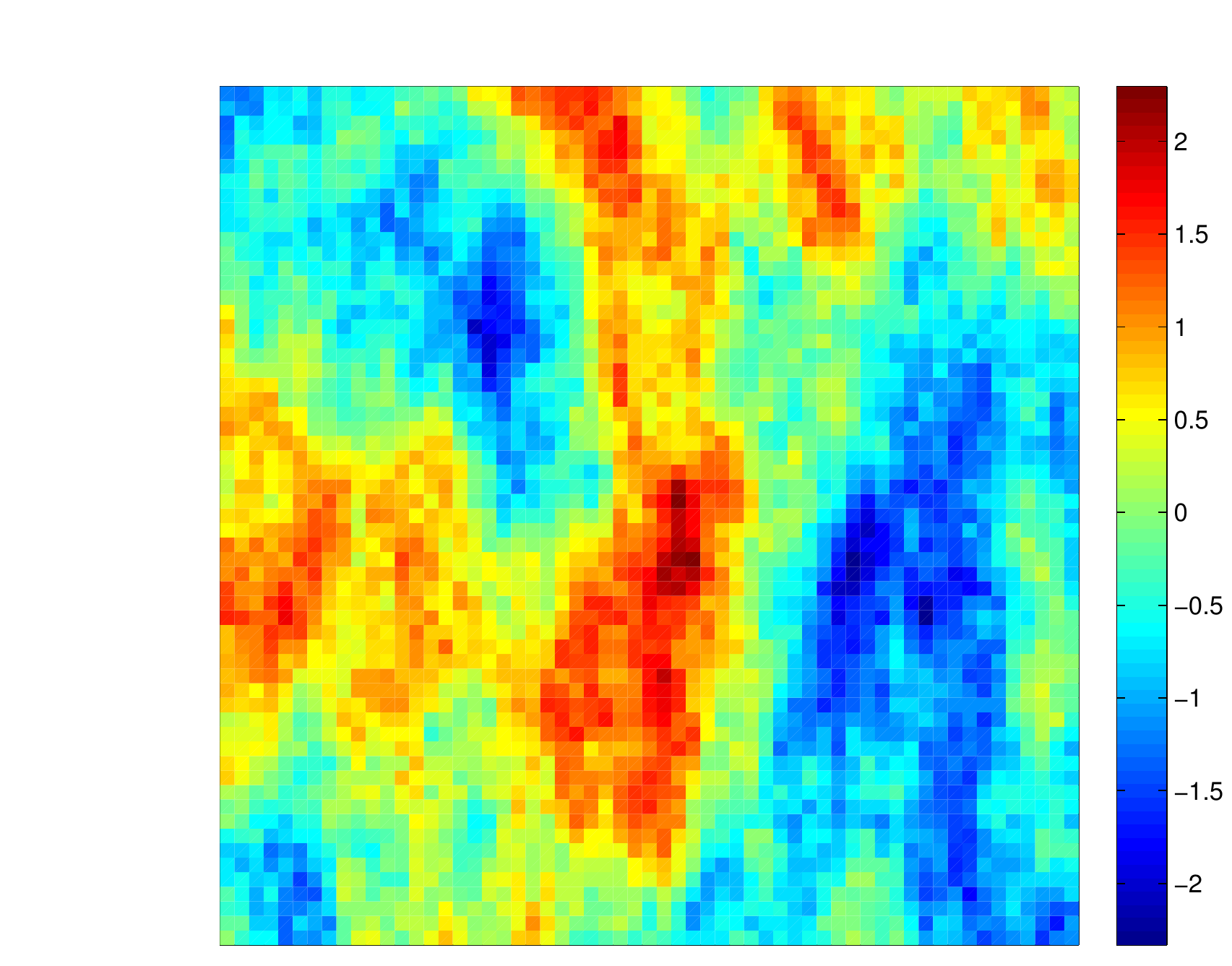}
\includegraphics[scale=0.134]{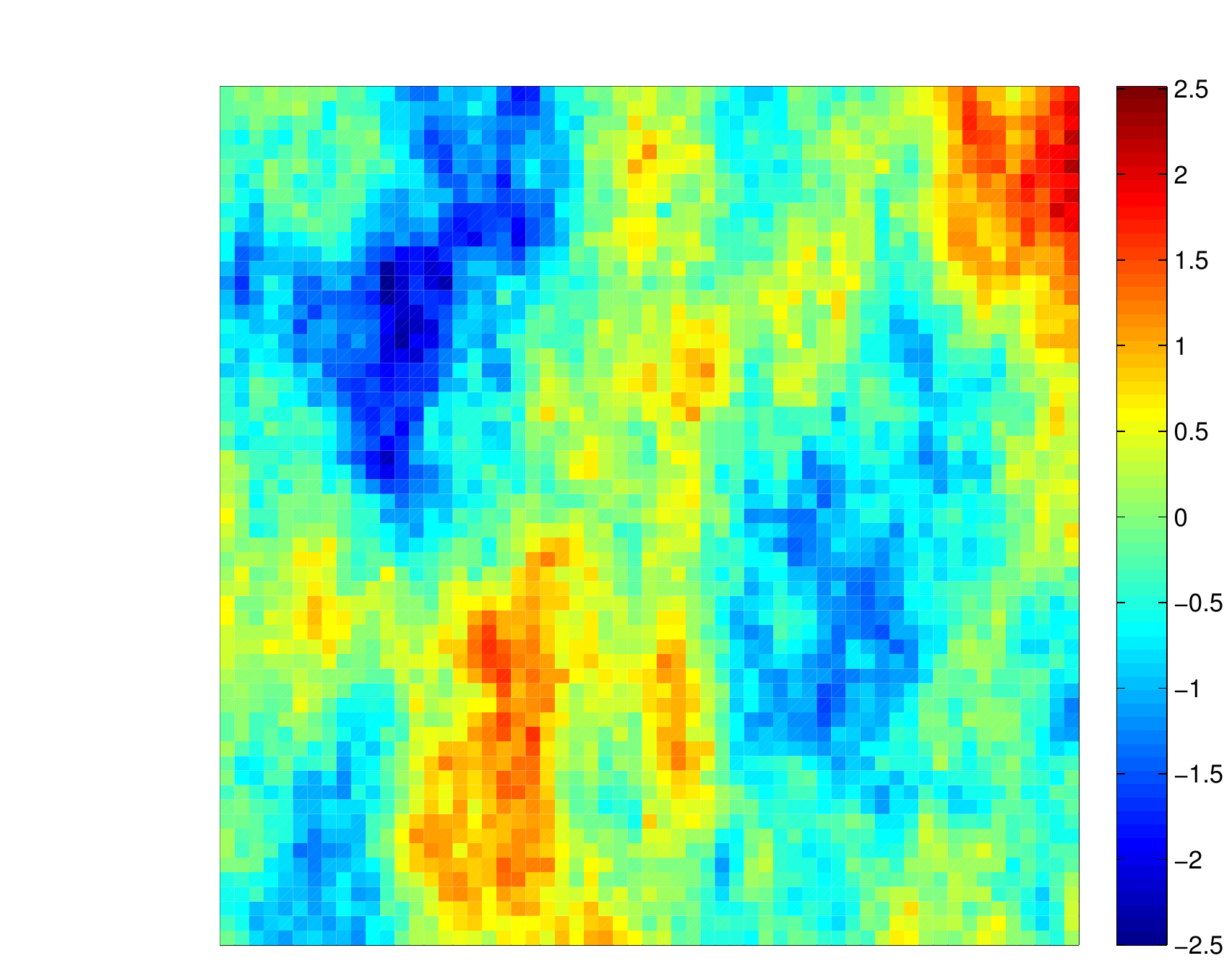}
\includegraphics[scale=0.134]{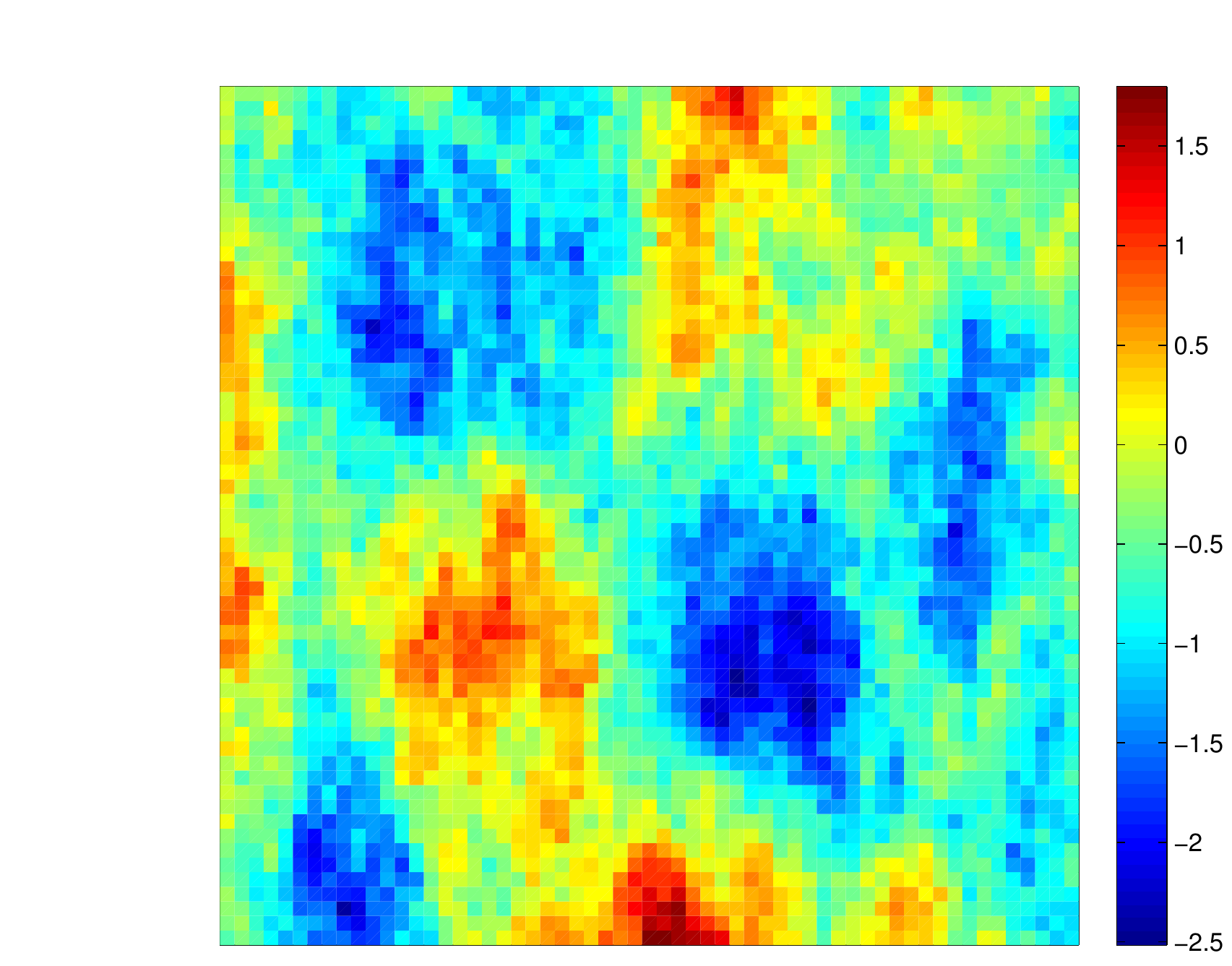}
\includegraphics[scale=0.134]{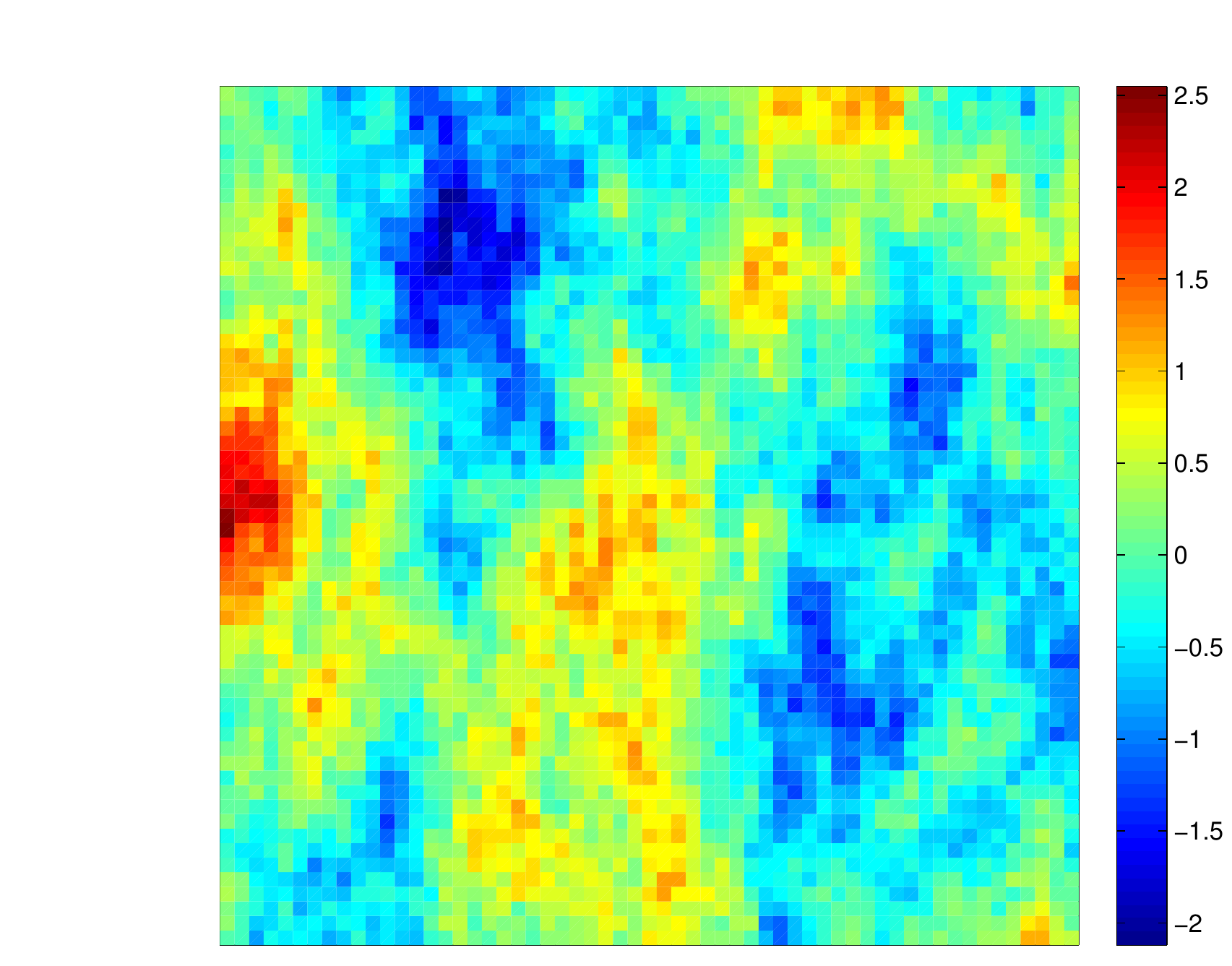}
\includegraphics[scale=0.134]{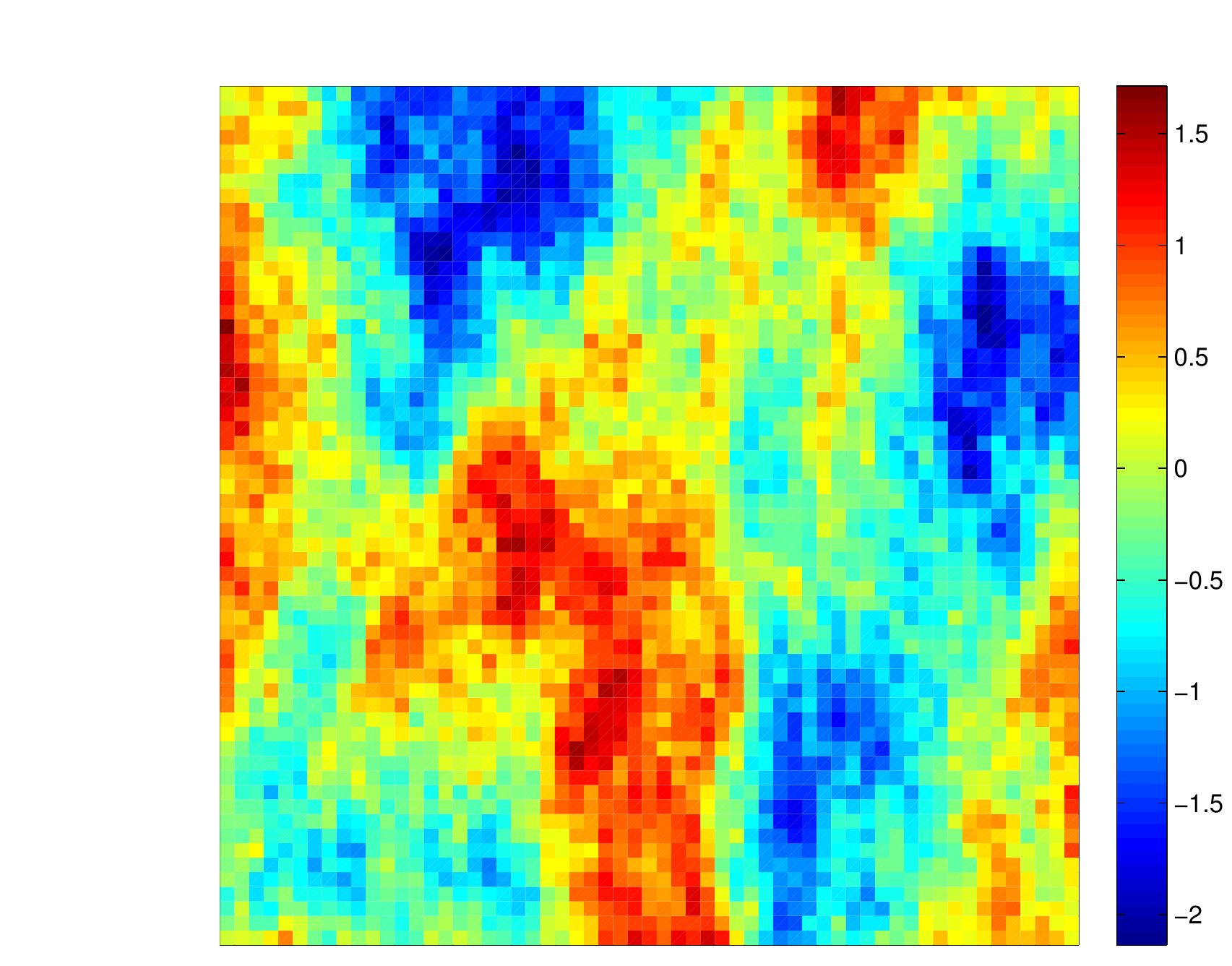}\\
\includegraphics[scale=0.165]{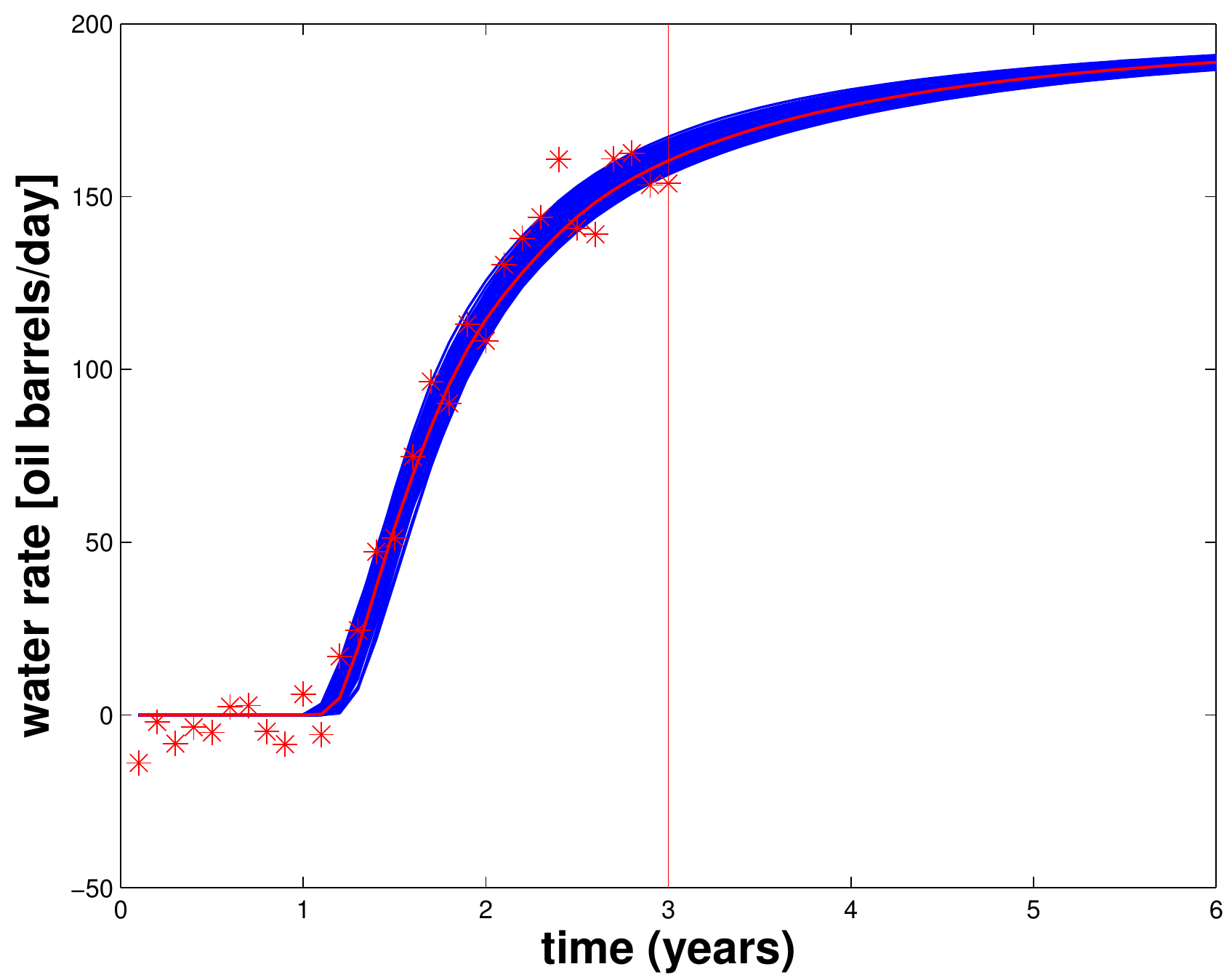}
\includegraphics[scale=0.165]{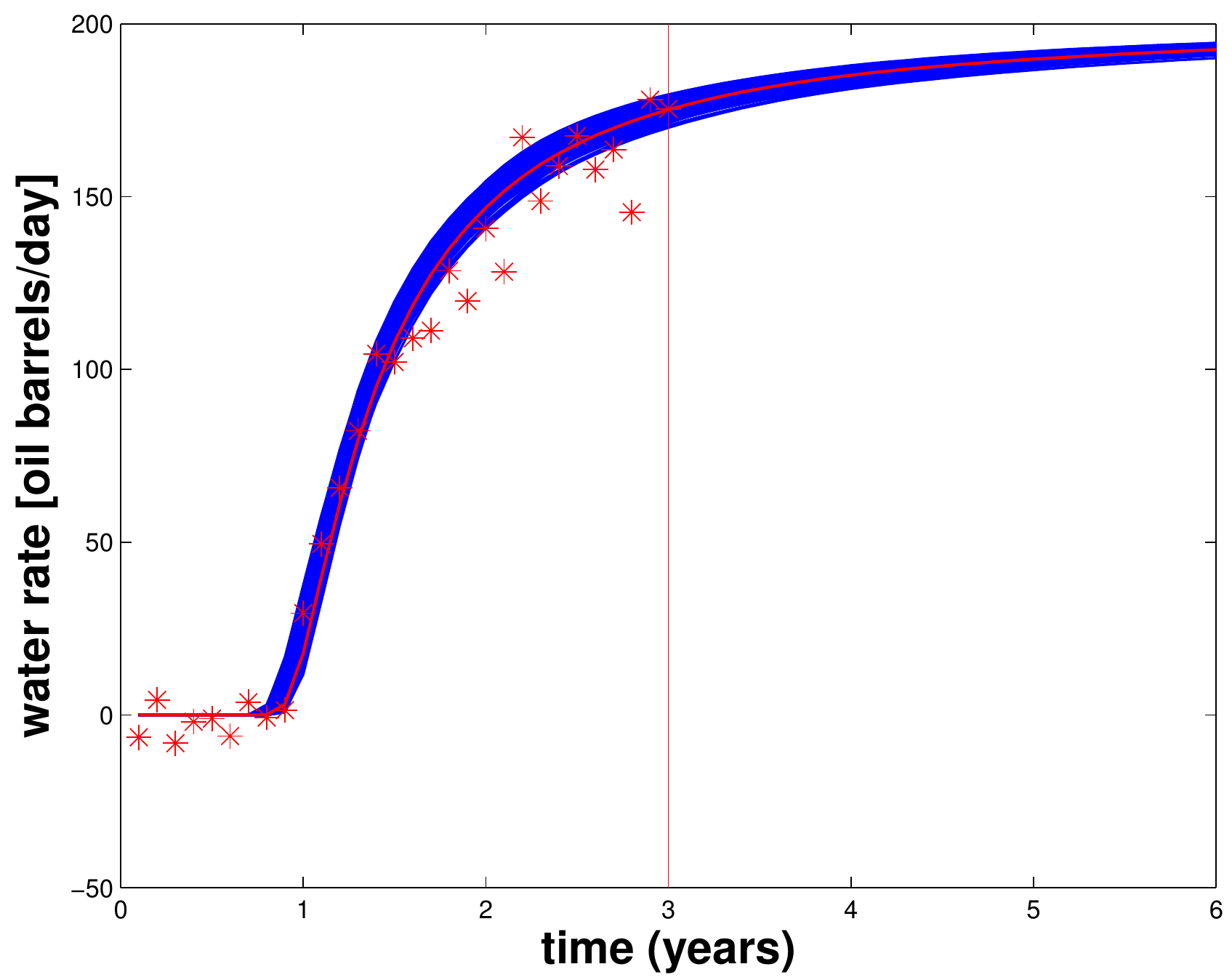}
\includegraphics[scale=0.165]{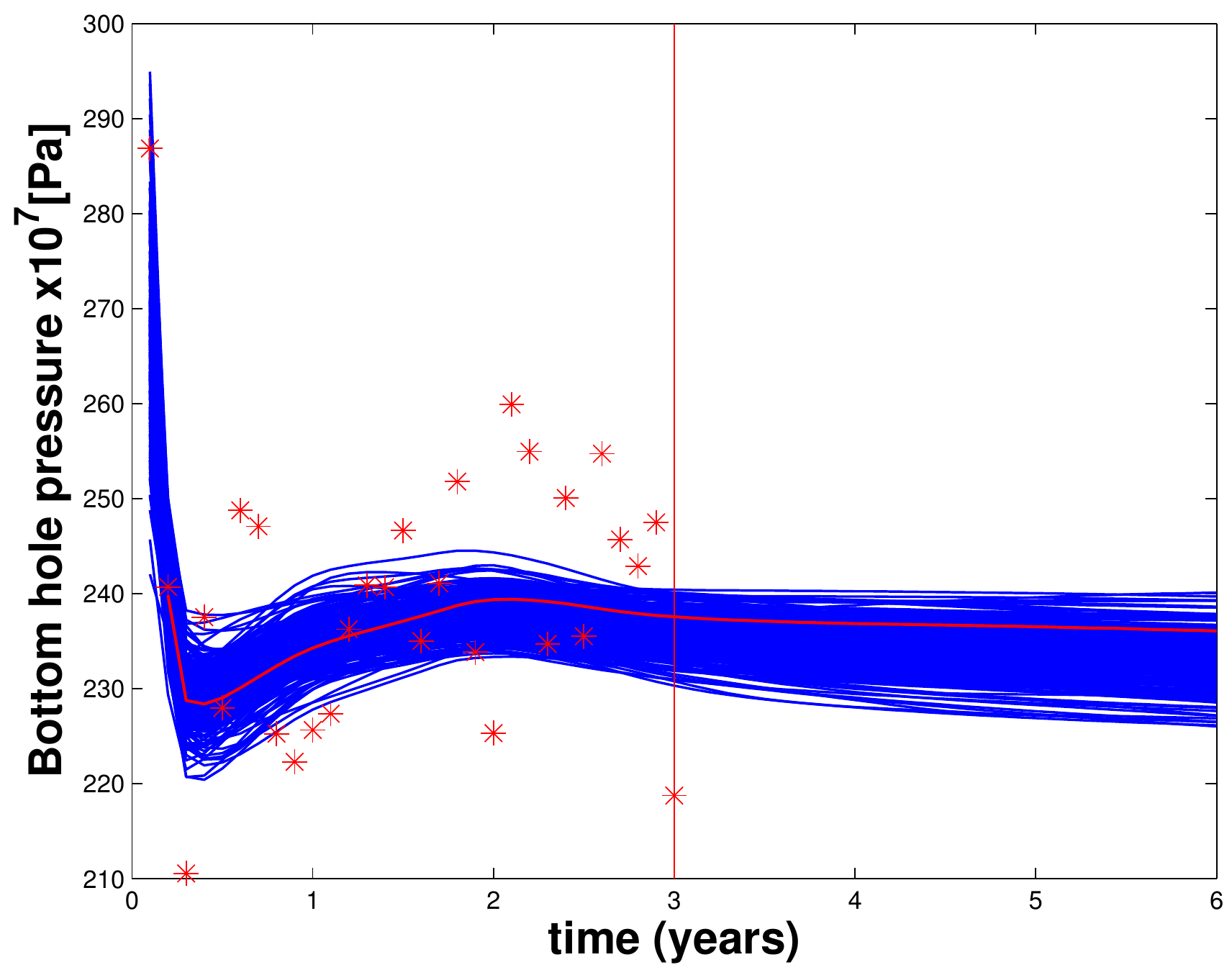}
\includegraphics[scale=0.165]{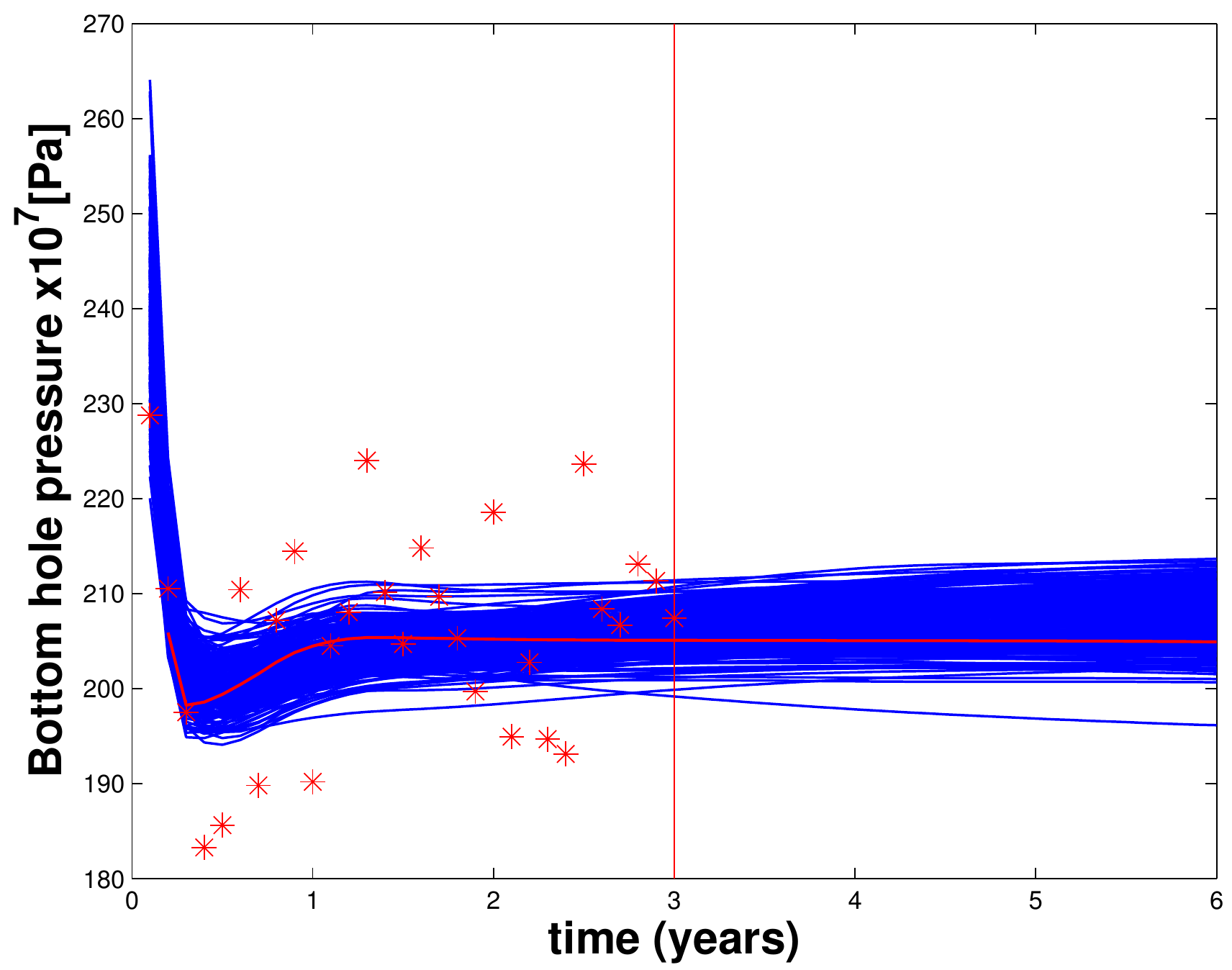}

\caption{ Top: Samples of the posterior distribution $\mu_{B}$ (characterized with pcn-MCMC) [$\log{\textrm{m}^2}$]. Bottom: (from left to right) water rates at wells $P_{1}$, $P_{2}$ and BHP from $I_{2}$ and $I_{3}$}  
\label{Figure3}
\end{center}
\end{figure}

\subsection{IR-enLM}\label{sec:numIR-enLM}

In this section we present a numerical investigation of the effect of the tunable parameters $\rho$ and $\tau$ in the approximation properties of the resulting ensemble obtained with IR-enLM (Algorithm \ref{IR-enLM}). The role of $\rho$ and $\tau$ in the regularizing LM scheme for the solution of deterministic inverse problems is well understood in terms of the theory of \cite{Hanke}. In addition, the work of \cite{LM} displays the application and validation of such theory applied to history matching problems where the aim was to recover the permeability of an oil-water reservoir. This subsection designed to understand the effect of $\rho$ and $\tau$ in the approximation properties of the proposed methods in the context of Bayesian inverse problems. More precisely, we investigate the effect of the tunable parameters in the accuracy and efficiency of IR-enLM for recovering the mean and the variance of the posteriors $\mu_{A}$ and $\mu_{B}$ computed with MCMC in the preceding subsection.

We apply IR-enLM to generate ensemble approximations to $\mu_{A}$ and $\mu_{B}$, respectively. We recall that for both cases, we have the same prior from which the initial ensemble for IR-enLM is drawn. Since the results depend on the initial ensemble, the results we report in this manuscript are averaged over 15 experiments corresponding to different initial ensembles of a fixed size. For each of these experiments, IR-enLM produces an ensemble of inverse estimates whose resulting mean $\hat{u}$ and variance $\hat{\sigma}$ are compared to the mean $u_{pos}$ and variance $\sigma_{pos}$ of the posterior. More precisely, we consider the relative errors
\begin{eqnarray}\label{eq:5.3}
\epsilon_{u}=\frac{\vert\vert (\hat{u}-\overline{u})-(u_{pos}-\overline{u})\vert\vert_{L^{2}(D)}}{\vert\vert (u_{pos}-\overline{u})\vert\vert_{L^{2}(D)}},\qquad \textrm{and}\qquad \epsilon_{\sigma}=\frac{\vert\vert \hat{\sigma}-\sigma_{pos}\vert\vert_{L^{2}(D)}}{\vert\vert \sigma_{pos}\vert\vert_{L^{2}(D)}},
\end{eqnarray}
We re-emphasize that the mean and the variance of the posterior (for $\mu_{A}$ and $\mu_{B}$, respectively) are computed from our pc-MCMC chains. 

We recall from our discussion in subsection \ref{tunable} that for a given $\rho\in (0,1)$, the theory of the regularizing LM scheme \cite{Hanke} requires that we select $\tau$ according to $\tau>1/\rho>1$ in order to ensure convergence to a stable computation of the minimizer of (\ref{eq:IR}). Larger values of $\rho$ (i.e. $\rho$ closer to one) yield small incremental steps and smaller $\tau$'s which enable the algorithm to progress for a long number of iterations obtaining more accurate minimizers of (\ref{eq:IR}). On the other hand, smaller values of $\rho$ and so larger values of $\tau$'s are associated with very early termination of the scheme (according to (\ref{discrepancy2})) and so the corresponding estimates of the minimizer of (\ref{eq:IR}) are less accurate although convergence is faster. These considerations in terms of the accuracy of the minimizer of (\ref{eq:IR}) are reflected in the approximation properties of IR-enLM as we can observe from Table \ref{Table1A} where we display the corresponding ensemble approximation of $\mu_{A}$ and $\mu_{B}$ obtained from IR-enLM for several choices of $\tau$, $\rho$ for an ensemble fixed size $N_{e}=50$. Indeed, Table \ref{Table1A} confirms that more accurate minimizers of (\ref{eq:IR}) obtained by selecting a smaller value of $\rho$ result in better ensemble approximations of the mean and variance of the posterior. In the 5th and 8th columns of Table \ref{Table1A} we display the average number of iterations for convergence (established by (\ref{discrepancy2})). For each $\rho$, in Table \ref{Table1A} we display different values of $\tau$ with the corresponding value $\tau=1/\rho$ highlighted blue. Note that as $\rho$ increases, the corresponding $\tau=1/\rho$ (in blue) produces more accurate solutions. For fixed $\rho=0.8$ and $N_{e}=50$, in Figure \ref{Figure4} we present a visual comparison of the mean (top row) and variance (top-middle row) produce by one ensemble of IR-enLM for approximating $\mu_{A}$ with different choices of $\tau$. For this ensemble and the describe choices of parameters, in the bottom-middle (resp. bottom-bottom) row of Figure \ref{Figure4} we display the box plots of water rate (resp. bottom-hole pressure) at some production (resp. injection) wells after 6 years of injection (i.e. 3 years of assimilation plus 3 years of forecast). Analogous visual results for the approximation of $\mu_{B}$ are displayed in Figure \ref{Figure5}.

For the larger values of $\rho$ in Table \ref{Table1A} we observe that  the optimal value of $\tau$ for reducing the error in the mean and variance is close to $\tau=1.0$ even though the application of the application of the regularizing LM scheme indicates that $\tau$ should be selected to satisfy $\tau>1/\rho$ to ensure stability. Note, for example, that for $\rho=0.7$, $\rho=0.8$ and $\rho=0.9$ the stabilization is ensure provided $\tau>1/\rho=1.45$, $\tau>1/\rho=1.25$ and $\tau>1/\rho=1.11$, respectively. For theses three cases, the errors in the mean and variance can be substantially reduced by considering $\tau=1.0$. Two potential causes for this improved accuracy with reduced $\tau$ are the following. First, we know from \cite{Hanke} that the condition on $\tau$ ($\tau>1/\rho>1$) is sufficient but not necessary for the convergence of the regularizing LM scheme. In other words, stable and more accurate approximate minimizers may be obtained for $1<\tau\leq 1/\rho$ thus contributing to more accurate ensemble approximations of the mean and variance of the posterior. Second, our application of the regularizing LM scheme for the computation of the minimizer of (\ref{eq:IR}) is based on the estimate of the noise level provided in (\ref{estima}). Other potential estimates of the noise level $\eta^{(j)}$ may result in $\tau$'s more consistent with the assumption $\tau>1/\rho$. While obtaining such an optimal estimate is beyond the scope of the present work, it is important to remark that our results reflect that changes in $\tau$ (with $1\leq \tau$) are associated with stable changes in the relative error of the mean and variance produced by IR-enLM. In addition, our numerical experiments suggest that for IR-enLM with $\rho\ge 0.7$, the value $\tau=1.0$ in (\ref{discrepancy2})) provides stable and accurate approximations of the mean and variance of the posterior. Note that this value is consistent with the discrepancy principle in the sense that the algorithm is stopped provided that the data misfit matches our estimate (\ref{estima}) of the noise level. Note that, in most cases, the error with respect to the mean can be further reduced by allowing the scheme to progress for more iterations (i.e. by selecting a smaller $\tau<1$). However, this decrease in the error of the mean is associated with an increase in the error of the variance. We therefore see that the error grow typical of the computation of solutions to ill-posed inverse problems is reflected in the properties of IR-enLM for approximating the Bayesian posterior. While for smaller values of $\rho$ ($\rho<0.7$) the value of $\tau$ seems to be consistent with $\tau>1/\rho$, the associated errors are substantial and therefore not recommended for further applications of IR-enLM. Note that $\rho\approx 0.8$ with $\tau=1.0$ is seems a reasonable compromise between accuracy and cost.

It is also worth mentioning that the aforementioned conclusions concerning $\tau$ apply for several choices of ensemble sizes as it can be observed in Table \ref{Table1B} (for $\rho=0.8$). Note that when we increase the ensemble size by a factor of ten, the relative error with respect to the mean and variance of $\mu_{A}$ (resp. $\mu_{B}$) are decreased by $\%30$ (resp. $\%20$) and $\%35$ (resp. $\%32$), respectively. Finally, we note that the IR-enLM was capable of obtaining a better approximation of $\mu_{A}$ (in terms of mean and variance) than the one of $\mu_{B}$. We recall that $\mu_{A}$ and $\mu_{B}$ correspond to two different Bayesian inverse problems. However, our results suggests that posteriors that quantity larger uncertainties may be more difficult to be captured with IR-enLM.

\begin{table}[H!]                                                                                  
\centering                                                                                         
\caption{Performance of IR-enLMfor different choices of $\rho$ and $\tau$ (for $N_{e}=50$)}      \label{Table1A}                                              
\begin{tabular}{cc|ccc||ccc}                                                         
\hline\noalign{\smallskip}
    & & &Approx. of $\mu_{A}$& & & Approx. of $\mu_{B}$ & \\
\noalign{\smallskip}\hline\noalign{\smallskip}
$\rho$ & $\tau$  &  $\epsilon_{u}$&  $\epsilon_{\sigma}$ & aver. iter. &  $\epsilon_{u}$&  $\epsilon_{\sigma}$ & aver. iter. \\
\noalign{\smallskip}\hline\noalign{\smallskip}   
\hline                                         \hline                     

0.5 & 3.000 & 0.682 & 0.788 & 2.936 & 0.636 & 0.873 & 2.229 \\  
0.5 & 2.800 & 0.669 & 0.780 & 3.024 & 0.620 & 0.872 & 2.287 \\  
0.5 & 2.500 & 0.651 & 0.756 & 3.152 & 0.598 & 0.860 & 2.464 \\  
0.5 & 2.200 & 0.596 & 0.729 & 3.384 & 0.568 & 0.860 & 2.640 \\  
\Blue{0.5} & \Blue{2.000 }& \Blue{0.573} &\Blue{ 0.718} &\Blue{ 3.492} &\Blue{ 0.550} &\Blue{ 0.867} & \Blue{2.742} \\  
0.5 & 1.800 & 0.488 & 0.765 & 3.644 & 0.531 & 0.919 & 2.853 \\  
0.5 & 1.600 & 0.408 & 0.791 & 3.840 & 0.516 & 0.962 & 2.978 \\  
0.5 & 1.400 & 0.341 & 0.765 & 4.100 & 0.491 & 1.012 & 3.145 \\  
\hline                                                          \hline                                                          
0.6 & 2.200 & 0.654 & 0.755 & 4.335 & 0.620 & 0.634 & 3.575 \\  
0.6 & 2.000 & 0.631 & 0.735 & 4.534 & 0.598 & 0.613 & 3.625 \\  
0.6 & 1.900 & 0.616 & 0.720 & 4.636 & 0.587 & 0.604 & 3.650 \\  
0.6 & 1.800 & 0.594 & 0.712 & 4.754 & 0.571 & 0.591 & 3.825 \\  
\Blue{0.6} & \Blue{1.700} &\Blue{ 0.564} &\Blue{ 0.702} &\Blue{ 4.862} &\Blue{ 0.543} & \Blue{0.573} &\Blue{ 3.975} \\  
0.6 & 1.600 & 0.515 & 0.695 & 4.990 & 0.514 & 0.581 & 4.075 \\  
0.6 & 1.500 & 0.452 & 0.716 & 5.108 & 0.489 & 0.591 & 4.200 \\  
0.6 & 1.300 & 0.328 & 0.681 & 5.428 & 0.421 & 0.628 & 4.525 \\  
\hline                                                          \hline                                                          
0.7 & 1.500 & 0.565 & 0.673 & 7.049 & 0.536 & 0.508 & 6.122 \\    
\Blue{0.7} & \Blue{1.450} & \Blue{0.544} &\Blue{ 0.665} &\Blue{ 7.138} &\Blue{ 0.522} &\Blue{ 0.498} &\Blue{ 6.206 }\\  
0.7 & 1.400 & 0.517 & 0.651 & 7.230 & 0.498 & 0.488 & 6.320 \\  
0.7 & 1.250 & 0.379 & 0.637 & 7.590 & 0.420 & 0.487 & 6.651 \\  
0.7 & 1.150 & 0.293 & 0.572 & 7.885 & 0.381 & 0.498 & 6.869 \\  
0.7 & 1.100 & 0.269 & 0.504 & 8.059 & 0.367 & 0.495 & 6.973 \\  
0.7 & 1.000 & 0.240 & 0.405 & 8.597 & 0.335 & 0.528 & 7.335 \\  
0.7 & 0.950 & 0.221 & 0.421 & 9.425 & 0.332 & 0.595 & 7.978 \\  
\hline                                                          \hline                                                          
0.8 & 1.500 & 0.619 & 0.706 & 10.906 & 0.569 & 0.529 & 9.789 \\ 
0.8 & 1.400 & 0.594 & 0.687 & 11.248 & 0.544 & 0.499 & 10.158 \\
0.8 & 1.300 & 0.555 & 0.648 & 11.611 & 0.511 & 0.466 & 10.541 \\
\Blue{0.8} &\Blue{ 1.250} &\Blue{ 0.518} & \Blue{0.621} &\Blue{ 11.842} &\Blue{ 0.490} & \Blue{0.447 }&\Blue{ 10.739} \\
0.8 & 1.200 & 0.466 & 0.586 & 12.026 & 0.463 & 0.430 & 10.938 \\
0.8 & 1.150 & 0.387 & 0.560 & 12.216 & 0.430 & 0.425 & 11.127 \\
0.8 & 1.000 & 0.249 & 0.386 & 13.088 & 0.329 & 0.425 & 11.755 \\
0.8 & 0.950 & 0.214 & 0.394 & 13.945 & 0.310 & 0.479 & 12.344 \\
\hline                                                          \hline                                                          
0.9 & 1.160 & 0.547 & 0.594 & 25.960 & 0.491 & 0.438 & 23.907 \\
0.9 & 1.140 & 0.524 & 0.564 & 26.150 & 0.482 & 0.430 & 24.118 \\
0.9 & 1.120 & 0.497 & 0.538 & 26.324 & 0.472 & 0.419 & 24.331 \\
\Blue{0.9 }& \Blue{1.110} & \Blue{0.482} &\Blue{ 0.523} &\Blue{ 26.425} &\Blue{ 0.467} &\Blue{ 0.415} &\Blue{ 24.430} \\
0.9 & 1.100 & 0.466 & 0.510 & 26.515 & 0.462 & 0.412 & 24.510 \\
0.9 & 1.050 & 0.375 & 0.443 & 26.949 & 0.420 & 0.386 & 25.022 \\
0.9 & 1.000 & 0.277 & 0.371 & 27.476 & 0.362 & 0.381 & 25.511 \\
0.9 & 0.950 & 0.217 & 0.348 & 28.364 & 0.304 & 0.419 & 26.239 \\
\hline                        
\end{tabular}    
\end{table}

\begin{table}[H!]                                                                                  
\centering                                                                                         
\caption{Performance of IR-enLMfor different choices of $N_{e}$ and $\tau$ (for $\rho=0.8$)}       \label{Table1B}                                             
\begin{tabular}{cc|ccc||ccc}                                                         
\hline\noalign{\smallskip}
    & & &Approx. of $\mu_{A}$& & & Approx. of $\mu_{B}$ & \\
\noalign{\smallskip}\hline\noalign{\smallskip}
$\tau$ & $N_{e}$  &  $\epsilon_{u}$&  $\epsilon_{\sigma}$ & aver. iter. &  $\epsilon_{u}$&  $\epsilon_{\sigma}$ & aver. iter. \\
\noalign{\smallskip}\hline\noalign{\smallskip}
\hline                                                             
\hline                                                             
1.3 & 25.000 & 0.577 & 0.699 & 11.776 & 0.538 & 0.548 & 10.588 \\  
1.25 & 25.000 & 0.540 & 0.668 & 11.980 & 0.519 & 0.532 & 10.784 \\ 
1.2 & 25.000 & 0.502 & 0.642 & 12.120 & 0.495 & 0.515 & 10.992 \\  
1.1 & 25.000 & 0.361 & 0.568 & 12.540 & 0.430 & 0.500 & 11.340 \\  
\Red{1} & \Red{25.000} &\Red{ 0.300} &\Red{ 0.449 }& \Red{13.152} &\Red{ 0.372} &\Red{ 0.512} & \Red{11.792} \\    
0.95 & 25.000 & 0.267 & 0.463 & 13.936 & 0.359 & 0.546 & 12.320 \\ 
\hline                                                             
\hline

1.3 & 50.000 & 0.555 & 0.648 & 11.611 & 0.511 & 0.466 & 10.541 \\
1.25 & 50.000 & 0.518 &0.621 & 11.842 & 0.490 & 0.447& 10.739 \\
1.2 & 50.000 & 0.466 & 0.586 & 12.026 & 0.463 & 0.430 & 10.938 \\
1.1 & 50.000 & 0.315 & 0.503 & 12.444 & 0.393 & 0.422 & 11.317 \\
\Red{1} & \Red{50.000} & \Red{0.249} & \Red{0.386} & \Red{13.088} &\Red{ 0.329 }& \Red{0.425} & \Red{11.755} \\
0.95 & 50.000 & 0.214 & 0.394 & 13.945 & 0.310 & 0.479 & 12.344 \\
  \hline                                                             \hline                                                 
1.3 & 75.000 & 0.545 & 0.617 & 11.569 & 0.502 & 0.437 & 10.551 \\  
1.25 & 75.000 & 0.506 & 0.585 & 11.768 & 0.480 & 0.416 & 10.761 \\                       
1.2 & 75.000 & 0.455 & 0.549 & 11.945 & 0.455 & 0.402 & 10.947 \\  
1.1 & 75.000 & 0.300 & 0.467 & 12.357 & 0.383 & 0.387 & 11.325 \\  
\Red{1 }& \Red{75.000 }& \Red{0.231} &\Red{ 0.339} & \Red{12.988} & \Red{0.319} &\Red{ 0.386} & \Red{11.767 }\\    
0.95 & 75.000 & 0.196 & 0.357 & 13.920 & 0.300 & 0.438 & 12.345 \\ 
\hline                                                             
\hline                                               
              
1.3 & 100.000 & 0.544 & 0.612 & 11.602 & 0.496 & 0.423 & 10.541 \\ 
1.25 & 100.000 & 0.507 & 0.585 & 11.833 & 0.475 & 0.403 & 10.739 \\
1.2 & 100.000 & 0.451 & 0.548 & 12.021 & 0.448 & 0.384 & 10.938 \\ 
1.1 & 100.000 & 0.295 & 0.459 & 12.441 & 0.374 & 0.376 & 11.317 \\ 
\Red{1} & \Red{100.000} & \Red{0.226} &\Red{ 0.336} & \Red{13.084} &\Red{ 0.307} & \Red{0.378} & \Red{11.755} \\                              
0.95 & 100.000 & 0.188 & 0.346 & 13.960 & 0.285 & 0.432 & 12.344 \\

\hline                   
1.3 & 250.000 & 0.535 & 0.580 & 11.569 & 0.487 & 0.394 & 10.541 \\ 
1.25 & 250.000 & 0.496 & 0.548 & 11.768 & 0.466 & 0.374 & 10.739 \\
1.2 & 250.000 & 0.444 & 0.512 & 11.945 & 0.437 & 0.355 & 10.938 \\ 
1.1 & 250.000 & 0.284 & 0.425 & 12.357 & 0.363 & 0.347 & 11.317 \\ 
\Red{1} &\Red{ 250.000 }& \Red{0.210} &\Red{ 0.290} & \Red{12.988 }& \Red{0.293} & \Red{0.348} & \Red{11.755 }\\   
0.95 & 250.000 & 0.171 & 0.308 & 13.920 & 0.270 & 0.405 & 12.344 \\
\hline                                                             
\hline                                                             
\end{tabular}                                                     
\end{table}

\begin{figure} 
\begin{center}
\includegraphics[scale=0.15]{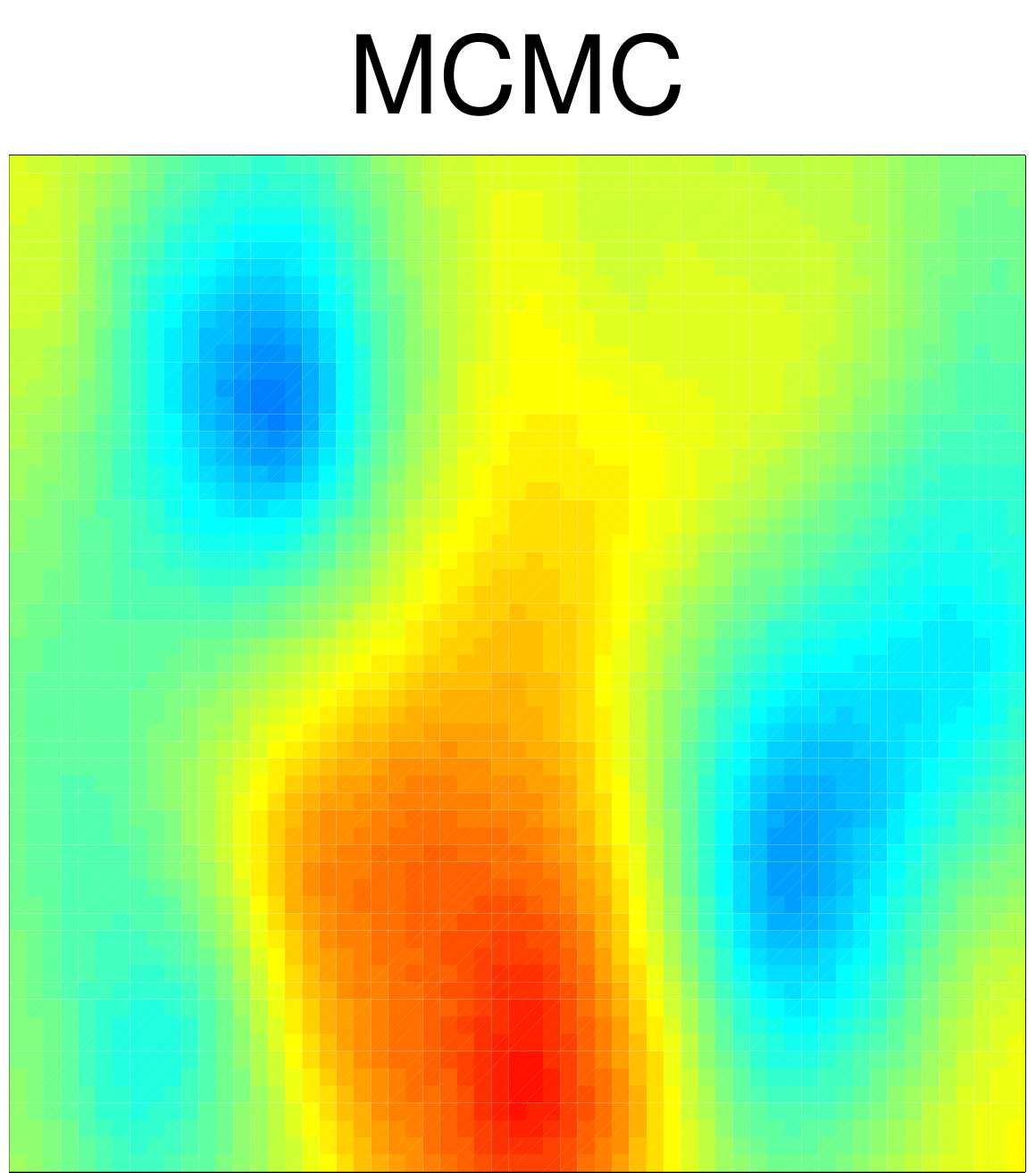}~
\includegraphics[scale=0.15]{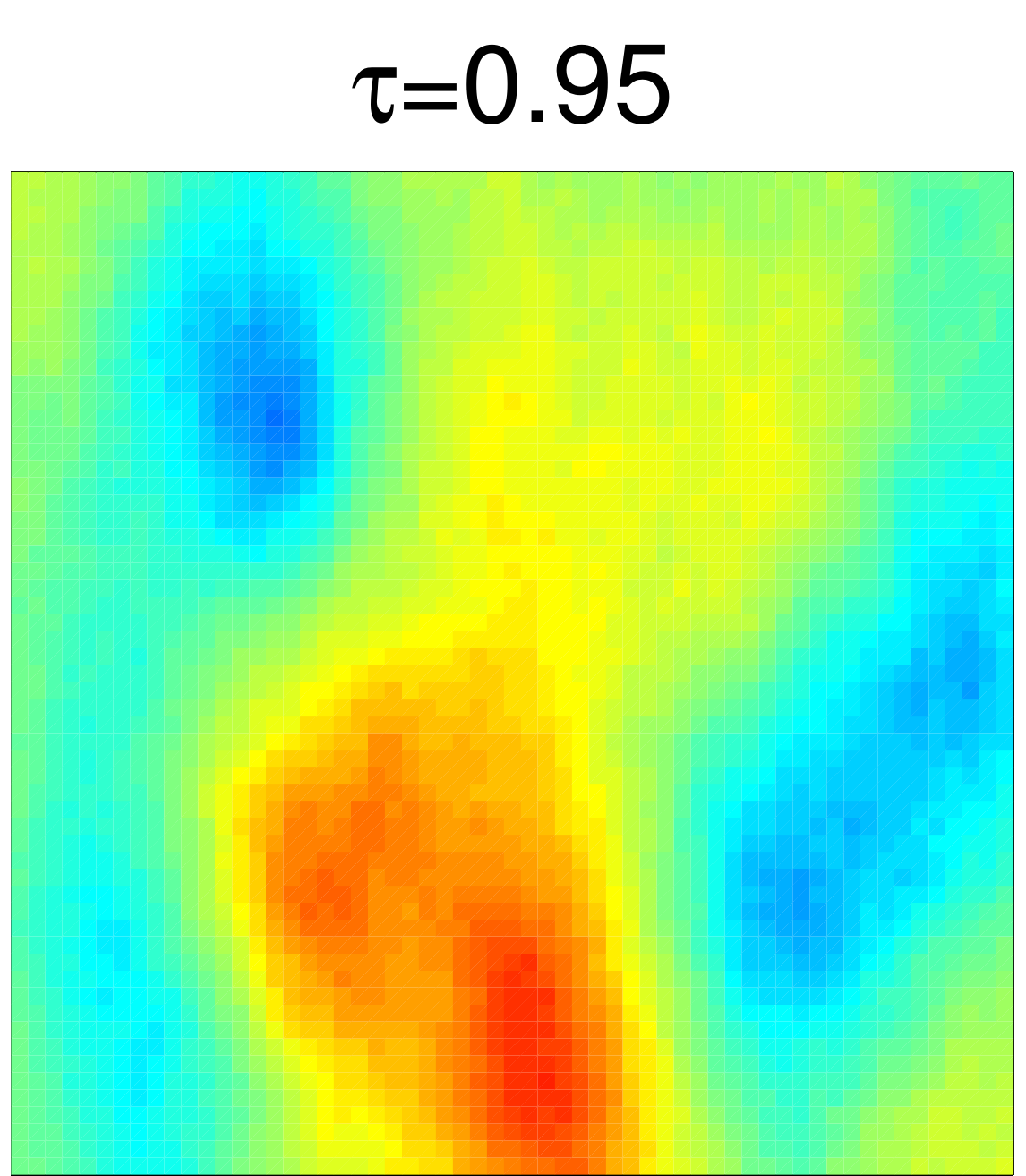}~
\includegraphics[scale=0.15]{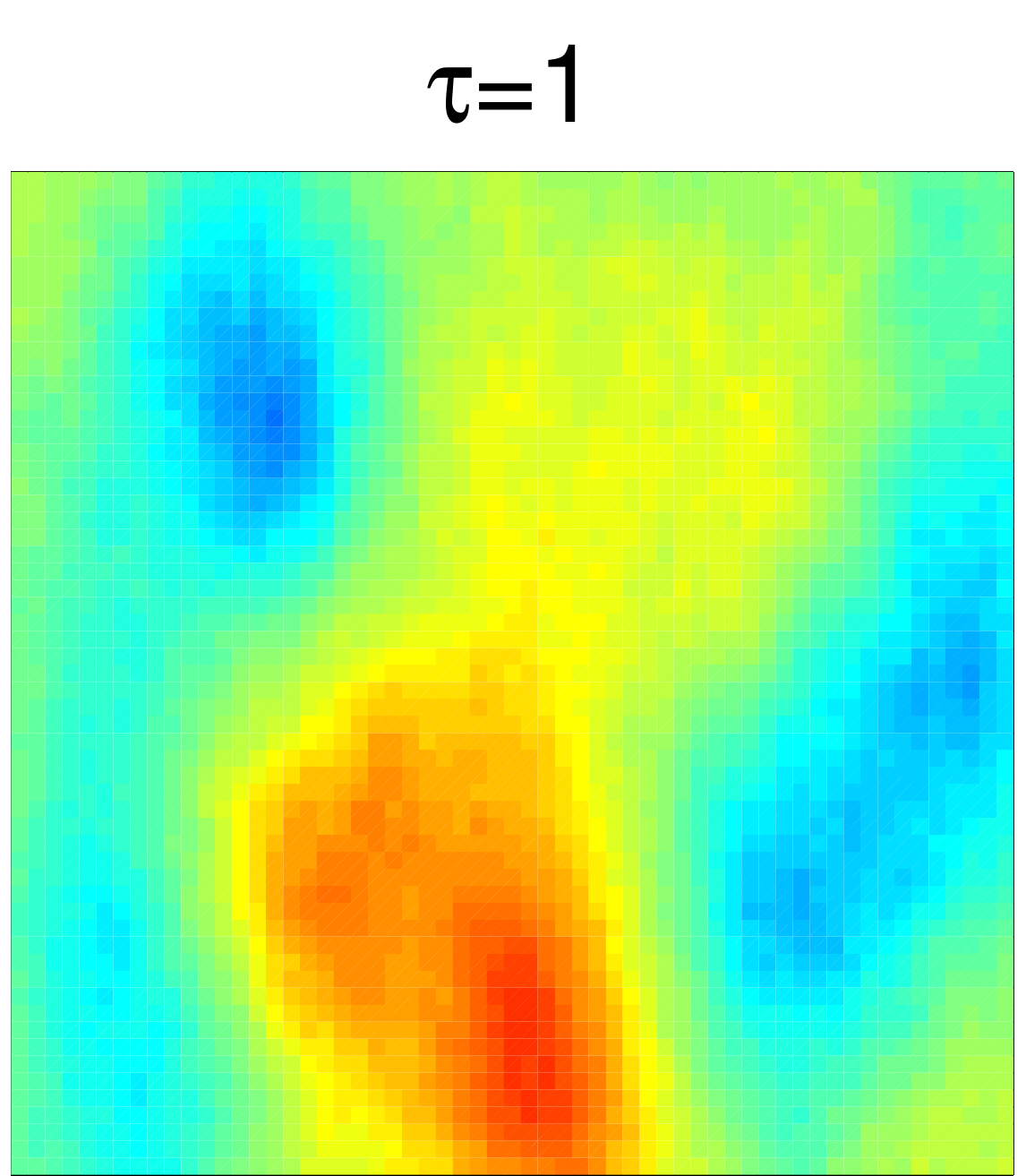}~
\includegraphics[scale=0.15]{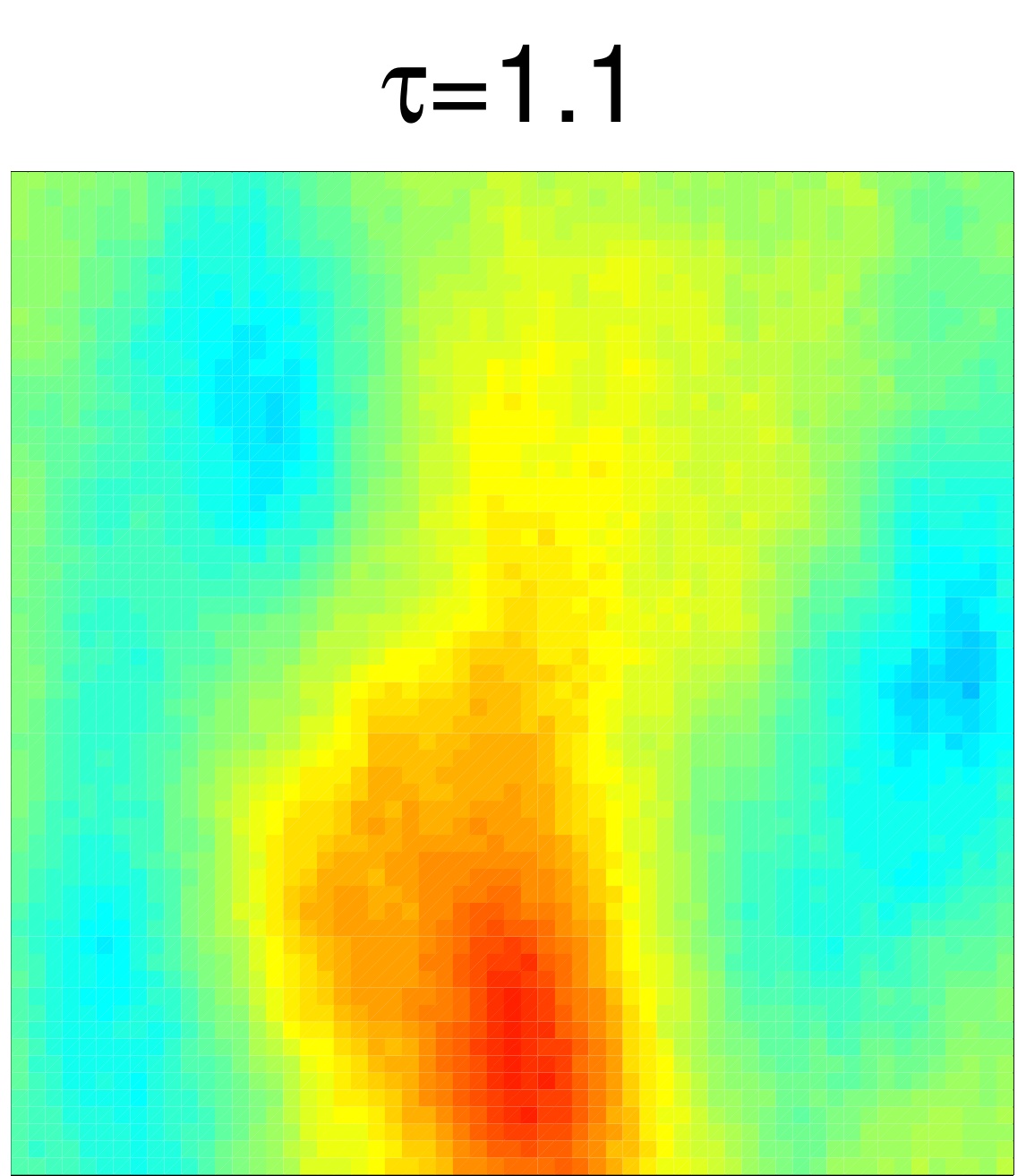}~
\includegraphics[scale=0.15]{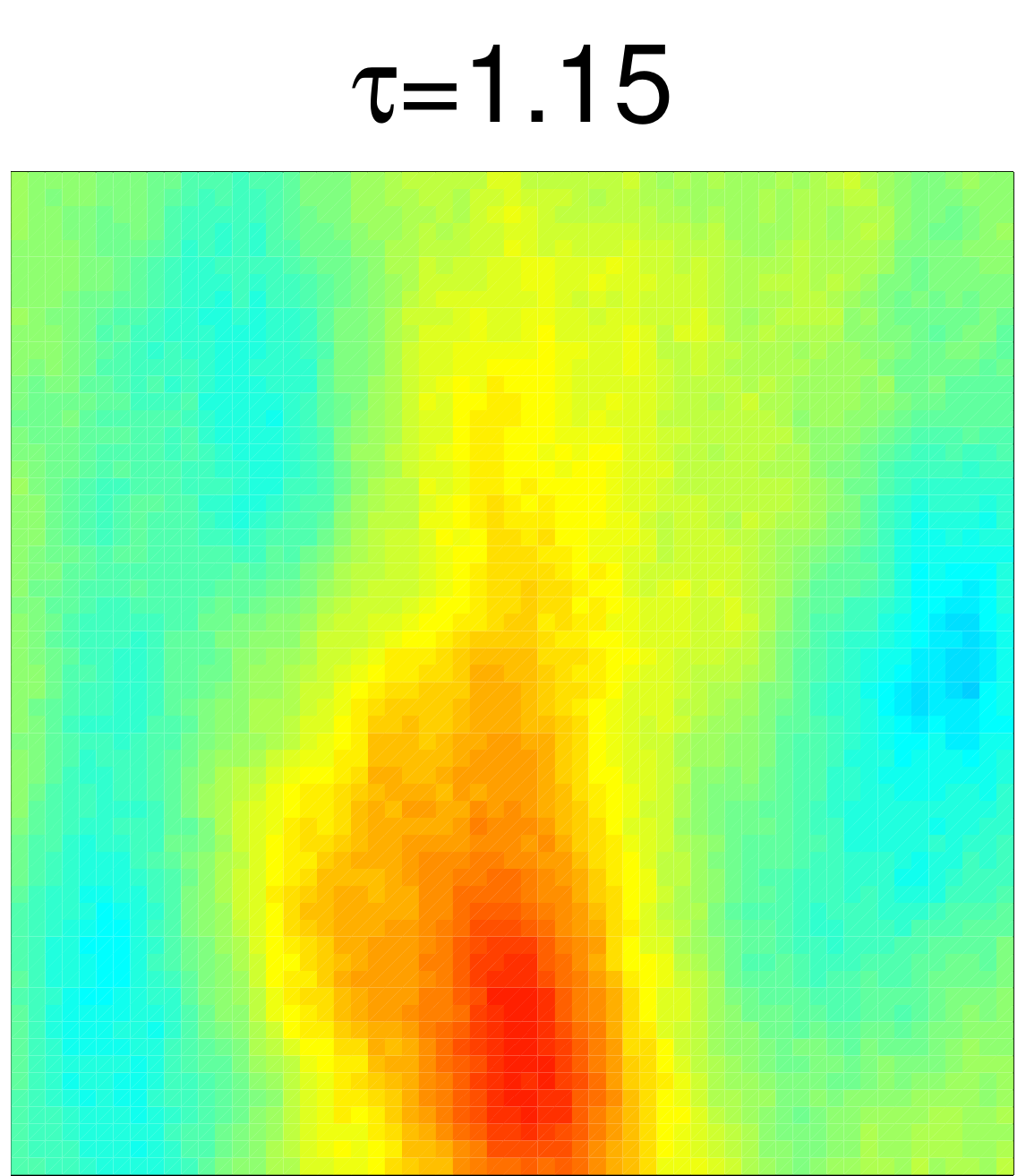}~
\includegraphics[scale=0.15]{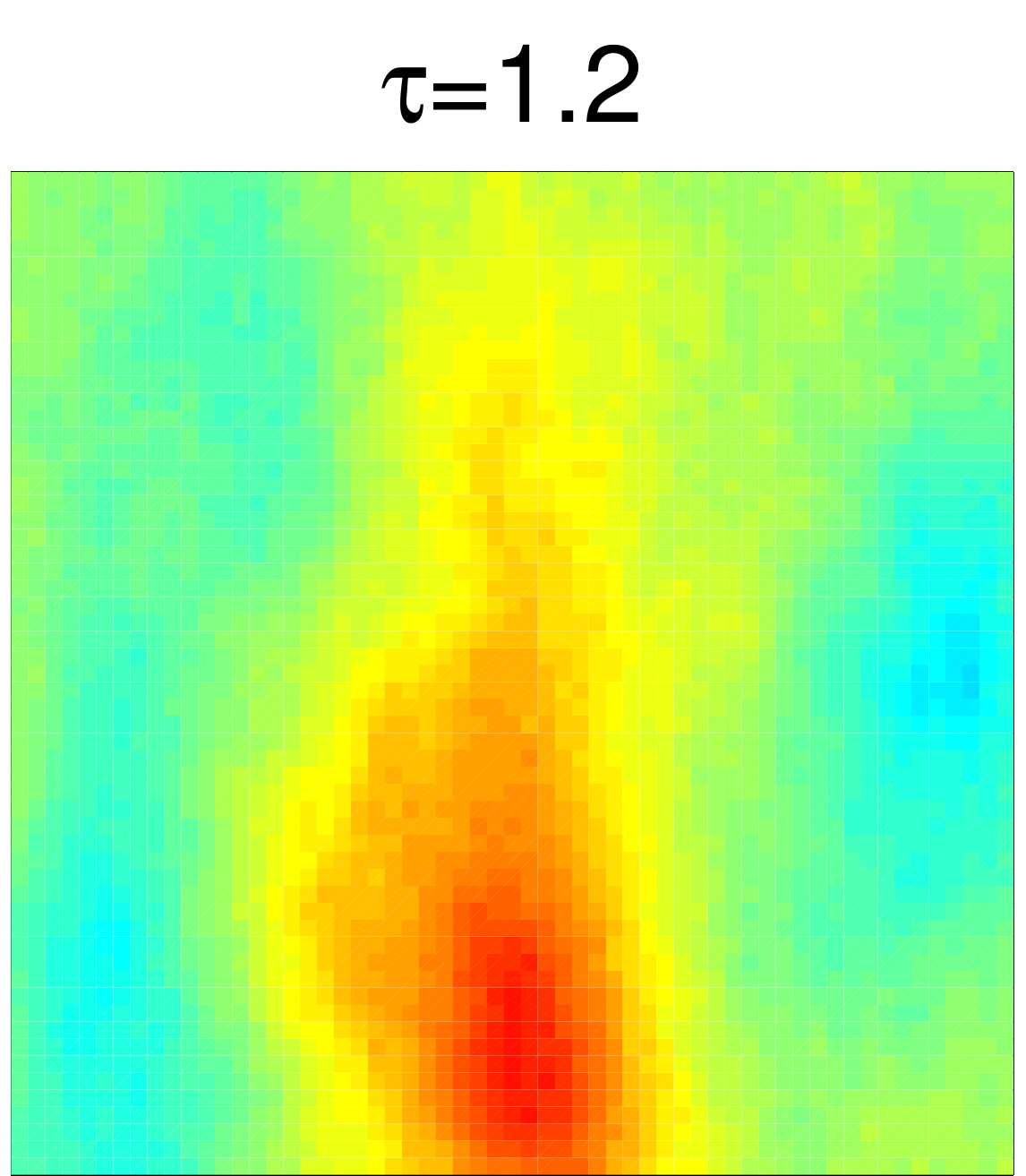}\\
\includegraphics[scale=0.15]{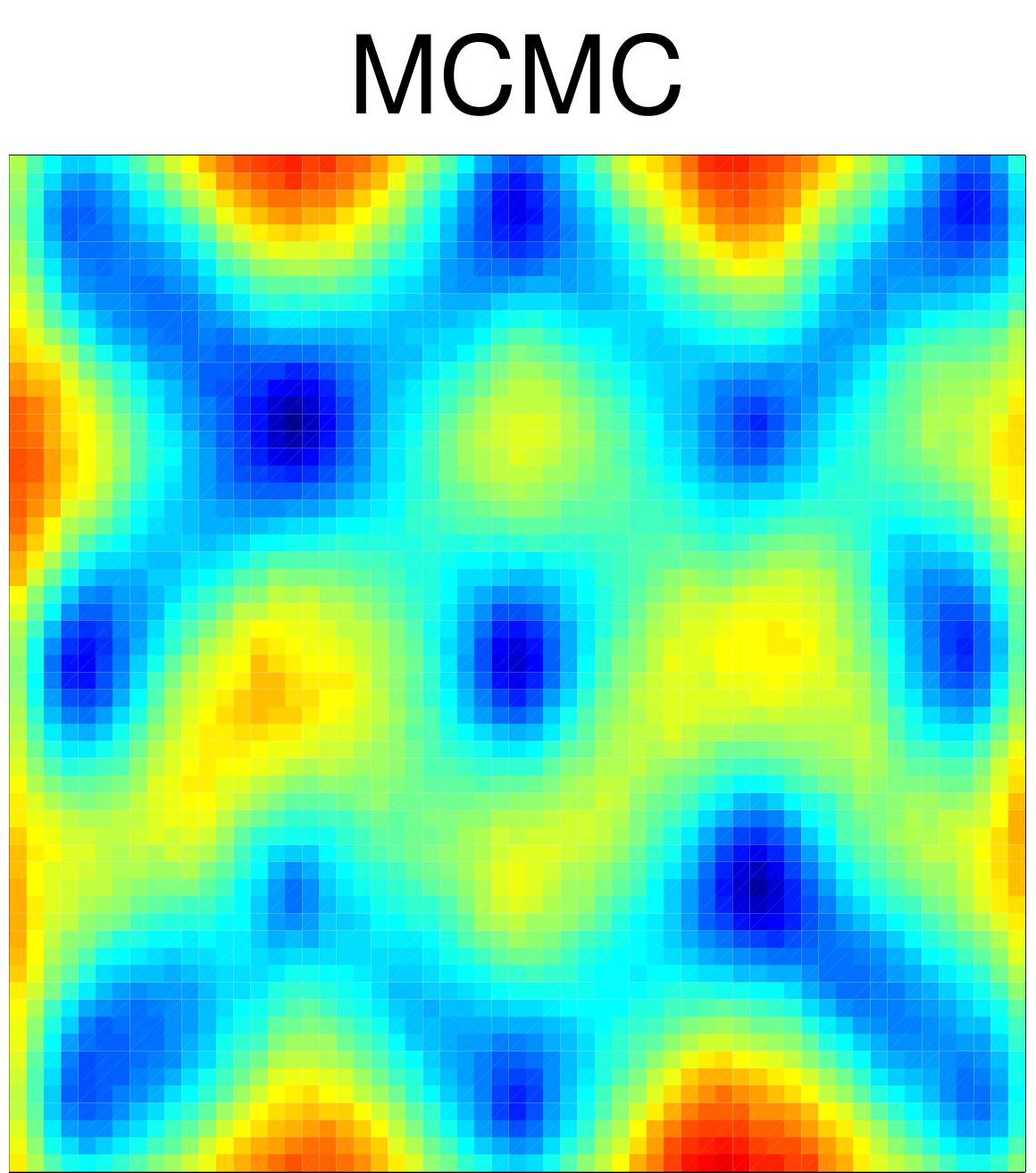}~
\includegraphics[scale=0.15]{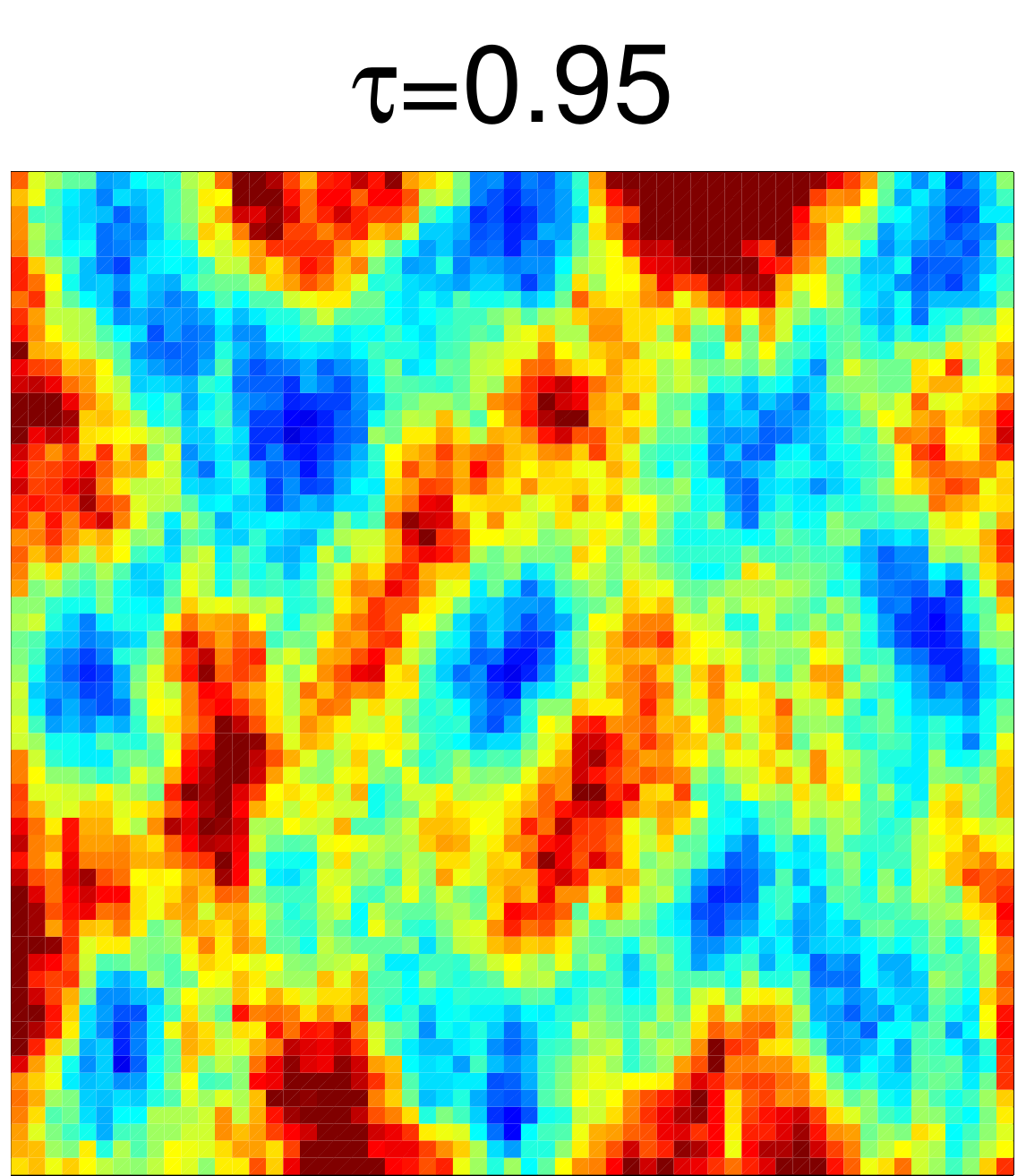}~
\includegraphics[scale=0.15]{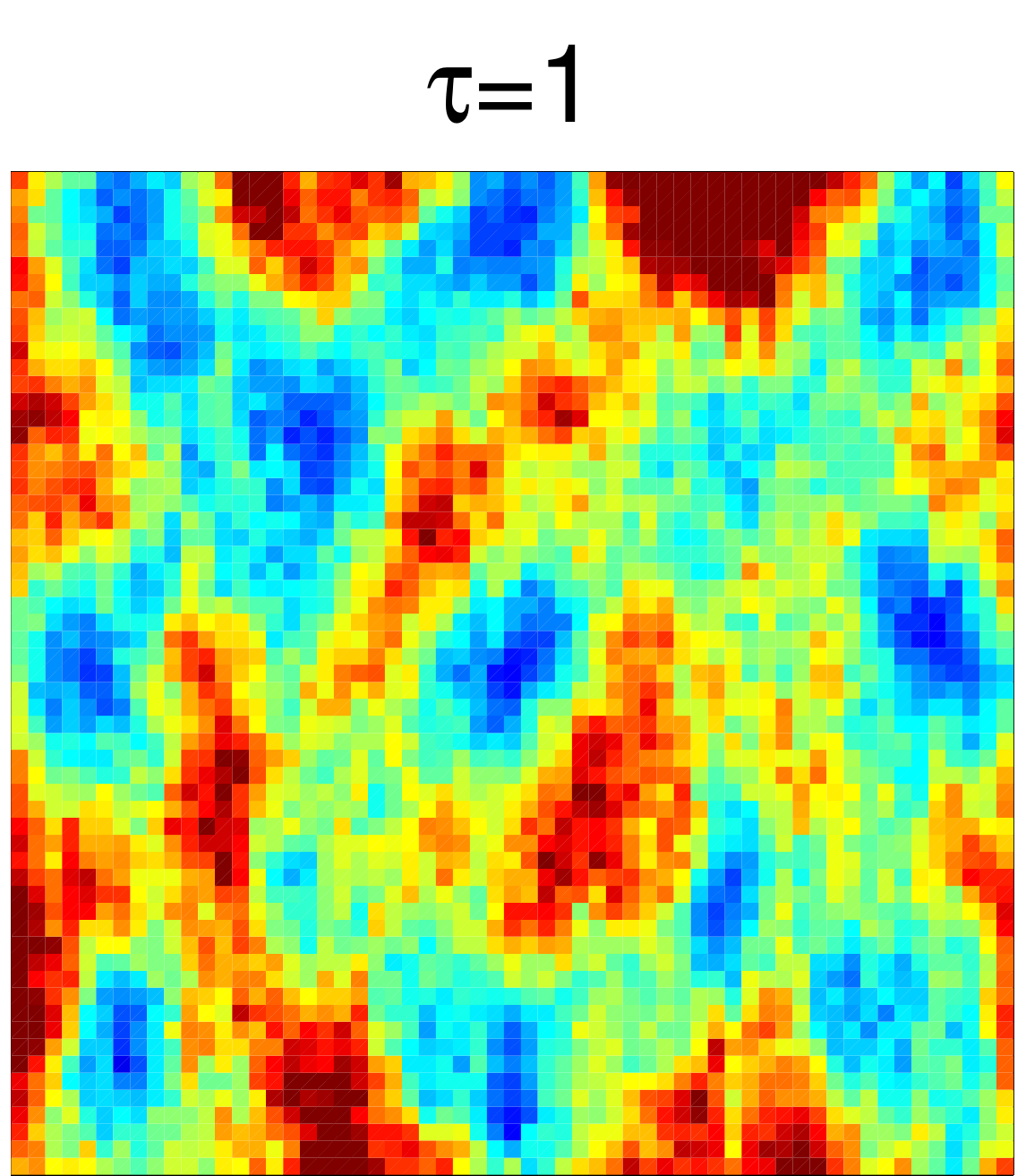}~
\includegraphics[scale=0.15]{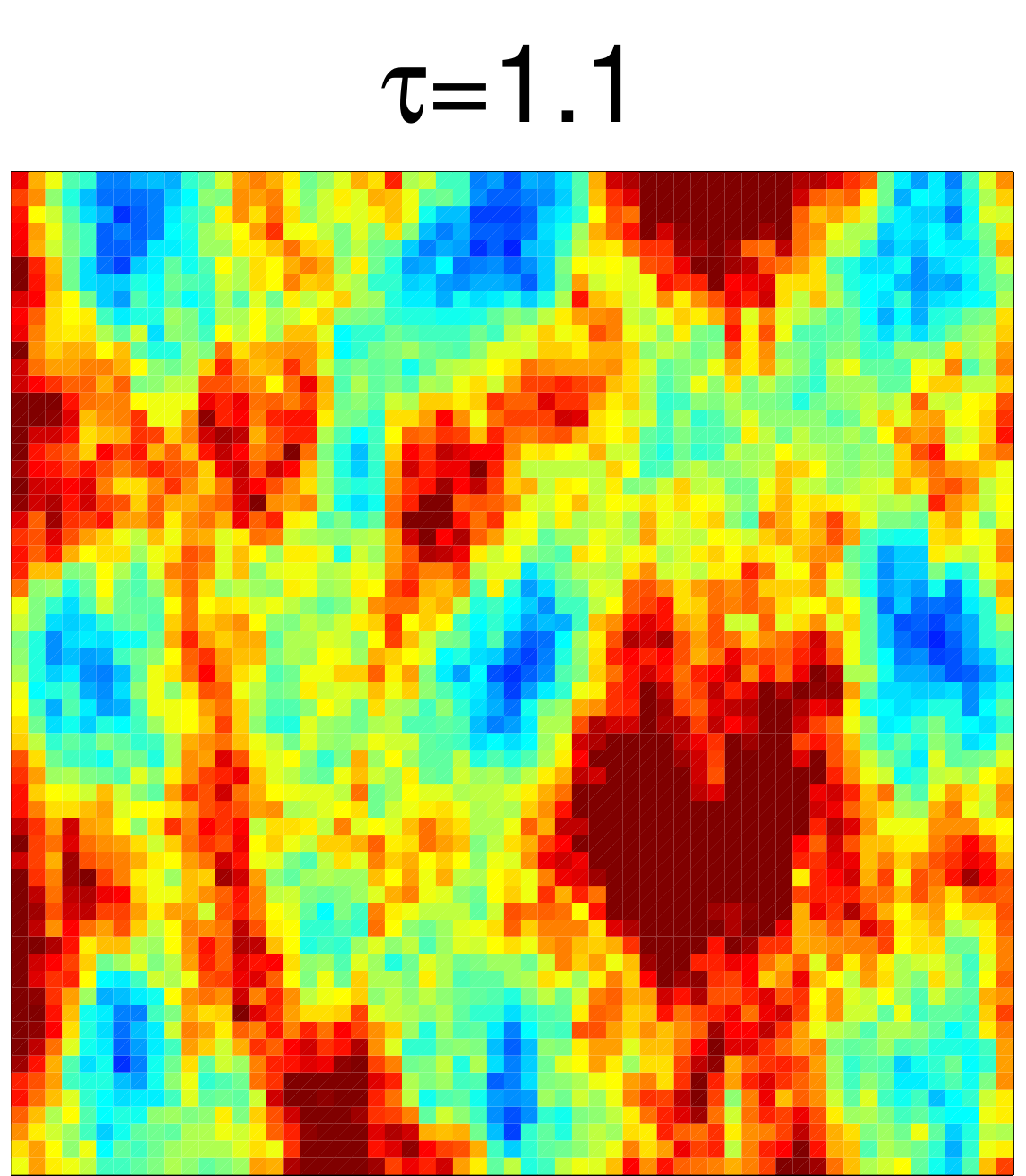}~
\includegraphics[scale=0.15]{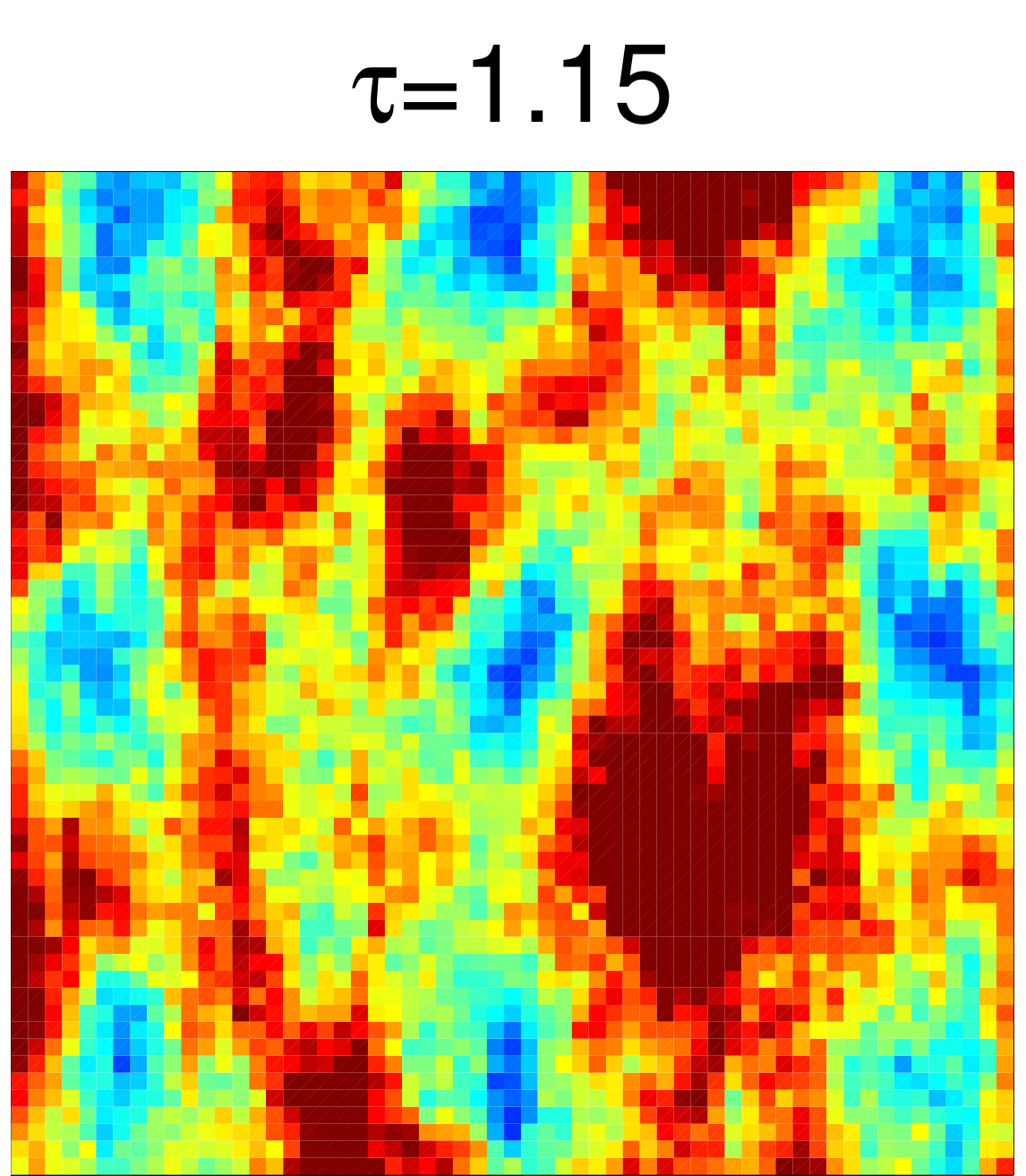}~
\includegraphics[scale=0.15]{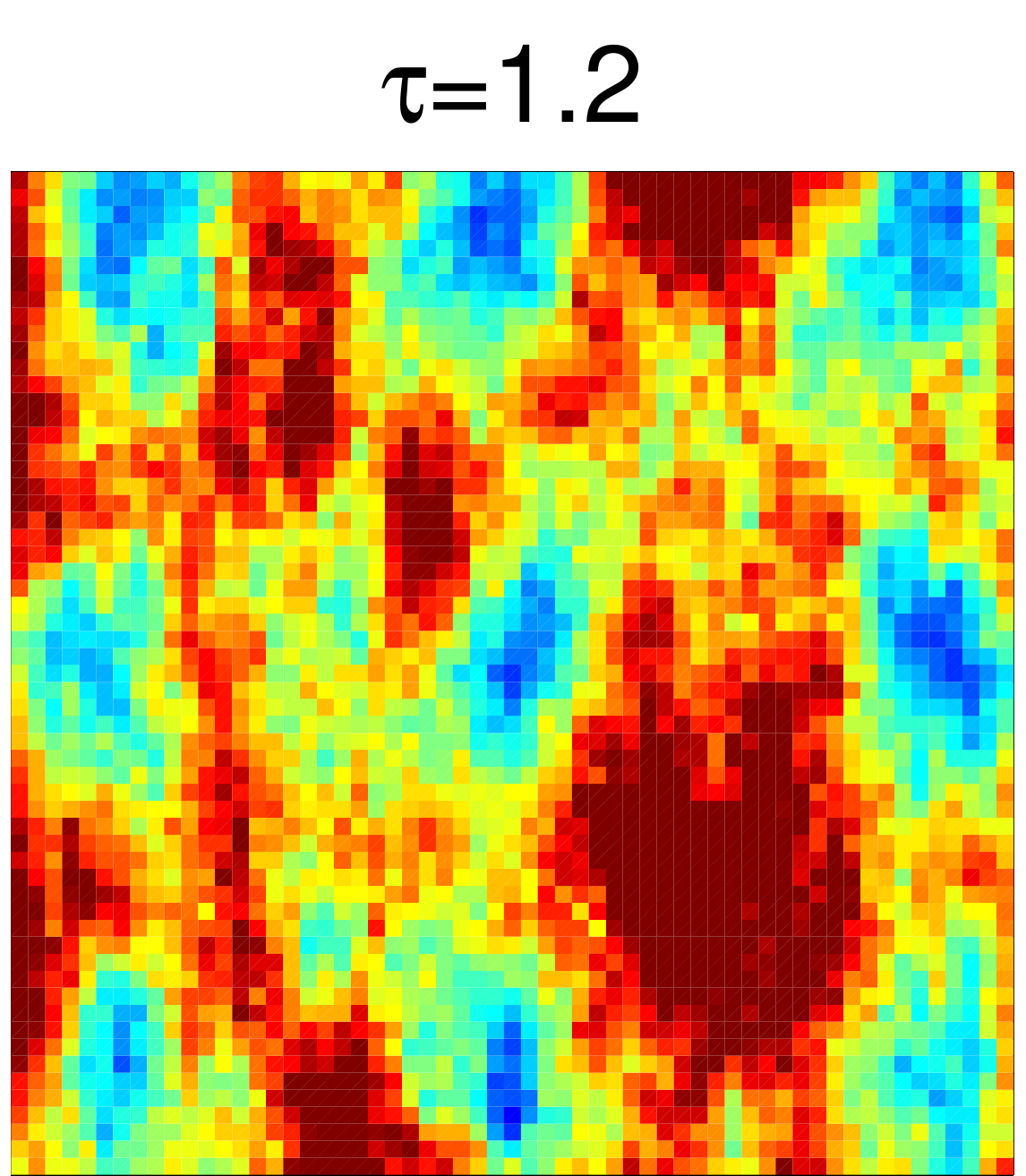}\\
\includegraphics[scale=0.225]{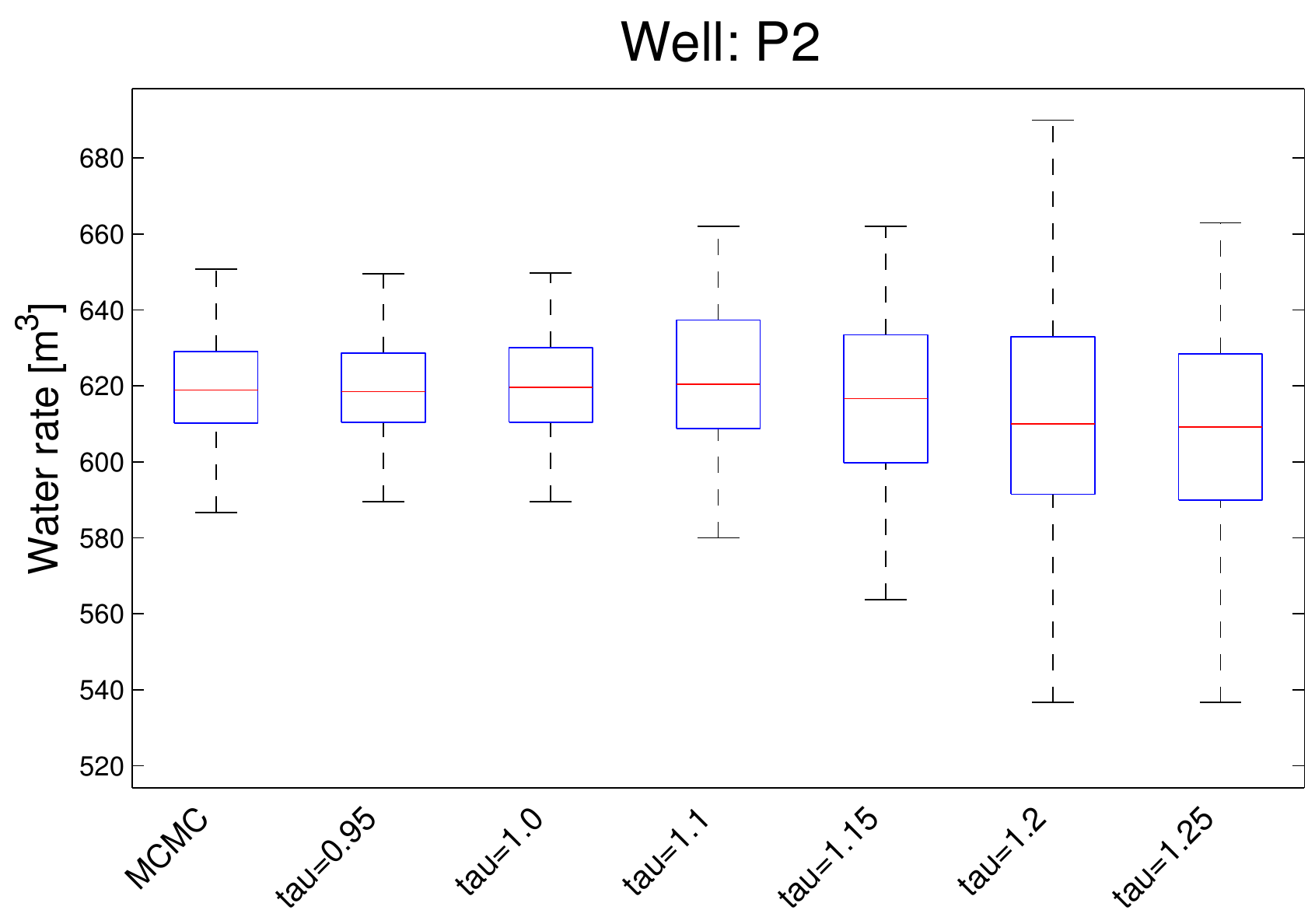}
\includegraphics[scale=0.225]{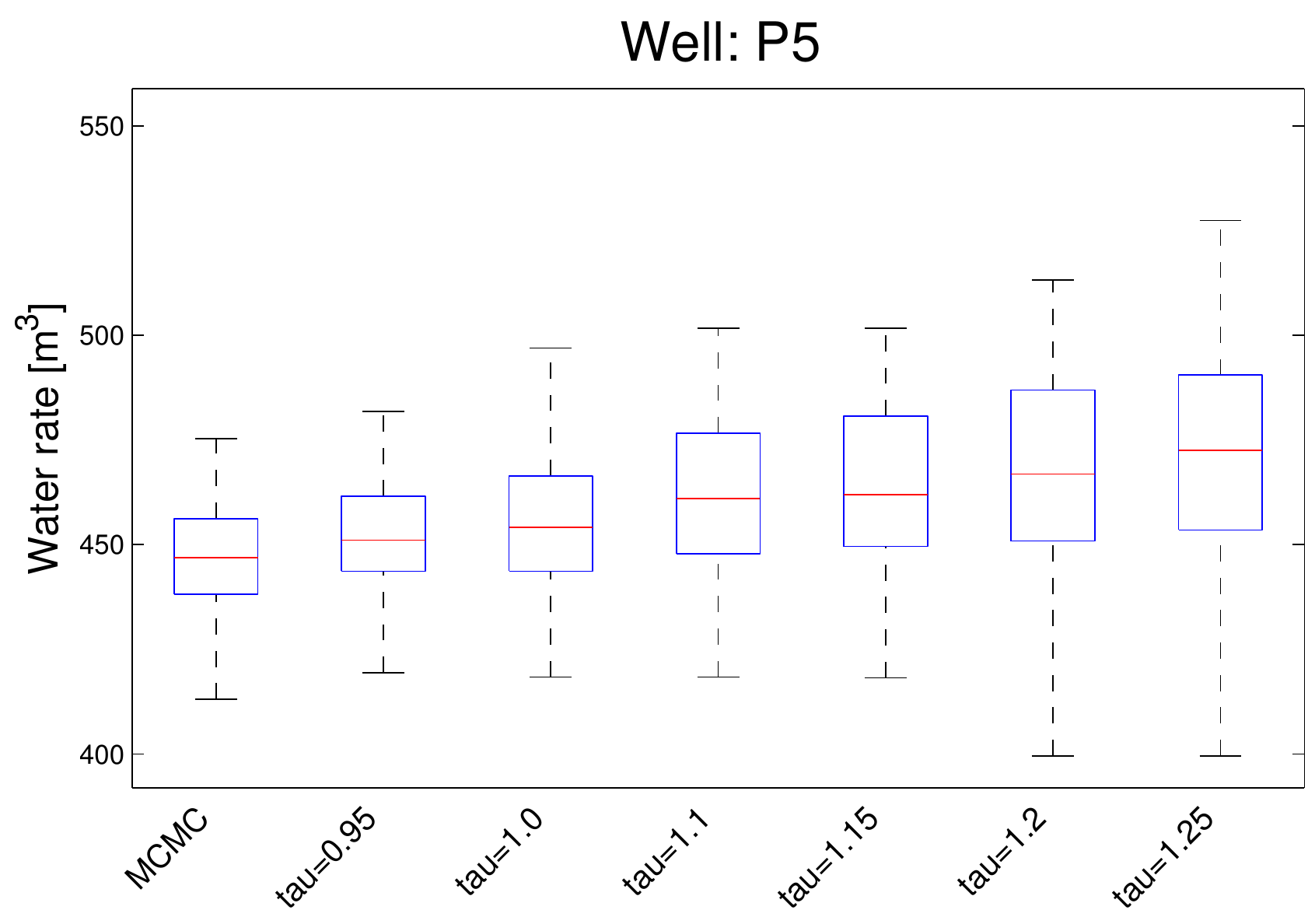}
\includegraphics[scale=0.225]{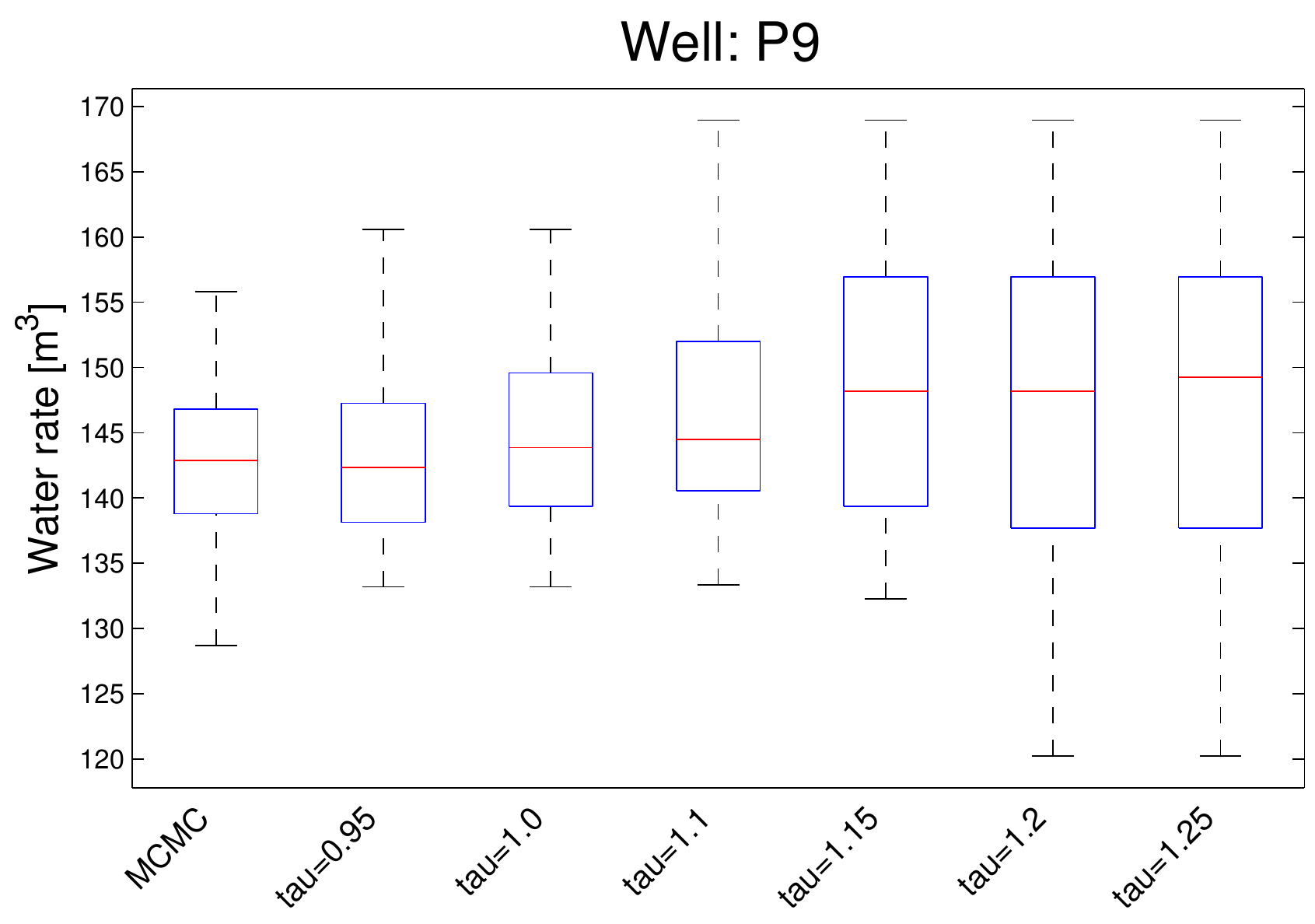}\\
\includegraphics[scale=0.225]{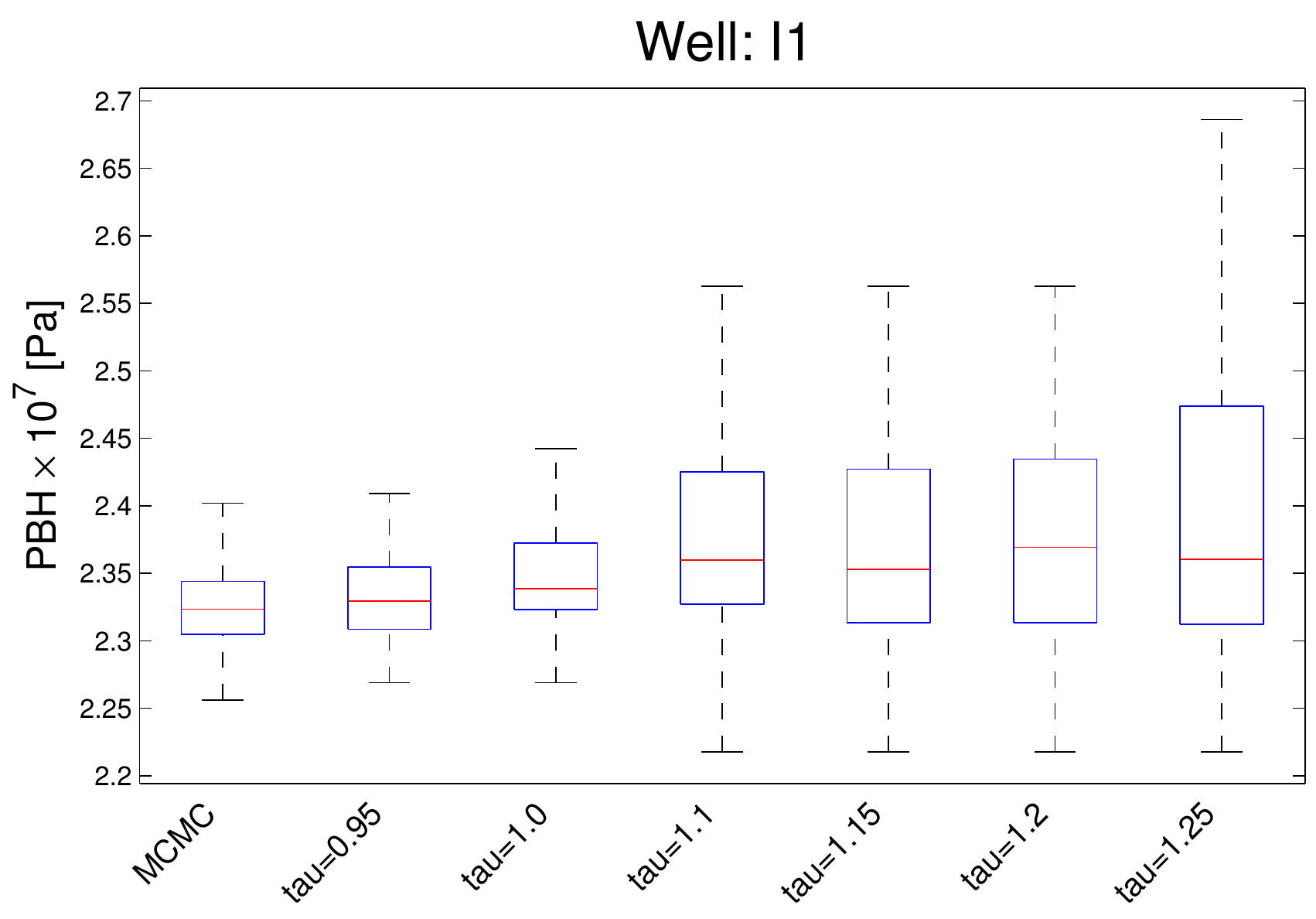}
\includegraphics[scale=0.225]{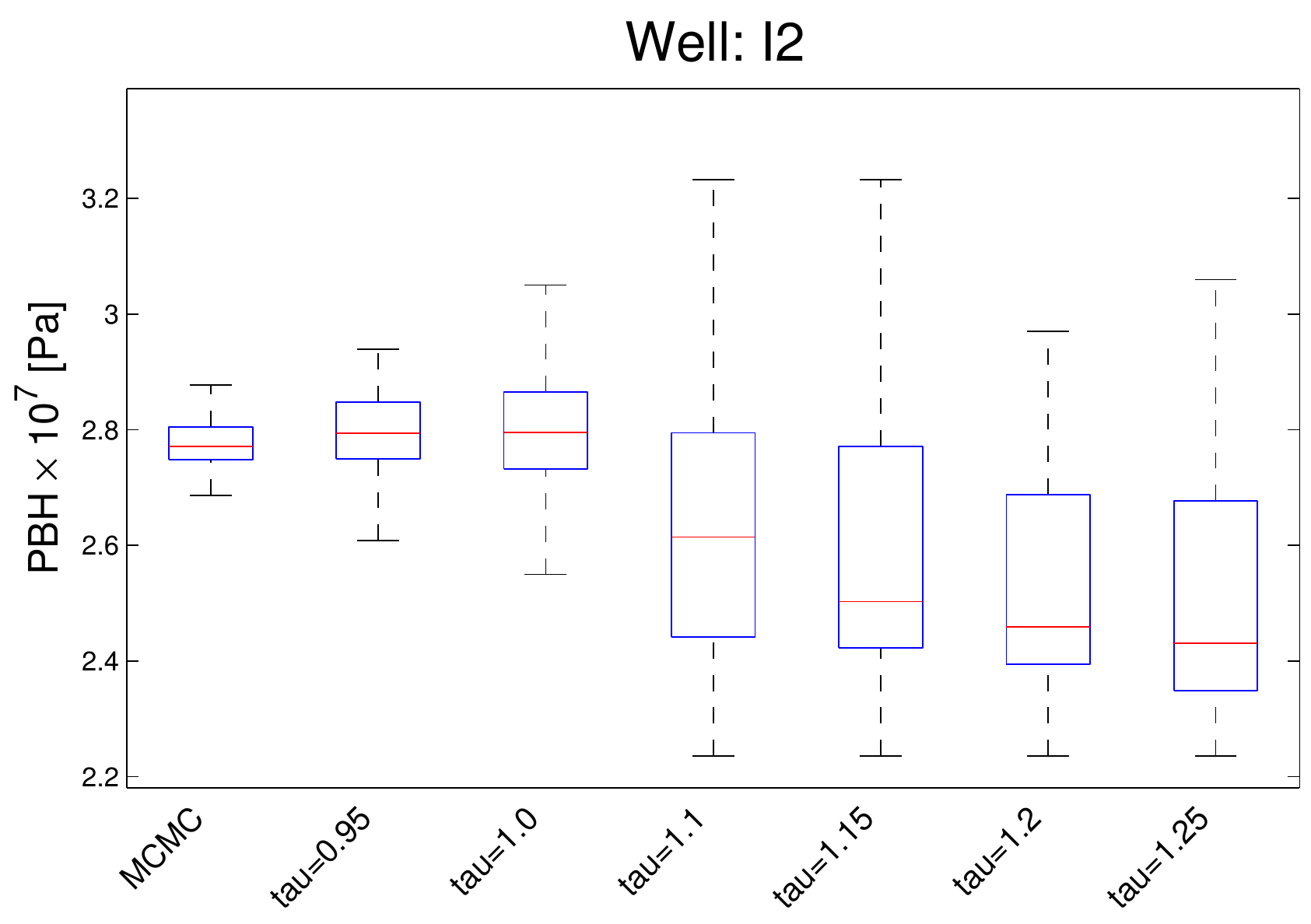}
\includegraphics[scale=0.225]{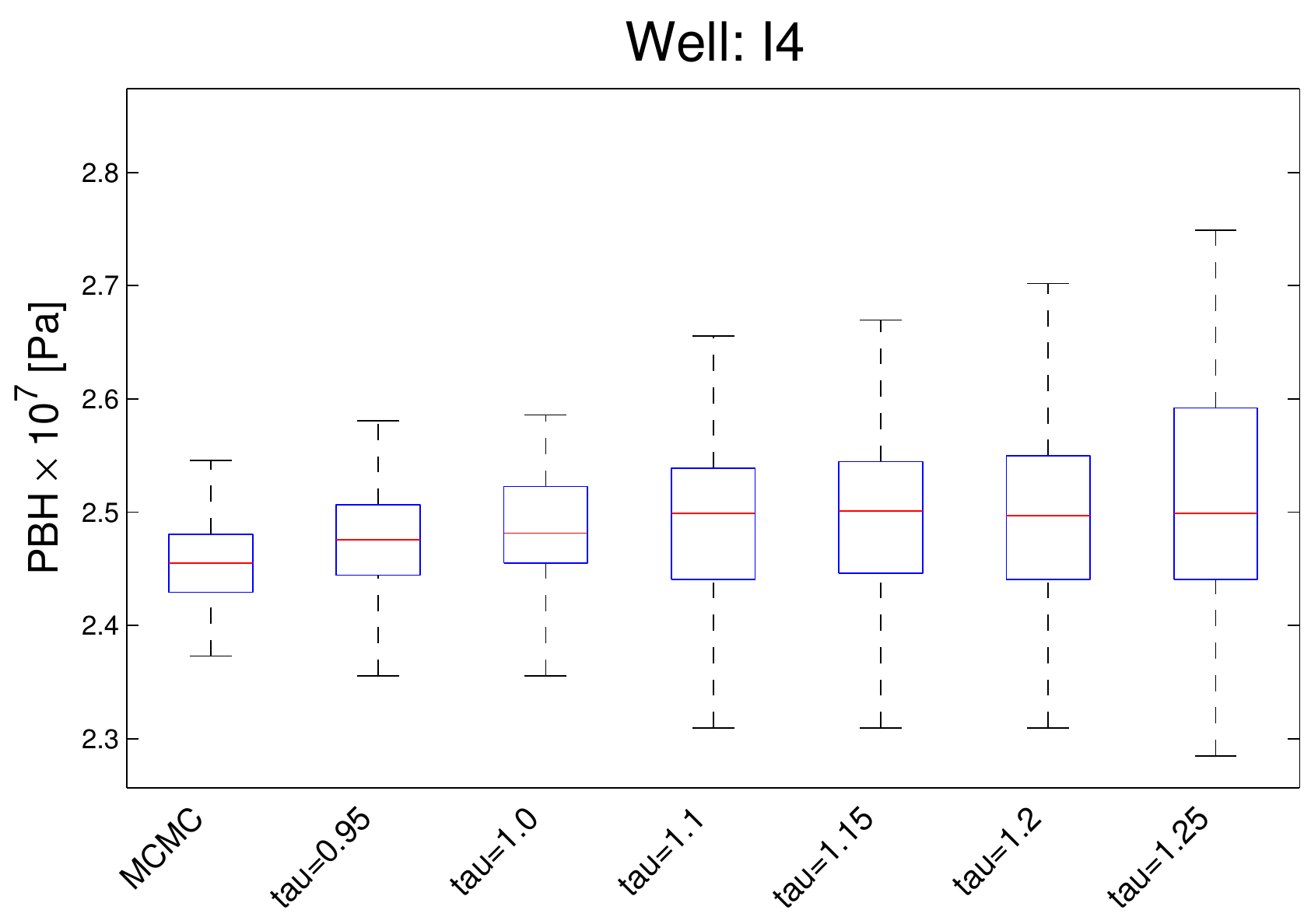}
\caption{Top row (mean of $\mu_A$) and Top-middle row (variance of $\mu_A$) from left to right: pcn-MCMC, IR-enLM approximations with $\rho=0.8$, $N_{e}=50$ and $\tau=0.95$, $\tau=1.0$, $\tau=1.1$, $\tau=1.15$, $\tau=1.2$. (color scales are the same as in the second row of Figure \ref{Figure1}). Middle-bottom and bottom row: Box plots of water rates from production wells $P_{2},P_{5}, P_{9}$ and PBH from injections wells $I_{1},I_{2},I_{4}$ after 6 years of water flood simulated from $\mu_{A}$ (with MCMC ) and the ensemble approximation with IR-enLM for $\rho=0.8$ and different choices of $\tau$}\label{Figure4}
\end{center}
\end{figure}

\begin{figure}
\begin{center}
\includegraphics[scale=0.15]{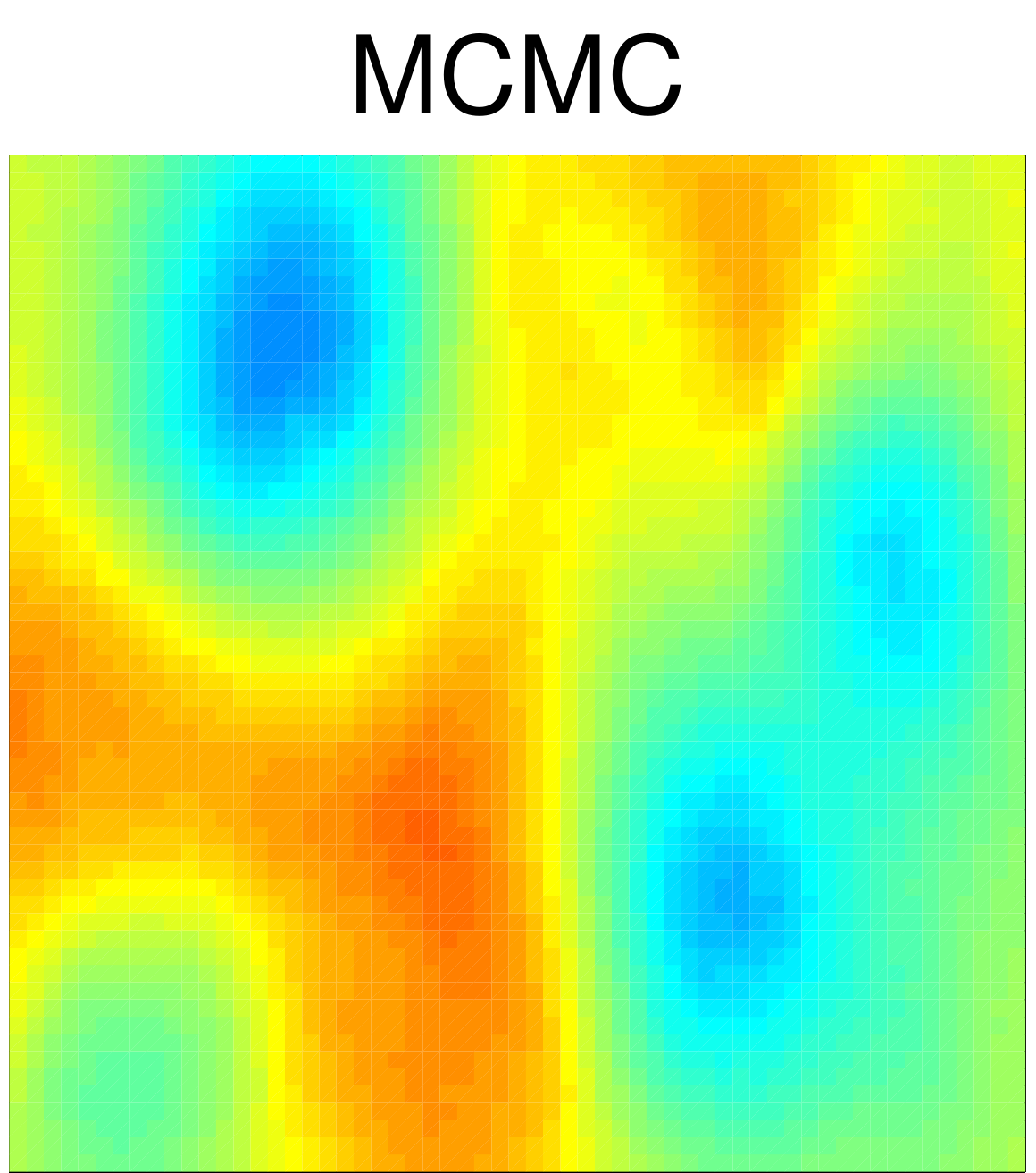}~
\includegraphics[scale=0.15]{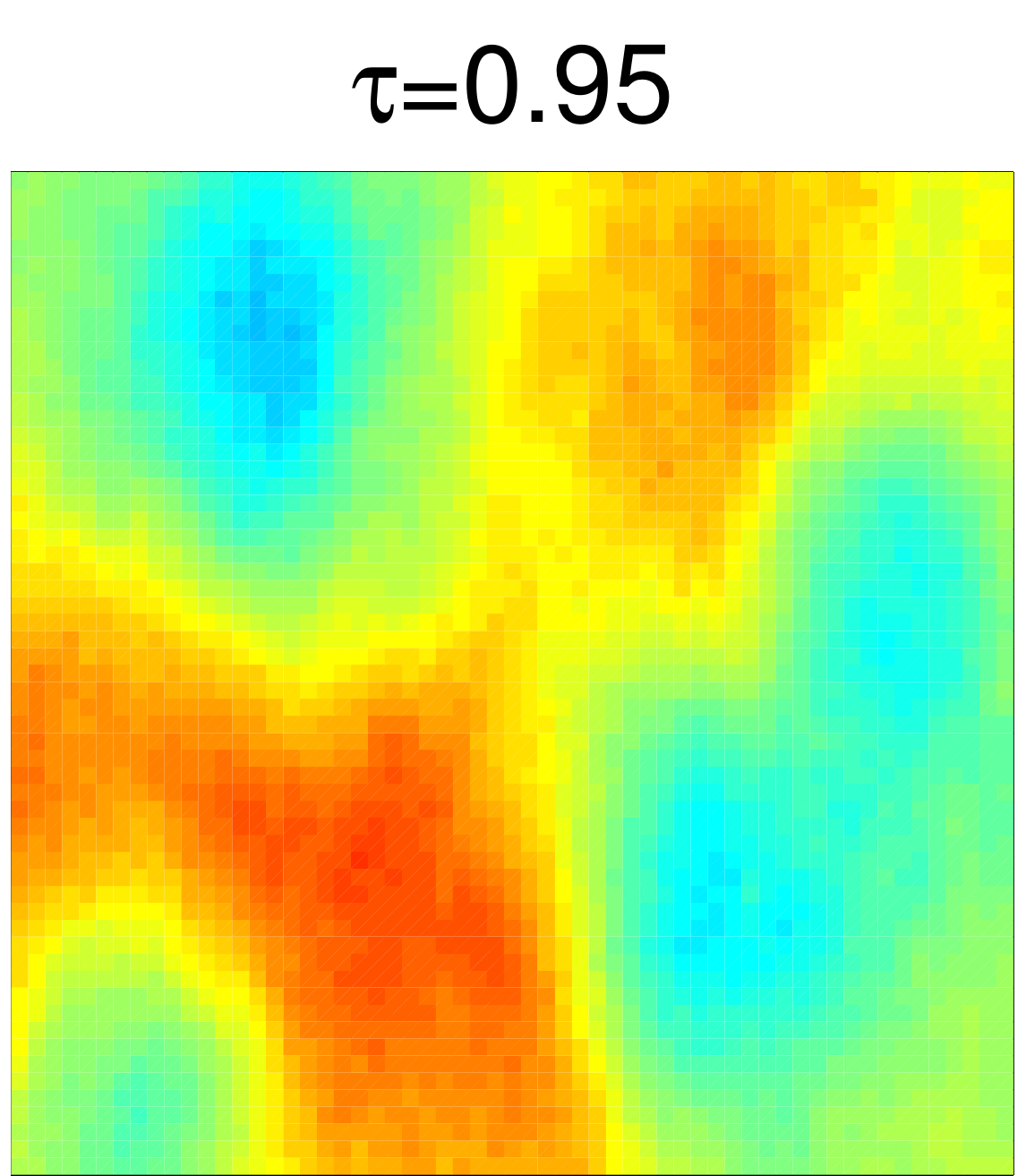}~
\includegraphics[scale=0.15]{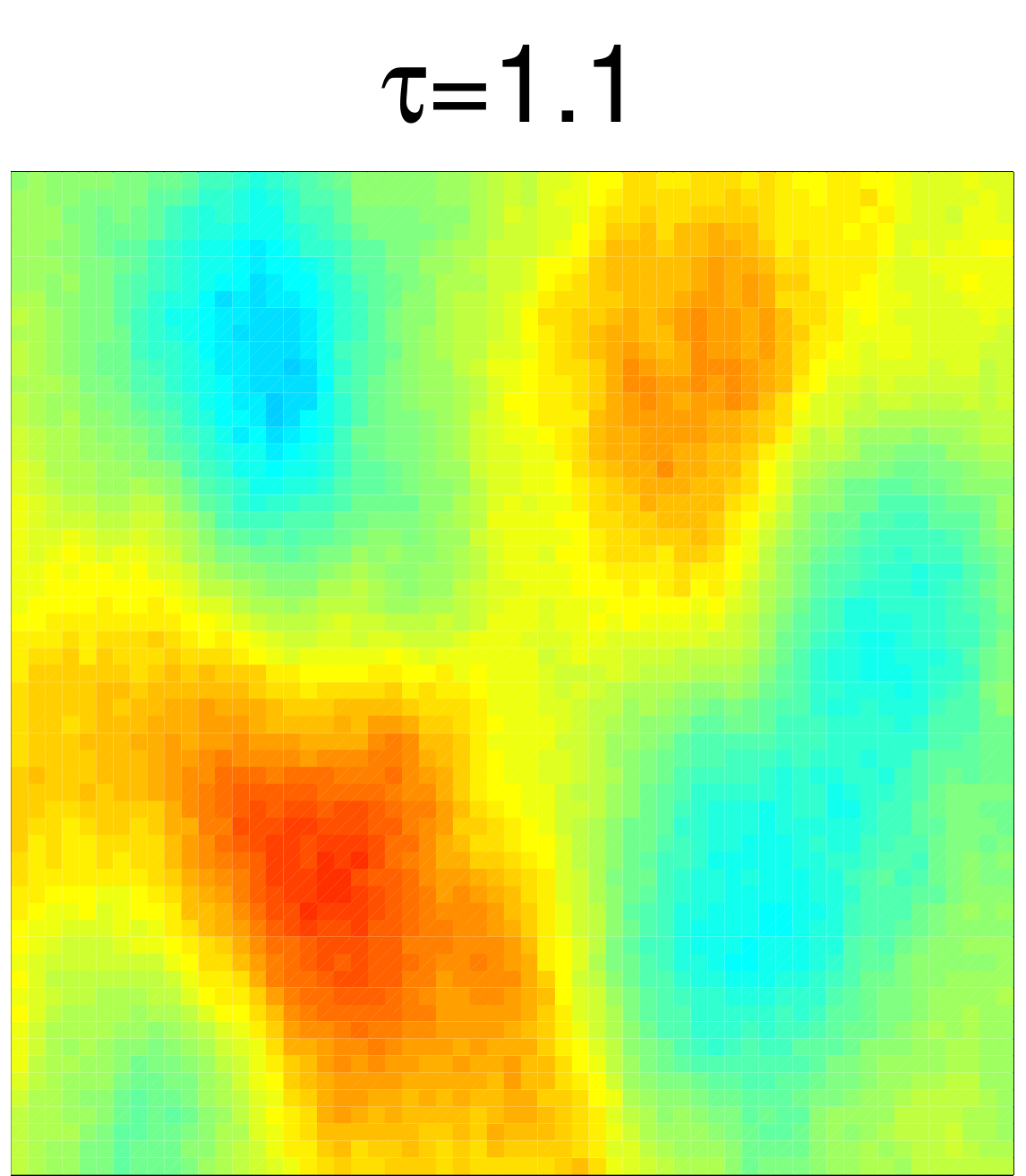}~
\includegraphics[scale=0.15]{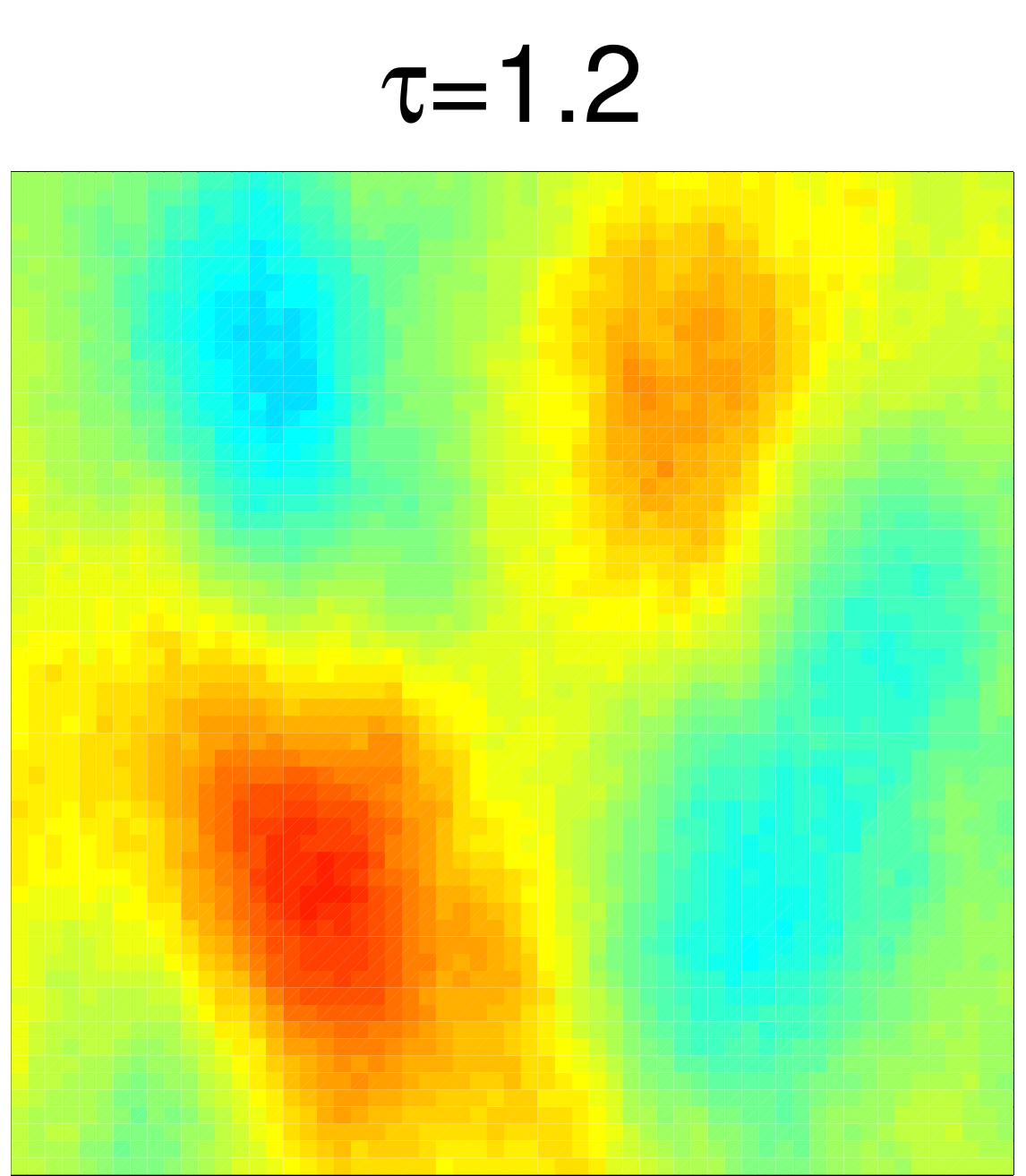}~
\includegraphics[scale=0.15]{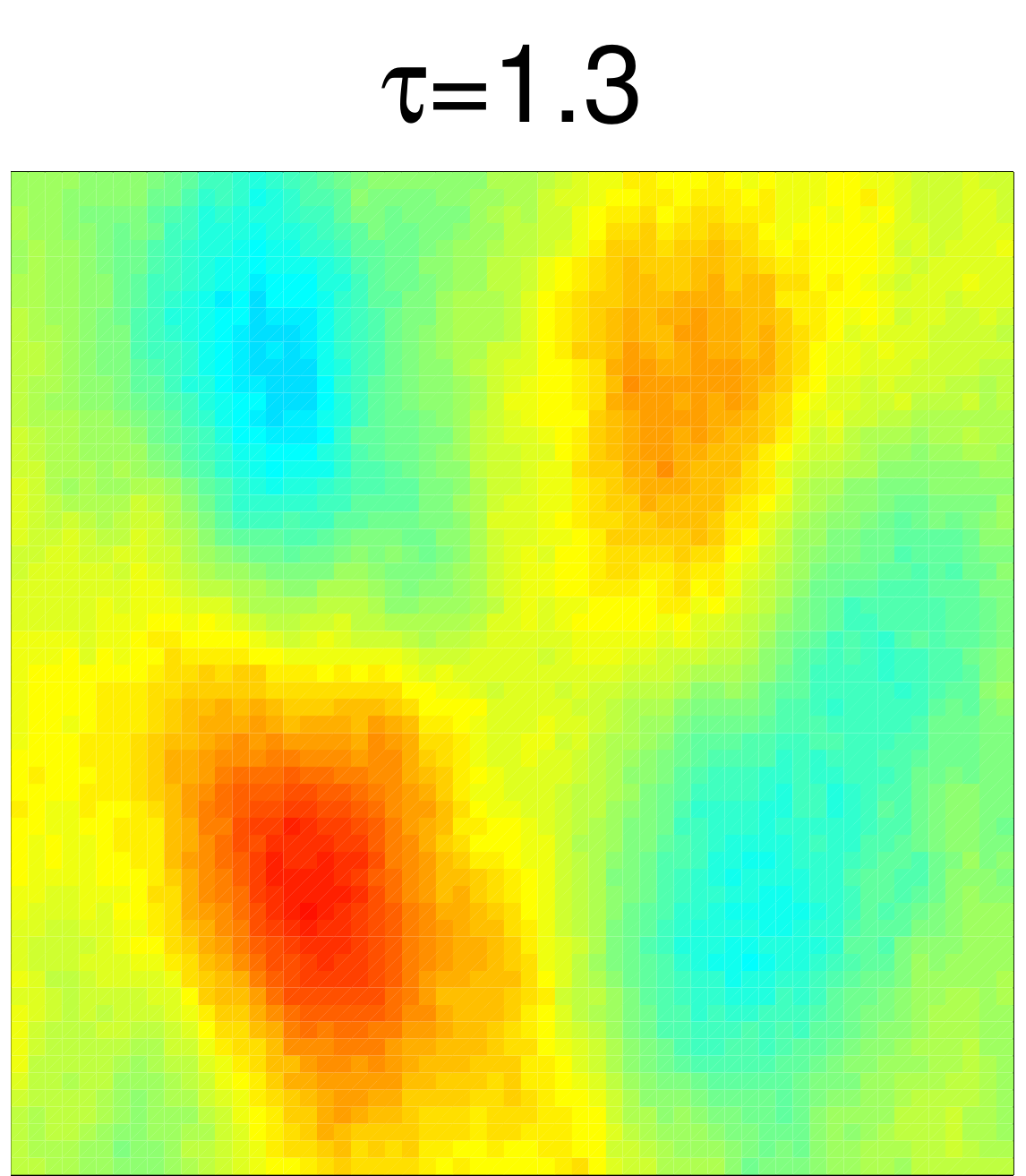}~
\includegraphics[scale=0.15]{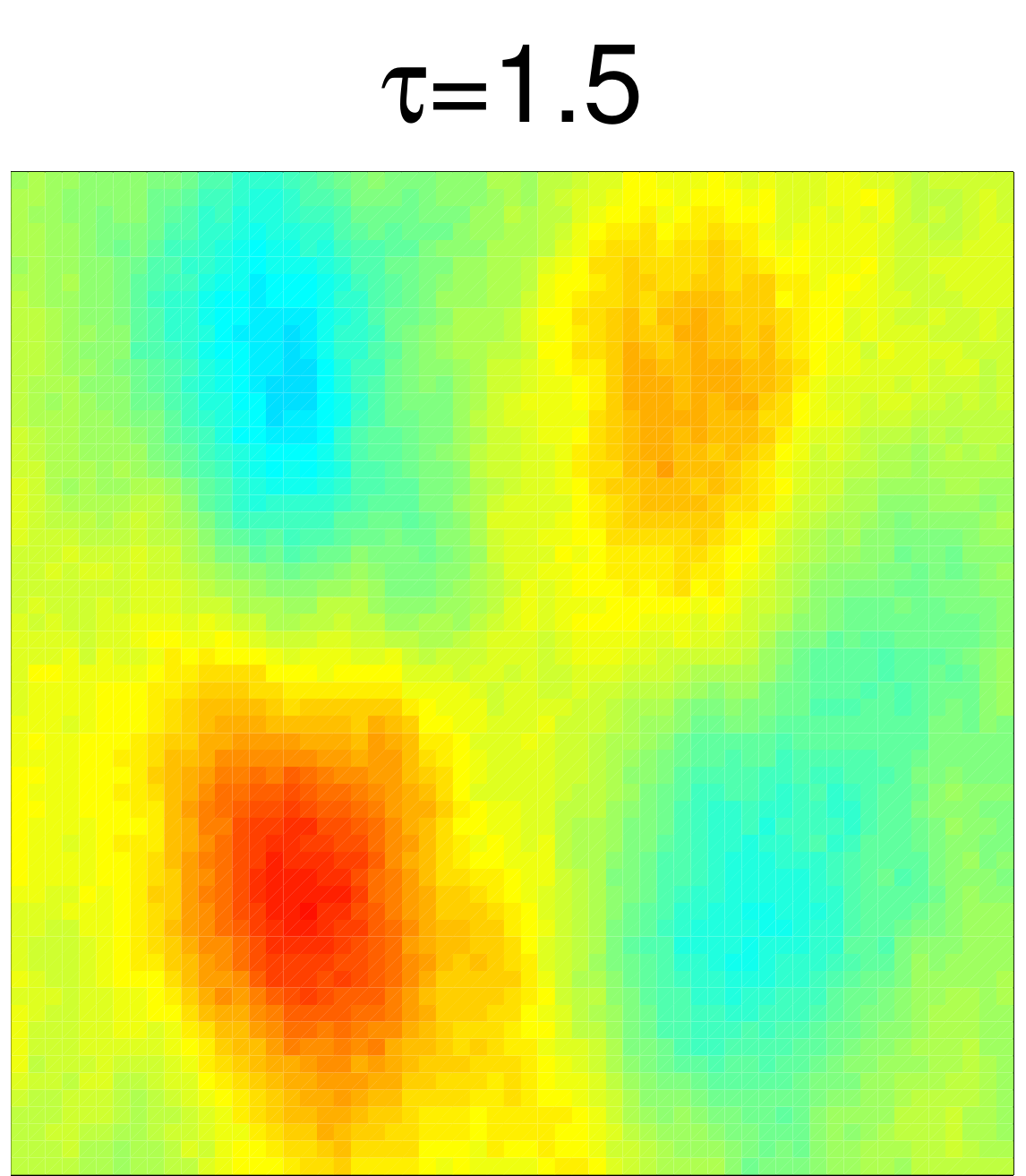}\\
\includegraphics[scale=0.15]{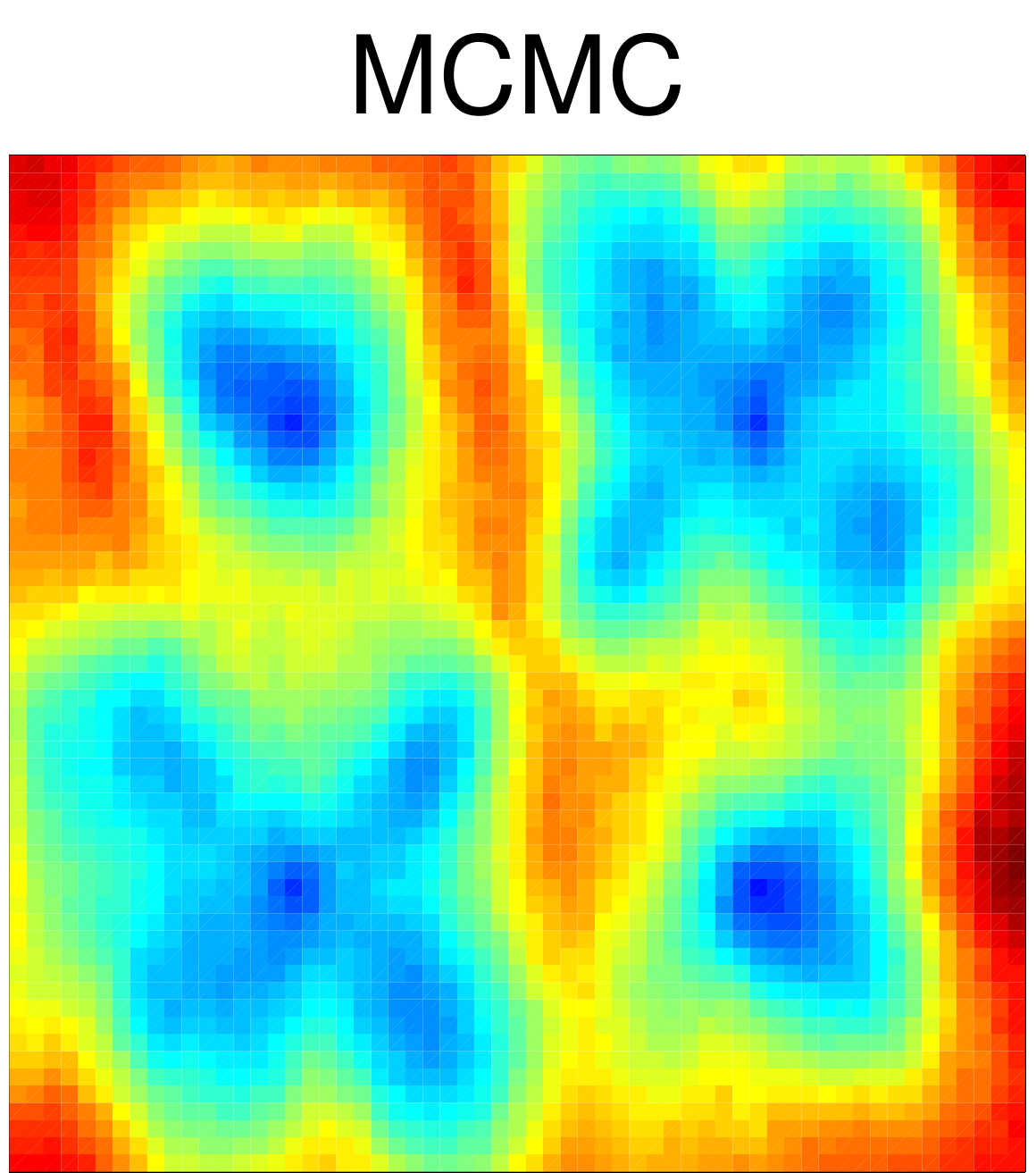}~
\includegraphics[scale=0.15]{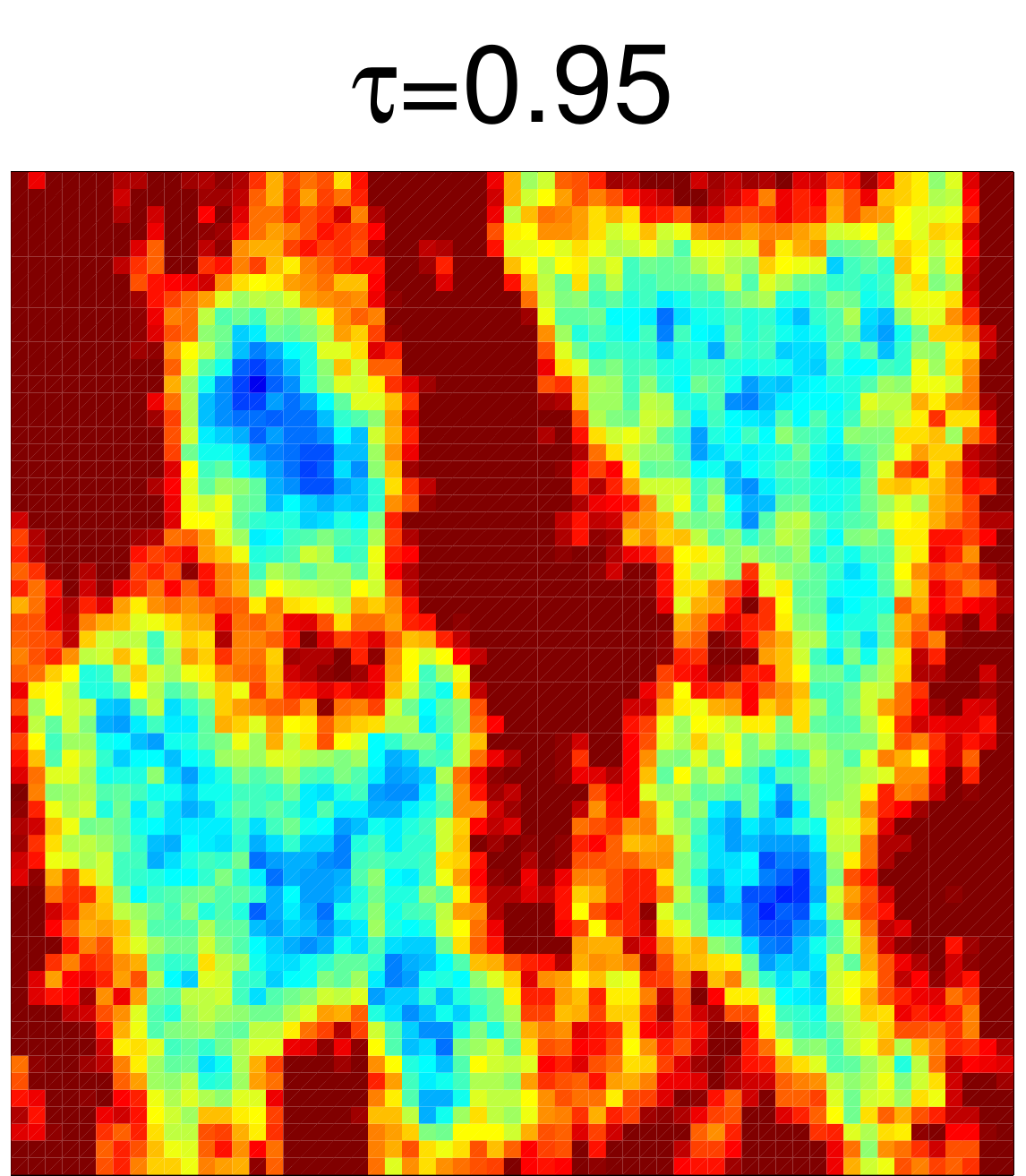}~
\includegraphics[scale=0.15]{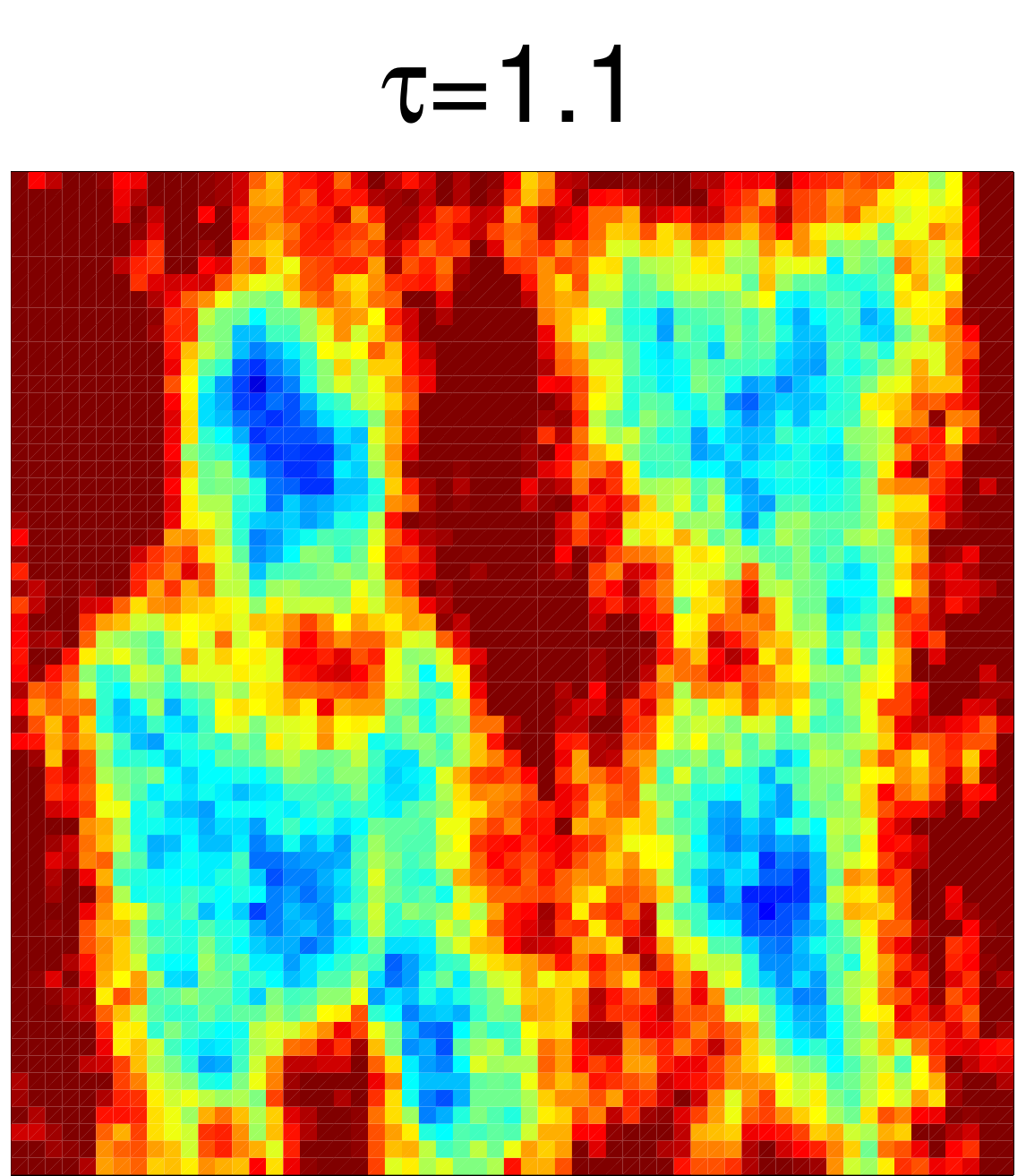}~
\includegraphics[scale=0.15]{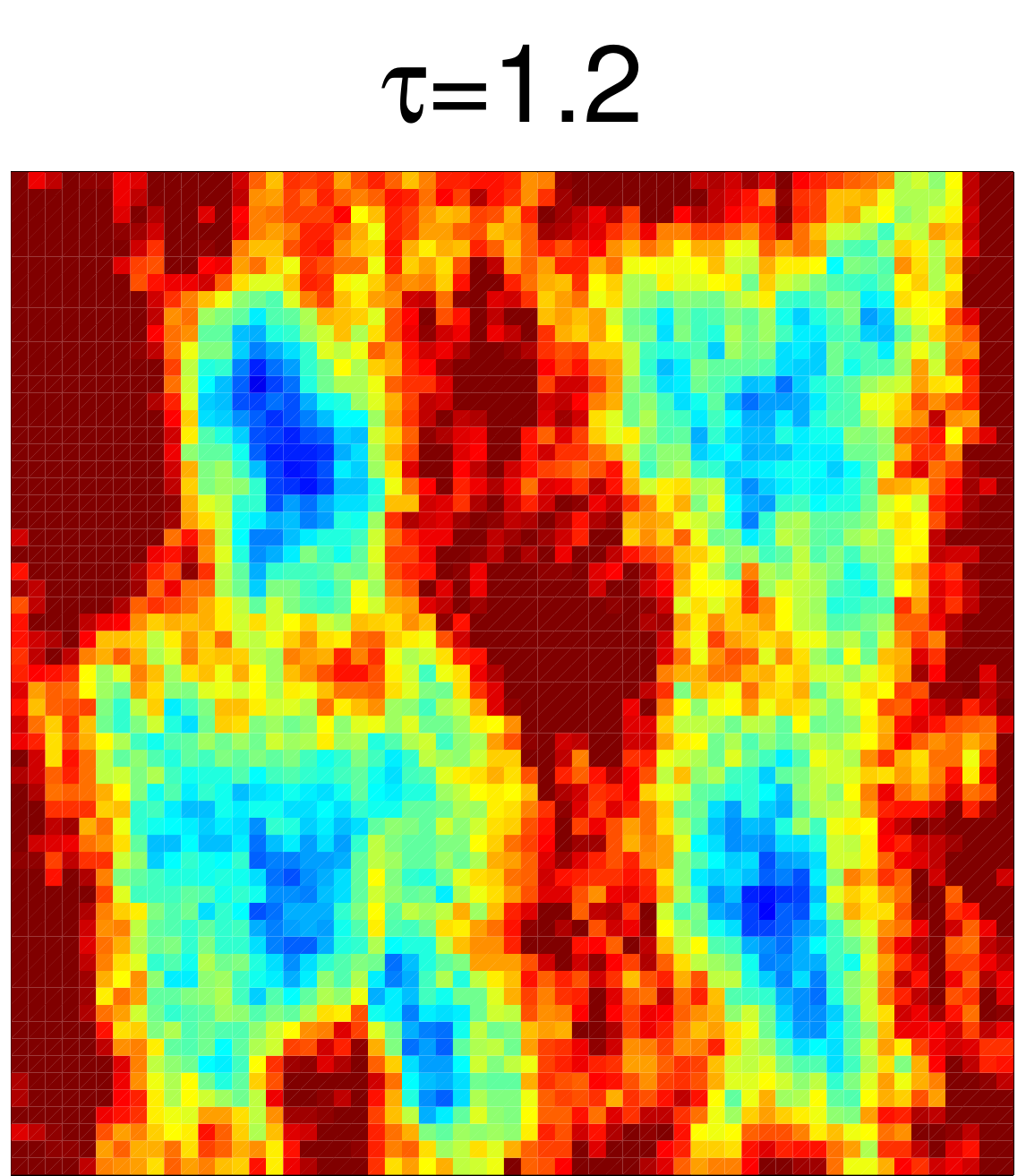}~
\includegraphics[scale=0.15]{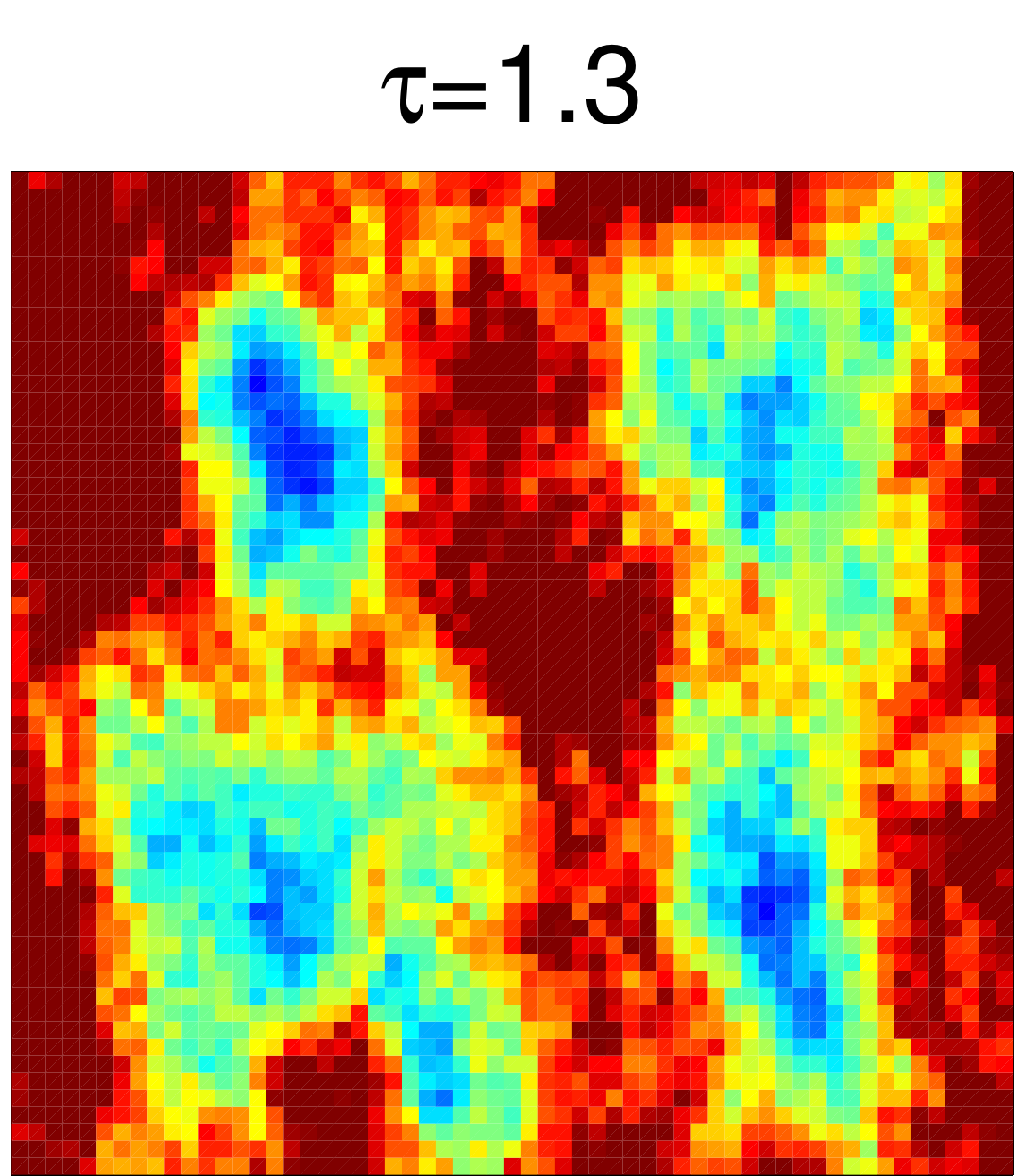}~
\includegraphics[scale=0.15]{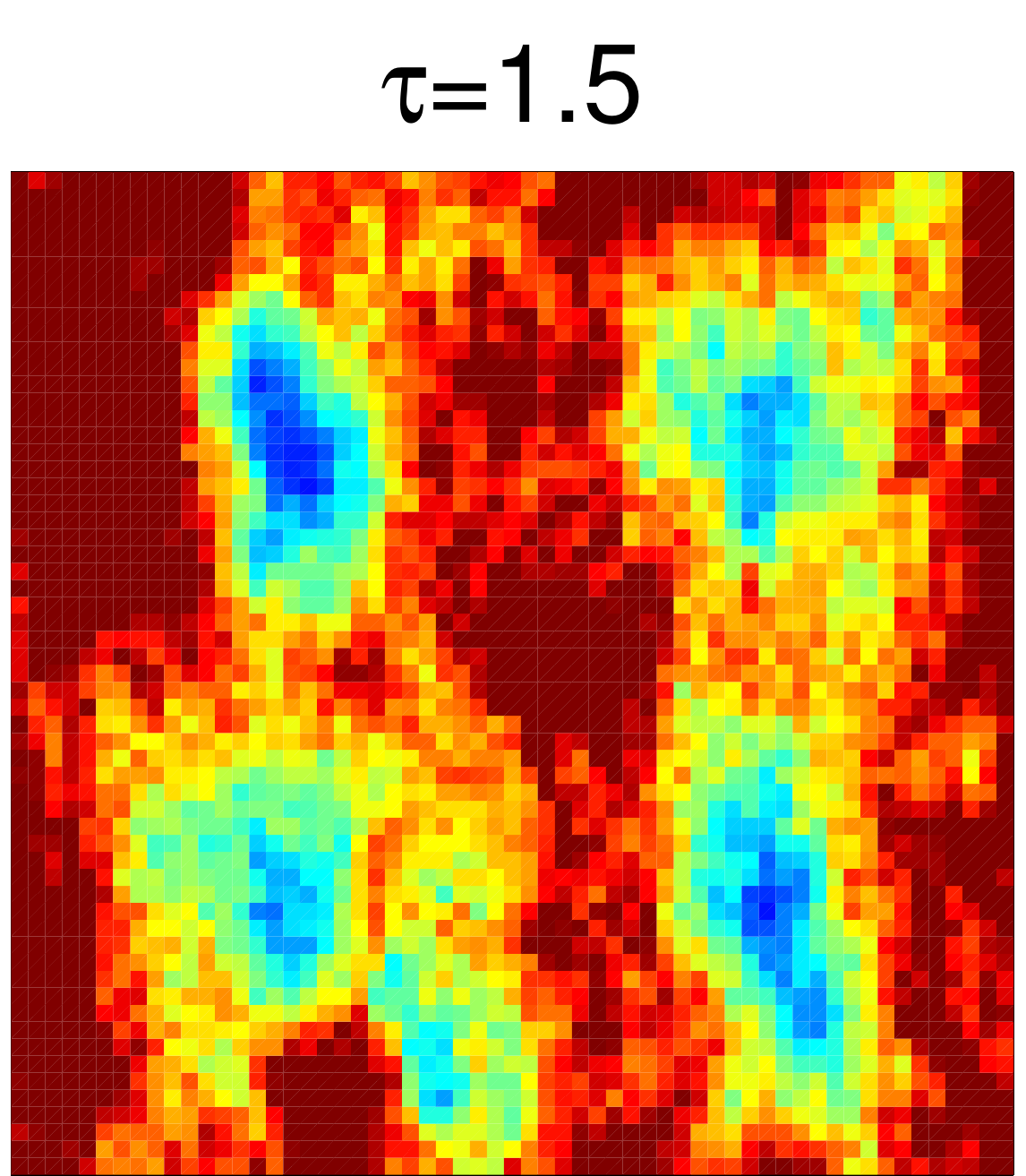}\\
\includegraphics[scale=0.225]{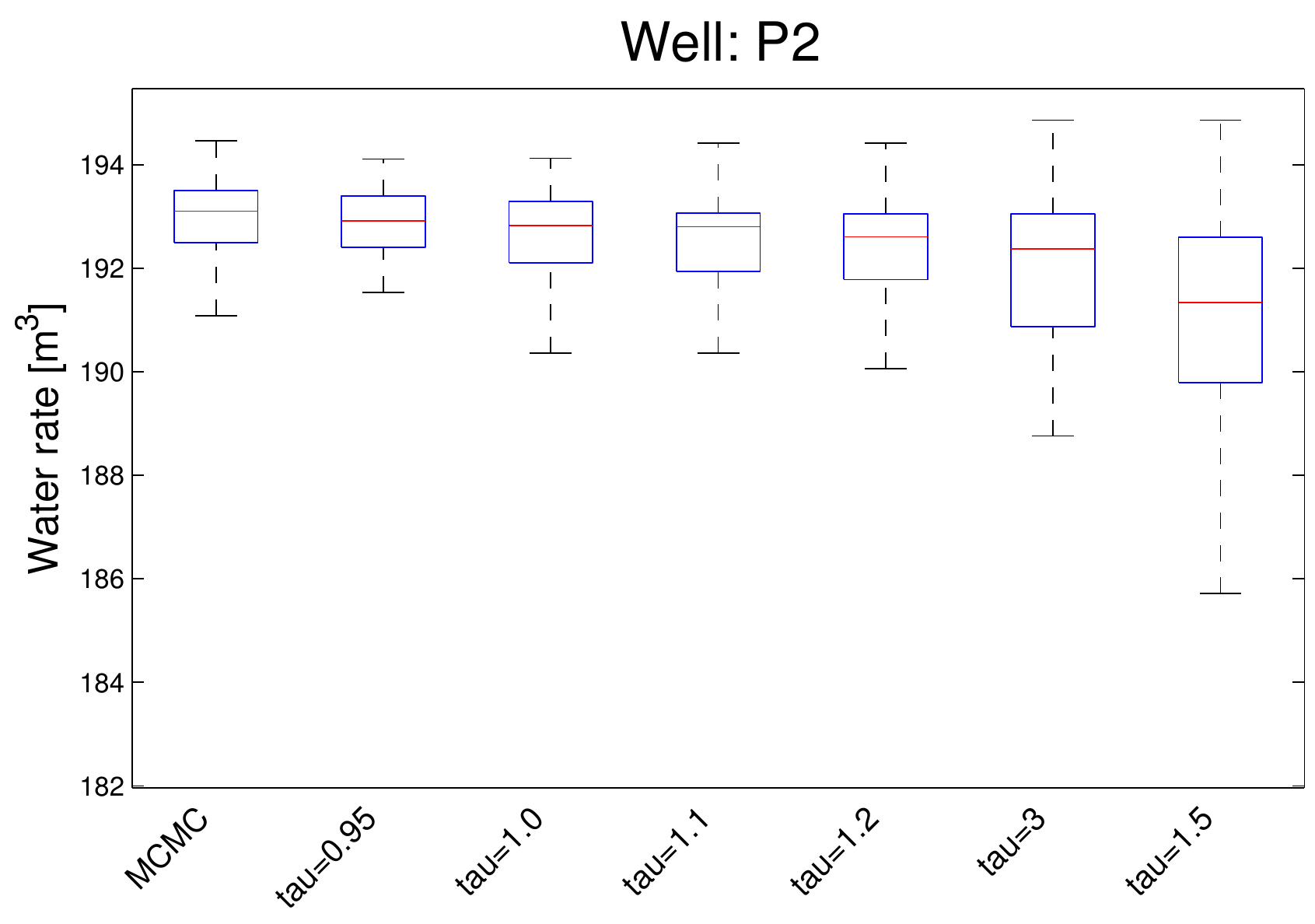}
\includegraphics[scale=0.225]{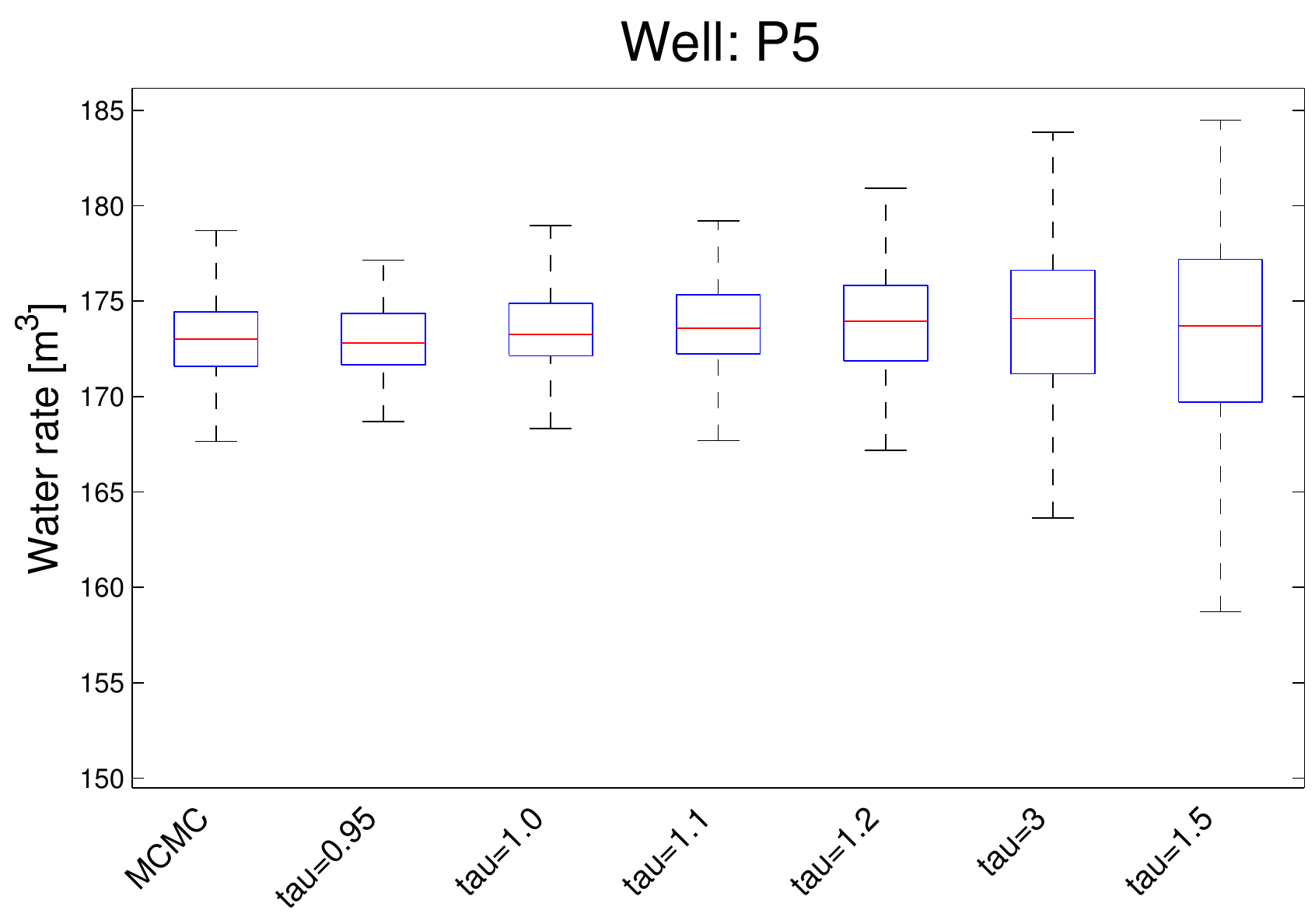}
\includegraphics[scale=0.225]{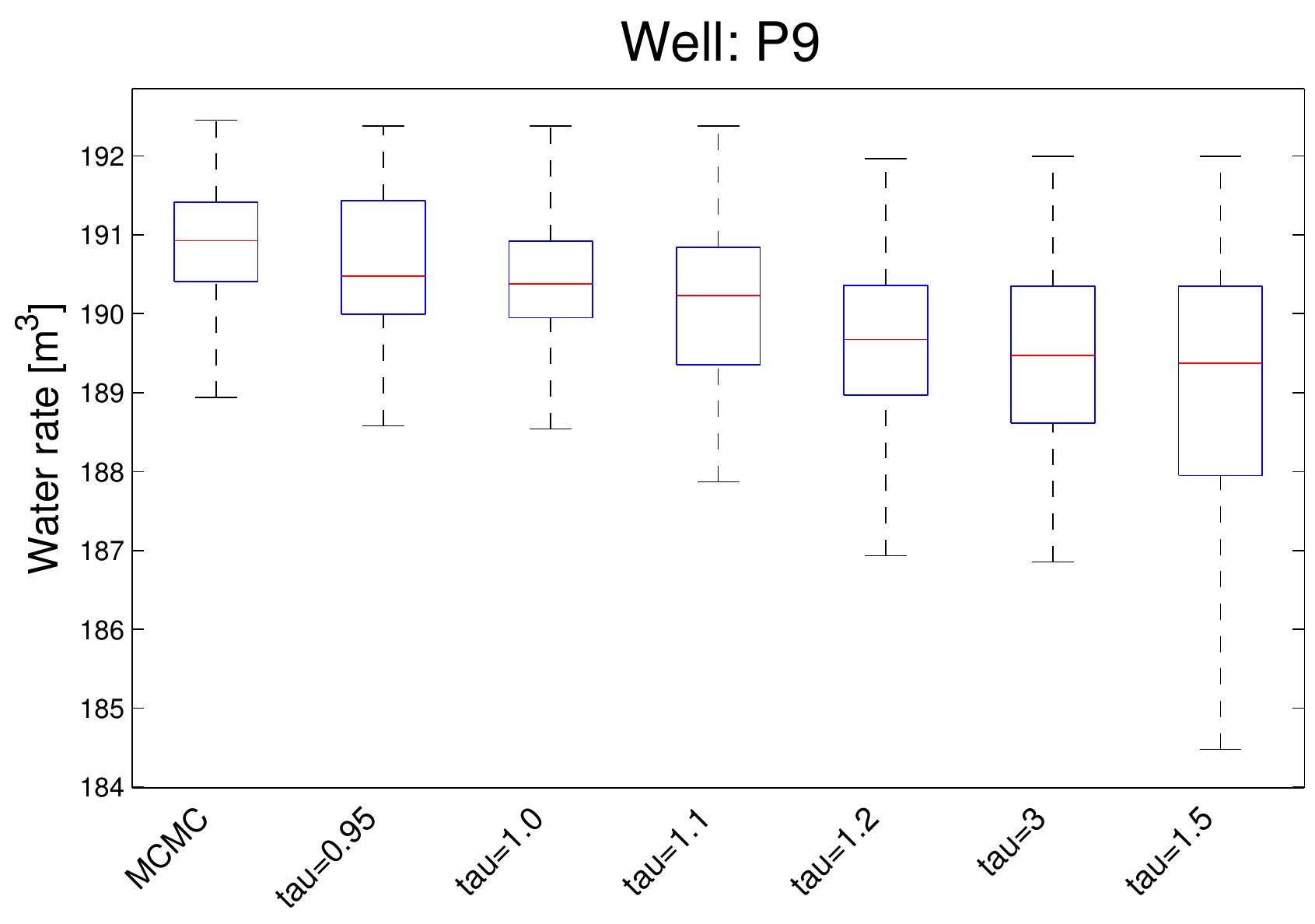}
\includegraphics[scale=0.225]{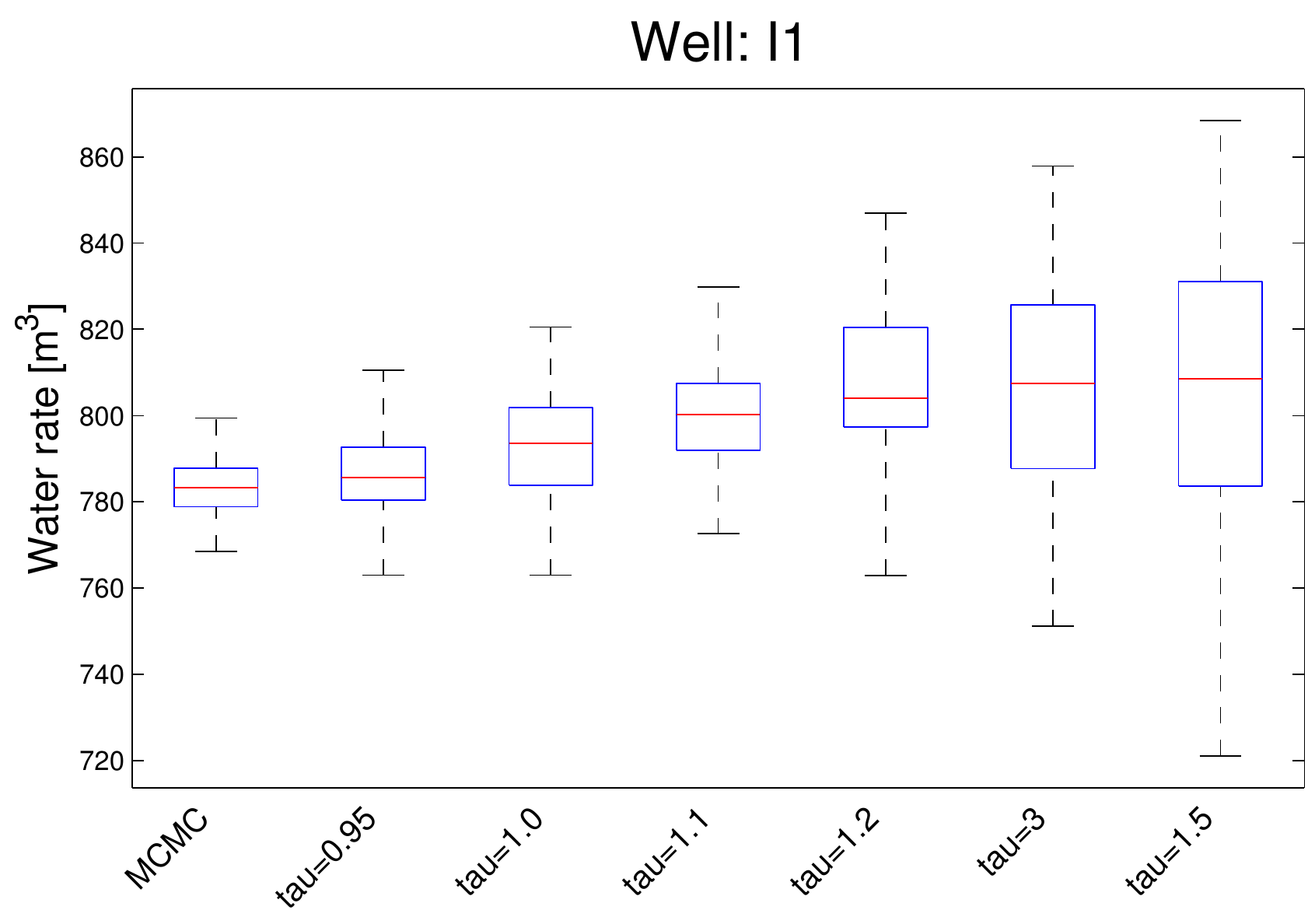}
\includegraphics[scale=0.225]{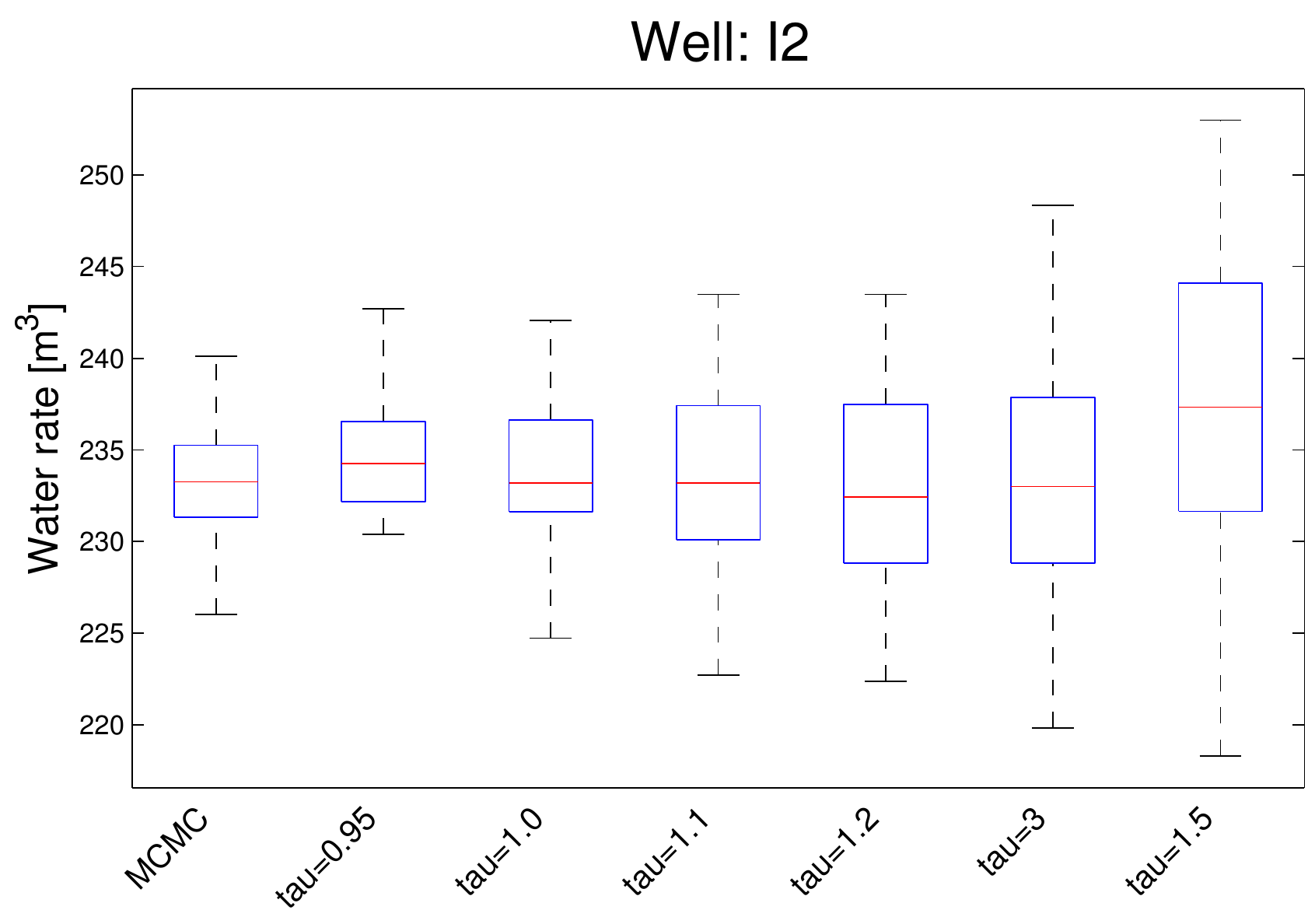}
\includegraphics[scale=0.225]{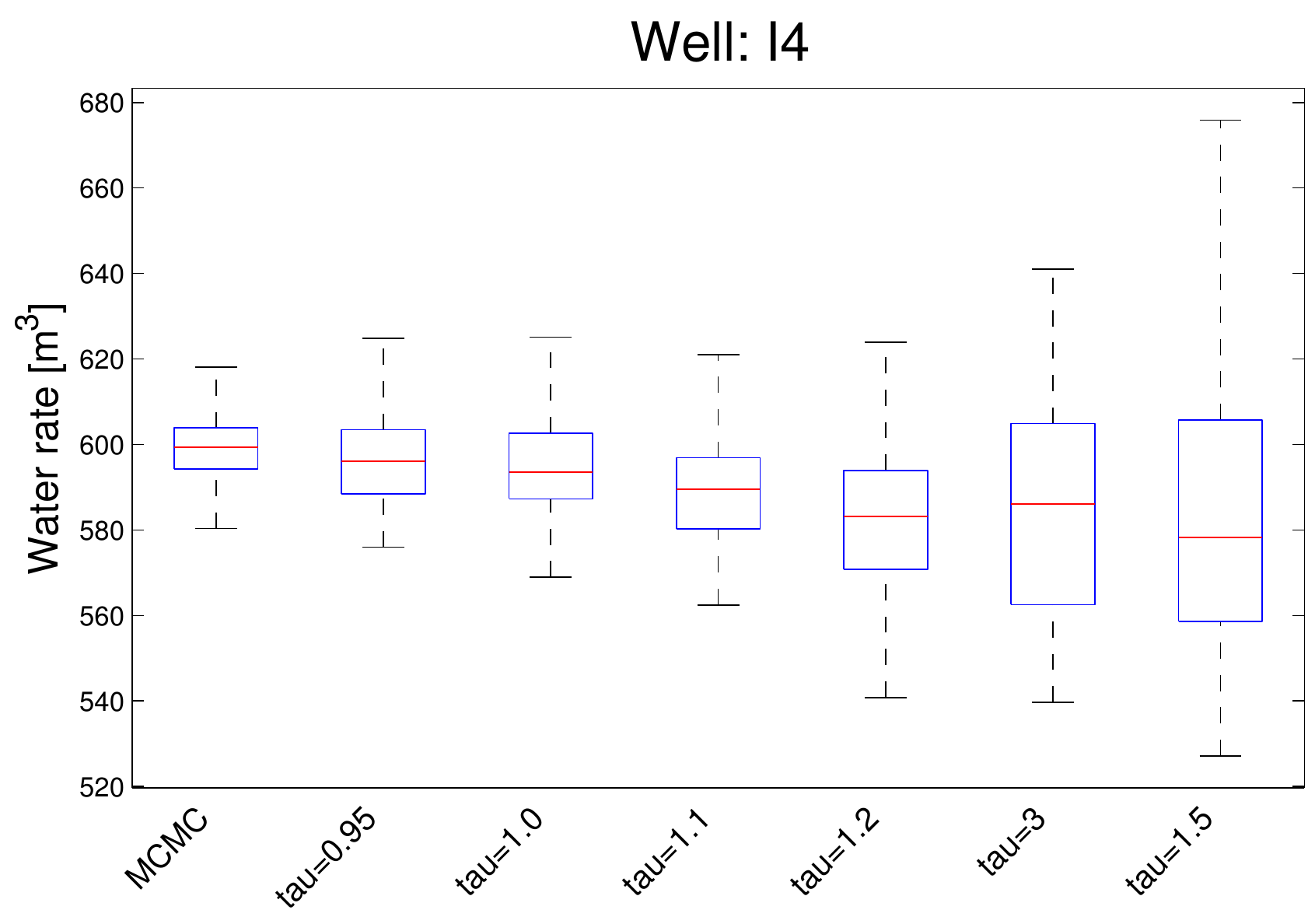}
\caption{Top row (mean of $\mu_B$) and Top-middle row (variance of $\mu_B$) from left to right: pcn-MCMC, IR-enLM approximations with $\rho=0.8$, $N_{e}=50$ and $\tau=0.95$, $\tau=1.1$, $\tau=1.2$, $\tau=1.3$, $\tau=1.5$ (color scales are the same as in the third row of Figure \ref{Figure1}). Middle-bottom and bottom row: Box plots of water rates from production wells $P_{2},P_{5}, P_{9}$ and PBH from injections wells $I_{1},I_{2},I_{4}$ after 6 years of water flood simulated from $\mu_{B}$ (with MCMC ) and the ensemble approximation with IR-enLM for $\rho=0.8$ and different choices of $\tau$} \label{Figure5}

\end{center}

\end{figure}

\subsection{IR-ES}\label{sec:numIR-ES}

Similar to the previous subsection, our objective is now to assess the performance of IR-ES at capturing the Bayesian posterior. More importantly, we aim at understanding the role of the tunable parameters $\rho$, $\tau$ and $M_{ES}$ in the proposed method. In Table \ref{Table2A} we display the approximations of the posterior mean and variance with IR-ES for different ensemble sizes $N_{e}$ and values of $\tau$ for fixed parameters $\rho=0.8$ and $M_{ES}=10$. The value $\tau$ highlighted in blue corresponds to the cutoff $\tau=1/\rho$. Recall from Section \ref{sec:IR-ES} that the tunable parameter $\rho$ appears when we apply the discrepancy principle (\ref{eq:1.38}) for the selection of the regularization parameter $\alpha$. Moreover, the parameter $\tau$ in (\ref{eq:2.2}) should satisfy $\tau>1/\rho>1$ for consistency with assumption (\ref{eq:1.42}) as discussed in Remark \ref{remaIR}. From Table \ref{Table2A} we observe that, for the approximation corresponding to the value $\tau=1/\rho$ (in blue), the accuracy in the approximation of the mean and variance increases with ensemble size. In contrast to IR-enLM where the ensemble size did not play a fundamental role, here we note that the ensemble size has a substantial influence on the accuracy in the ensemble approximation of the posterior obtained with IR-ES. This, of course, follows from the fact that the (Tikhonov) regularization of IR-ES is defined in terms a norm that is weighted by covariance constructed from the ensemble (see expression (\ref{eq:1.45})). Indeed, Table \ref{Table2A} shows that for small ensembles  ($N_{e}\leq 50$)  the relative errors in the mean and the variance starts increasing before the cut off value $\tau=1/\rho$ is reached. Note that it is only for larger ensembles ($N_{e}\ge 75$) where we observe the reduction of the error in the mean and variance as we reduce $\tau$ to the value of approximately $\tau=1/\rho$. Below this critical value, the errors of the ensemble approximation increase. For $\rho=0.8$, $M_{ES}=10$ and $N_{e}=50$ fixed, in Figure \ref{Figure6} we compare the mean (top row) and variance (top-middle row) generated with IR-ES for approximating $\mu_{A}$ with different choices of $\tau$. In Figure \ref{Figure6} (bottom-middle and bottom rows) we show box plots of relevant quantities at some wells. Similarly, in Figure \ref{Figure7} we present analogous results for the approximation of $\mu_{B}$.

When the ensemble is sufficiently large ($N_{e}\ge 75$ for the present case) we note that better approximations are obtained when, for a given $\tau$, the ensemble  approximation is stopped according to (\ref{eq:2.2}) with $\tau=1/\rho$. Recall that the parameter $\rho$ in IR-ES controls each update of the ensemble similar to IR-enLM. A larger $\rho$ in  (\ref{eq:2.6}) results in larger (but adaptively changing) $\alpha_{m}$ in (\ref{eq:2.6}) and therefore more controlled ensemble updates at each iteration; this is reflected in more accurate estimates of the posterior in terms of mean and variance. In Table \ref{Table2B} we show the performance of IR-ES for different values of $\rho$ (for fixed $M_{ES}=10$ and $N_{e}=75$). Note that for a given $\rho$, decreasing the value of $\tau$ reduces the error in the mean and variance decreases until approximately the value values of $\tau=1/\rho$. Below this value the aforementioned error will increase as expected from the lack of stability in the computation of the ensemble. Smaller $\rho$ corresponds to smaller $\alpha_{m}$'s with may not stabilize the scheme and thus the early stopping is required by having a larger $\tau$ selected according to $\tau>1/\rho$. Similar to the IR-enLM, the results from this subsection suggest that the value of $\rho=0.8$ seems a reasonable selection in terms of cost and accuracy for a given $N_{e}$ and $\tau$ fixed. Moreover, for the experiments under consideration a size of $N_e=75$ was sufficient to exhibit the regularizing properties that we would expect in the limit of a large ensemble.


In Table \ref{Table2C} we display the performance of IR-ES for different choices of $M_{ES}$ (for $\rho=0.7, \rho=0.8$ and $N_{e}=75$). As we discussed in subsection \ref{rem3}, $M_{ES}$ controls the amount of iterations where an approximation to the forward model outputs is computed from the ensemble. The value $M_{ES}=1$ corresponds to an evaluation of the forward model per iteration for each ensemble member. This corresponds to the most accurate but extremely expensive case. However, as we increase $M_{ES}$ we observe a dramatic reduction in the computational cost with a reasonable increase in the accuracy of the ensemble approximation. This is particularly clear for the case with larger $\rho$ which, as stated earlier, corresponds to smaller increments in the regularizing LM scheme and so the ensemble updates is better approximated by the updates of the measurement predictions. Finally, we notice that, in contrast to IR-enLM, the performance of IR-ES was similar for the both approximation of $\mu_{A}$ and $\mu_{B}$. In fact, note that the approximation of the variance of $\mu_{B}$ was better than the one of $\mu_{A}$. 

\begin{table}[H!]                                                 
\centering                                                        
\caption{Performance of IR-ES for different choices of $\tau$ and $N_{e}$ (for $\rho=0.8$ and $M_{ES}=10$)}
\label{Table2A}       
\begin{tabular}{cc|ccc|ccc}
\hline\noalign{\smallskip}
    & & &Approx. of $\mu_{A}$& & & Approx. of $\mu_{B}$ & \\
\noalign{\smallskip}\hline\noalign{\smallskip}
 $\tau$ &$N_{e}$& $\epsilon_{u}$&  $\epsilon_{\sigma}$ &aver. iter. &  $\epsilon_{u}$&  $\epsilon_{\sigma}$ & aver. iter.\\
\noalign{\smallskip}\hline\noalign{\smallskip}
\hline                                                             
1.6 & 25.000 & 1.001 & 0.668 & 11.500 & 0.932 & 0.508 & 12.050 \\                             
1.4 & 25.000 & 1.051 & 0.767 & 12.250 & 1.004 & 0.585 & 12.750 \\  
1.3 & 25.000 & 1.107 & 0.803 & 12.550 & 1.090 & 0.664 & 13.150 \\  
\Blue{1.25} &\Blue{ 25.000} &\Blue{ 1.144} &\Blue{ 0.824} & \Blue{12.700} & \Blue{1.210} &\Blue{ 0.718} &\Blue{ 13.450} \\ 
1.2 & 25.000 & 1.184 & 0.845 & 12.900 & 1.311 & 0.799 & 14.050 \\  
\hline                                                             \hline                                                             
1.6 & 50.000 & 0.717 & 0.325 & 10.950 & 0.702 & 0.279 & 11.700 \\  
1.5 & 50.000 & 0.711 & 0.341 & 11.200 & 0.699 & 0.278 & 12.050 \\ 
1.4 & 50.000 & 0.696 & 0.352 & 11.550 & 0.700 & 0.285 & 12.300 \\  
1.3 & 50.000 & 0.684 & 0.396 & 12.000 & 0.718 & 0.335 & 12.750 \\  
\Blue{1.25} &\Blue{ 50.000} &\Blue{ 0.685} &\Blue{ 0.421} &\Blue{ 12.150} &\Blue{ 0.742} &\Blue{ 0.375} & \Blue{13.000} \\ 
1.2 & 50.000 & 0.714 & 0.460 & 12.400 & 0.791 & 0.419 & 13.200 \\  
\hline                                                             \hline                                                             
1.5 & 75.000 & 0.651 & 0.295 & 10.700 & 0.618 & 0.247 & 11.550 \\  
1.4 & 75.000 & 0.635 & 0.280 & 11.050 & 0.606 & 0.223 & 12.050 \\  
1.3 & 75.000 & 0.600 & 0.267 & 11.500 & 0.609 & 0.234 & 12.450 \\  
\Blue{1.25} &\Blue{ 75.000} &\Blue{ 0.583} & \Blue{0.283} &\Blue{ 11.700} &\Blue{ 0.611} & \Blue{0.242} & \Blue{12.600 }\\ 
1.2 & 75.000 & 0.575 & 0.310 & 11.900 & 0.640 & 0.291 & 13.000 \\  
1.1 & 75.000 & 0.598 & 0.366 & 12.300 & 0.672 & 0.316 & 13.150 \\  
\hline                                                             \hline                                                             
1.5 & 100.000 & 0.614 & 0.316 & 10.500 & 0.563 & 0.259 & 11.550 \\ 
1.4 & 100.000 & 0.586 & 0.268 & 10.900 & 0.547 & 0.222 & 12.050 \\ 
1.3 & 100.000 & 0.558 & 0.236 & 11.250 & 0.541 & 0.208 & 12.350 \\ 
\Blue{1.25} & \Blue{100.000} &\Blue{ 0.544} &\Blue{ 0.235} &\Blue{ 11.400} &\Blue{ 0.539} &\Blue{ 0.204} &\Blue{ 12.500 }\\
1.2 & 100.000 & 0.540 & 0.238 & 11.550 & 0.549 & 0.215 & 12.850 \\ 
1.1 & 100.000 & 0.540 & 0.278 & 12.050 & 0.601 & 0.249 & 13.200 \\ 
\hline                                                             
1.4 & 250.000 & 0.534 & 0.407 & 10.267 & 0.469 & 0.283 & 11.714 \\ 
1.3 & 250.000 & 0.454 & 0.251 & 11.067 & 0.452 & 0.246 & 12.071 \\ 
\Blue{1.25} &\Blue{ 250.000} &\Blue{ 0.454} & \Blue{0.251} & \Blue{11.067} &\Blue{ 0.448} & \Blue{0.234} &\Blue{ 12.214} \\
1.2 & 250.000 & 0.437 & 0.237 & 11.133 & 0.440 & 0.186 & 12.571 \\ 
\Red{1.1} & \Red{250.000} & \Red{0.389} &\Red{ 0.178} & \Red{11.667} & \Red{0.450} &\Red{ 0.159} & \Red{12.857} \\ 
\hline                                                             
\end{tabular}                                                     
\end{table}

\begin{table}[H!]                     
\caption{Performance of IR-ES for different choices of $\rho$ and $\tau$ (for $M_{ES}=10$ and $N_{e}=75$)}
\label{Table2B}       
\begin{tabular}{cc|ccc|ccc}
\hline\noalign{\smallskip}
    & & &Approx. of $\mu_{A}$& & & Approx. of $\mu_{B}$ & \\
\noalign{\smallskip}\hline\noalign{\smallskip}
 $\rho$ &$\tau$& $\epsilon_{u}$&  $\epsilon_{\sigma}$ &aver. iter. &  $\epsilon_{u}$&  $\epsilon_{\sigma}$ & aver. iter.\\
\noalign{\smallskip}\hline\noalign{\smallskip}
 0.5 & 3.500 & 0.766 & 0.610 & 3.000 & 0.716 & 0.480 & 3.500 \\           
0.5 & 3.000 & 0.719 & 0.466 & 3.550 & 0.693 & 0.408 & 3.850 \\  
0.5 & 2.800 & 0.695 & 0.397 & 3.800 & 0.684 & 0.388 & 3.950 \\  
0.5 & 2.200 & 0.681 & 0.344 & 4.000 & 0.689 & 0.368 & 4.050 \\  
\Blue{0.5} &\Blue{ 2.000} &\Blue{ 0.681} & \Blue{0.344 }& \Blue{4.000} &\Blue{ 0.741} &\Blue{ 0.339} & \Blue{4.400 }\\  
0.5 & 1.800 & 0.737 & 0.353 & 4.200 & 0.817 & 0.331 & 4.650 \\  
0.5 & 1.600 & 0.806 & 0.366 & 4.600 & 0.909 & 0.346 & 4.900 \\  
\hline                                                          \hline                                                          
0.6 & 2.200 & 0.679 & 0.362 & 5.150 & 0.685 & 0.358 & 5.000 \\  
0.6 & 2.000 & 0.676 & 0.345 & 5.250 & 0.687 & 0.333 & 5.200 \\  
0.6 & 1.900 & 0.668 & 0.314 & 5.400 & 0.686 & 0.326 & 5.250 \\  
0.6 & 1.800 & 0.666 & 0.307 & 5.500 & 0.709 & 0.299 & 5.550 \\  
\Blue{0.6} &\Blue{ 1.700} &\Blue{ 0.666} &\Blue{ 0.292} & \Blue{5.700} &\Blue{ 0.749} & \Blue{0.290} & \Blue{5.850 }\\  
0.6 & 1.600 & 0.673 & 0.296 & 5.900 & 0.769 & 0.291 & 6.000 \\  
0.6 & 1.500 & 0.695 & 0.303 & 5.950 & 0.769 & 0.291 & 6.000 \\  
0.6 & 1.400 & 0.714 & 0.318 & 6.050 & 0.769 & 0.291 & 6.000 \\  
\hline                  \hline                  
0.7 & 1.700 & 0.659 & 0.292 & 7.550 & 0.693 & 0.292 & 8.100 \\                                    
0.7 & 1.600 & 0.655 & 0.279 & 7.750 & 0.698 & 0.281 & 8.250 \\  
0.7 & 1.500 & 0.657 & 0.280 & 8.000 & 0.723 & 0.280 & 8.500 \\  
\Blue{0.7} & \Blue{1.450} &\Blue{ 0.657} &\Blue{ 0.284} & \Blue{8.100} & \Blue{0.759} & \Blue{0.290} & \Blue{8.800} \\  
0.7 & 1.420 & 0.669 & 0.296 & 8.250 & 0.763 & 0.293 & 8.850 \\  
0.7 & 1.350 & 0.684 & 0.302 & 8.300 & 0.770 & 0.296 & 8.900 \\  
0.7 & 1.300 & 0.733 & 0.324 & 8.450 & 0.815 & 0.323 & 9.150 \\  
\hline                                                          \hline                                                          
0.8 & 1.400 & 0.635 & 0.280 & 11.050 & 0.606 & 0.223 & 12.050 \\
0.8 & 1.350 & 0.619 & 0.272 & 11.250 & 0.604 & 0.224 & 12.150 \\
0.8 & 1.300 & 0.600 & 0.267 & 11.500 & 0.609 & 0.234 & 12.450 \\
\Blue{0.8} & \Blue{1.250} &\Blue{ 0.583} &\Blue{ 0.283} &\Blue{ 11.700} &\Blue{ 0.611} & \Blue{0.242} &\Blue{ 12.600} \\
0.8 & 1.200 & 0.575 & 0.310 & 11.900 & 0.640 & 0.291 & 13.000 \\
0.8 & 1.100 & 0.598 & 0.366 & 12.300 & 0.672 & 0.316 & 13.150 \\
\hline                                                          \hline                                                          
0.9 & 1.400 & 0.629 & 0.299 & 23.800 & 0.581 & 0.238 & 26.200 \\
0.9 & 1.300 & 0.599 & 0.265 & 24.600 & 0.582 & 0.220 & 26.900 \\
0.9 & 1.200 & 0.563 & 0.244 & 25.550 & 0.590 & 0.206 & 27.900 \\
0.9 & 1.150 & 0.551 & 0.242 & 26.000 & 0.600 & 0.209 & 28.400 \\
\Blue{0.9} &\Blue{ 1.100} &\Blue{ 0.540} & \Blue{0.260} &\Blue{ 26.650} & \Blue{0.628} &\Blue{ 0.229} &\Blue{ 29.000} \\
0.9 & 1.000 & 0.608 & 0.309 & 27.750 & 0.799 & 0.419 & 30.500 \\
\hline 

\end{tabular}                                                        
\end{table}

\begin{table}[H!]                     
\caption{Performance of IR-ES for different choices of $M_{ES}$ and $\rho$ (for $\tau=1/\rho$ and $N_{e}=75$)}
\label{Table2C}       
\begin{tabular}{cc|ccc|cccc}
\hline\noalign{\smallskip}
&&  & &Approx. of $\mu_{A}$& & & Approx. of $\mu_{B}$ & \\
\noalign{\smallskip}\hline\noalign{\smallskip}
$\rho$ & $M_{ES}$& $\epsilon_{u}$&  $\epsilon_{\sigma}$ &aver. cost (model runs) &  $\epsilon_{u}$&  $\epsilon_{\sigma}$ & aver. cost (model runs) \\
\noalign{\smallskip}\hline\noalign{\smallskip}
                          
0.7&1 & 0.598 & 0.297 & 652.500 & 0.576 & 0.278 & 675.000 \\ 
0.7&5& 0.634 & 0.308 & 150& 0.594 & 0.250 & 150 \\ 
0.7&10&  0.654 & 0.293 & 75 & 0.718 & 0.287 & 75 \\  
\hline \hline
0.8&1 & 0.547 & 0.255 & 918.750 & 0.588 & 0.205 & 1057.500 \\                                                 
0.8&5&  0.575 & 0.277 & 225& 0.583 & 0.222 & 225\\
0.8&10& 0.578 & 0.294 & 150 & 0.610 & 0.246 & 150\\
0.8&15 & 0.668 & 0.307 & 75 & 0.751 & 0.279 & 75\\

\hline                                                 
\end{tabular}                                                    
\end{table}

\begin{figure}
\begin{center}
\includegraphics[scale=0.15]{Mean_MCMC1A}~
\includegraphics[scale=0.15]{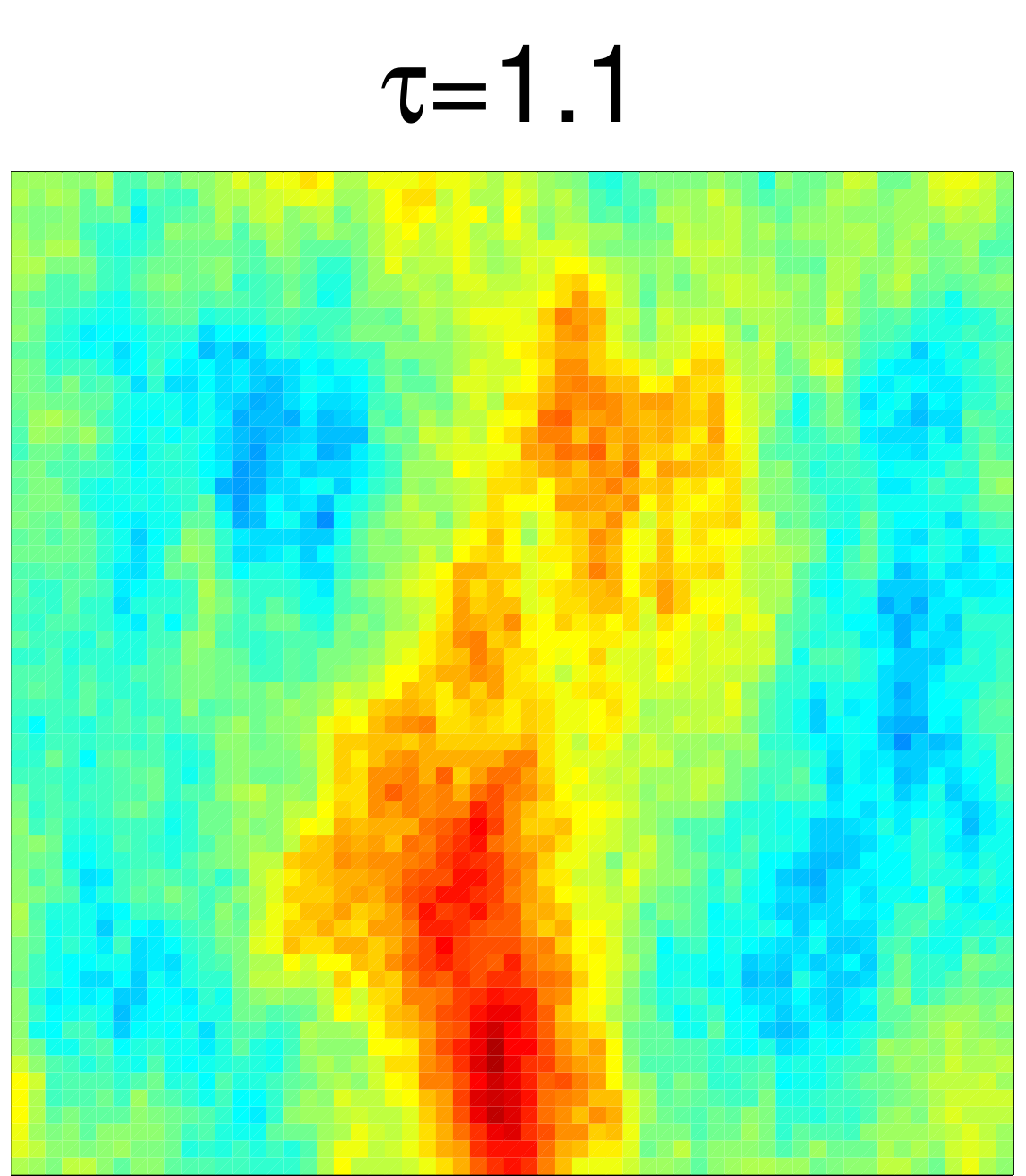}~
\includegraphics[scale=0.15]{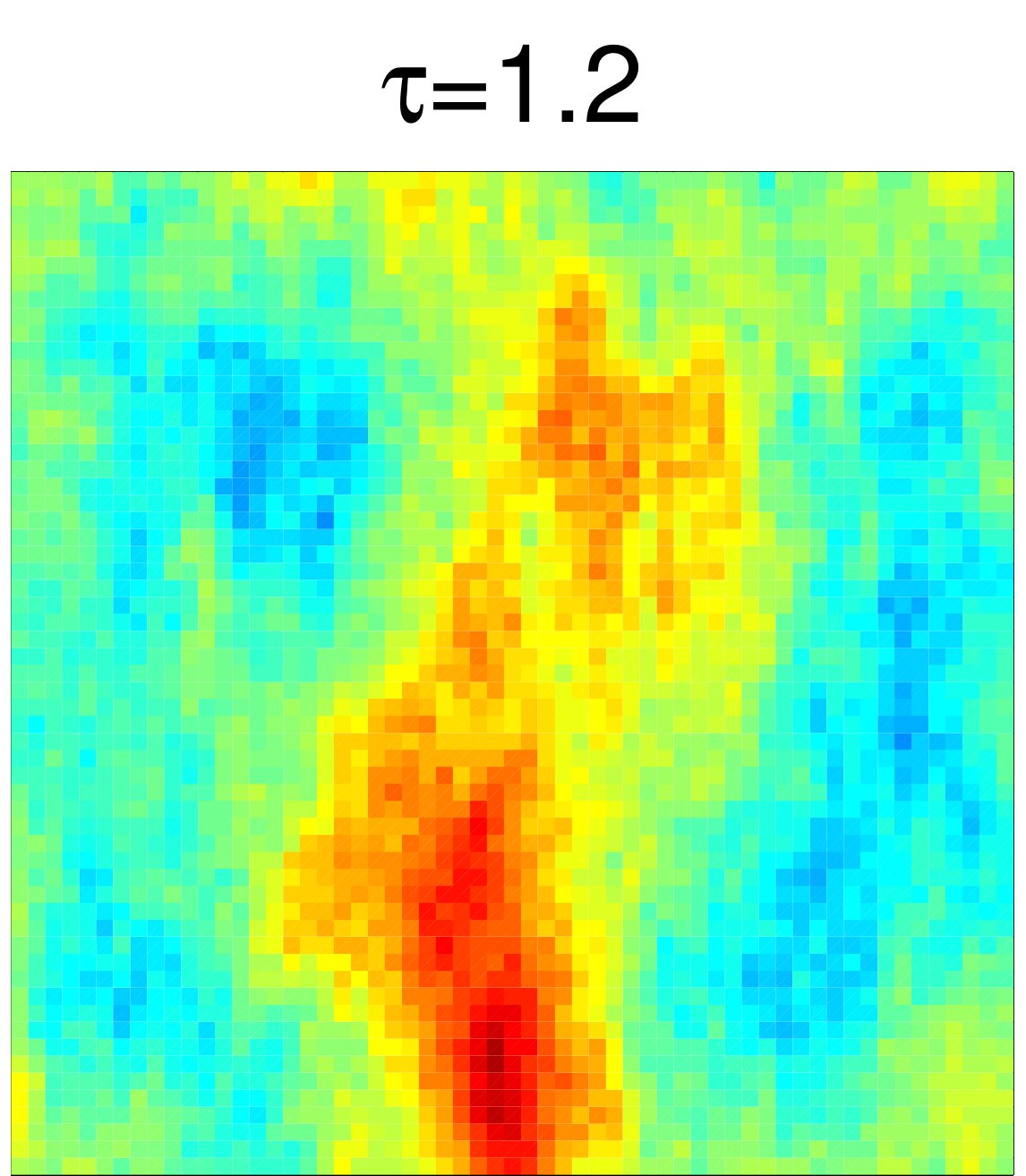}~
\includegraphics[scale=0.15]{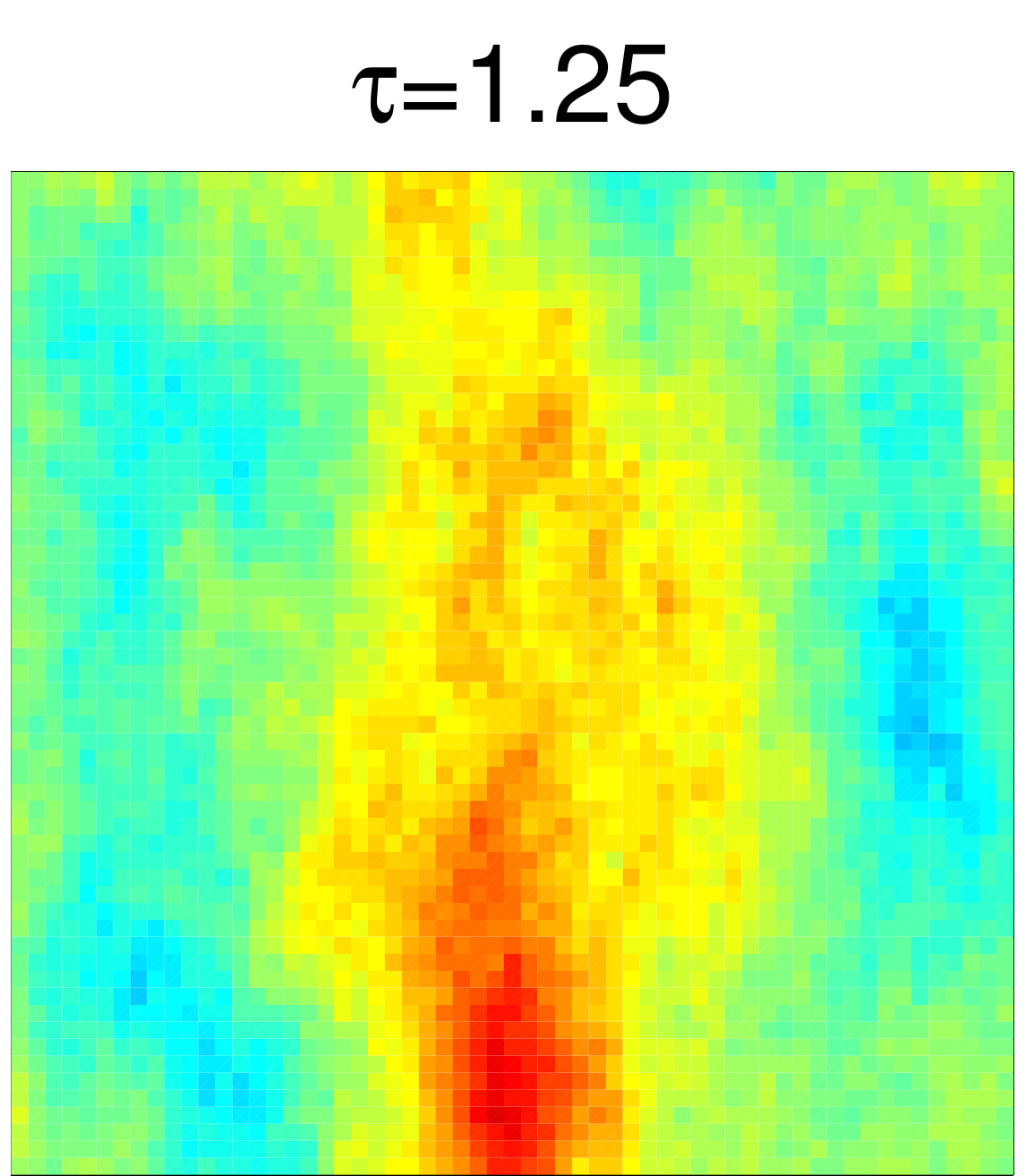}~
\includegraphics[scale=0.15]{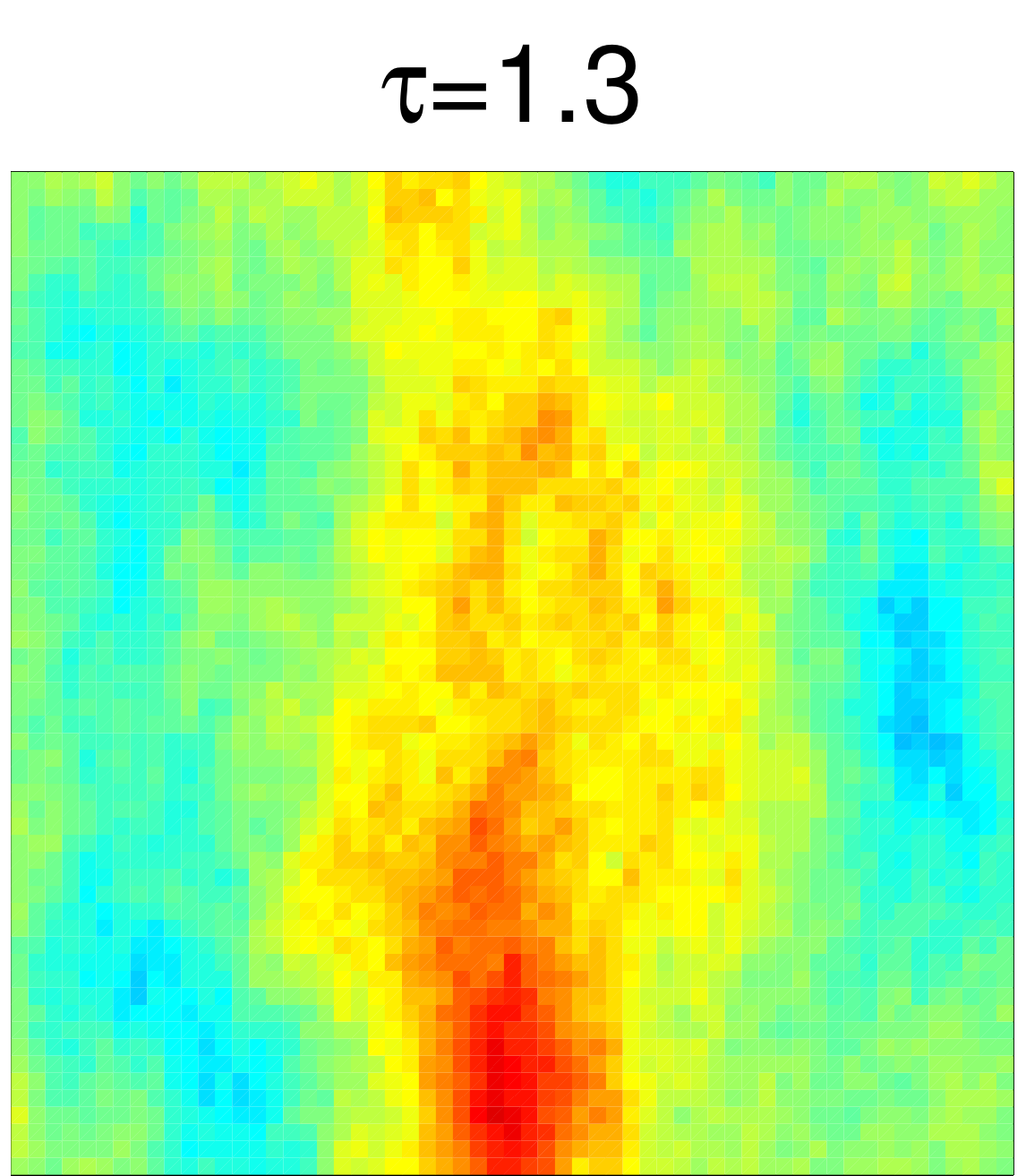}~
\includegraphics[scale=0.15]{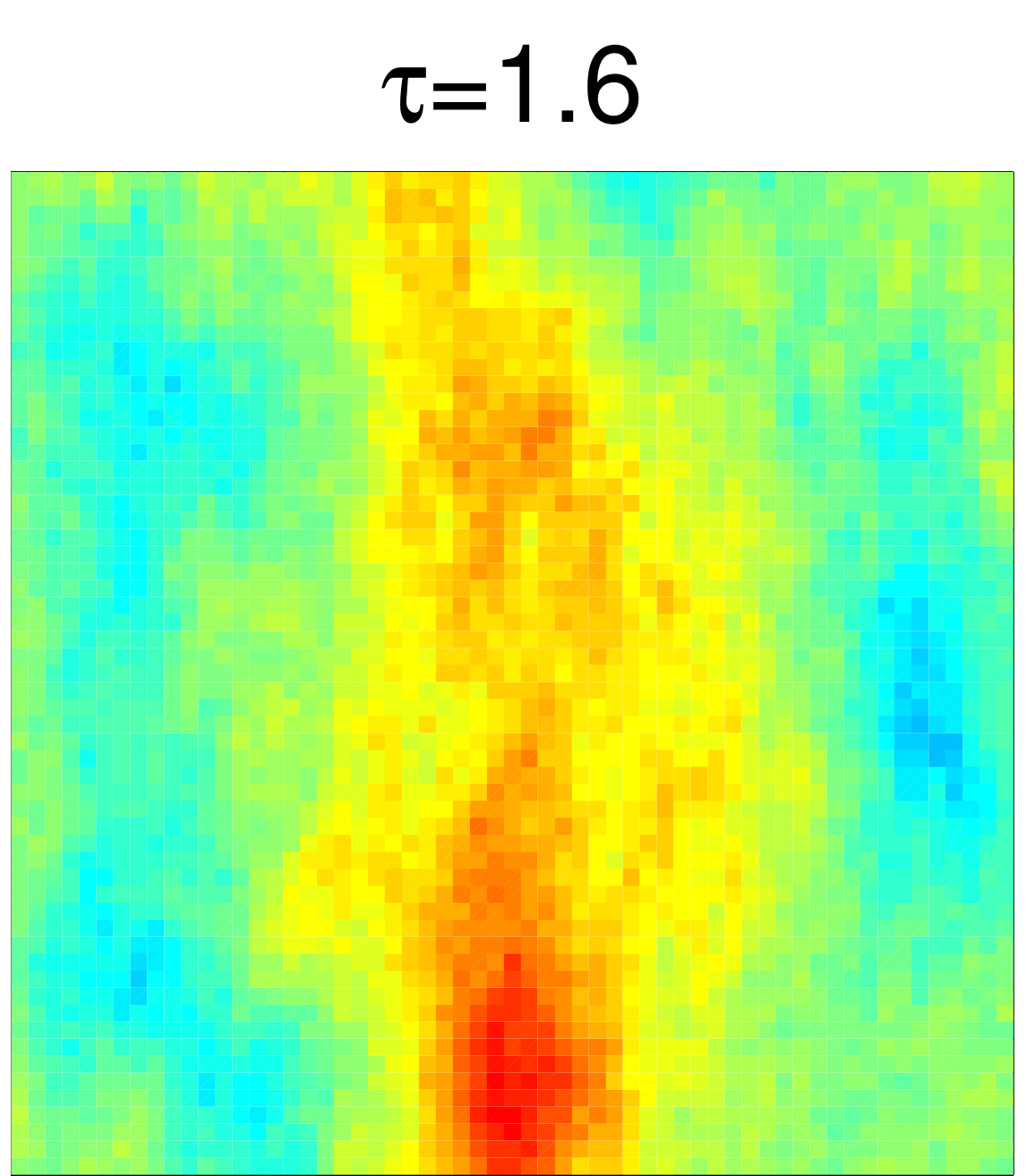}\\
\includegraphics[scale=0.15]{Var_MCMC1A}~
\includegraphics[scale=0.15]{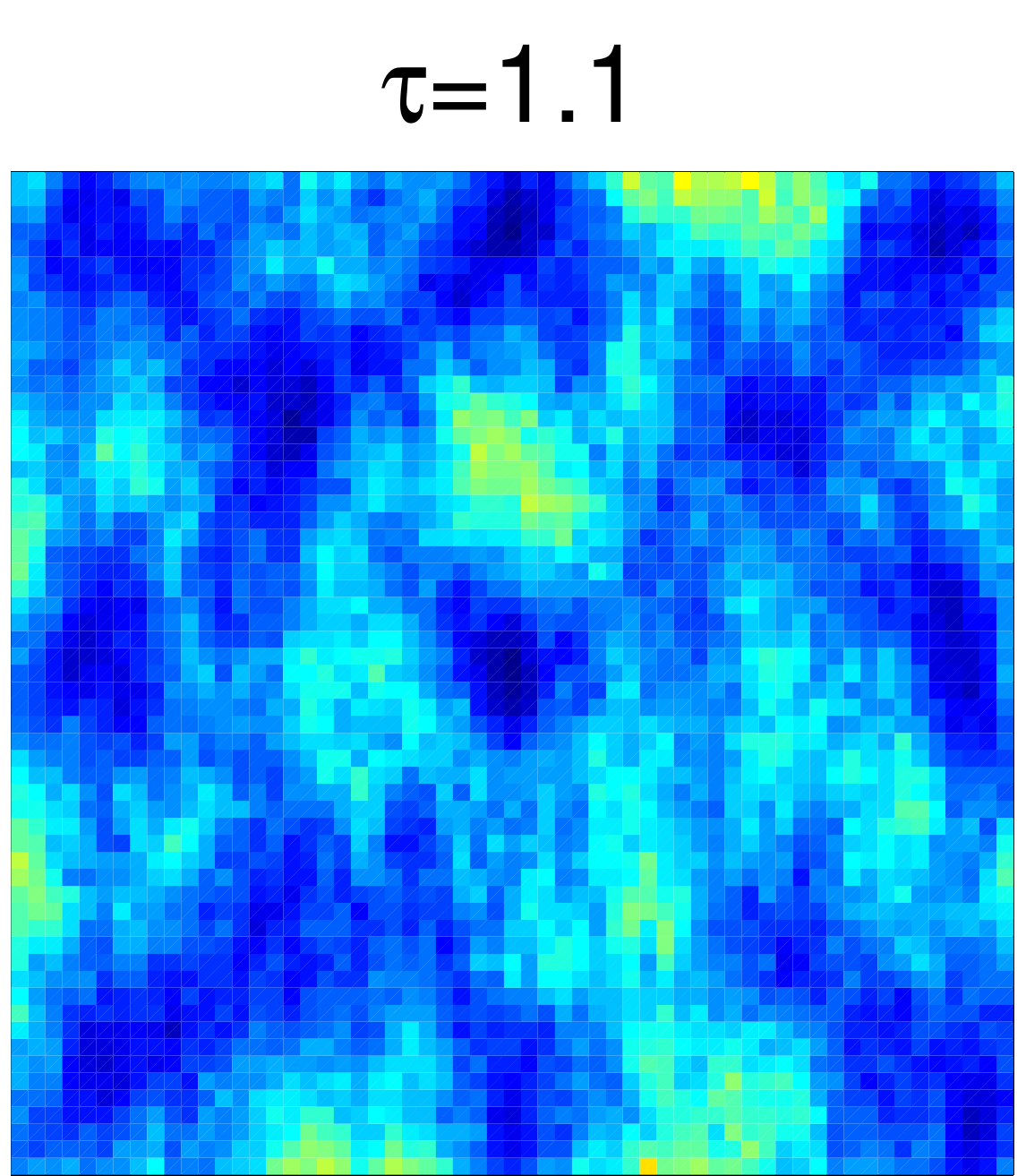}~
\includegraphics[scale=0.15]{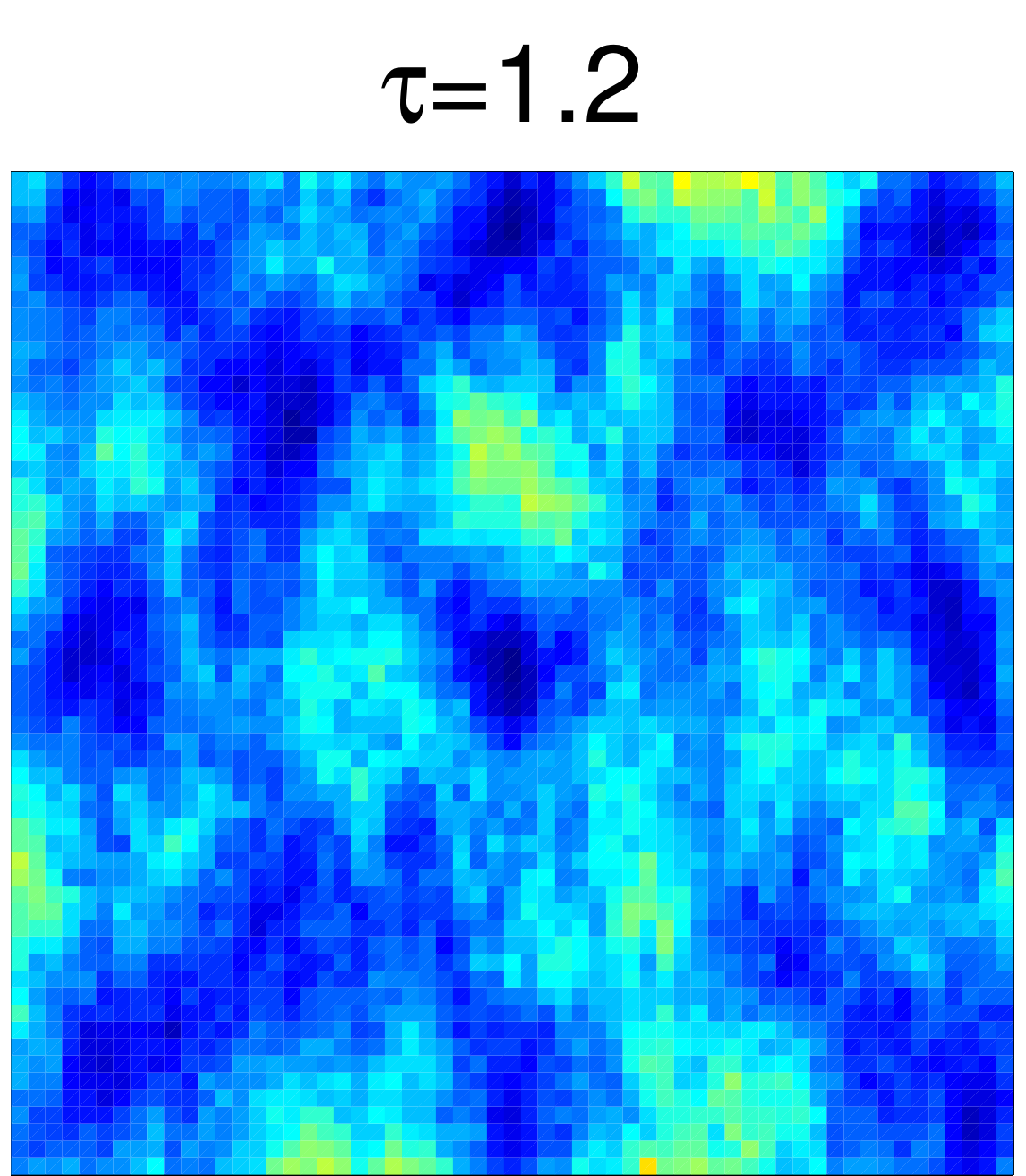}~
\includegraphics[scale=0.15]{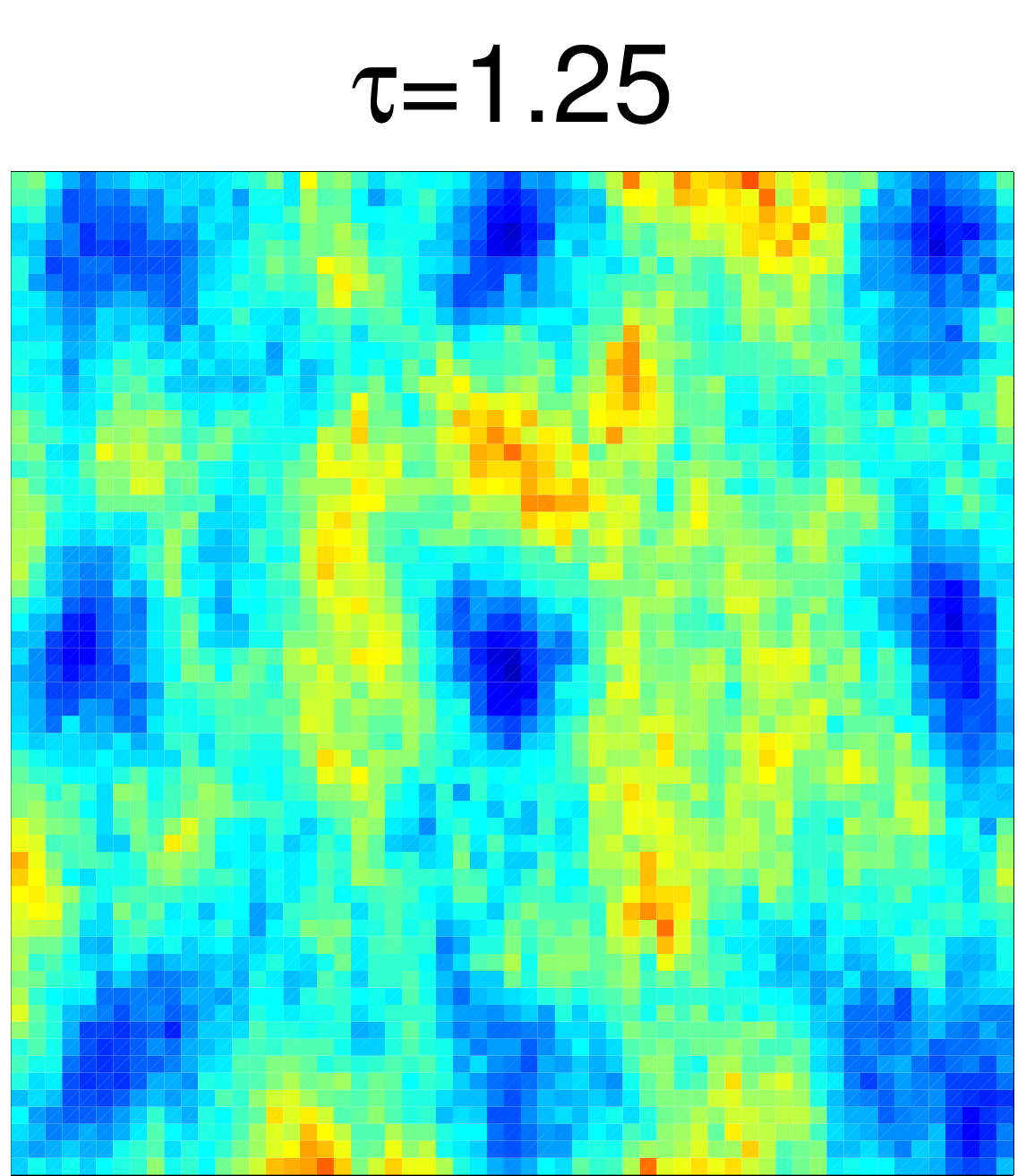}~
\includegraphics[scale=0.15]{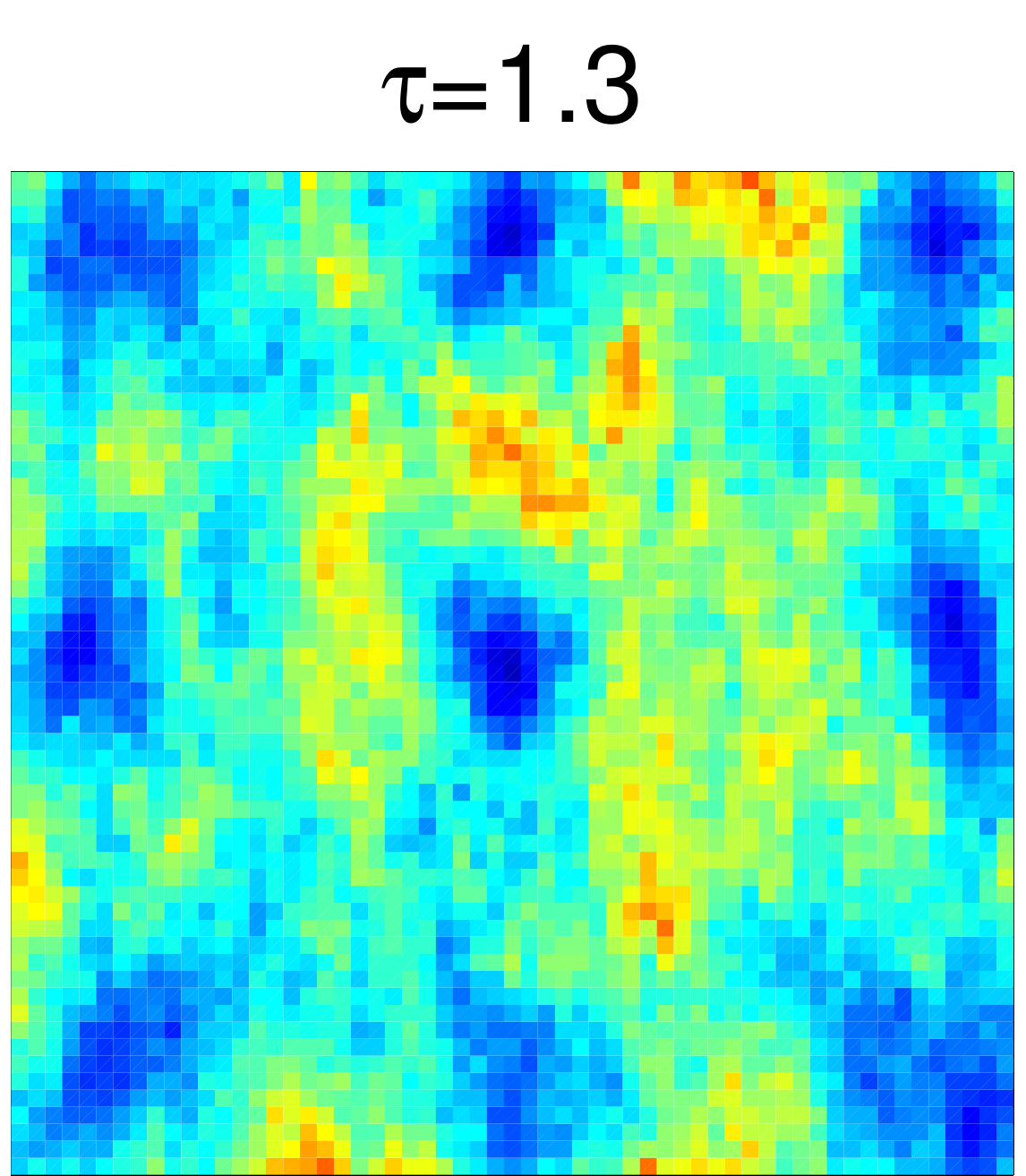}~
\includegraphics[scale=0.15]{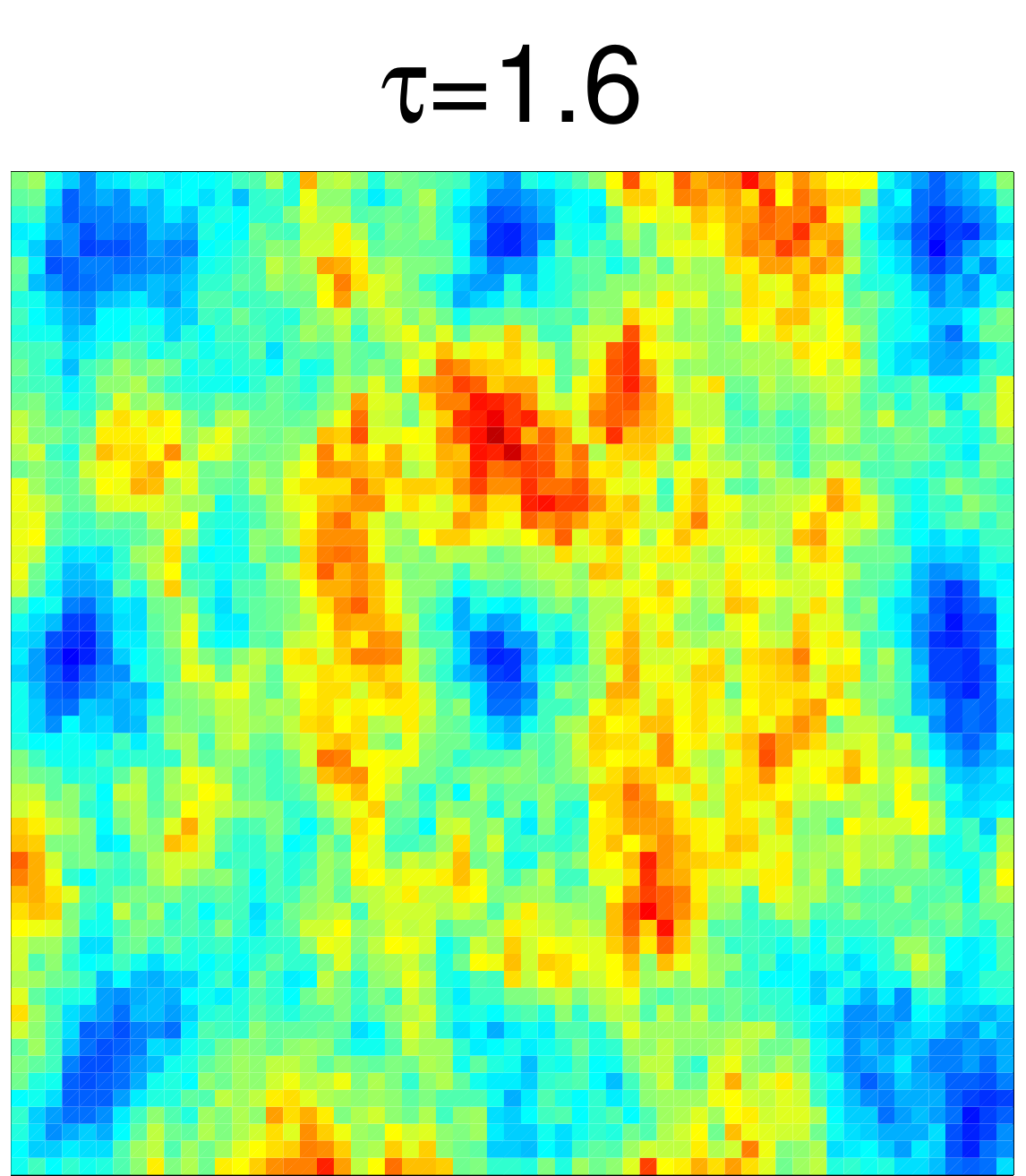}\\
\includegraphics[scale=0.225]{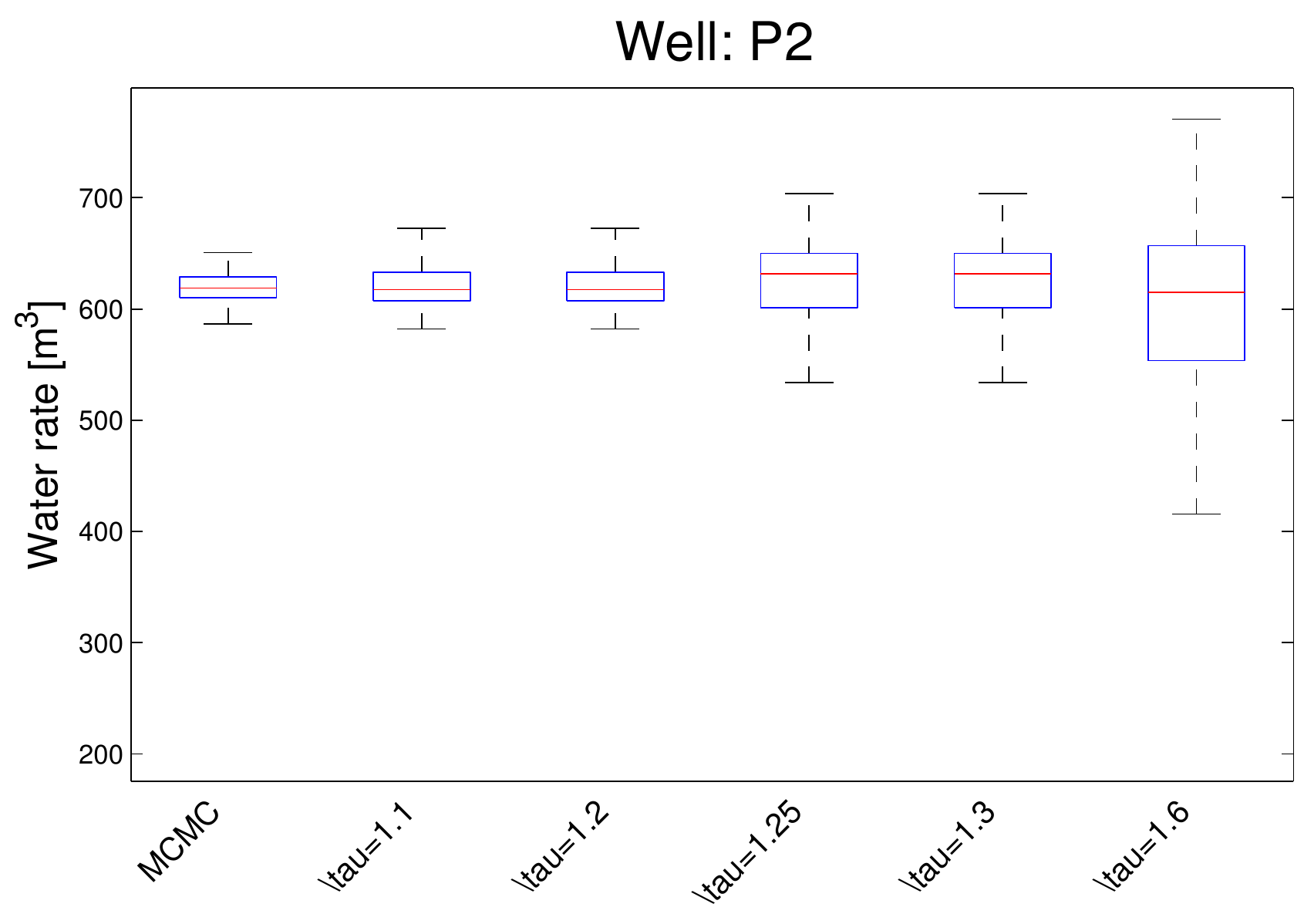}
\includegraphics[scale=0.225]{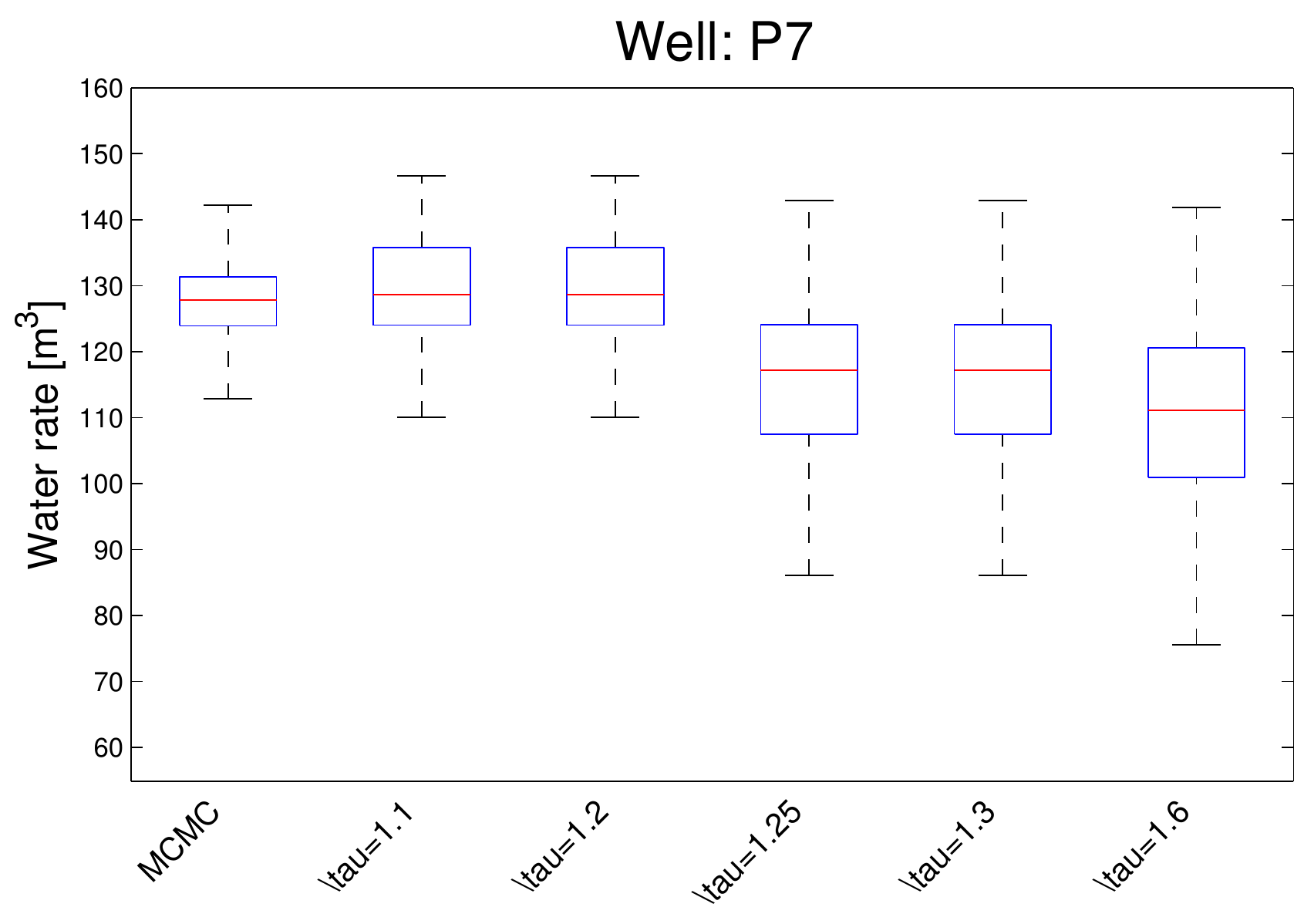}
\includegraphics[scale=0.225]{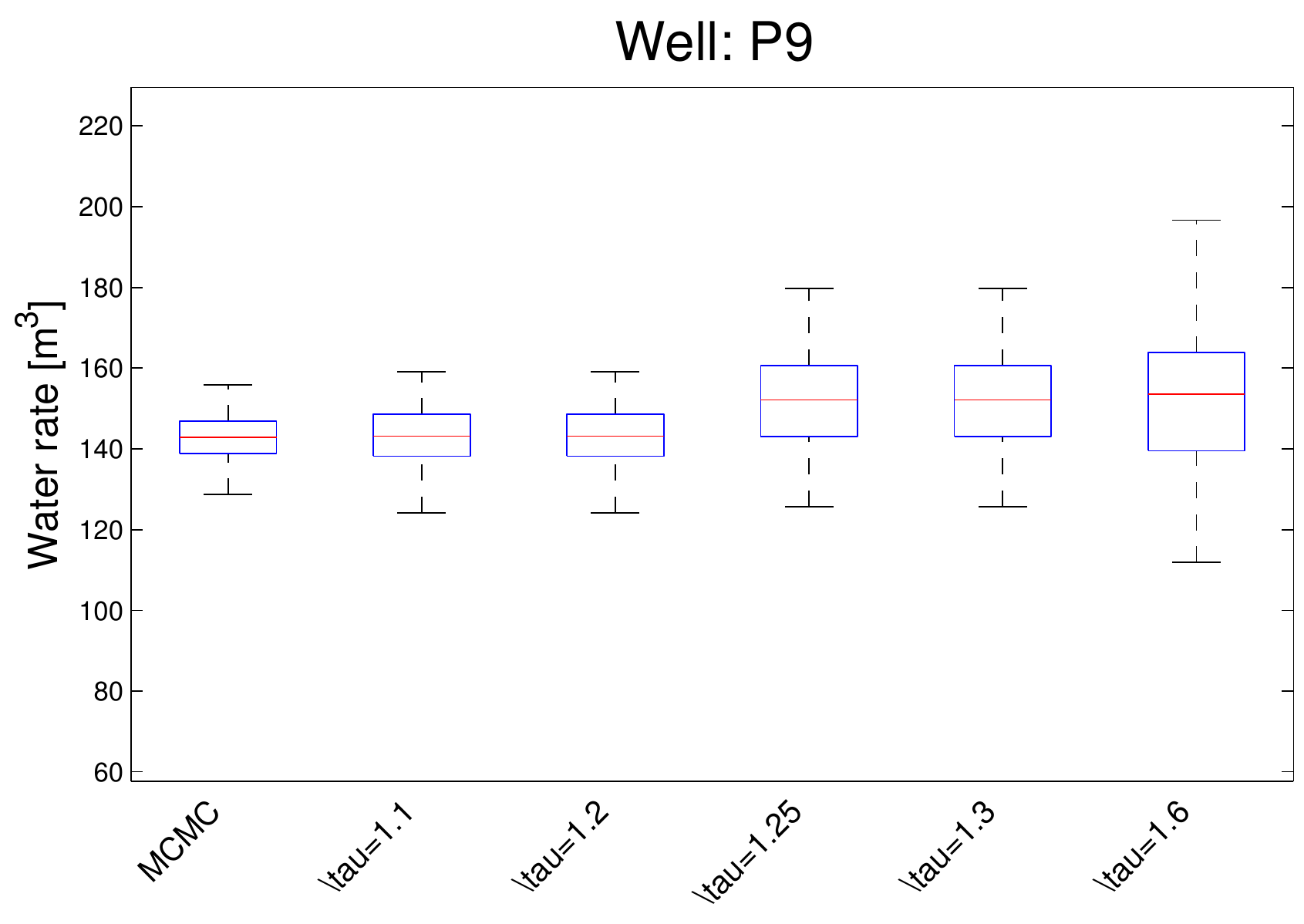}
\includegraphics[scale=0.225]{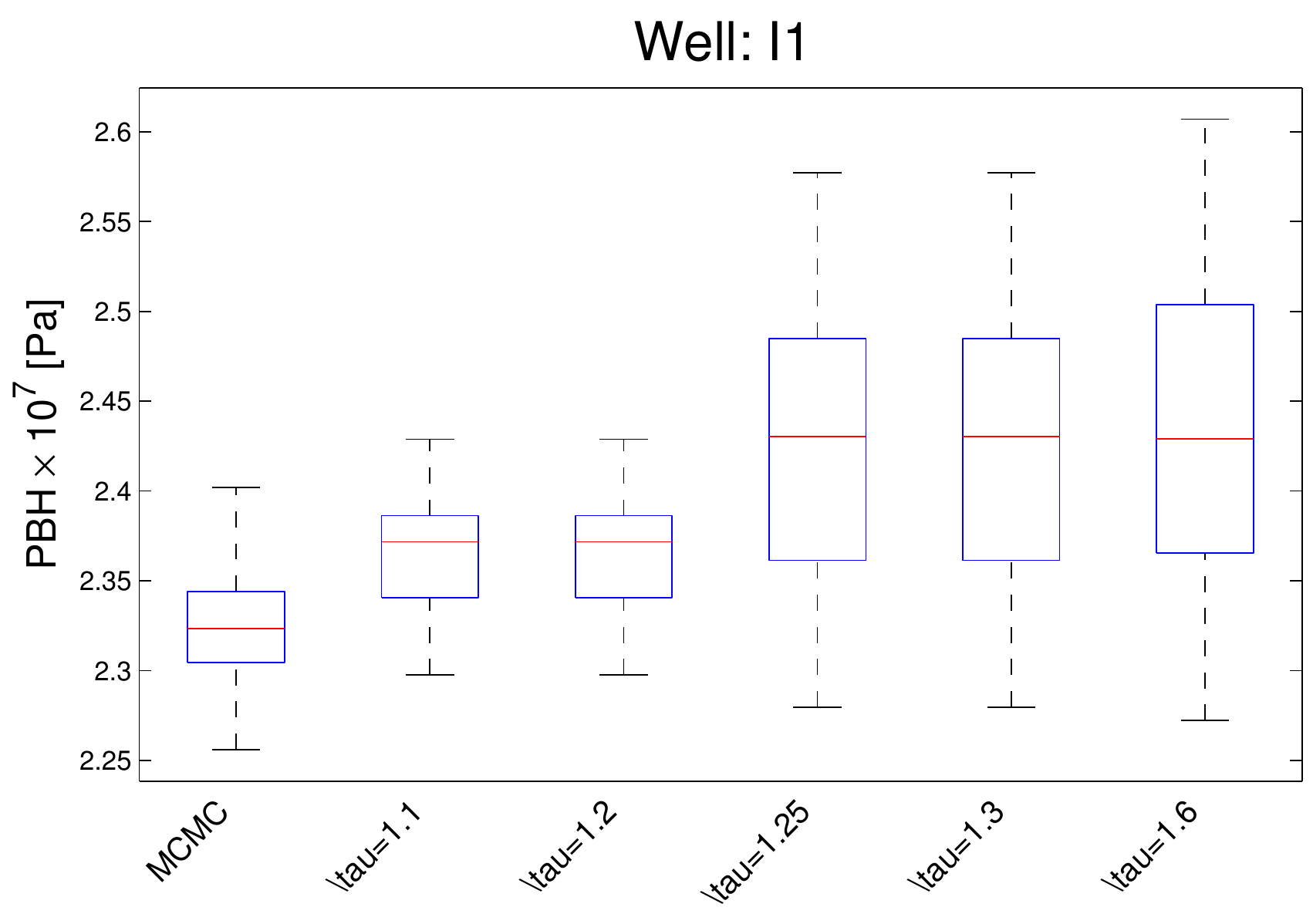}
\includegraphics[scale=0.225]{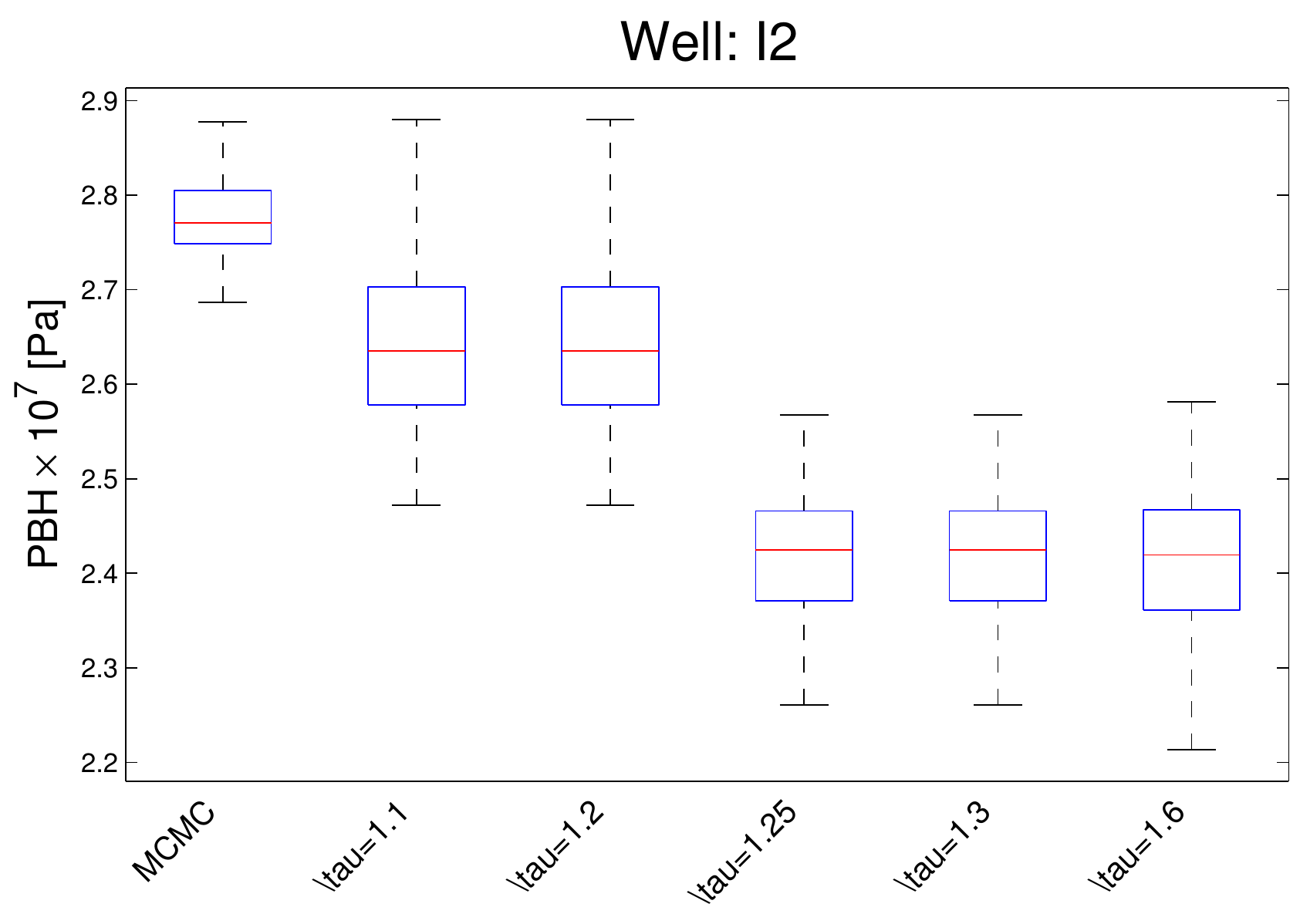}
\includegraphics[scale=0.225]{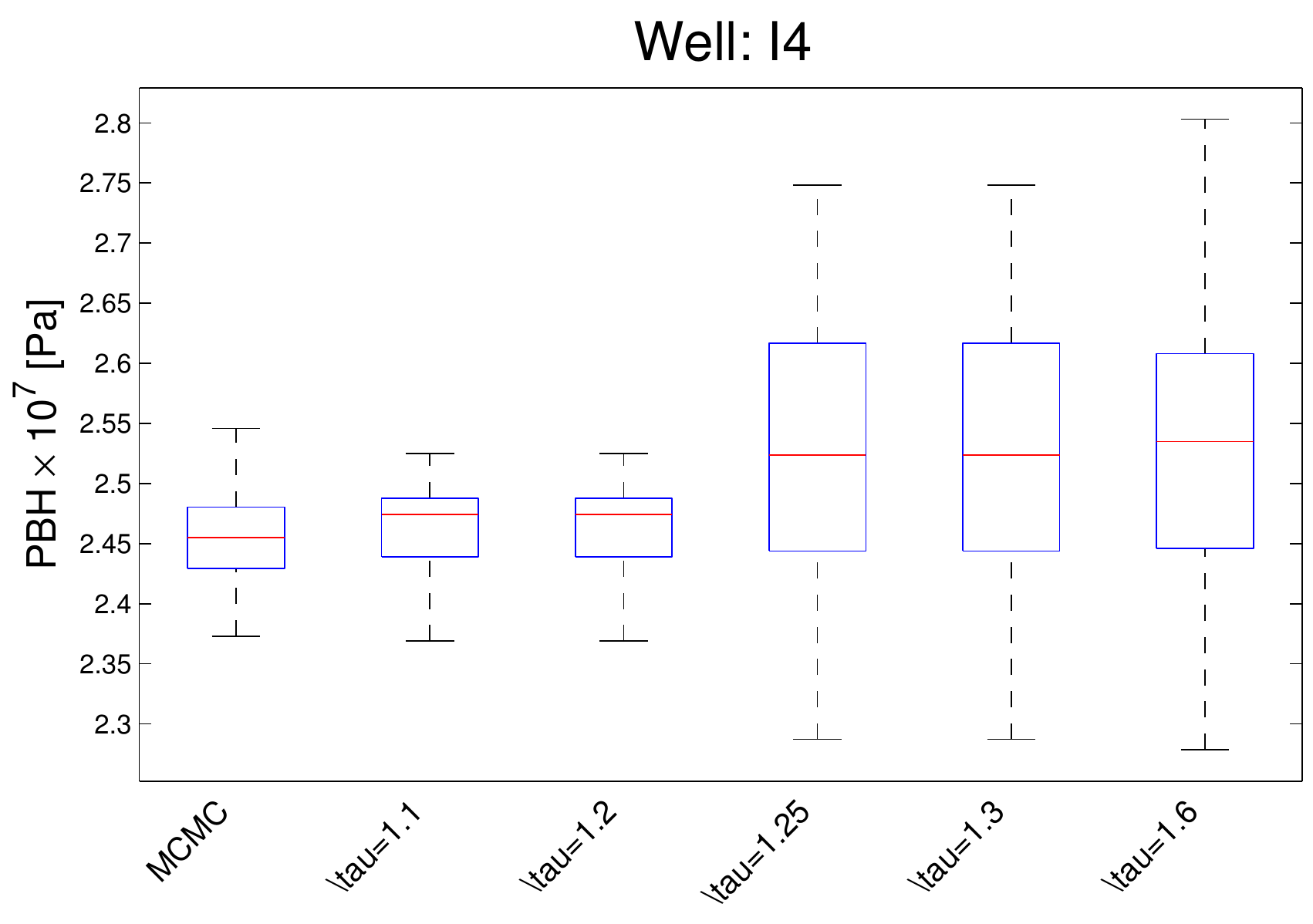}
\caption{Top row (mean of $\mu_A$) and Top-Middle row (variance of $\mu_A$) from left to right: pcn-MCMC, IR-ES approximations with $\rho=0.8$, $N_{e}=75$ and $\tau=1.1$, $\tau=1.2$, $\tau=1.25$, $\tau=1.3$, $\tau=1.6$. Middle-bottom and bottom row: Box plots of water rates from production wells $P_{2},P_{7}, P_{9}$ and PBH from injections wells $I_{1},I_{2},I_{4}$ after 6 years of water flood simulated from $\mu_{A}$ (with MCMC ) and the ensemble approximation with IR-ES for $\rho=0.8$ and different choices of $\tau$}\label{Figure6}
\end{center}
\end{figure}

\begin{figure}
\begin{center}
\includegraphics[scale=0.15]{Mean_MCMC2BB}~
\includegraphics[scale=0.15]{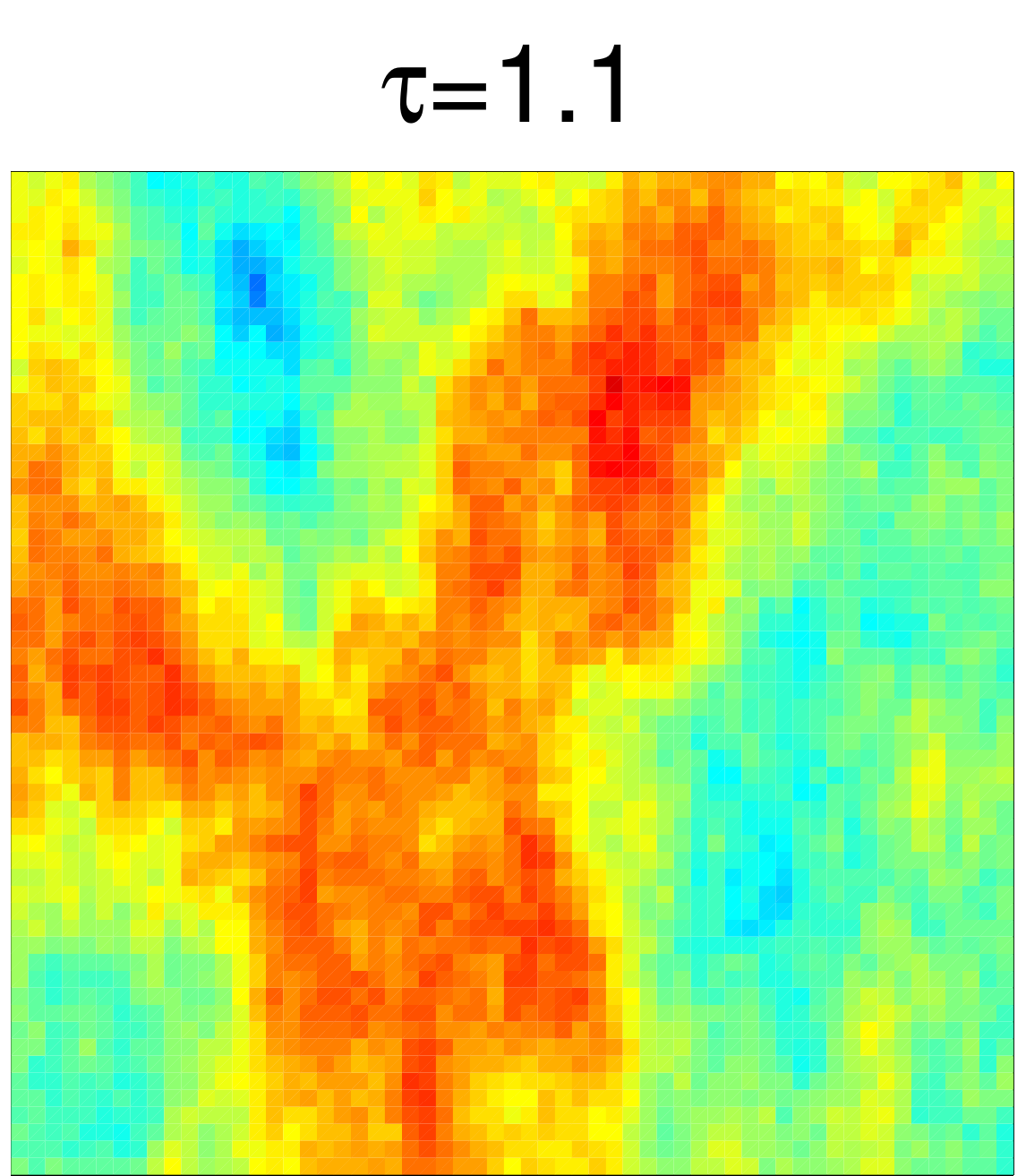}~
\includegraphics[scale=0.15]{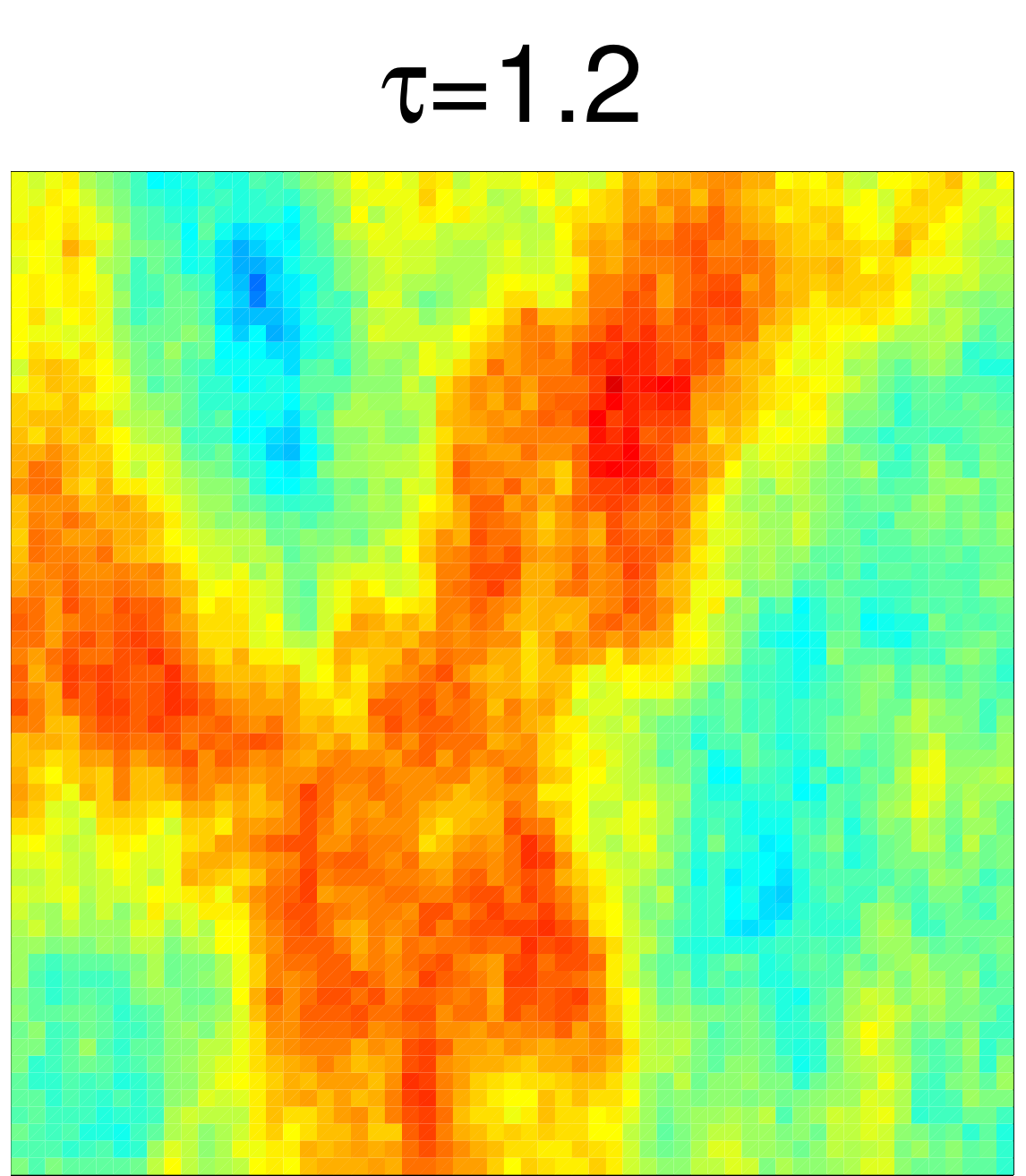}~
\includegraphics[scale=0.15]{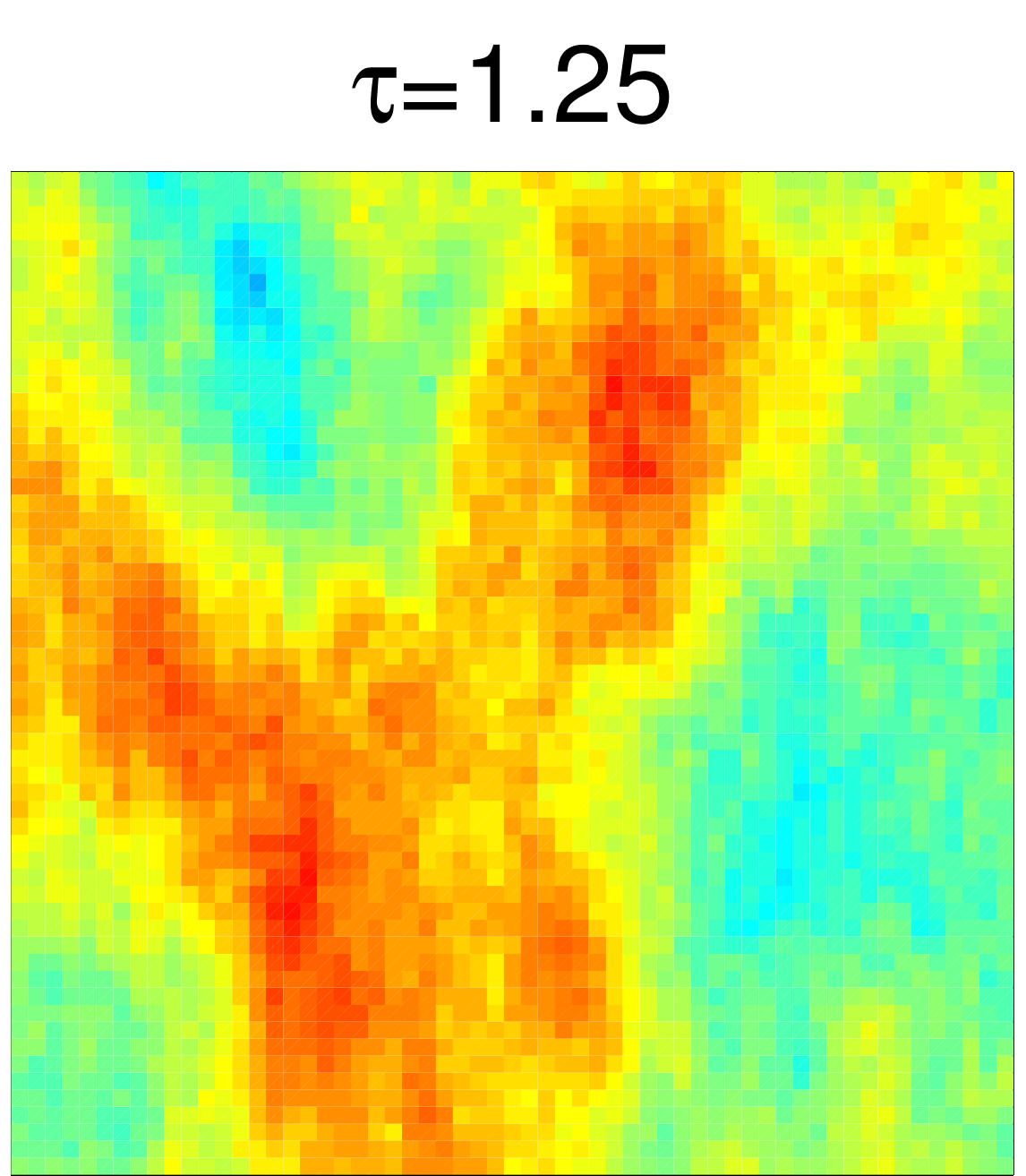}~
\includegraphics[scale=0.15]{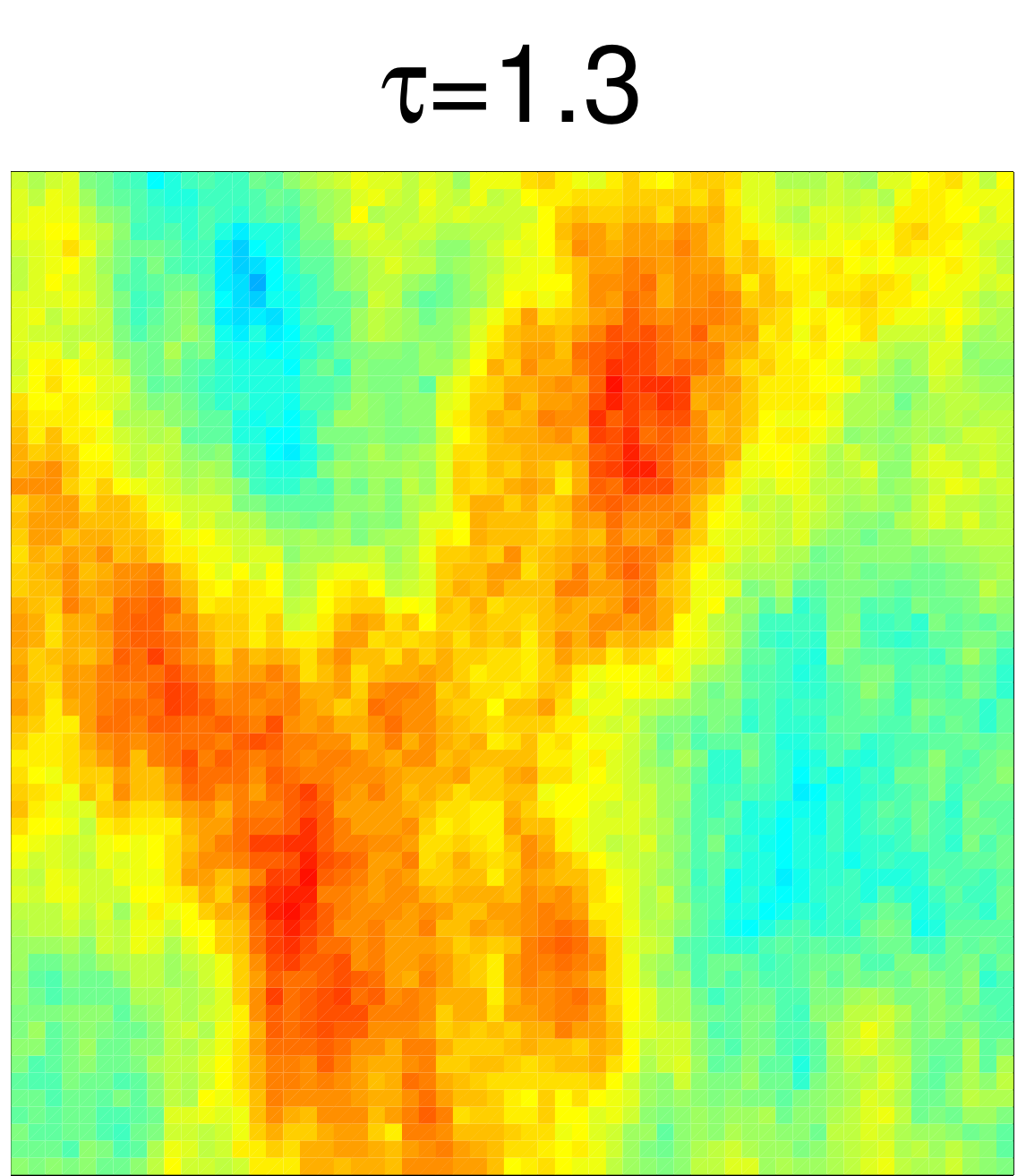}~
\includegraphics[scale=0.15]{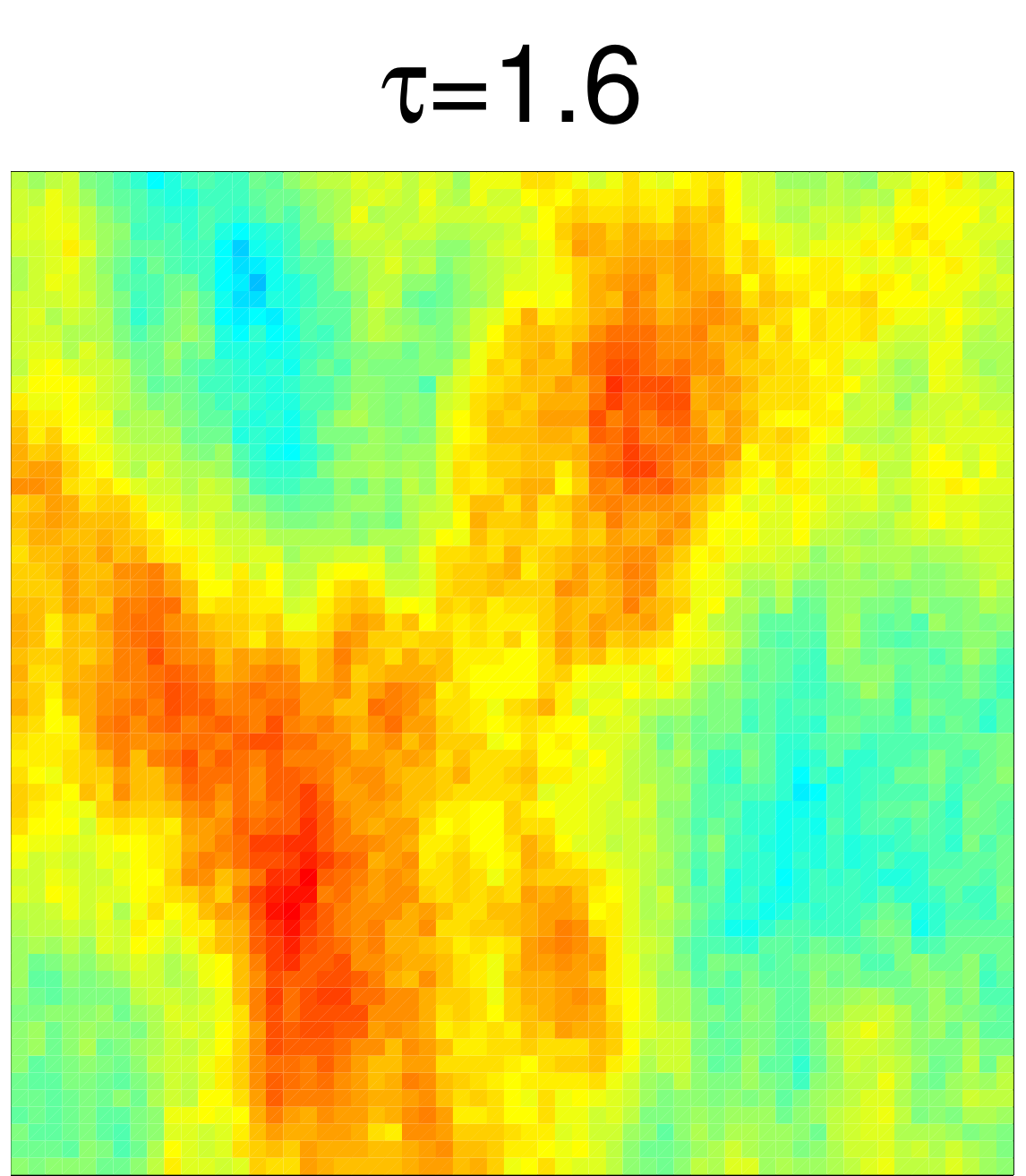}\\
\includegraphics[scale=0.15]{Var_MCMC2BB}~
\includegraphics[scale=0.15]{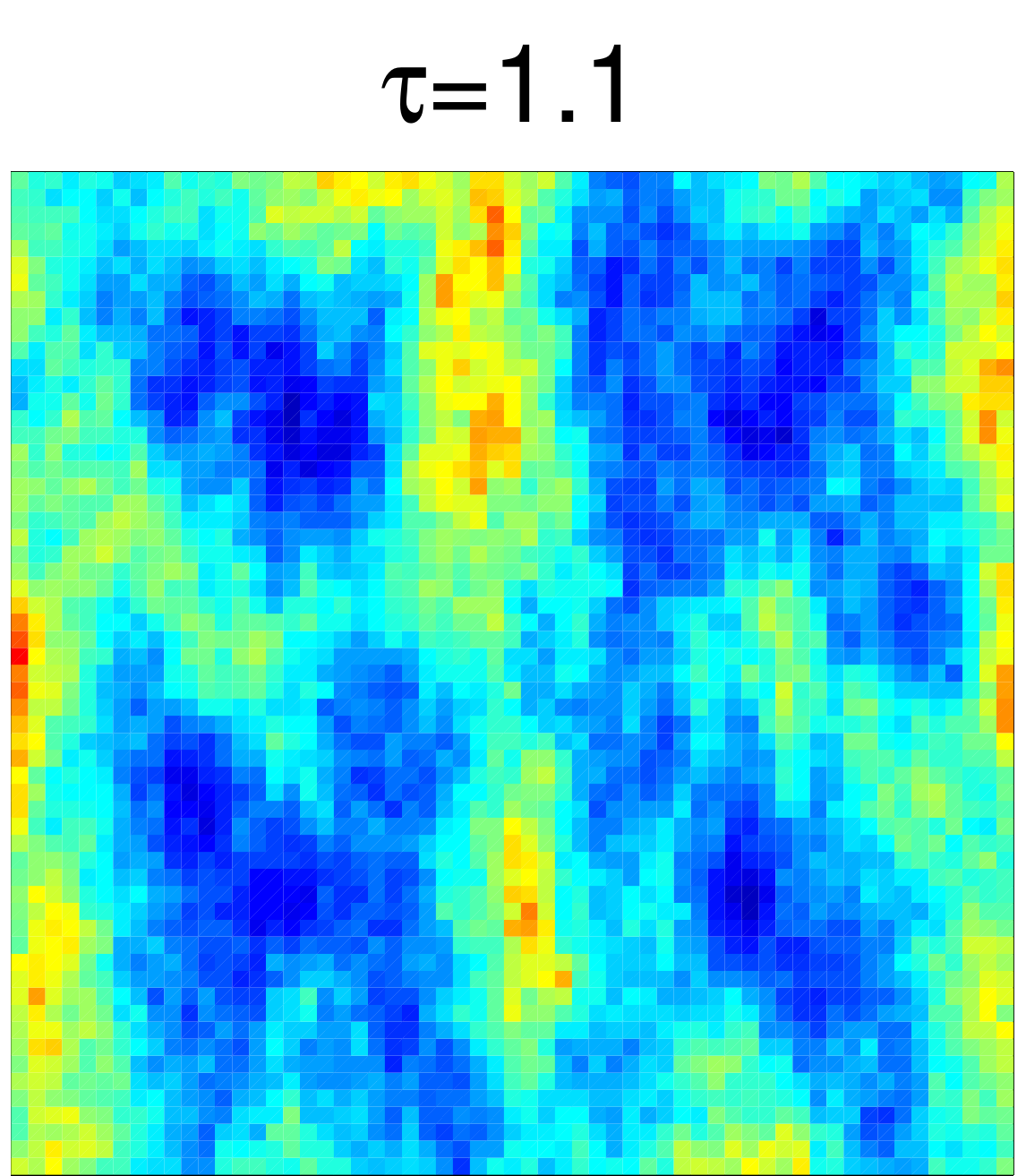}~
\includegraphics[scale=0.15]{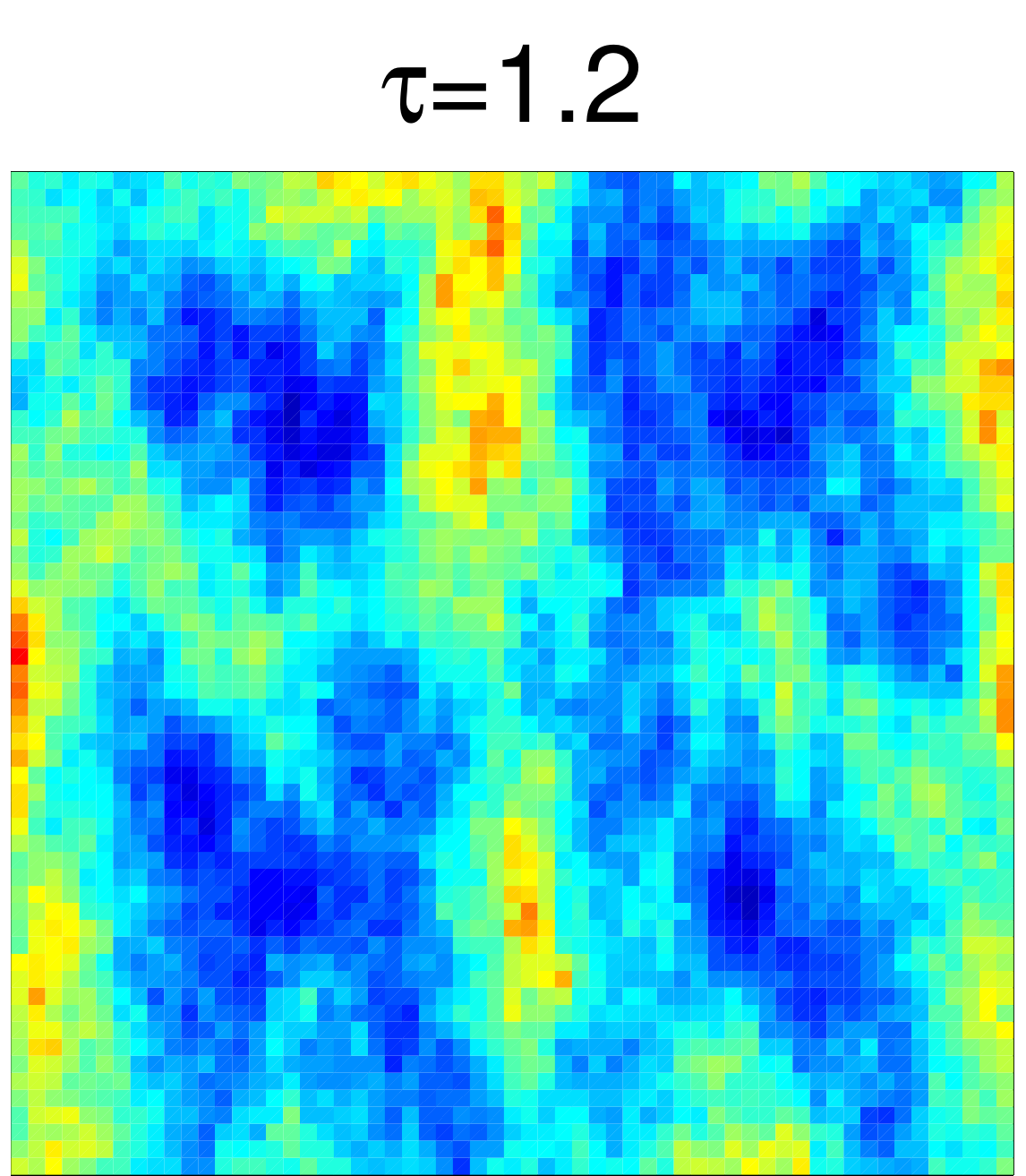}~
\includegraphics[scale=0.15]{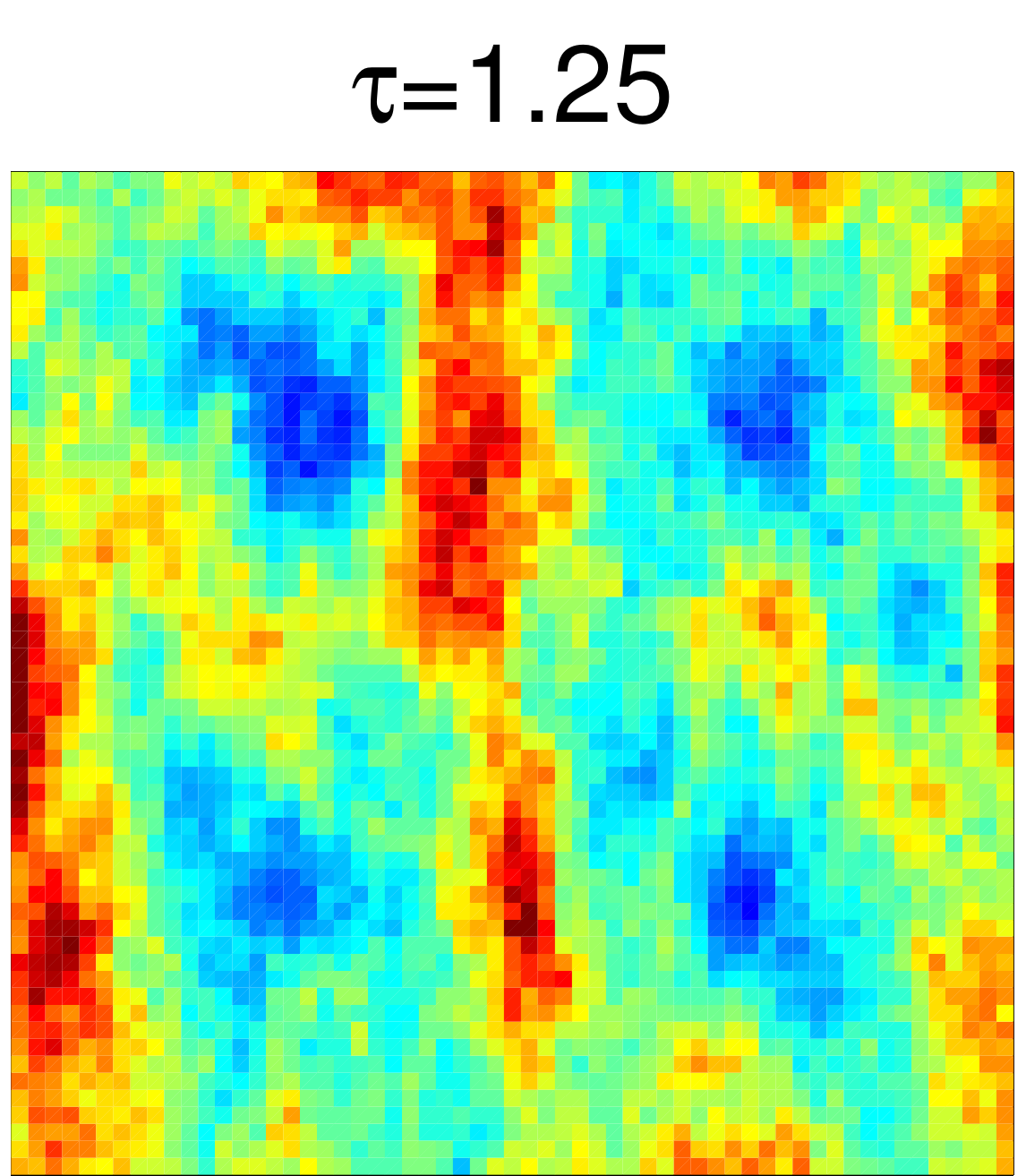}~
\includegraphics[scale=0.15]{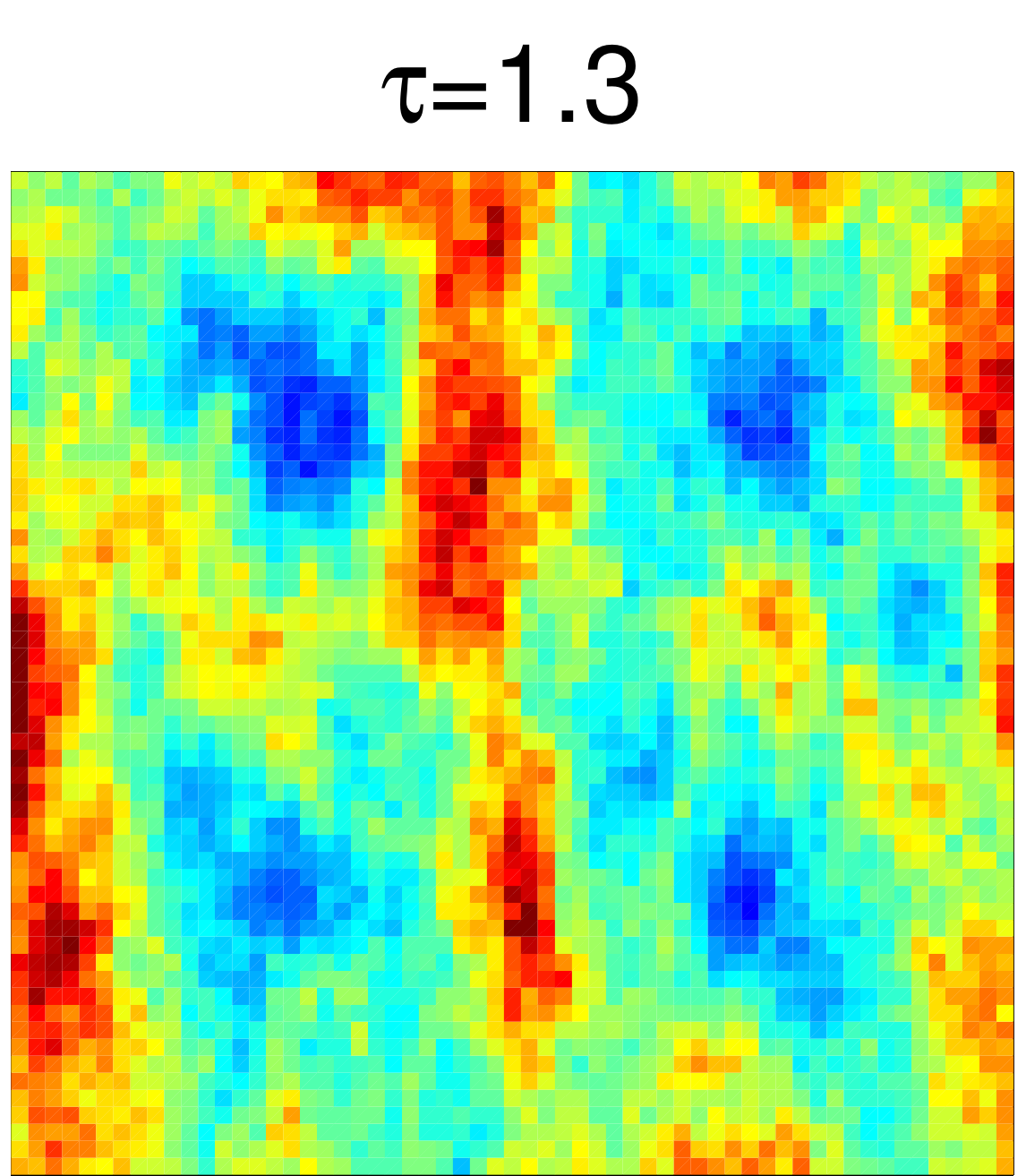}~
\includegraphics[scale=0.15]{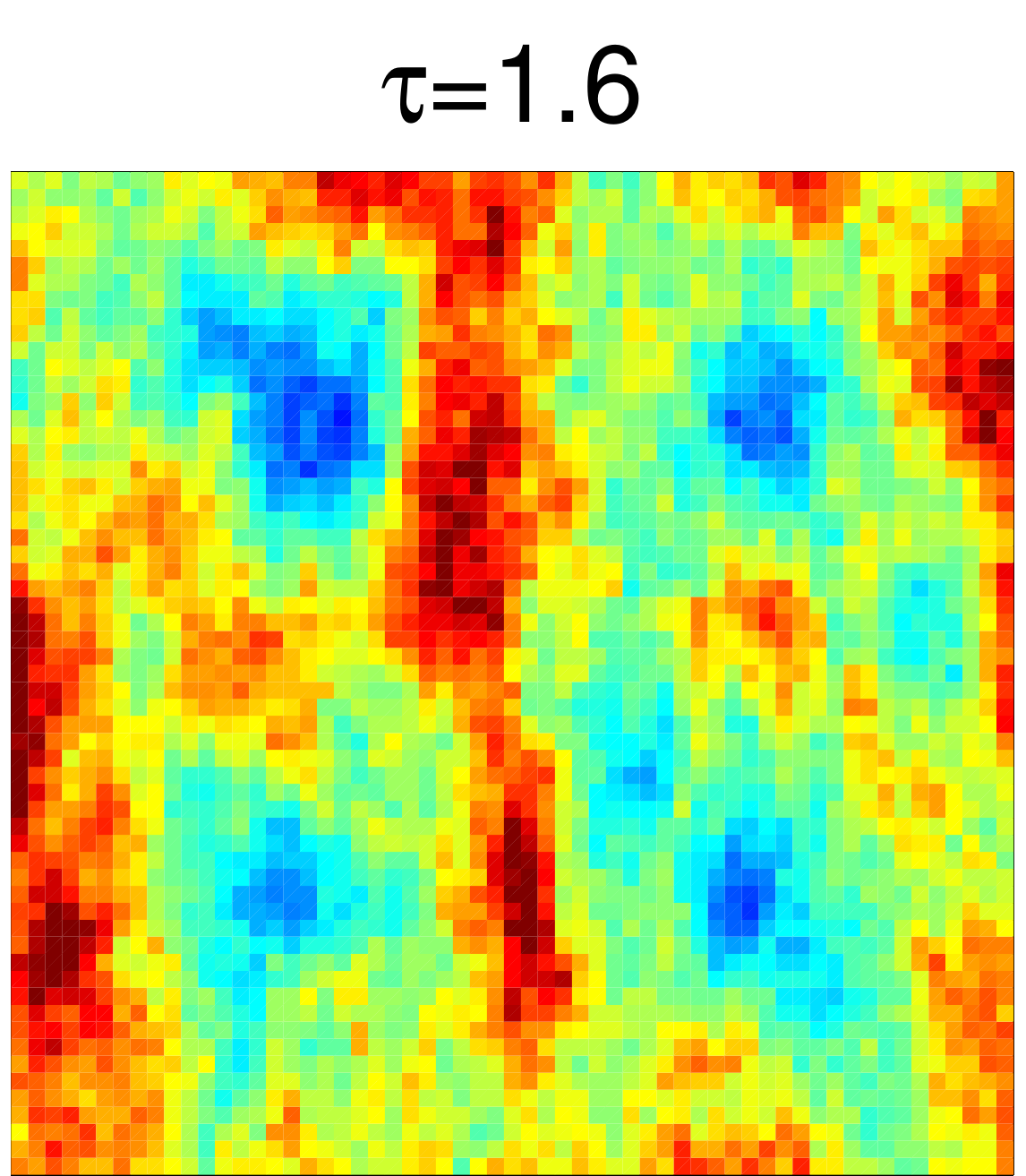}\\
\includegraphics[scale=0.225]{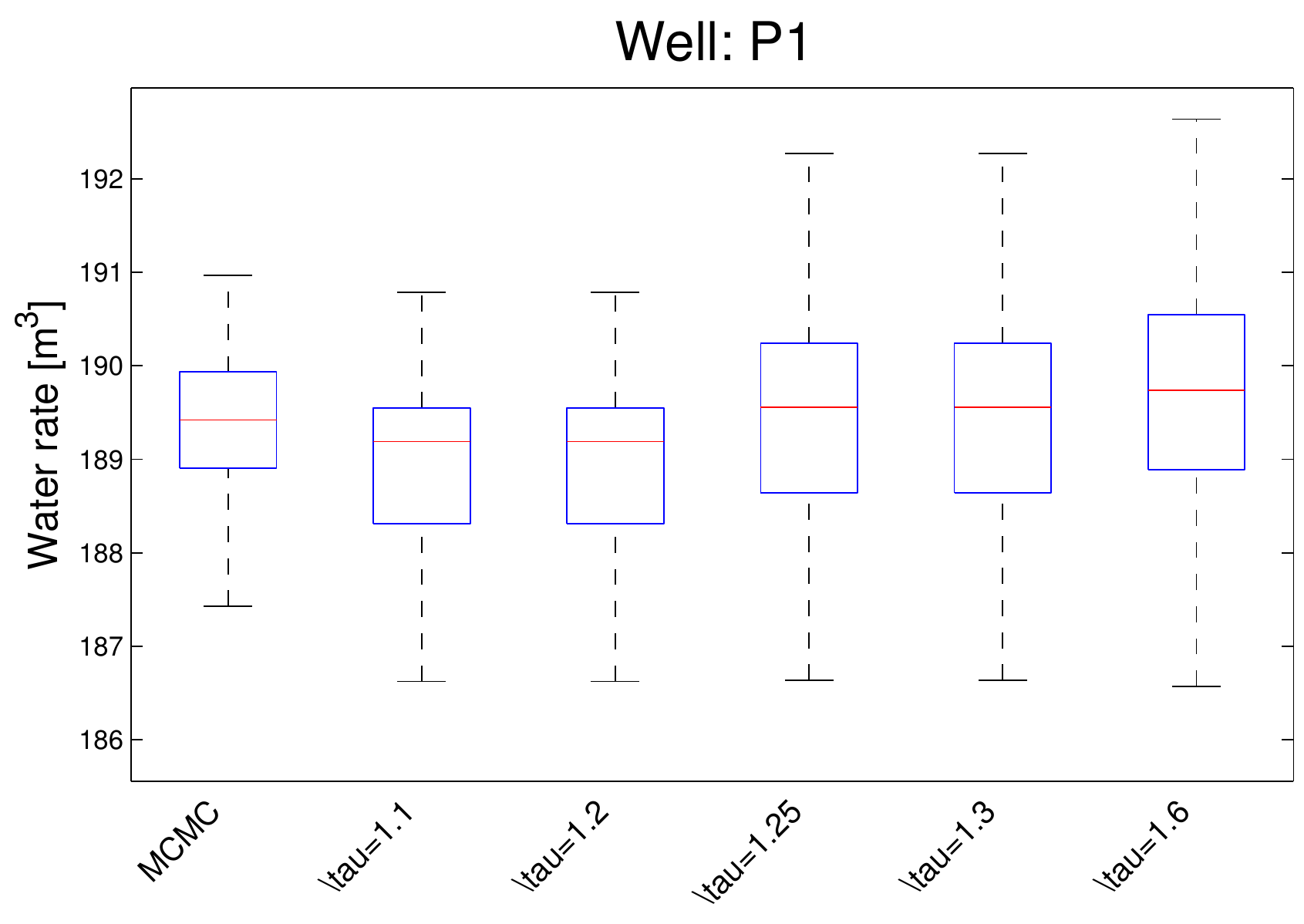}
\includegraphics[scale=0.225]{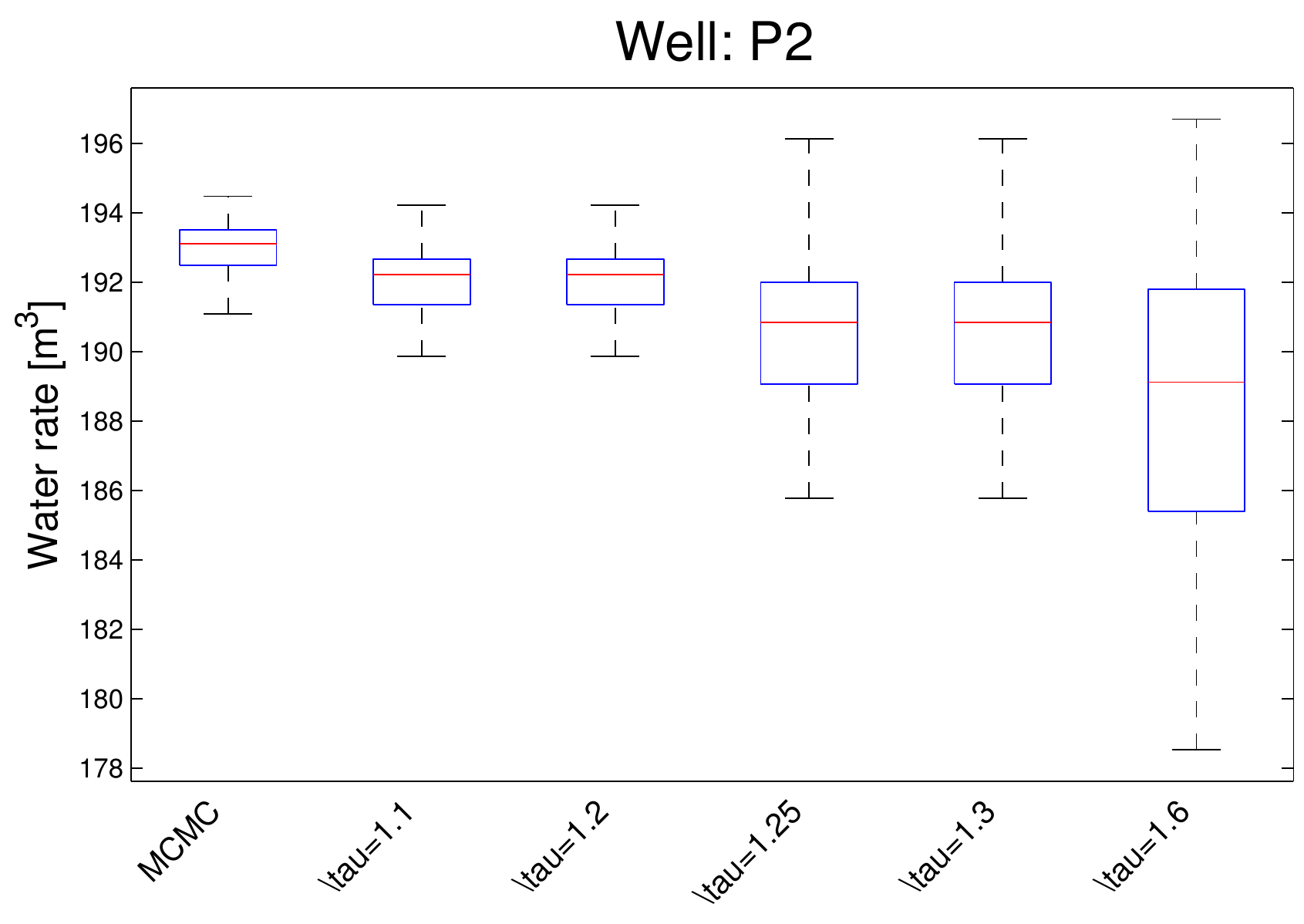}
\includegraphics[scale=0.225]{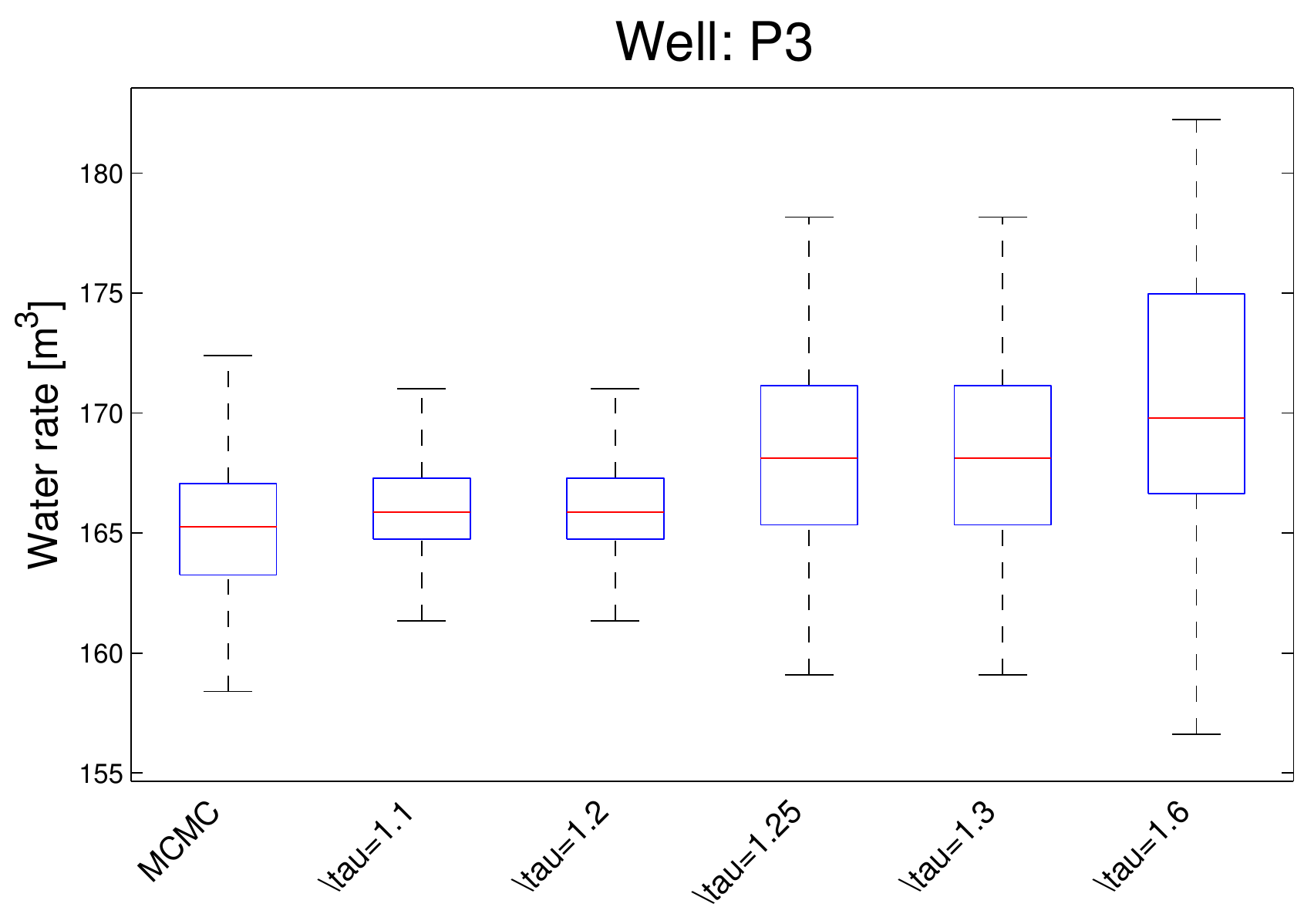}
\includegraphics[scale=0.225]{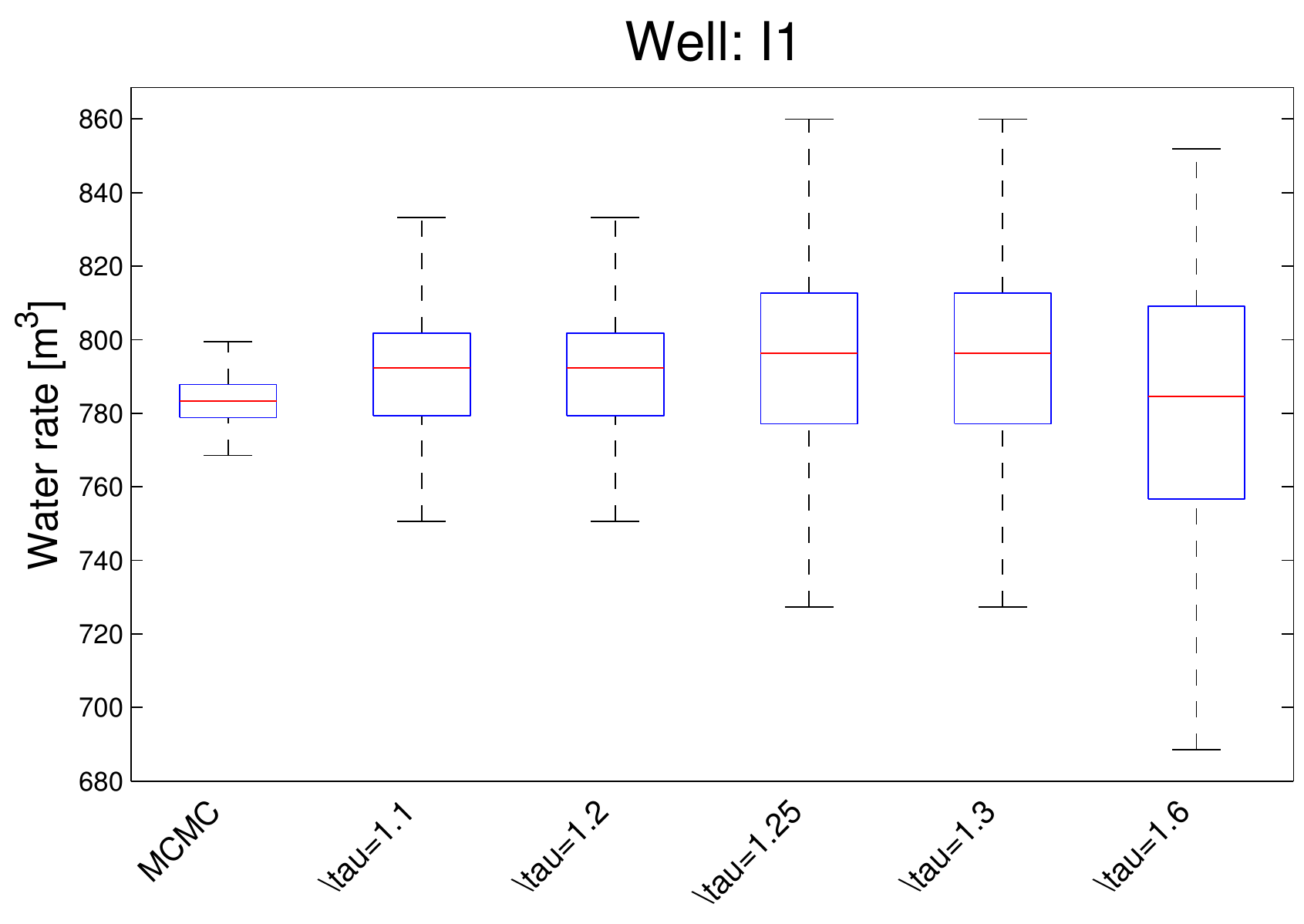}
\includegraphics[scale=0.225]{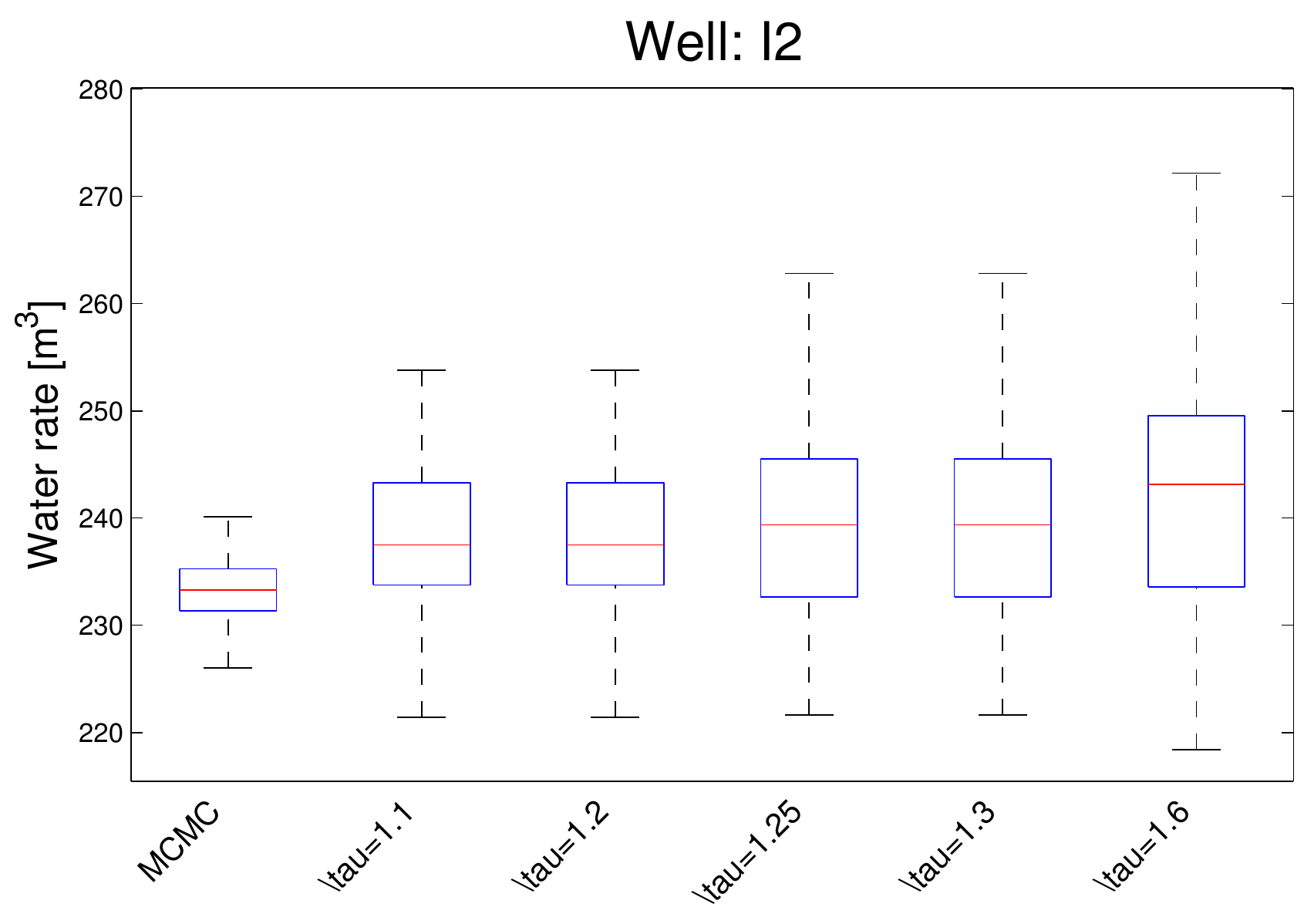}
\includegraphics[scale=0.225]{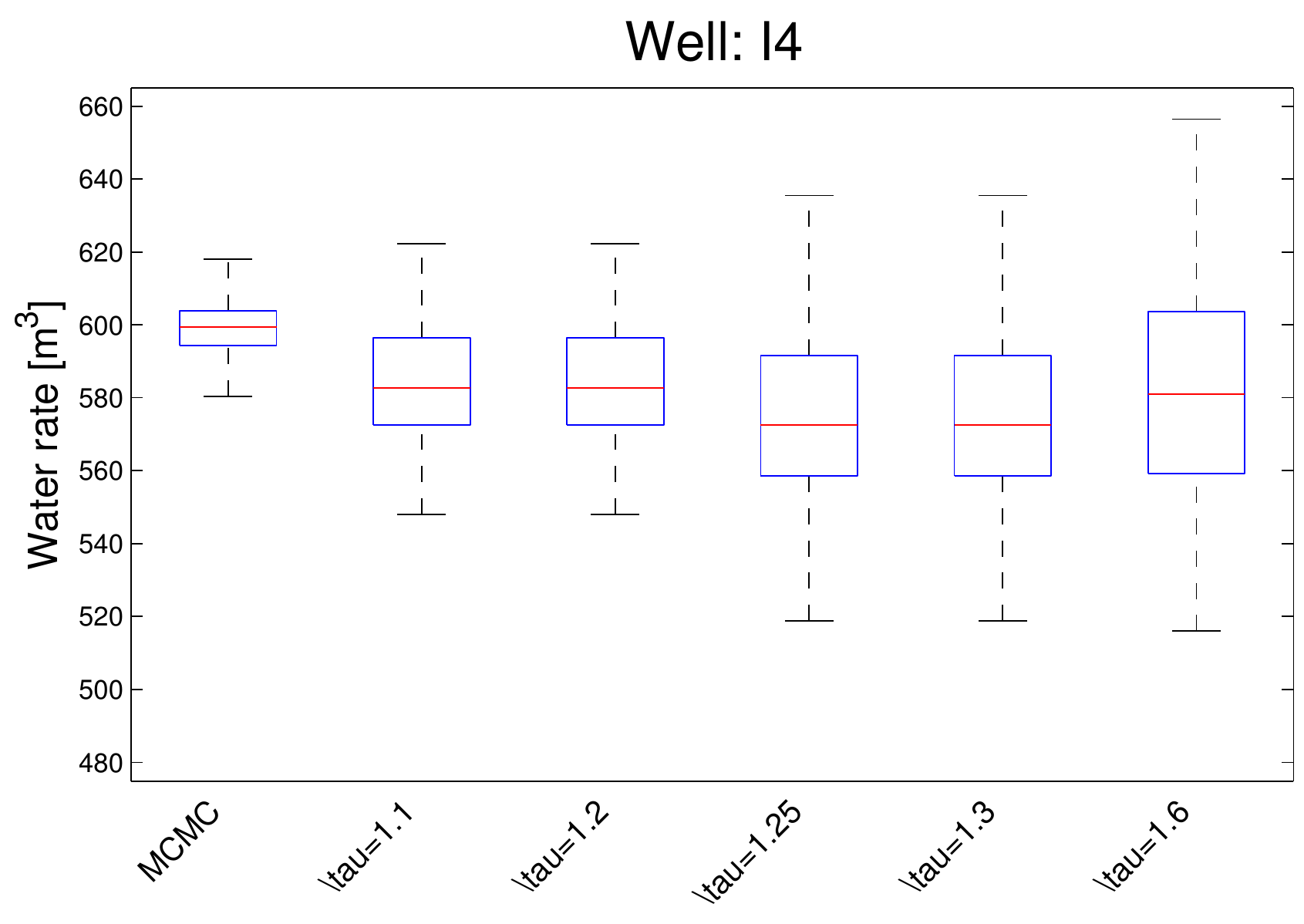}
\caption{Top row (mean of $\mu_B$) and Top-Middle row (variance of $\mu_B$) from left to right: pcn-MCMC, IR-ES approximations with $\rho=0.8$, $N_{e}=75$ and $\tau=1.1$, $\tau=1.2$, $\tau=1.25$, $\tau=1.3$, $\tau=1.6$. Middle-bottom and bottom row: Box plots of water rates from production wells $P_{1},P_{2}, P_{3}$ and PBH from injections wells $I_{1},I_{2},I_{4}$ after 6 years of water flood simulated from $\mu_{B}$ (with MCMC ) and the ensemble approximation with IR-ES for $\rho=0.8$ and different choices of $\tau$}\label{Figure7}
\end{center}
\end{figure}

\subsection{Comparison of the proposed methods versus unregularized standard methods}\label{comp}

In this section we compare the performance of IR-enLM and IR-ES with some standard unregularized methods. In particular, for different ensemble sizes $N_{e}$, in Table \ref{Table3A} we compare the proposed IR-enLM with a implementation of RML where each ensemble is computed with the unregularized LM algorithm described in subsection \ref{sec:RML-LM}. For the latter we consider the stopping criteria (\ref{eq:3.9D}) with $\epsilon_{0}=10^{-3}$ and $\epsilon_{1}=10^{-2}$. In addition, we use the standard recommendations \cite{Oliver,svdRML,YanChenLM} where the selection of $\lambda$ is given by (\ref{eq:1.26}) with $\lambda_{0}\equiv \Lambda_{0}=J(u_{0}^{(j)})/N_{D}$ and $\kappa=10$. For the IR-enLM results from Table \ref{Table3A}, different choices $\rho$ where consider with a selection of $\rho\ge 0.7$ and $\tau=1.0$ suggested from our discussion of subsection \ref{sec:numIR-enLM}. Table \ref{Table3A} shows clearly that the proposed methods, with our recommendations for the selection of the tunable parameters, outperformed RML (with standard unregularized LM method) in terms of approximating mean and variance of the posteriors $\mu_{A}$ and $\mu_{B}$. For each ensemble size, the IR-enLM method provides a similar level of approximation of the mean and variance for $\rho=0.7,0.8,0.9$. However, as discussed earlier, the  computational cost increases significantly as we increase $\rho$. Visual comparisons can be conducted from Figure \ref{Figure10A} (for $\mu_{A}$) and Figure \ref{Figure11A} (for $\mu_{B}$).

In Table \ref{Table3B} we present a comparison of the proposed IR-ES with the standard unregularized ES implementation described in subsection \ref{sec:ES}. For a fixed choice $M_{ES}=10$, we use different choices of the ensemble size $N_{e}$ and tunable parameter $\rho$. From our discussion of subsection \ref{sec:numIR-ES}, for each $\rho$, the IR-ES results from Table \ref{Table3B} correspond to the tunable parameter $\tau=1/\rho$. For each ensemble size, the IR-ES produces a better approximation that the corresponding standard ES. However, similar to the IR-enLM, the computational cost of IR-ES increases with $\rho$. For some of these $\rho$'s, we may be able to use an ensemble size $N_{e}$ for which the cost of ES matches the one of IR-ES but with increased accuracy. For example, the computational cost of IR-ES for $\rho=0.9$ and $N_{e}=25$ is 75 forward model runs which matches the cost of ES with $N_{e}=75$. In this case, ES provides more accurate approximations of the mean that our proposed method. However, the computational cost of IR-ES for $\rho=0.7$ and $N_{e}=75$ is also 75 forward model runs and provides more accurate results both in terms of mean and variance. Therefore, even though IR-ES provides better approximations for a given ensemble size, the selection of $\rho$ is fundamental for the computational efficiency of the method. Table \ref{Table3B} suggests that $\rho=0.7$, $M_{ES}=10$, offers a reasonable compromise between accuracy and cost for each ensemble size. Some visual comparisons are displayed in Figure \ref{Figure10B} (for $\mu_{A}$) and Figure \ref{Figure11B} (for $\mu_{B}$).

\begin{figure}
\begin{center}
\includegraphics[scale=0.2]{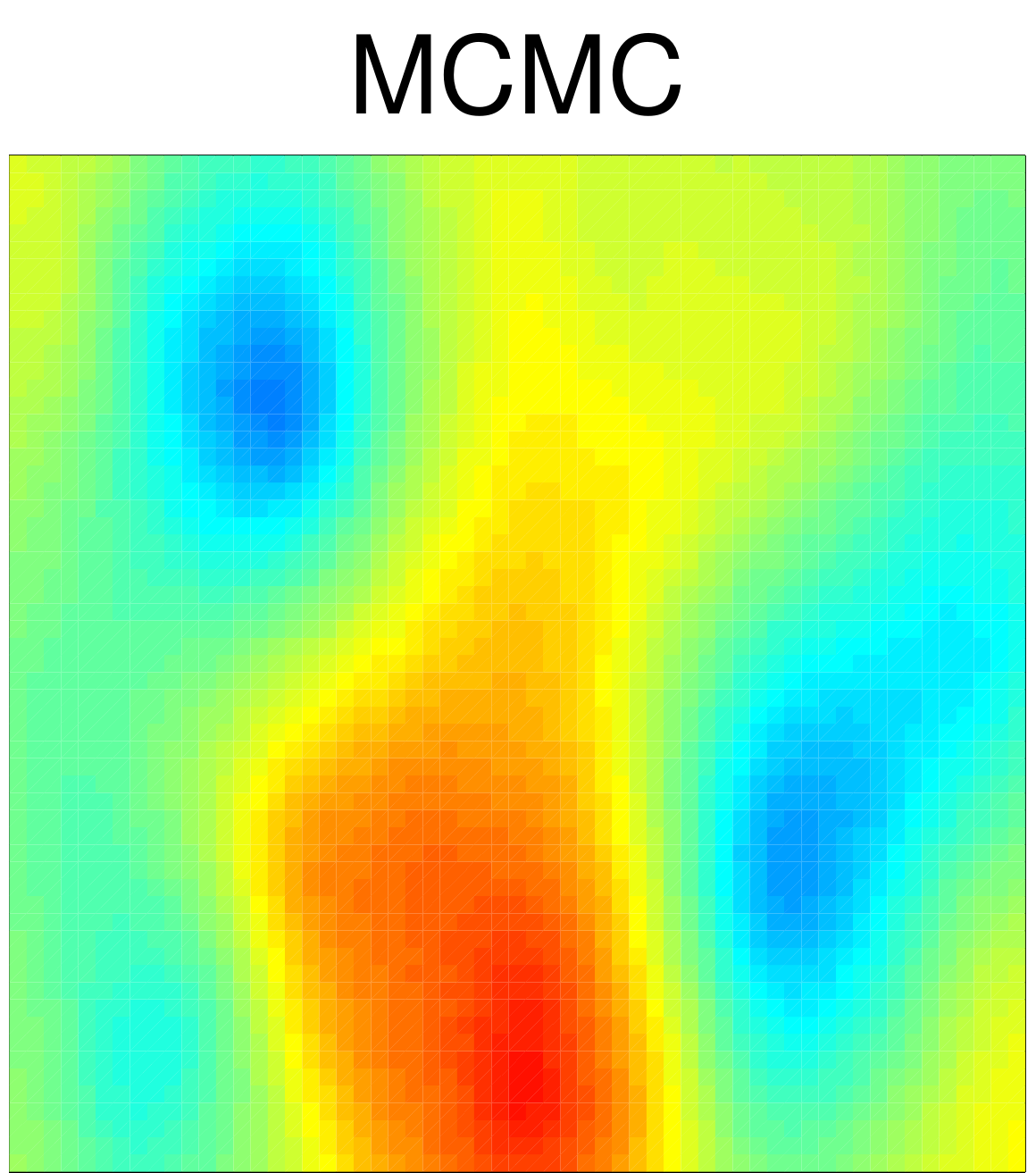}~
\includegraphics[scale=0.2]{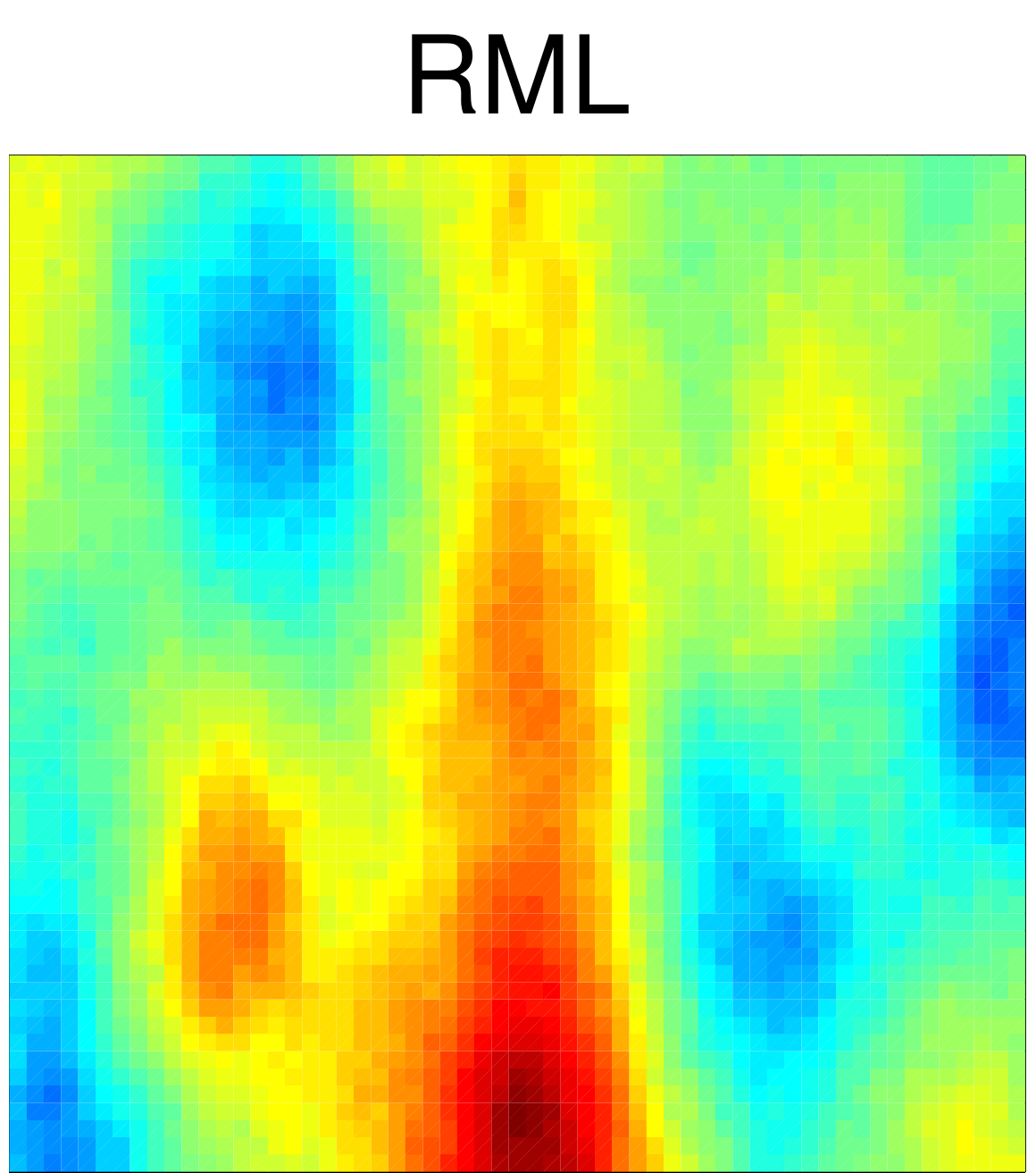}~
\includegraphics[scale=0.2]{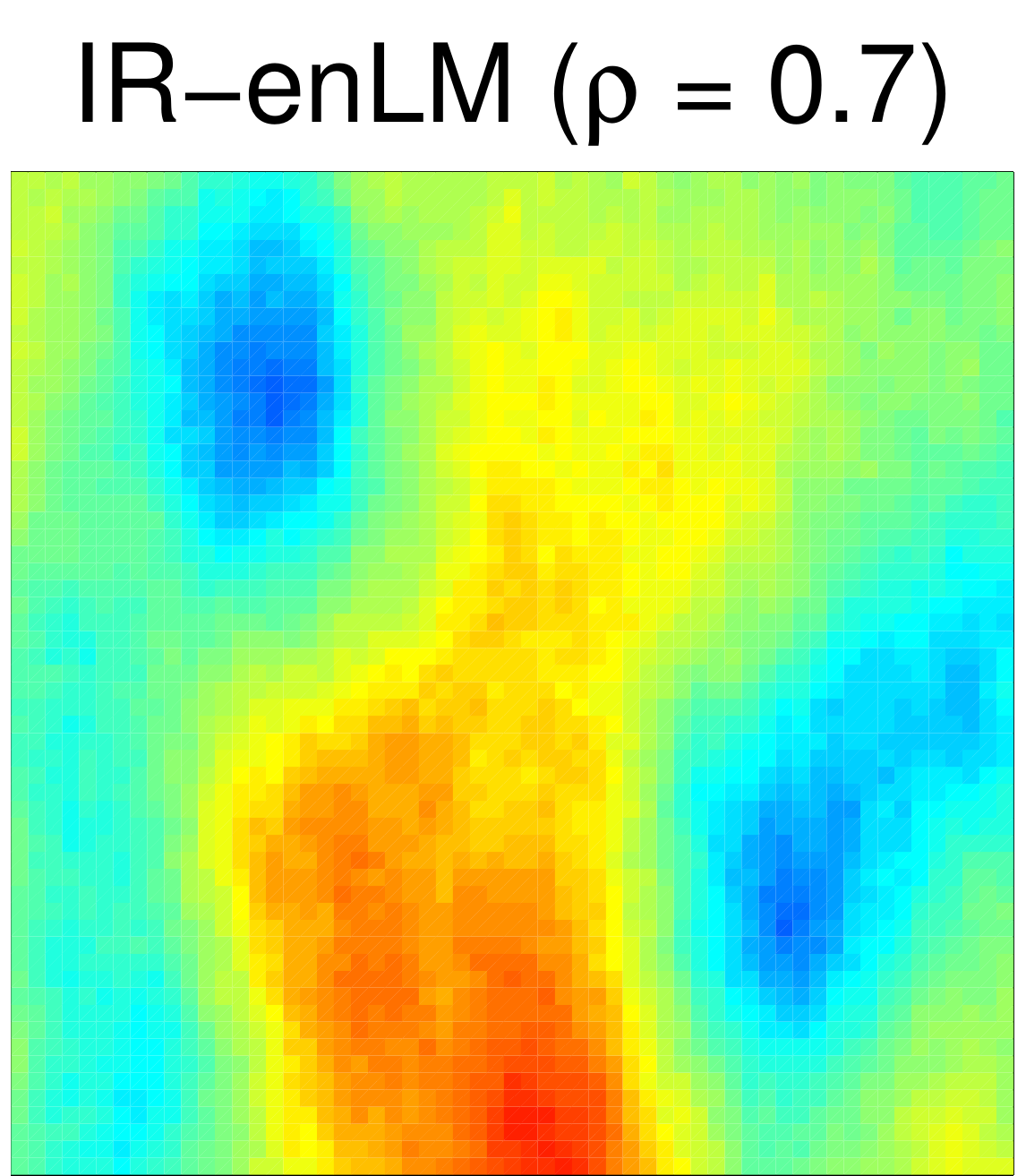}~
\includegraphics[scale=0.2]{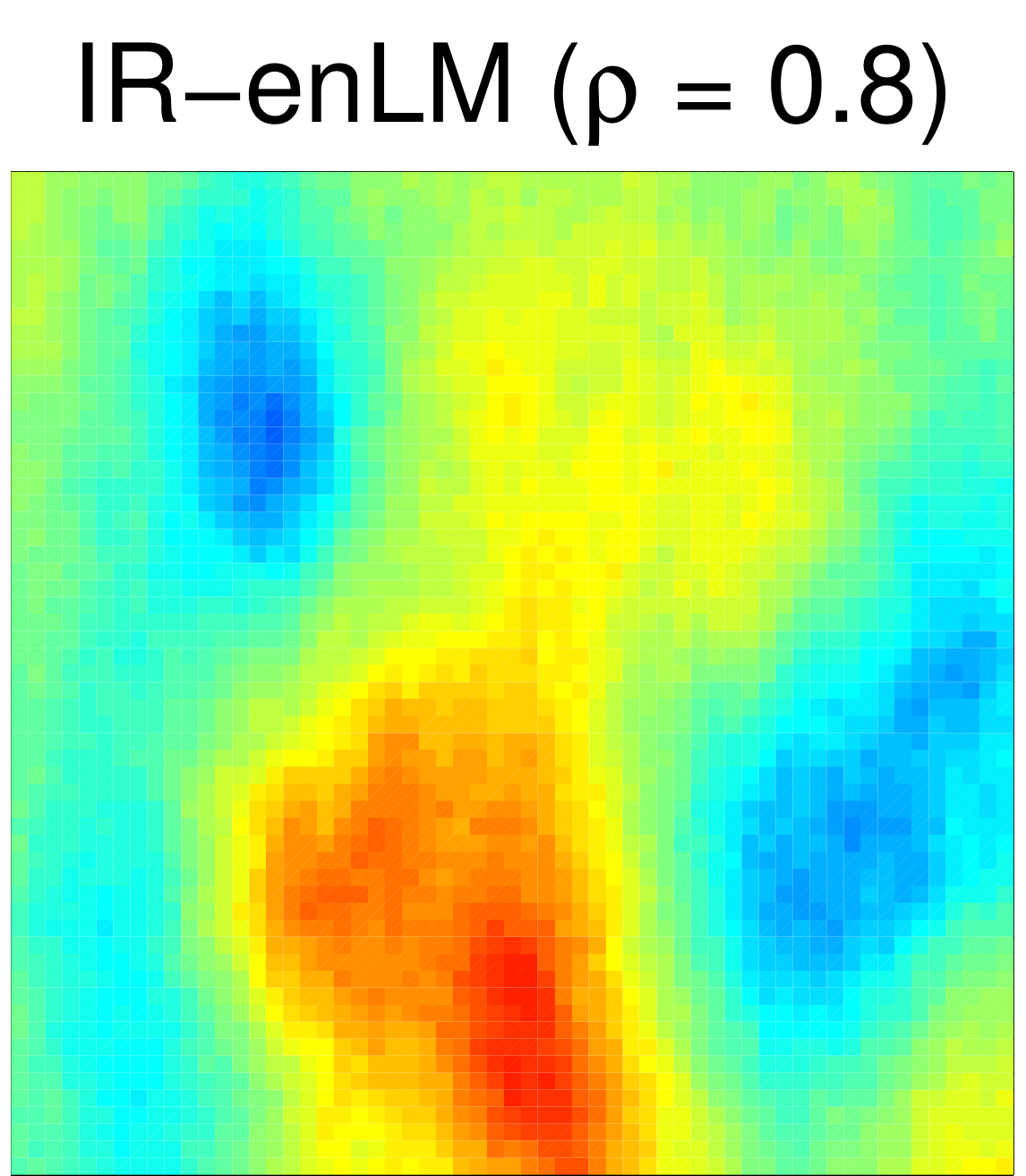}~
\includegraphics[scale=0.2]{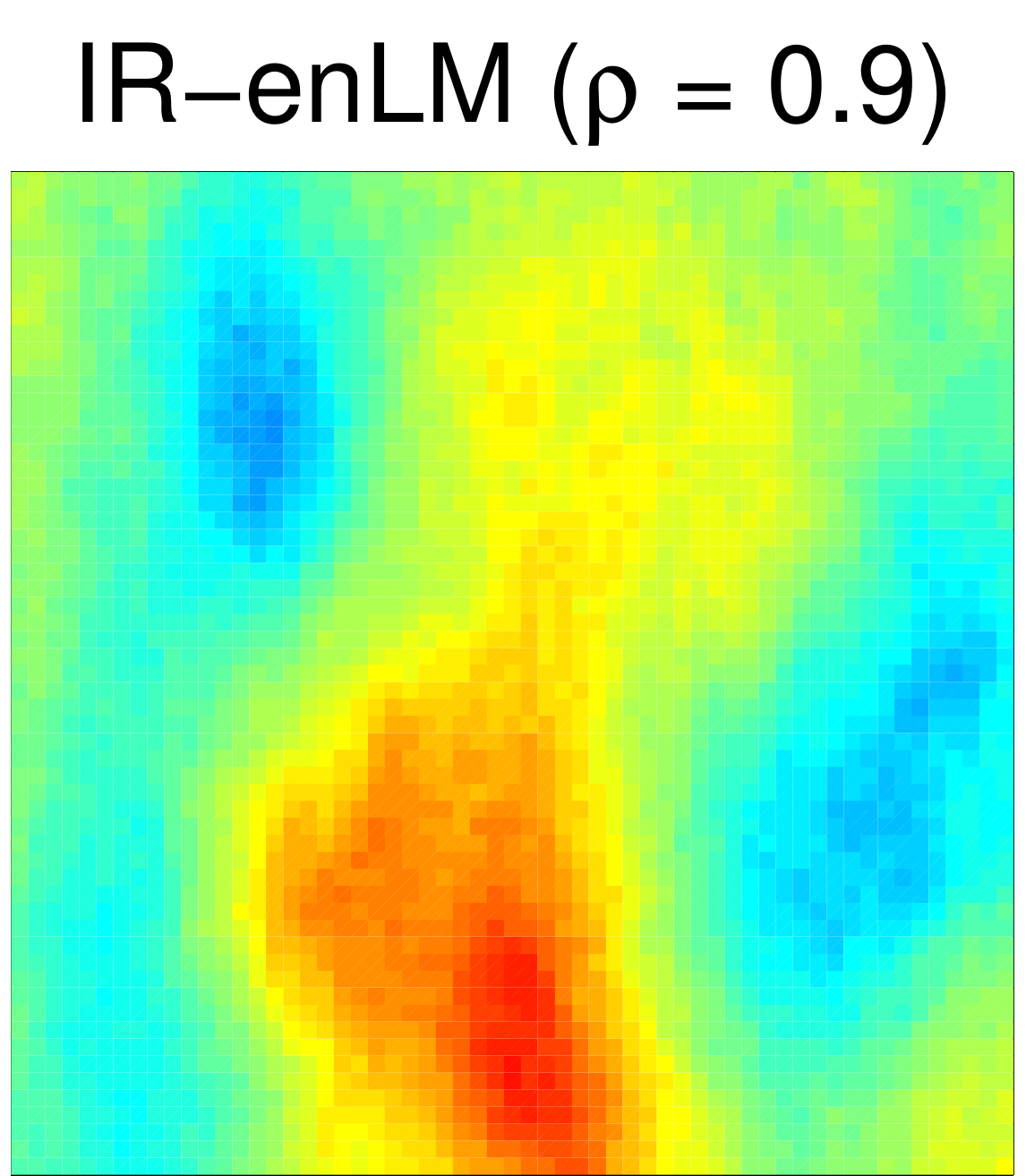}\\
\includegraphics[scale=0.2]{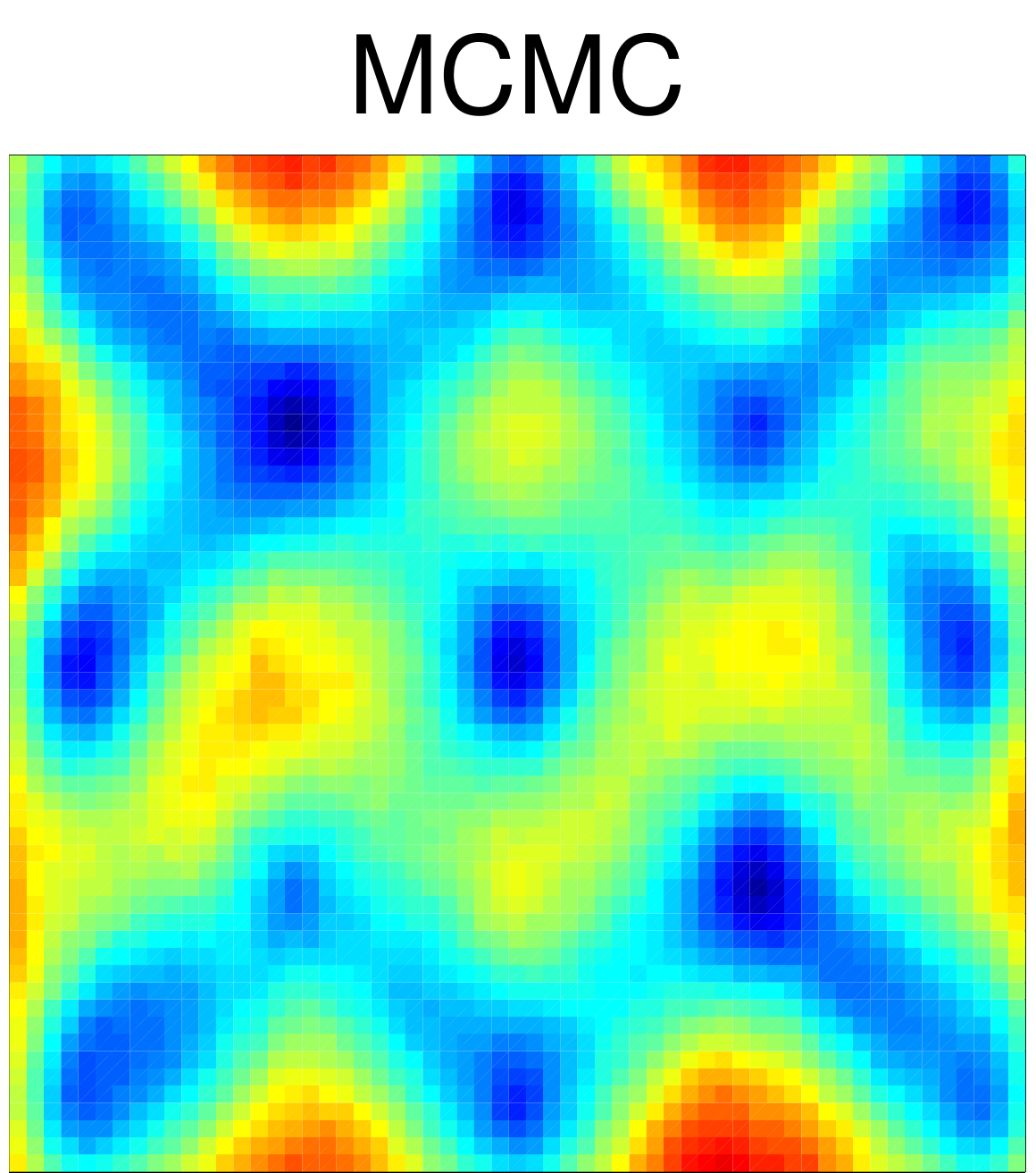}~
\includegraphics[scale=0.2]{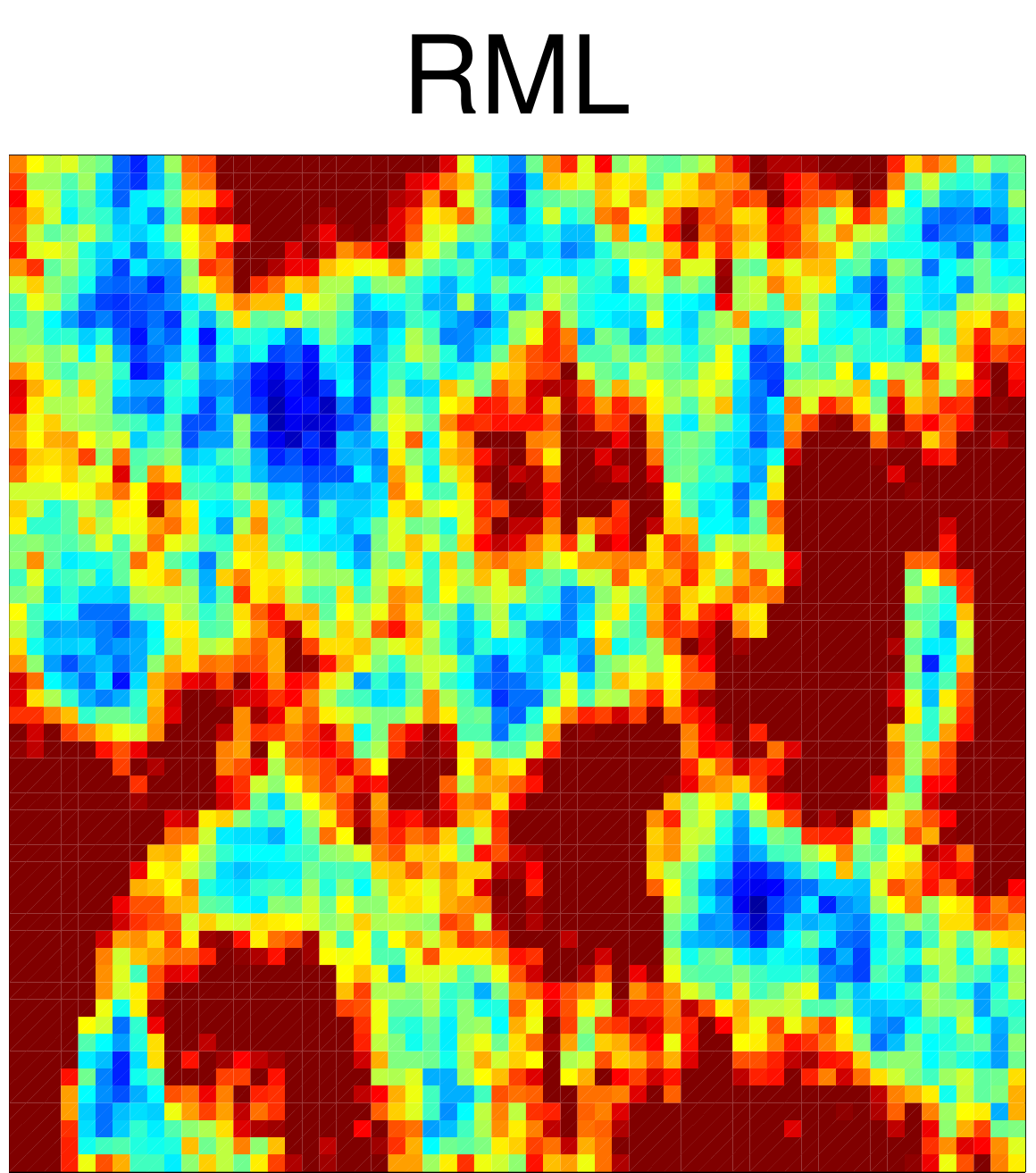}~
\includegraphics[scale=0.2]{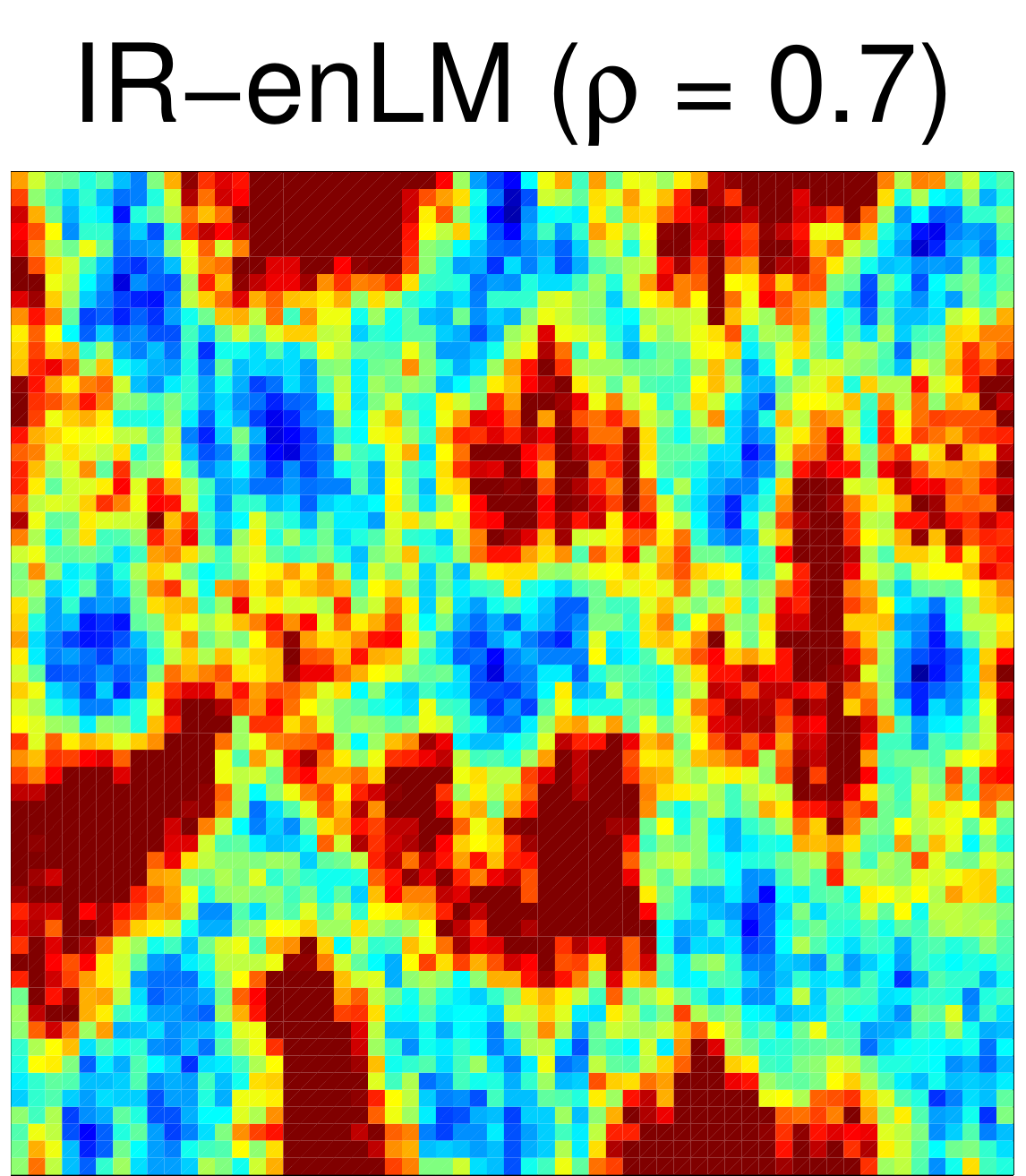}~
\includegraphics[scale=0.2]{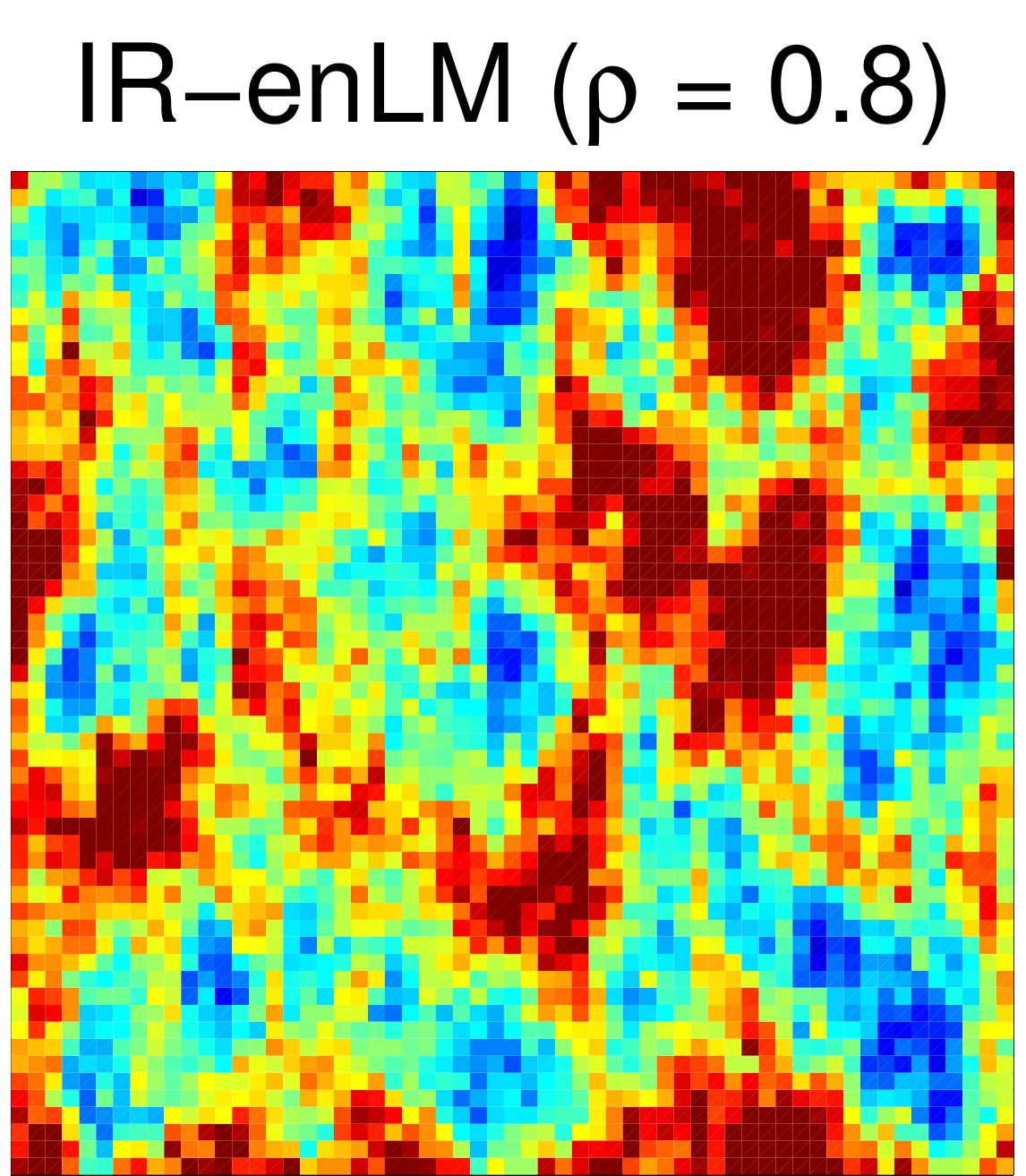}~
\includegraphics[scale=0.2]{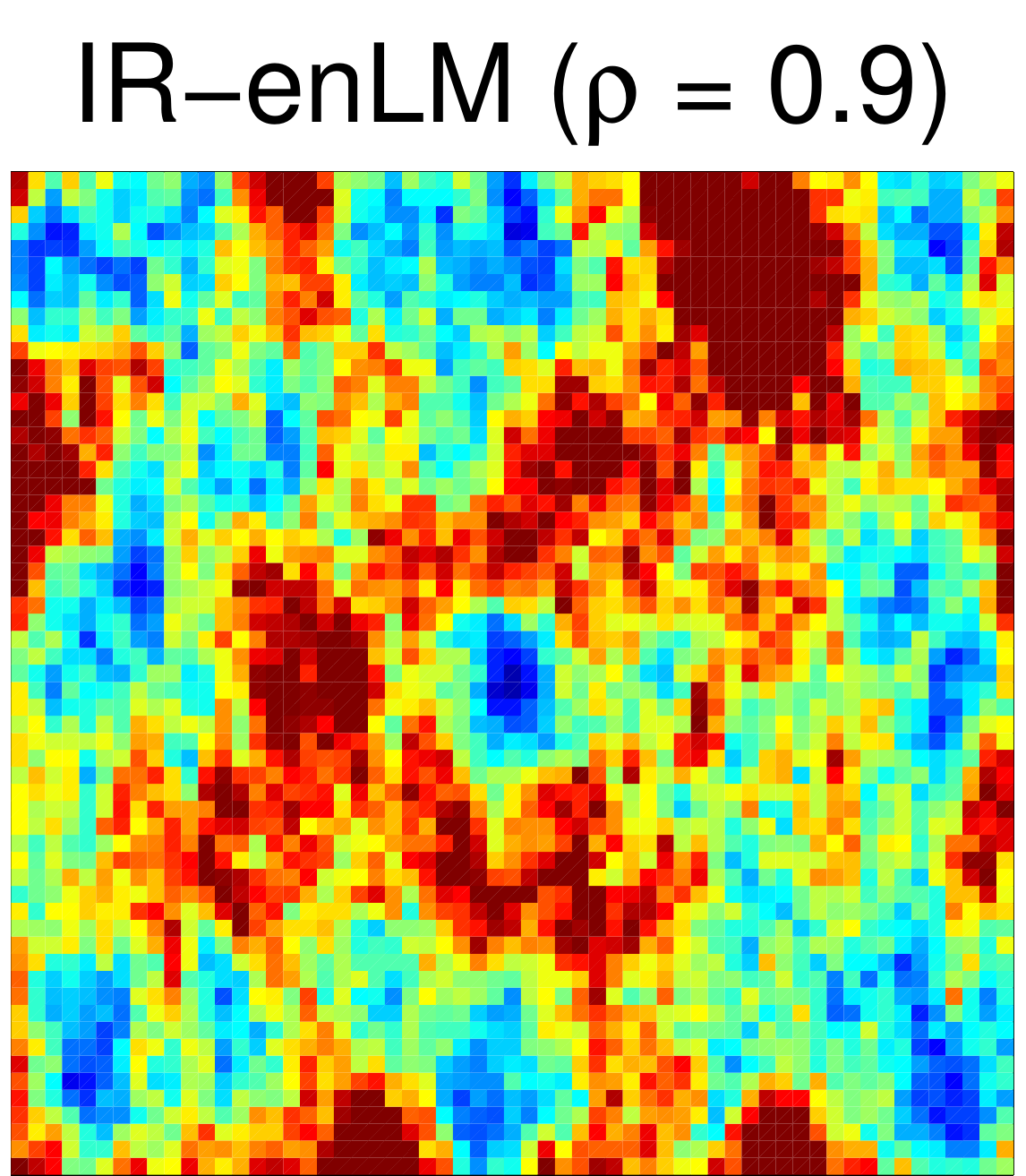}
\end{center}
\caption{ Mean (top) and Variance (bottom) of the posterior distribution $\mu_{A}$ (characterized with MCMC) and ensemble approximations RML and IR-enLM with $N_{e}=50$.}  
\label{Figure10A}
\end{figure}

\begin{figure}
\begin{center}
\includegraphics[scale=0.2]{Mean_MCMC1BA}~
\includegraphics[scale=0.2]{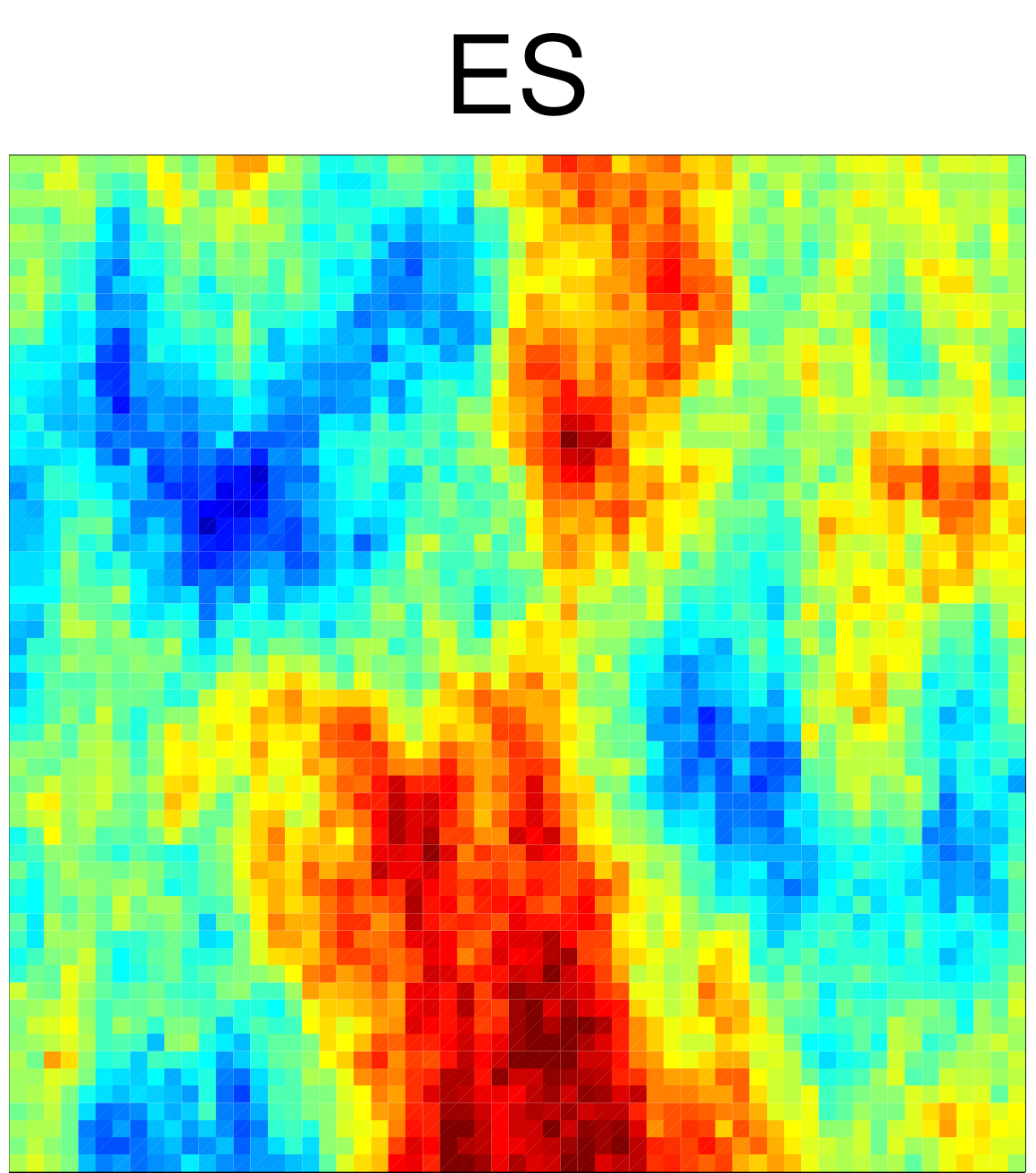}~
\includegraphics[scale=0.2]{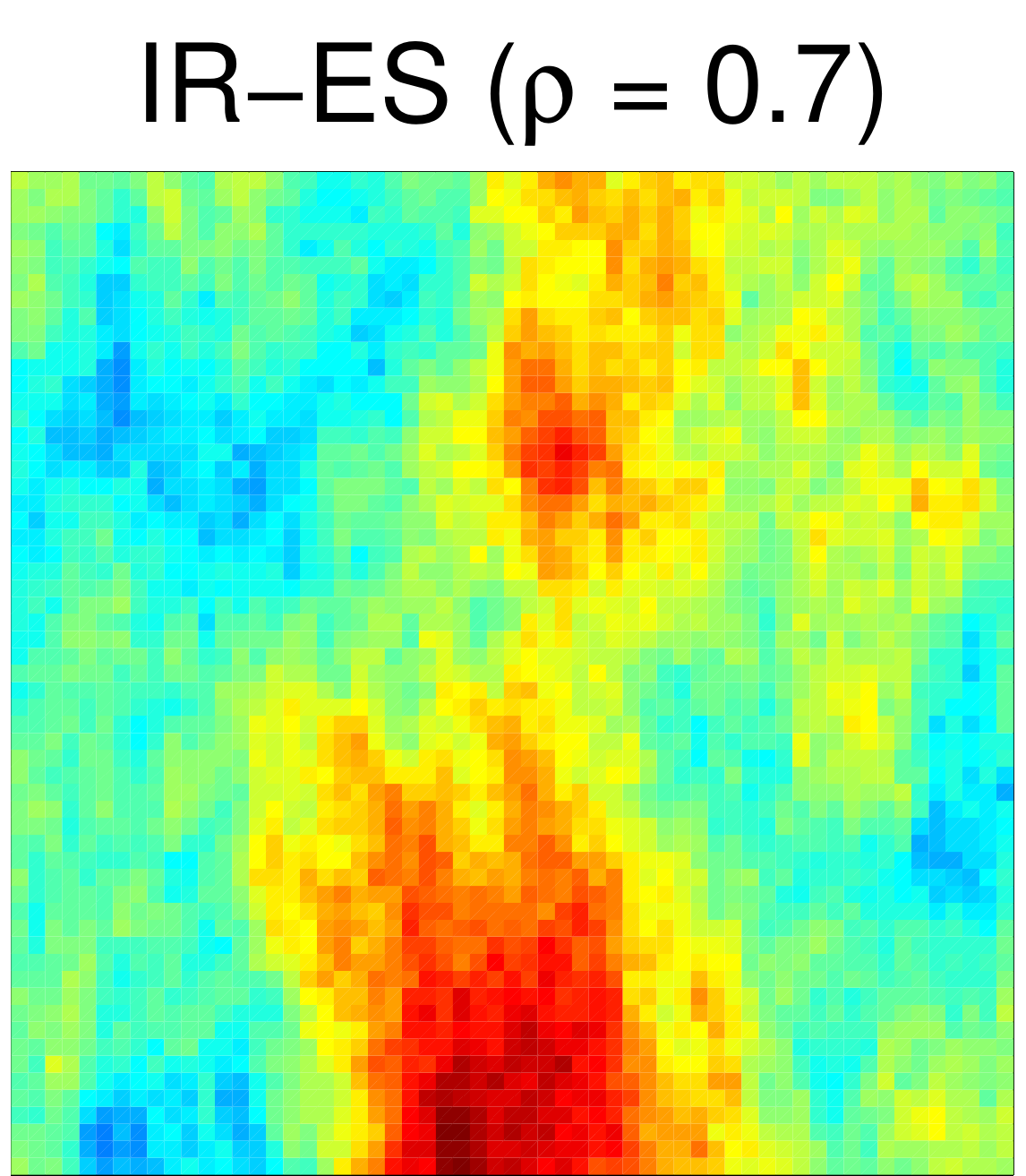}~
\includegraphics[scale=0.2]{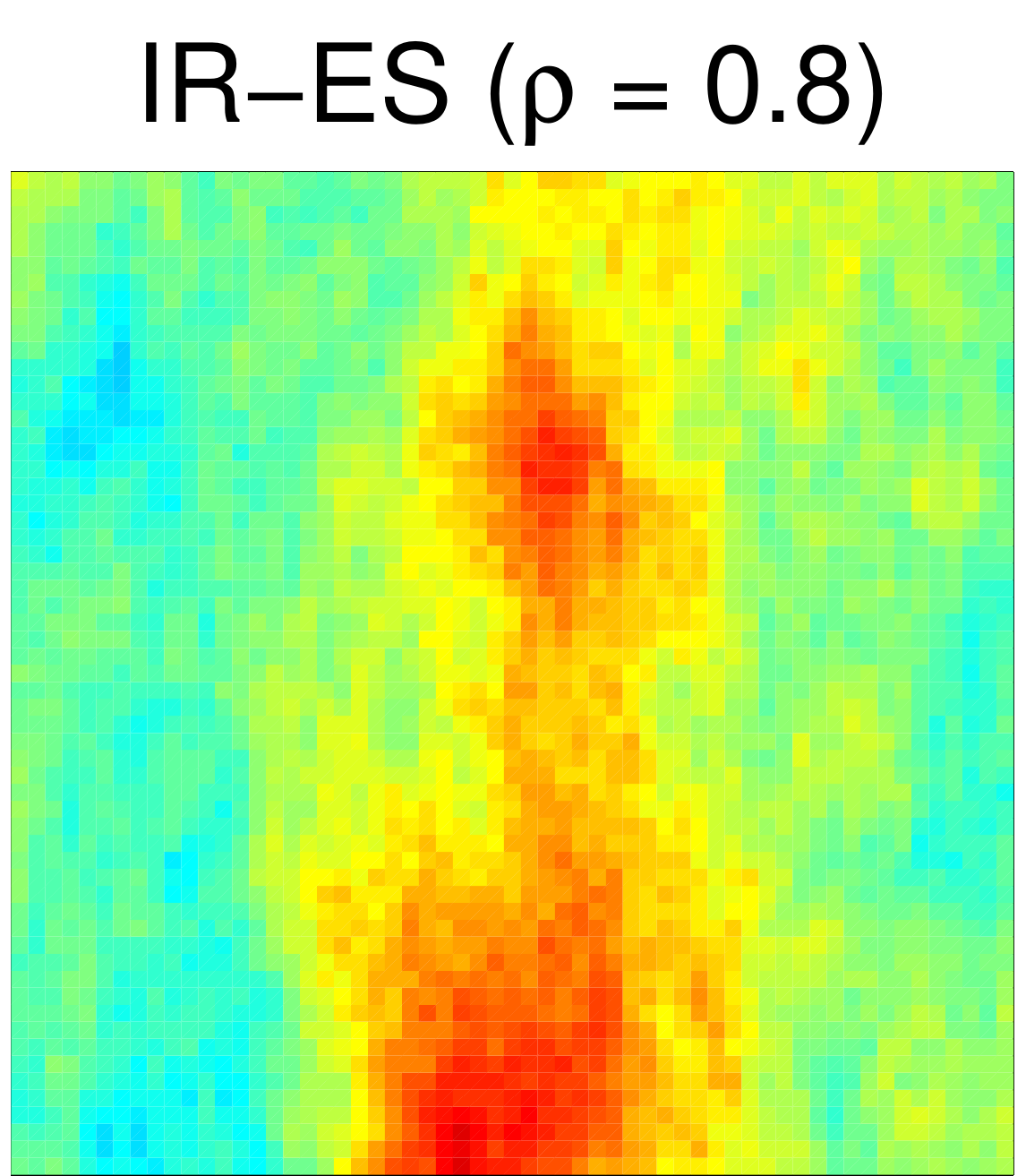}~
\includegraphics[scale=0.2]{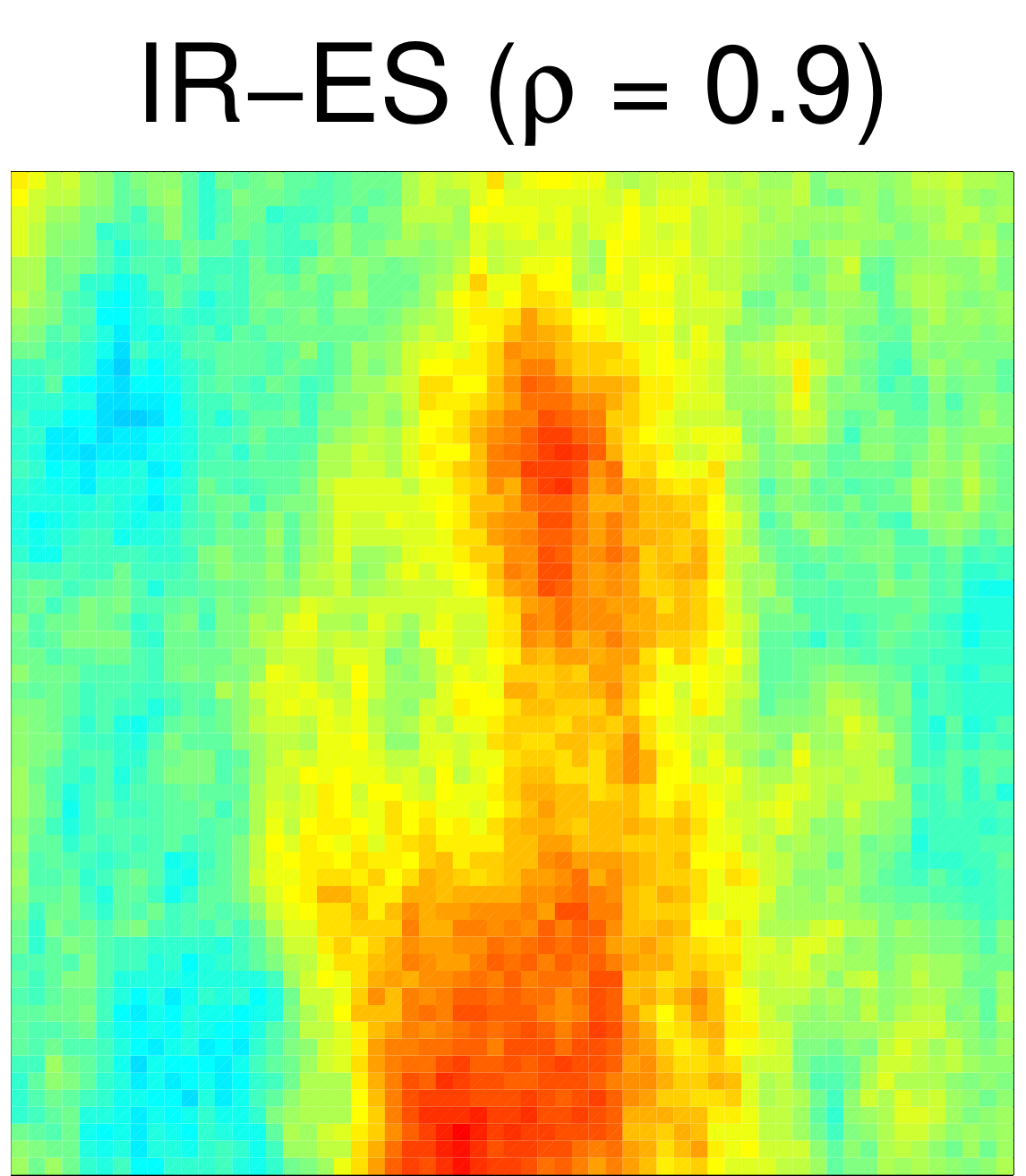}~\\
\includegraphics[scale=0.2]{Var_MCMC1BA}~
\includegraphics[scale=0.2]{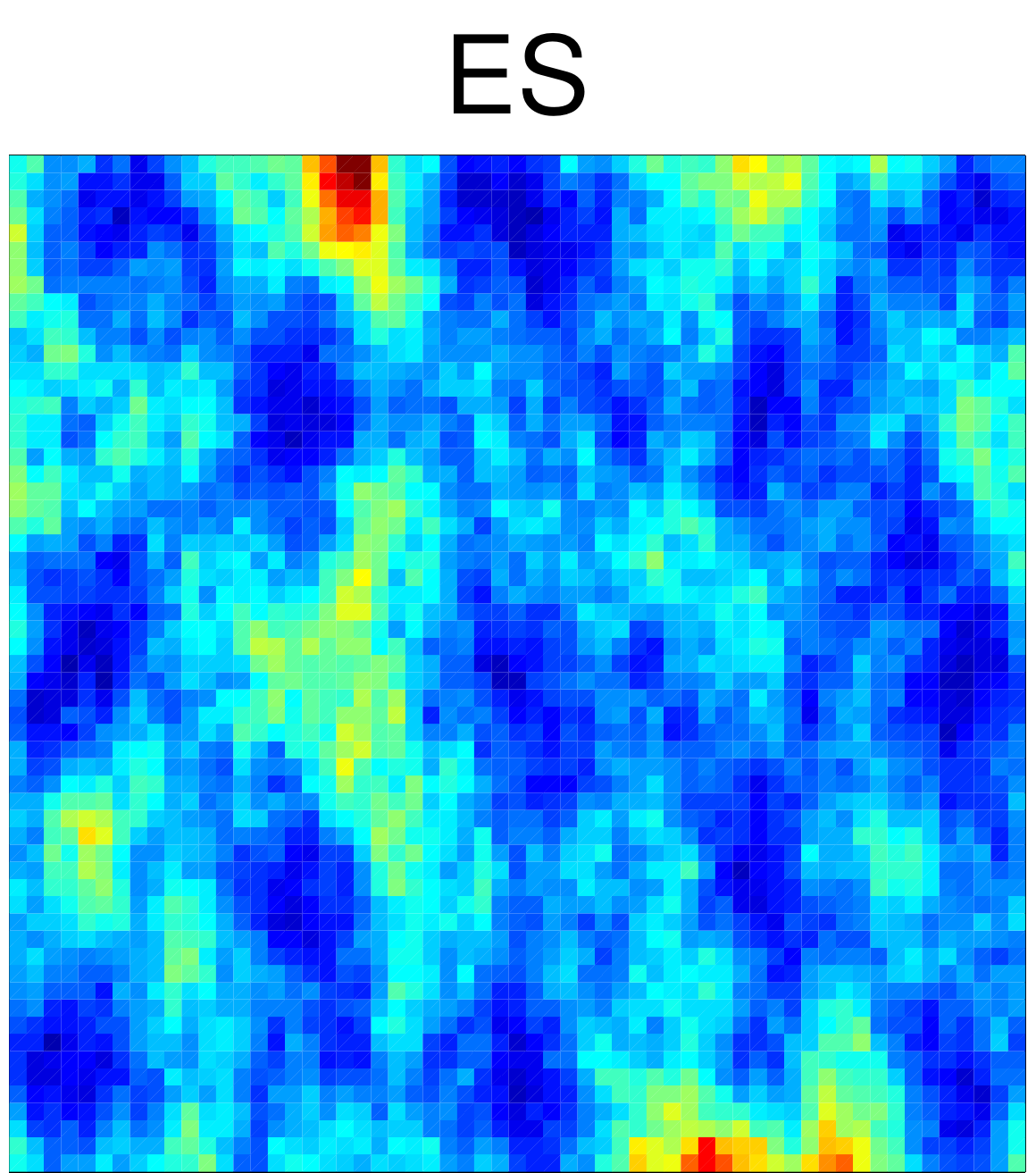}~
\includegraphics[scale=0.2]{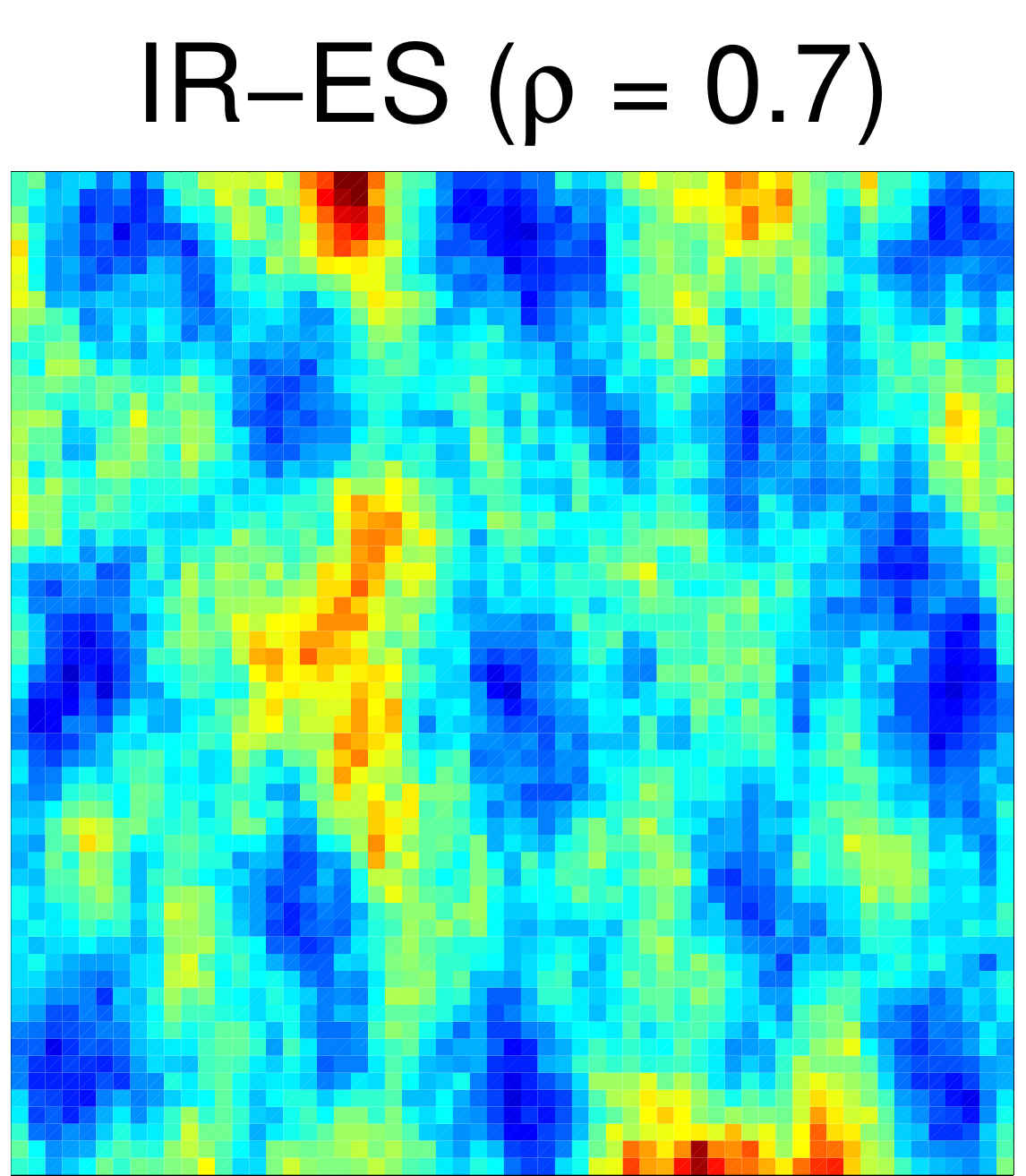}~
\includegraphics[scale=0.2]{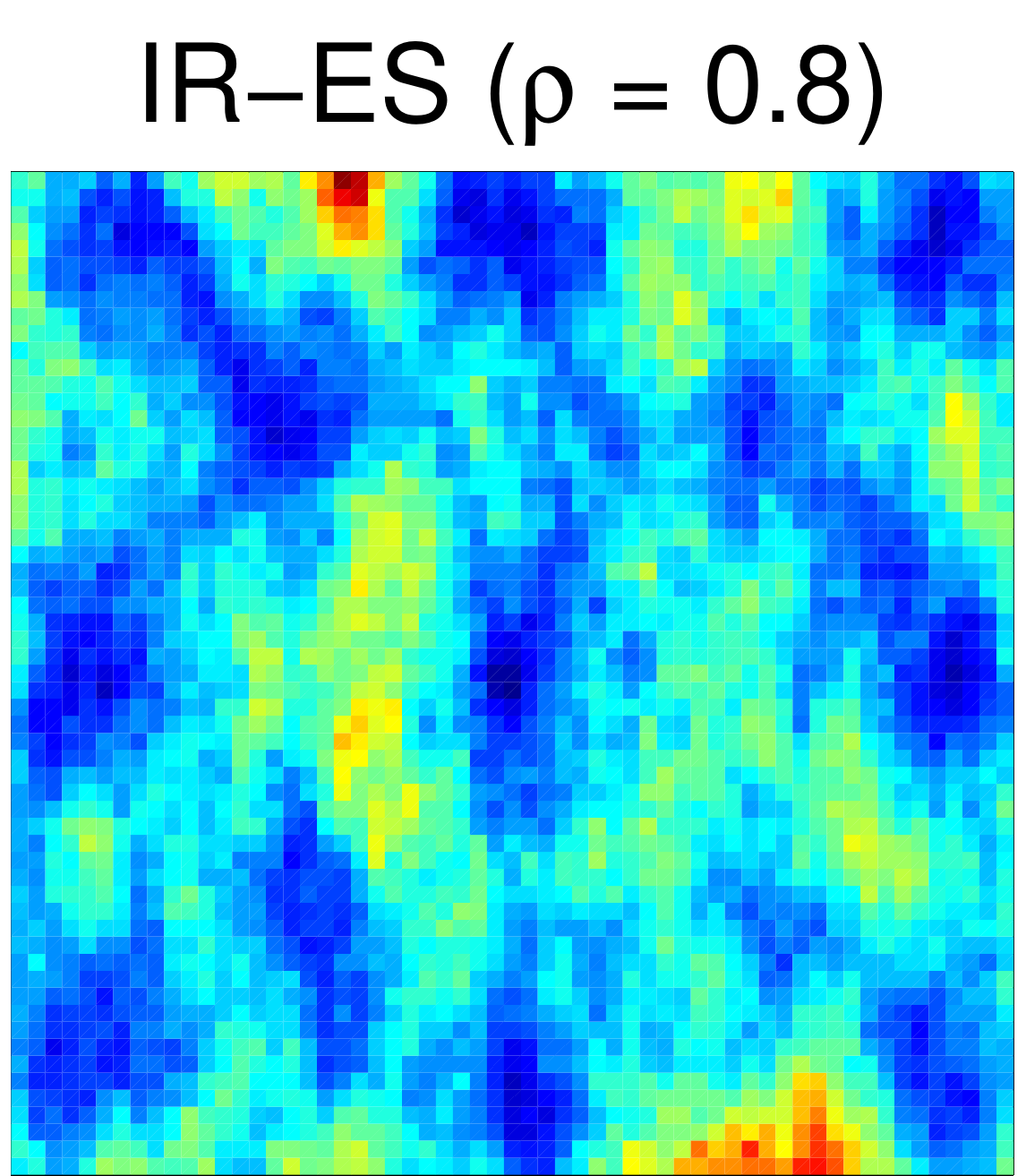}~
\includegraphics[scale=0.2]{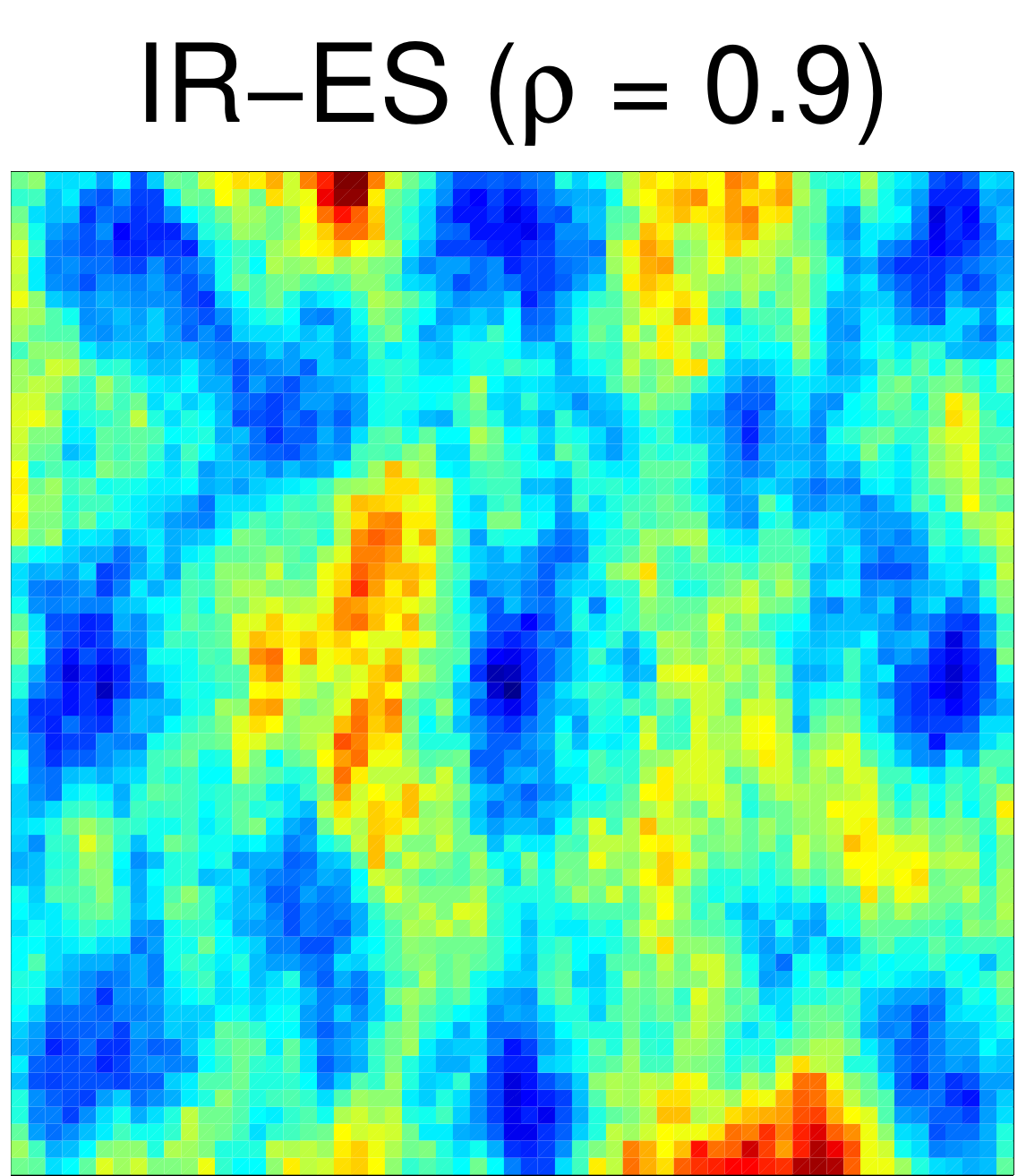}~
\end{center}
\caption{ Mean (top) and Variance (bottom) of the posterior distribution $\mu_{A}$ (characterized with MCMC) and ensemble approximations ES and IR-ES with $N_{e}=75$.}  
\label{Figure10B}
\end{figure}

\begin{figure}
\begin{center}
\includegraphics[scale=0.2]{Mean_MCMC2BB}~
\includegraphics[scale=0.2]{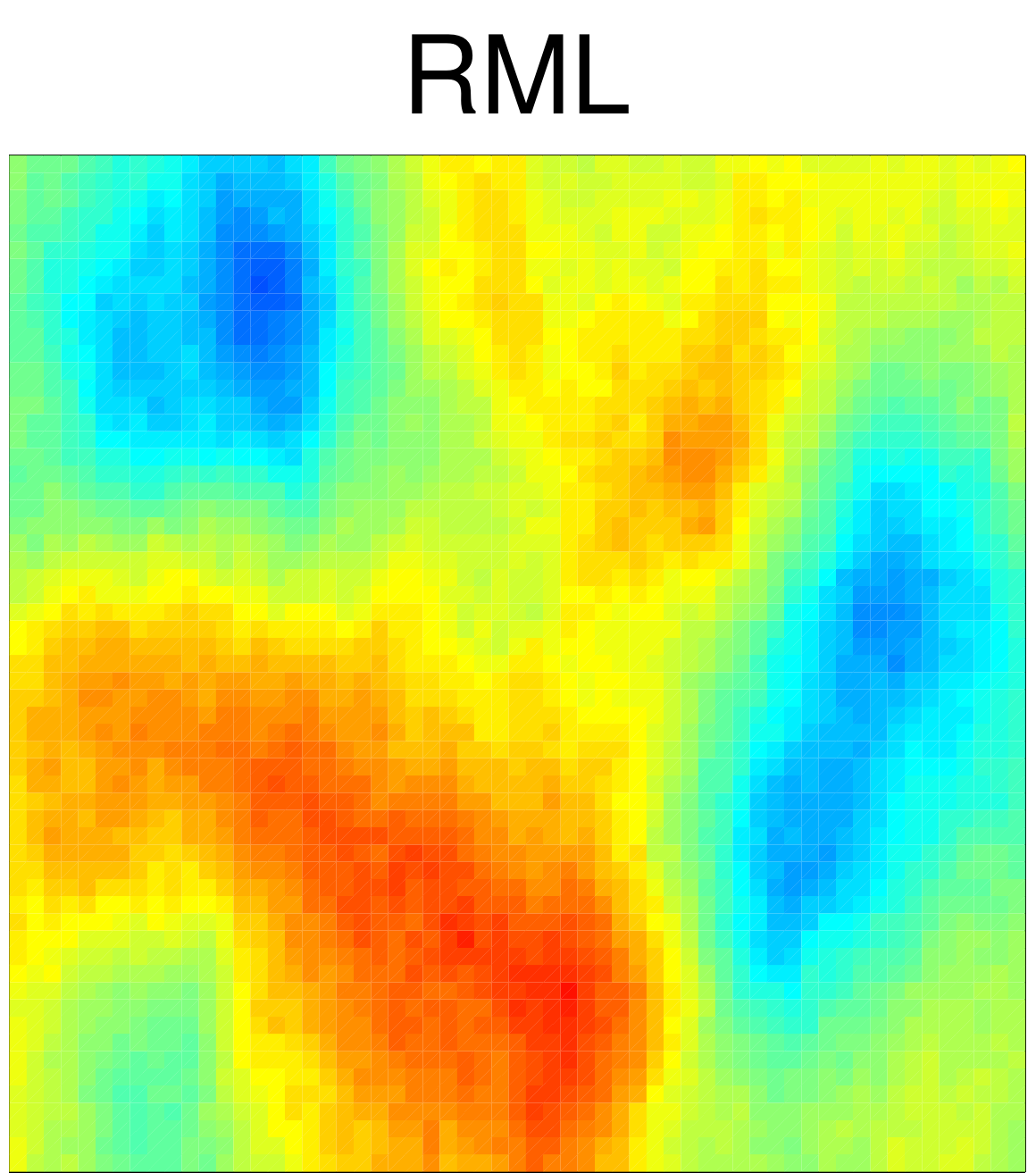}~
\includegraphics[scale=0.2]{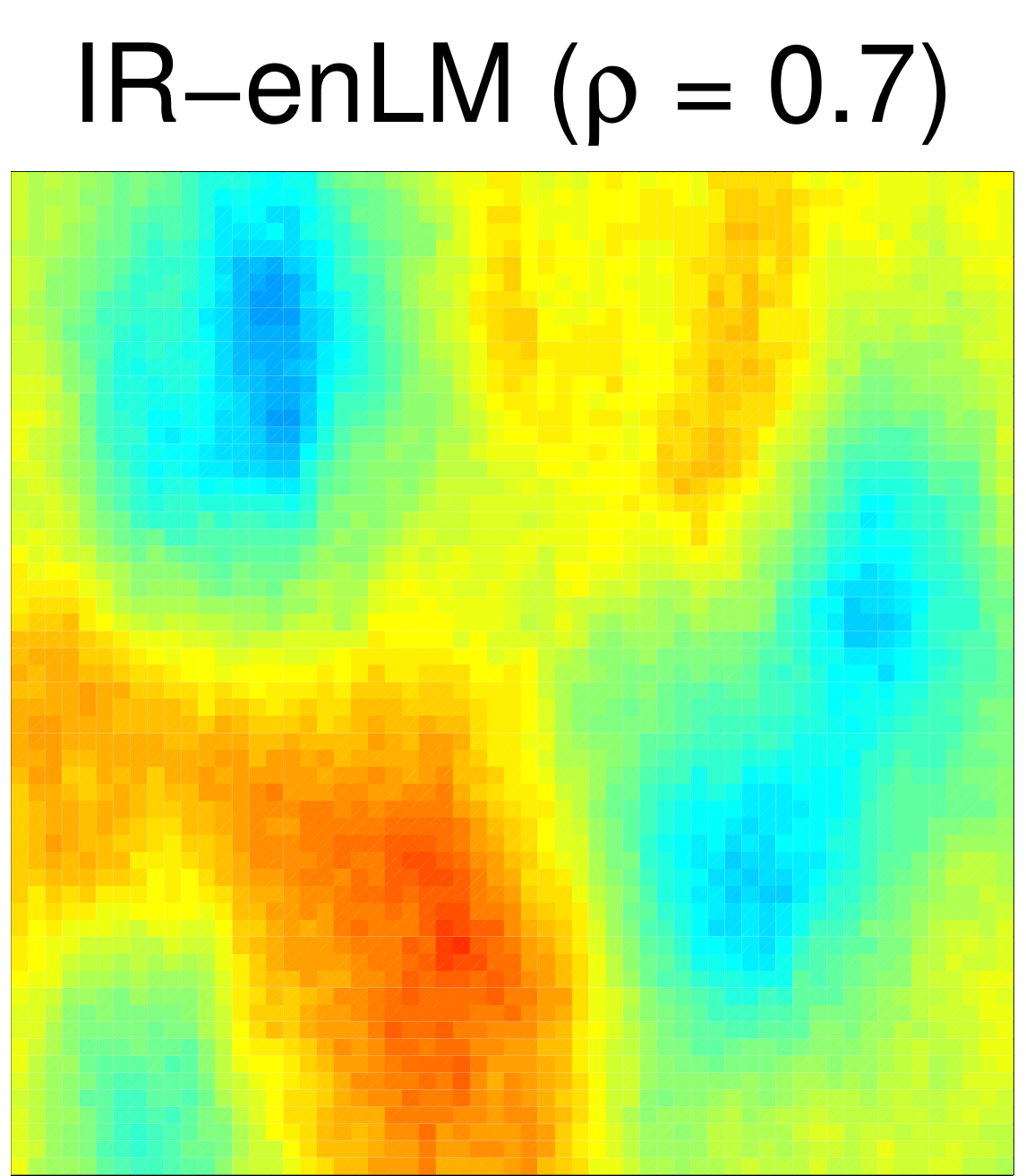}~
\includegraphics[scale=0.2]{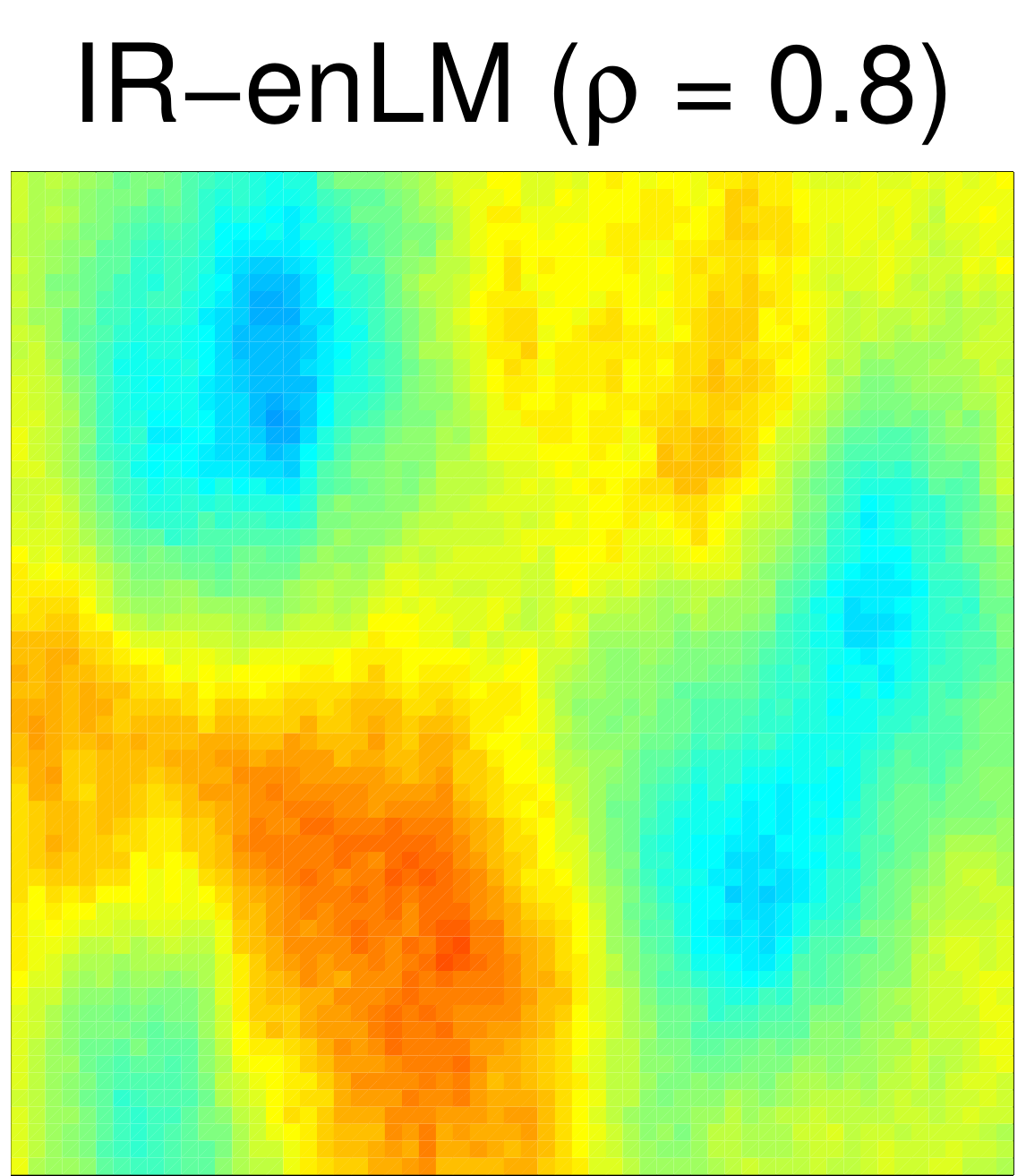}~
\includegraphics[scale=0.2]{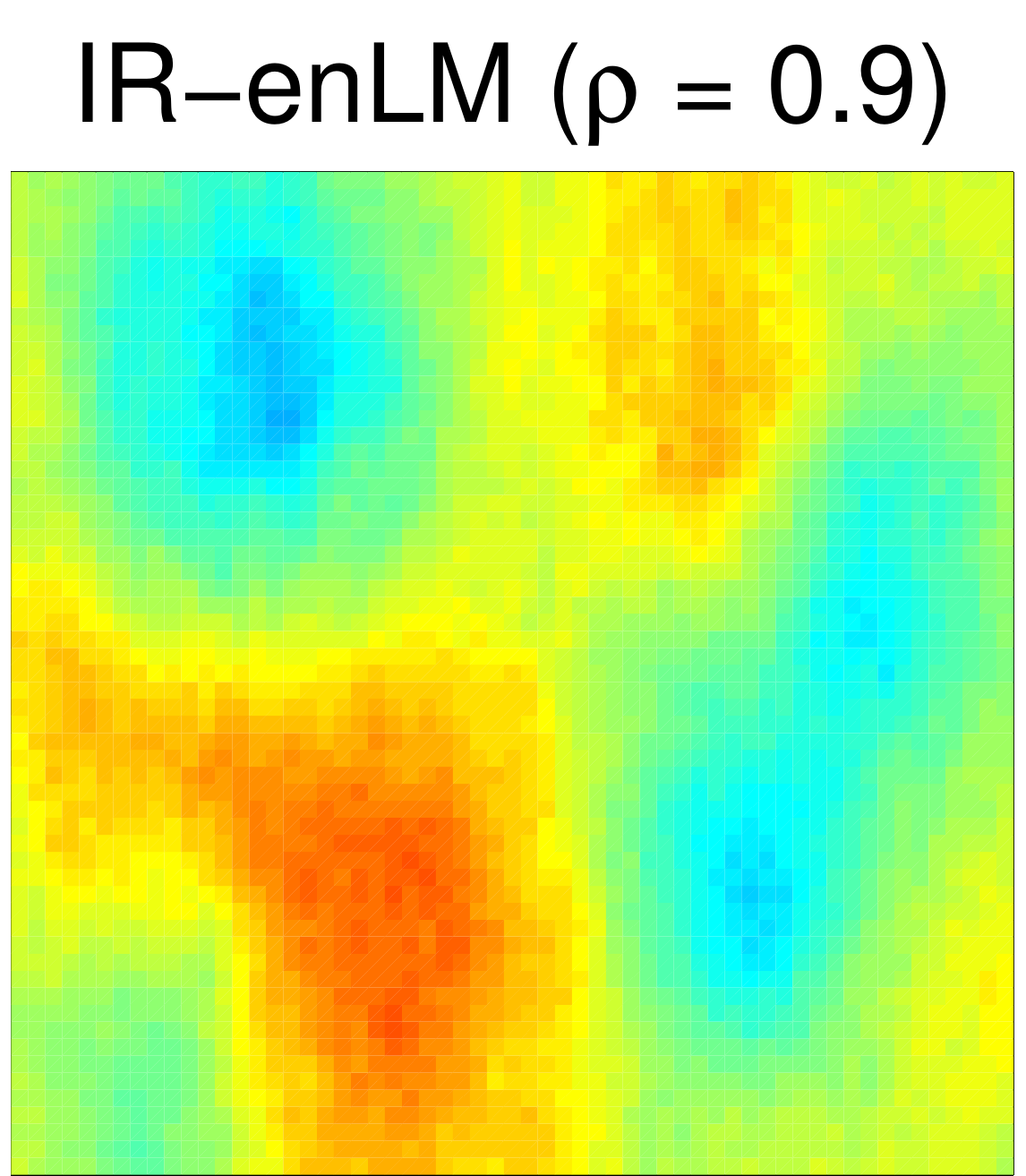}\\
\includegraphics[scale=0.2]{Var_MCMC2BB}~~
\includegraphics[scale=0.2]{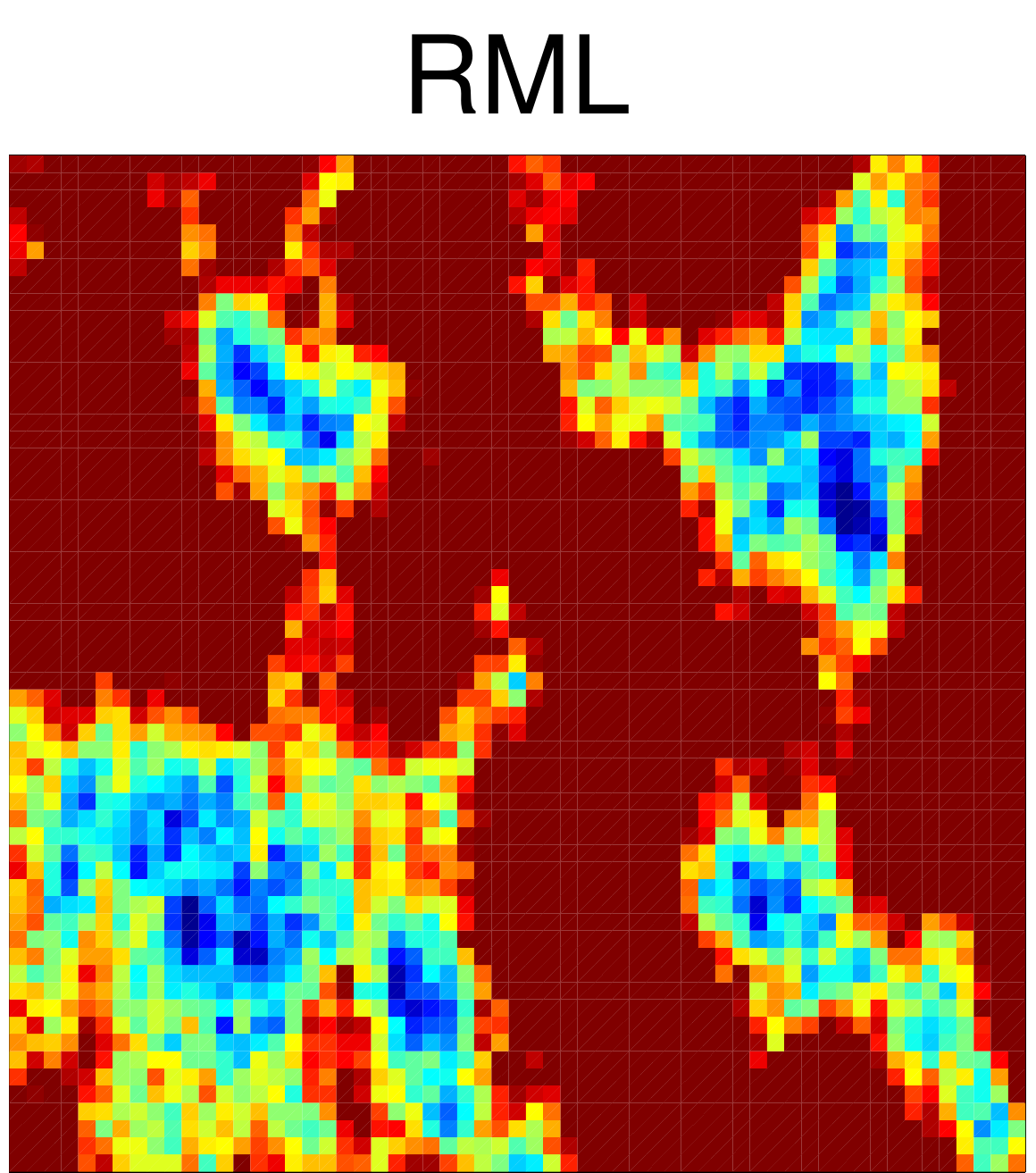}~
\includegraphics[scale=0.2]{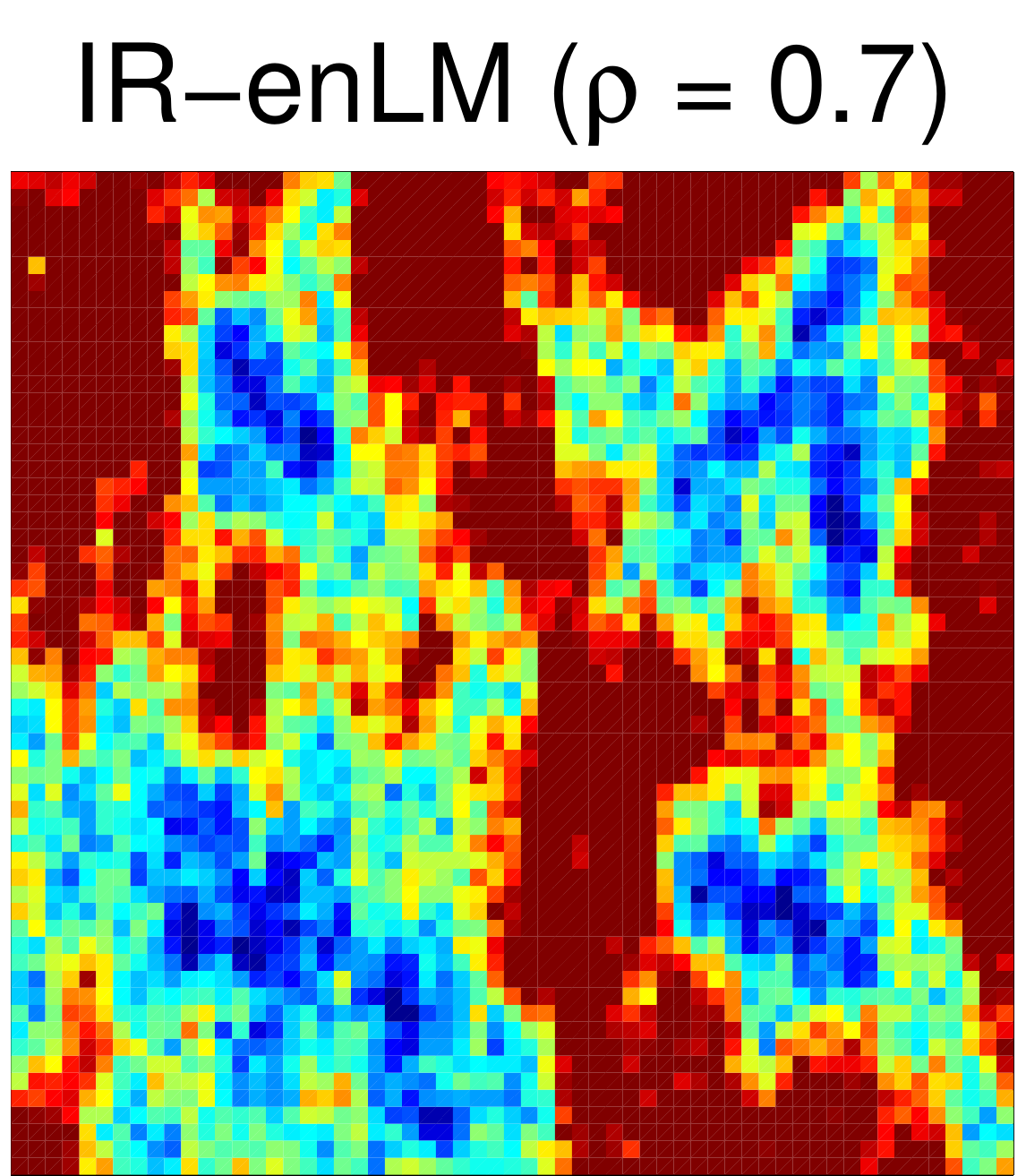}~
\includegraphics[scale=0.2]{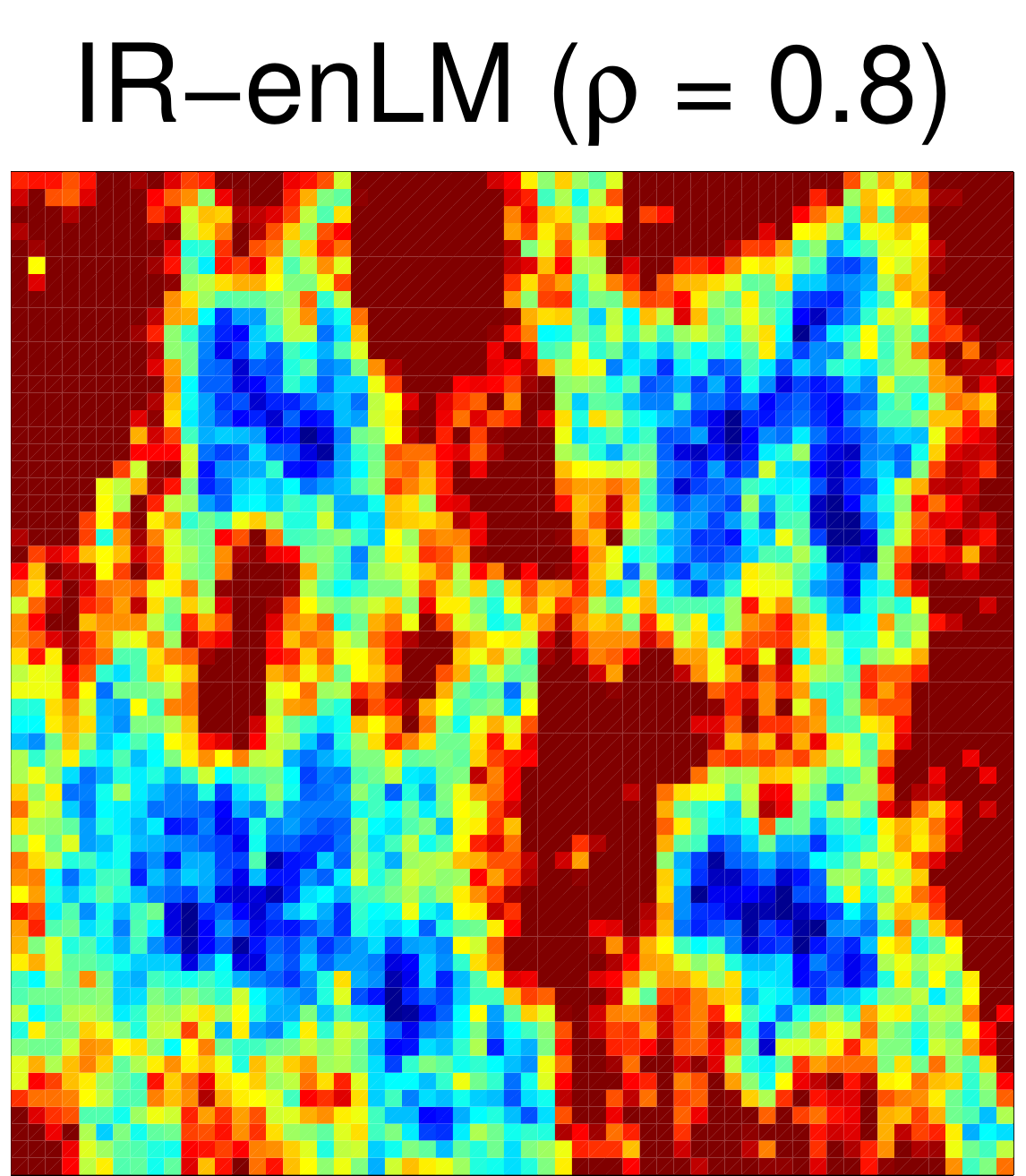}~
\includegraphics[scale=0.2]{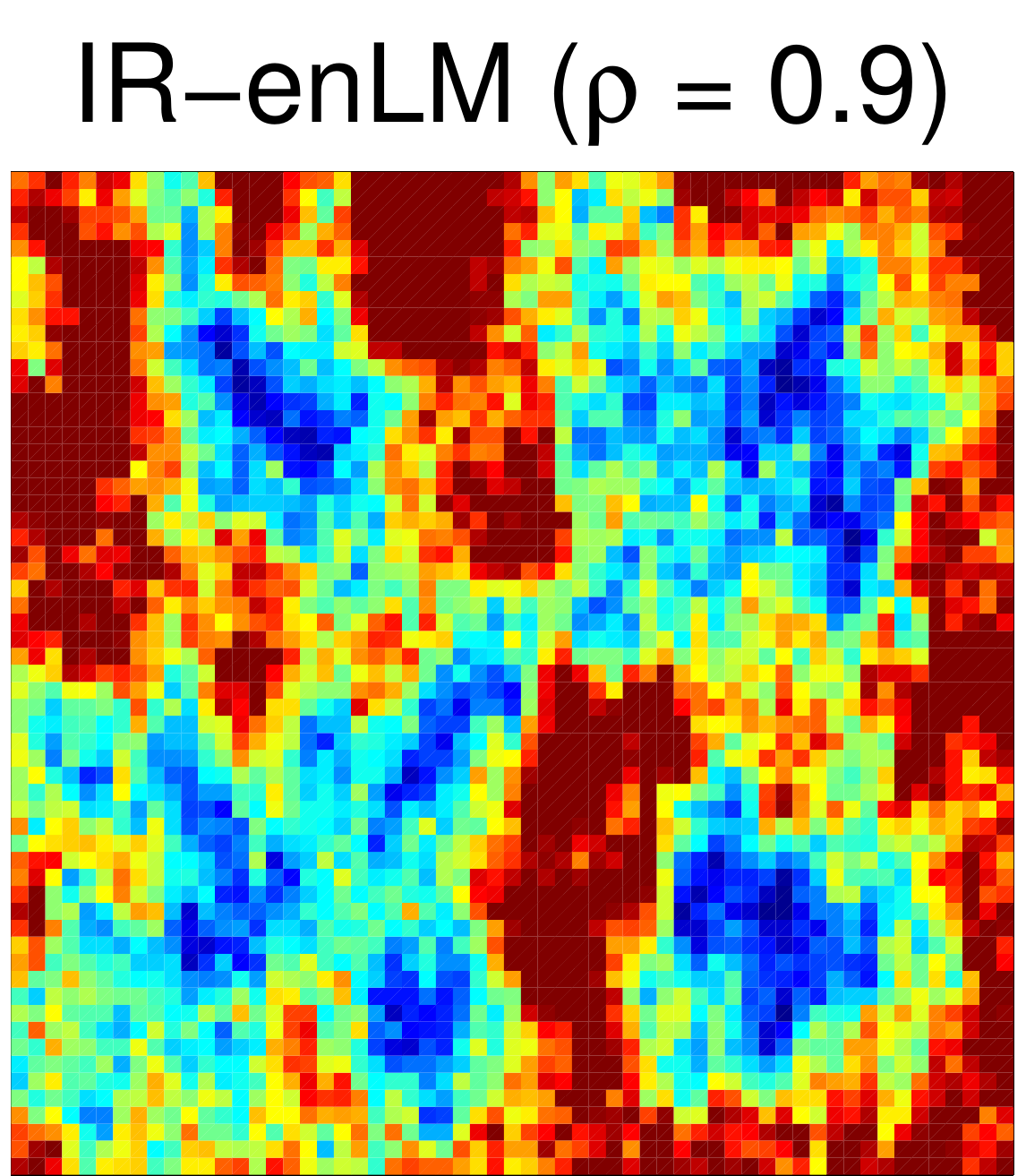}
\end{center}

\caption{ Mean (top) and Variance (bottom) of the posterior distribution $\mu_{B}$ (characterized with MCMC) and ensemble approximations RML, IR-enLM with $N_{e}=50$.}  

\label{Figure11A}
\end{figure}

\begin{figure}
\begin{center}
\includegraphics[scale=0.2]{Mean_MCMC2BB}~
\includegraphics[scale=0.2]{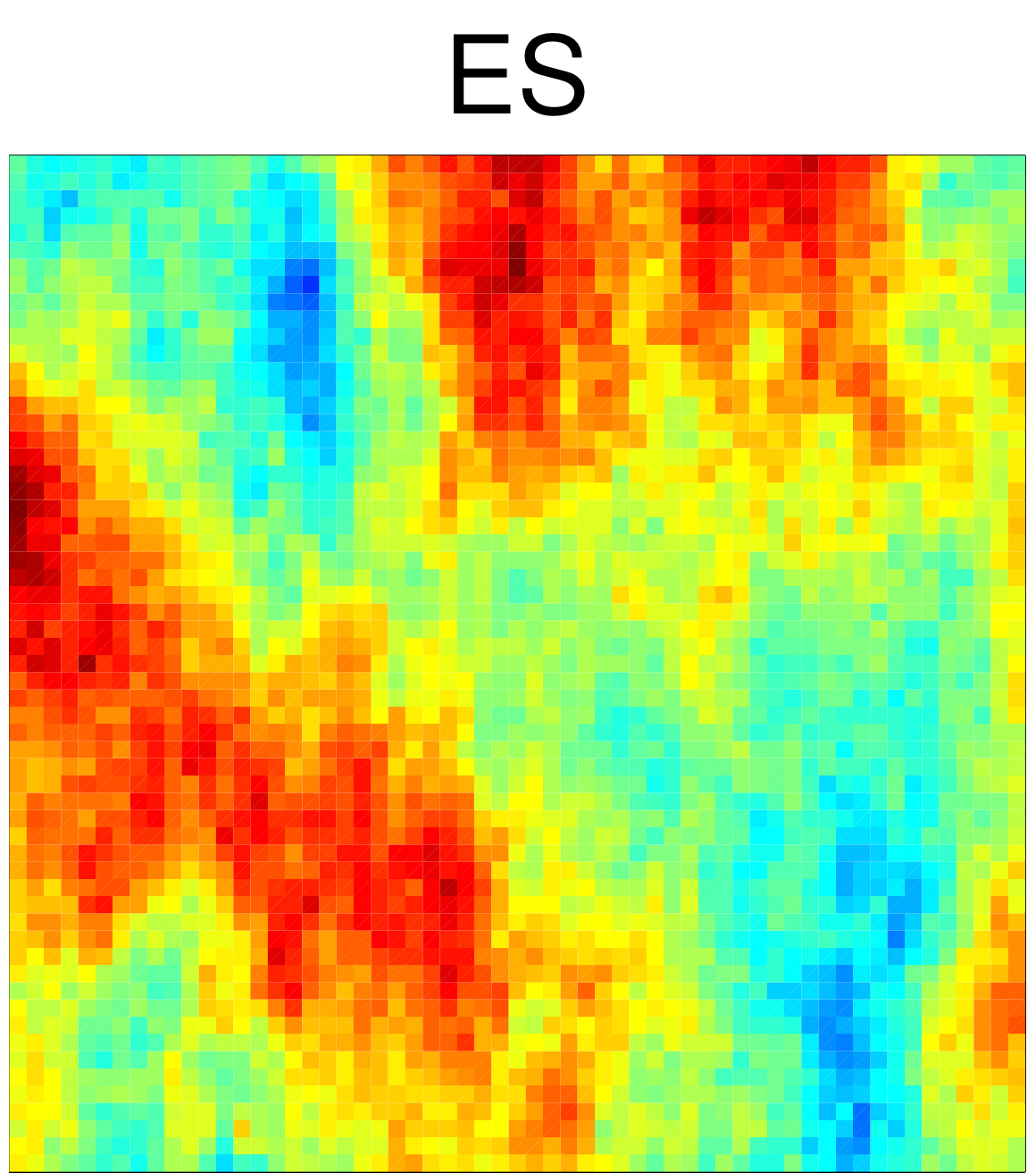}~
\includegraphics[scale=0.2]{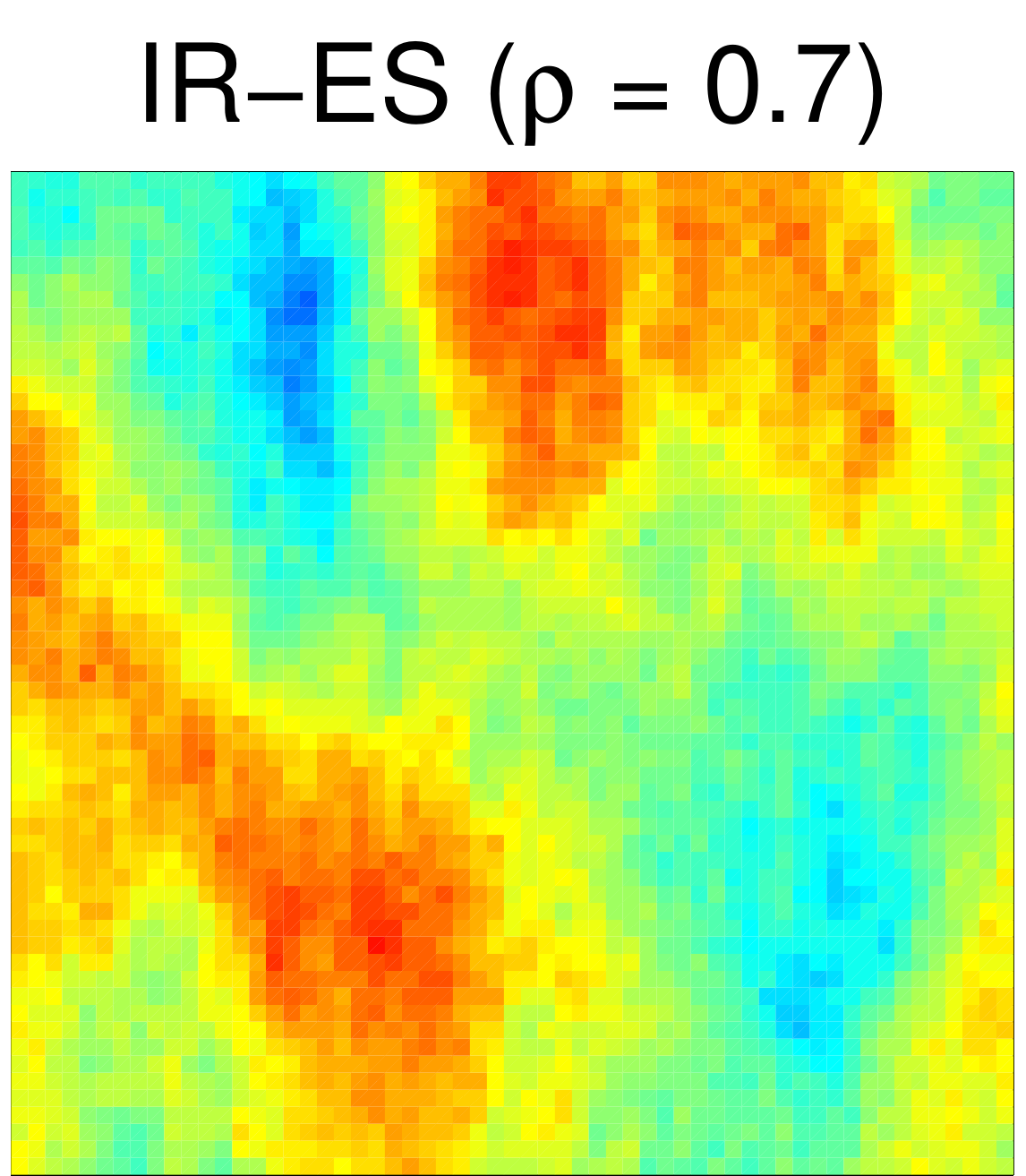}~
\includegraphics[scale=0.2]{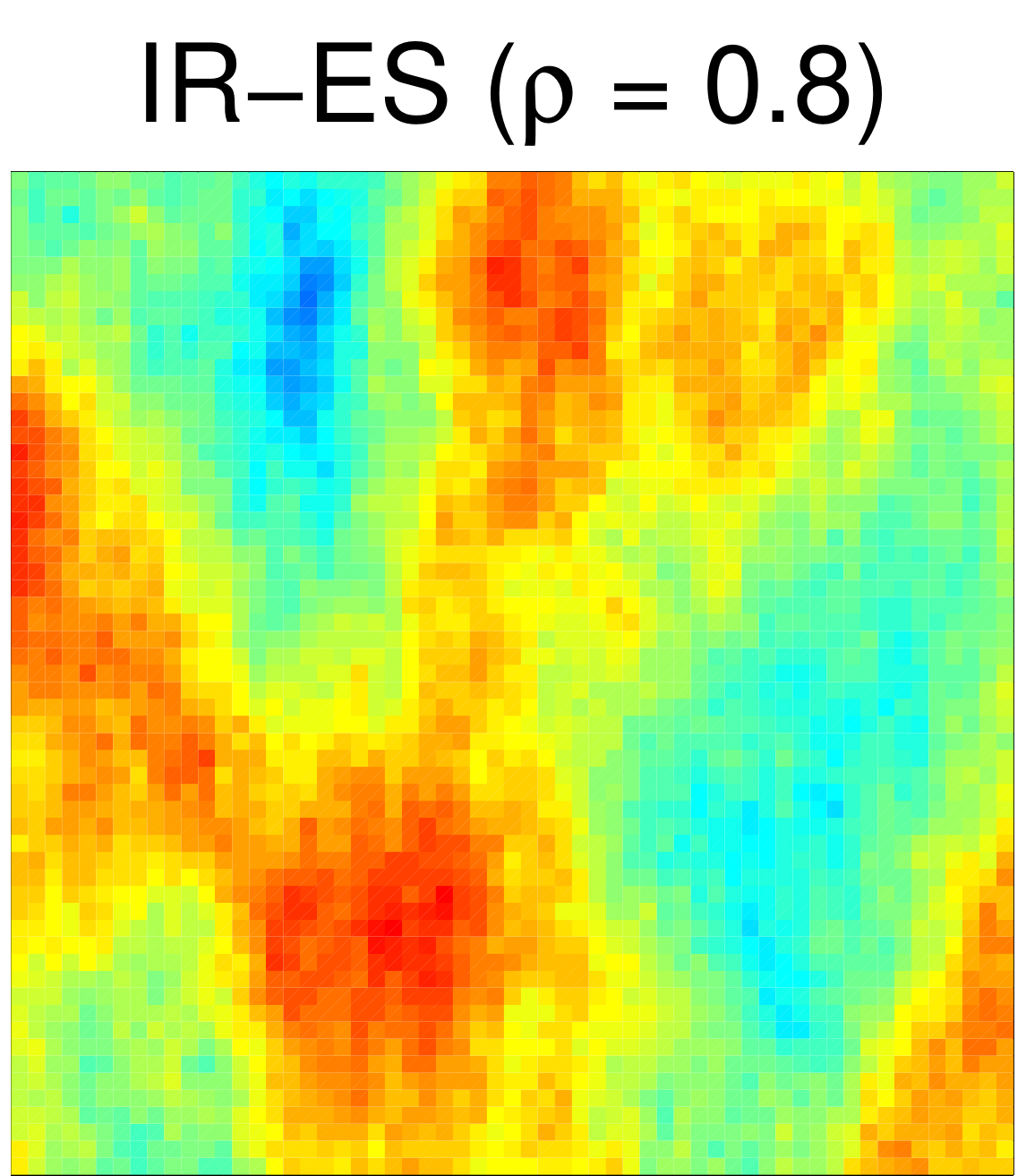}~
\includegraphics[scale=0.2]{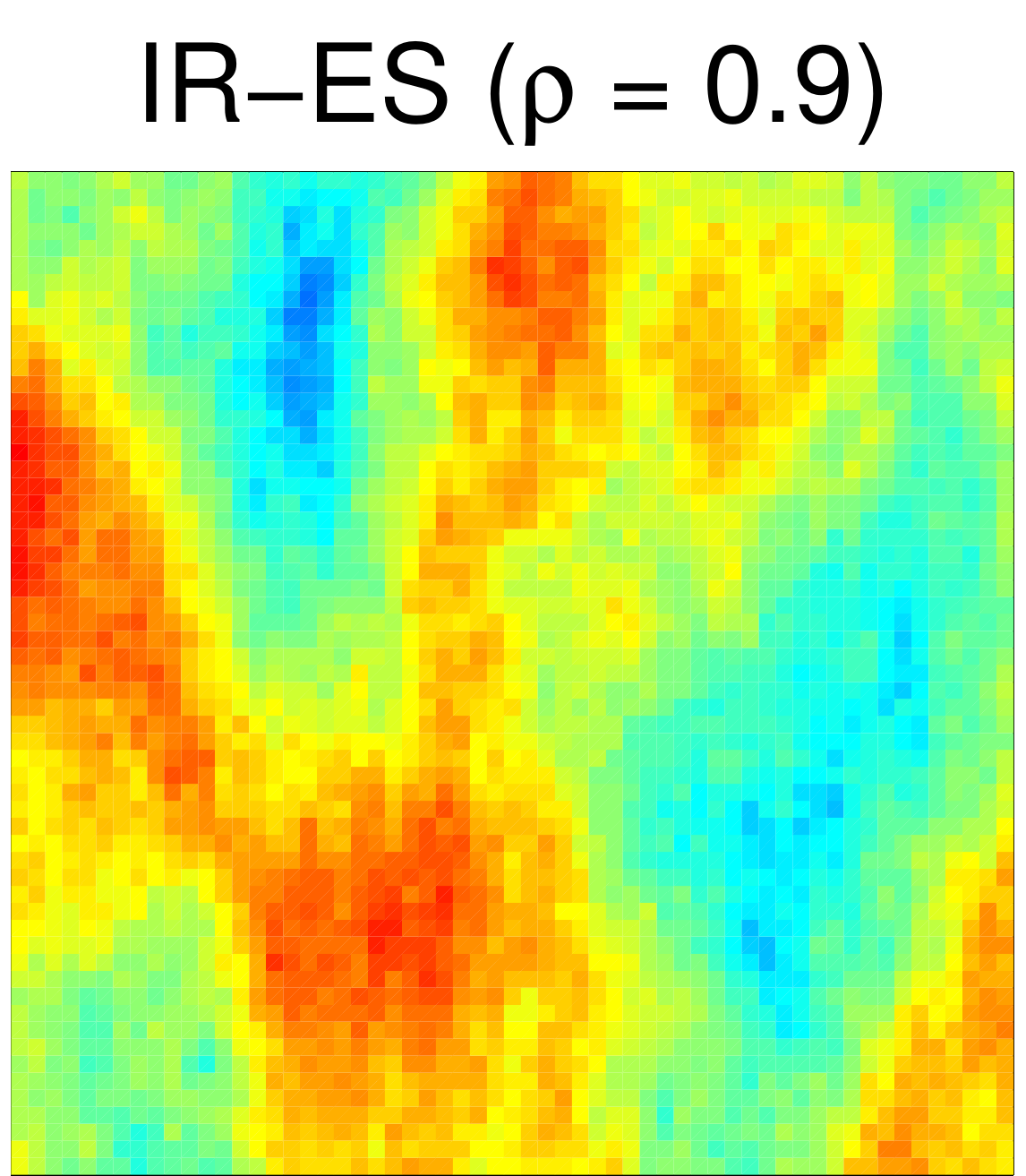}\\
\includegraphics[scale=0.2]{Var_MCMC2BB}~~
\includegraphics[scale=0.2]{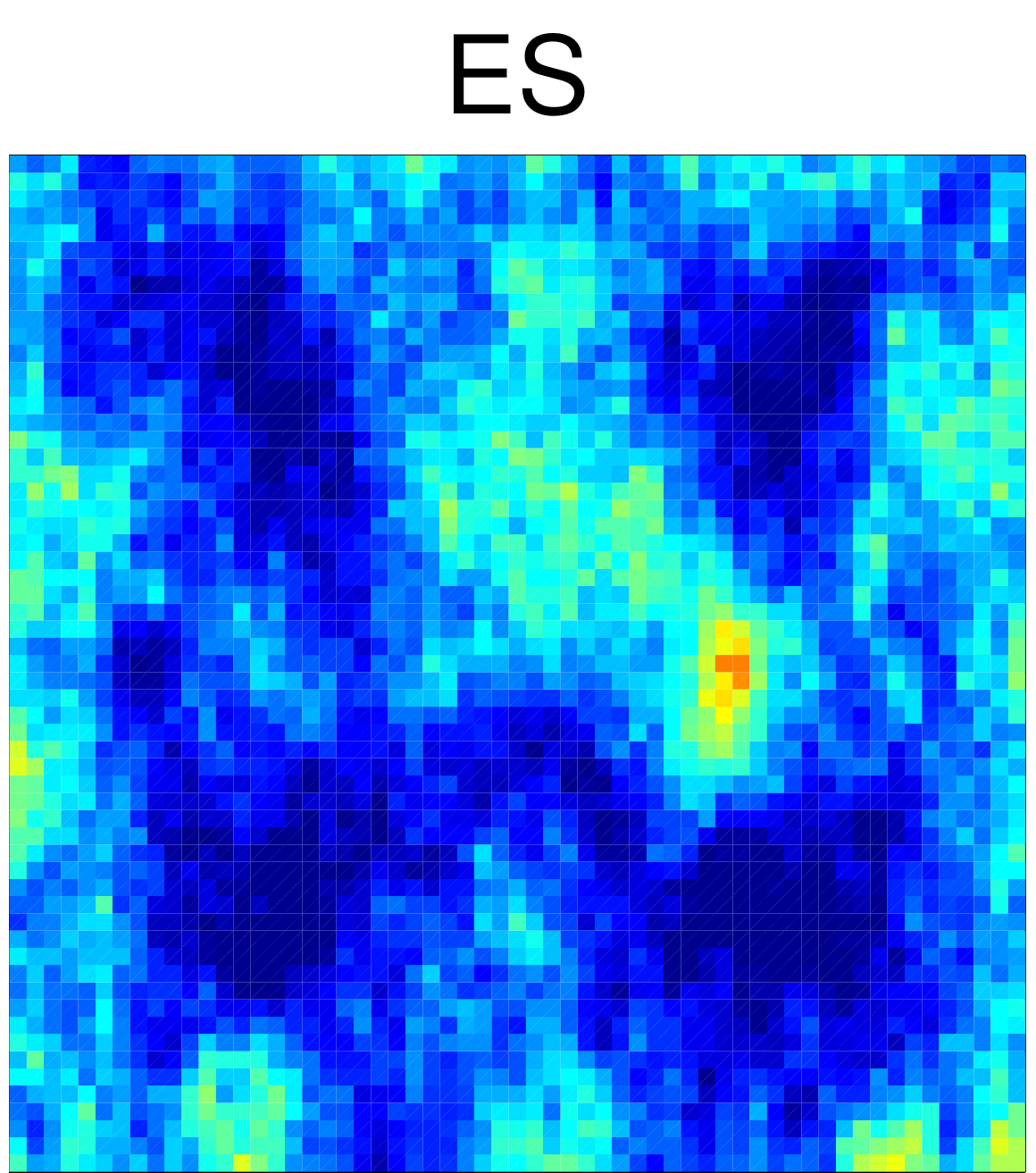}~
\includegraphics[scale=0.2]{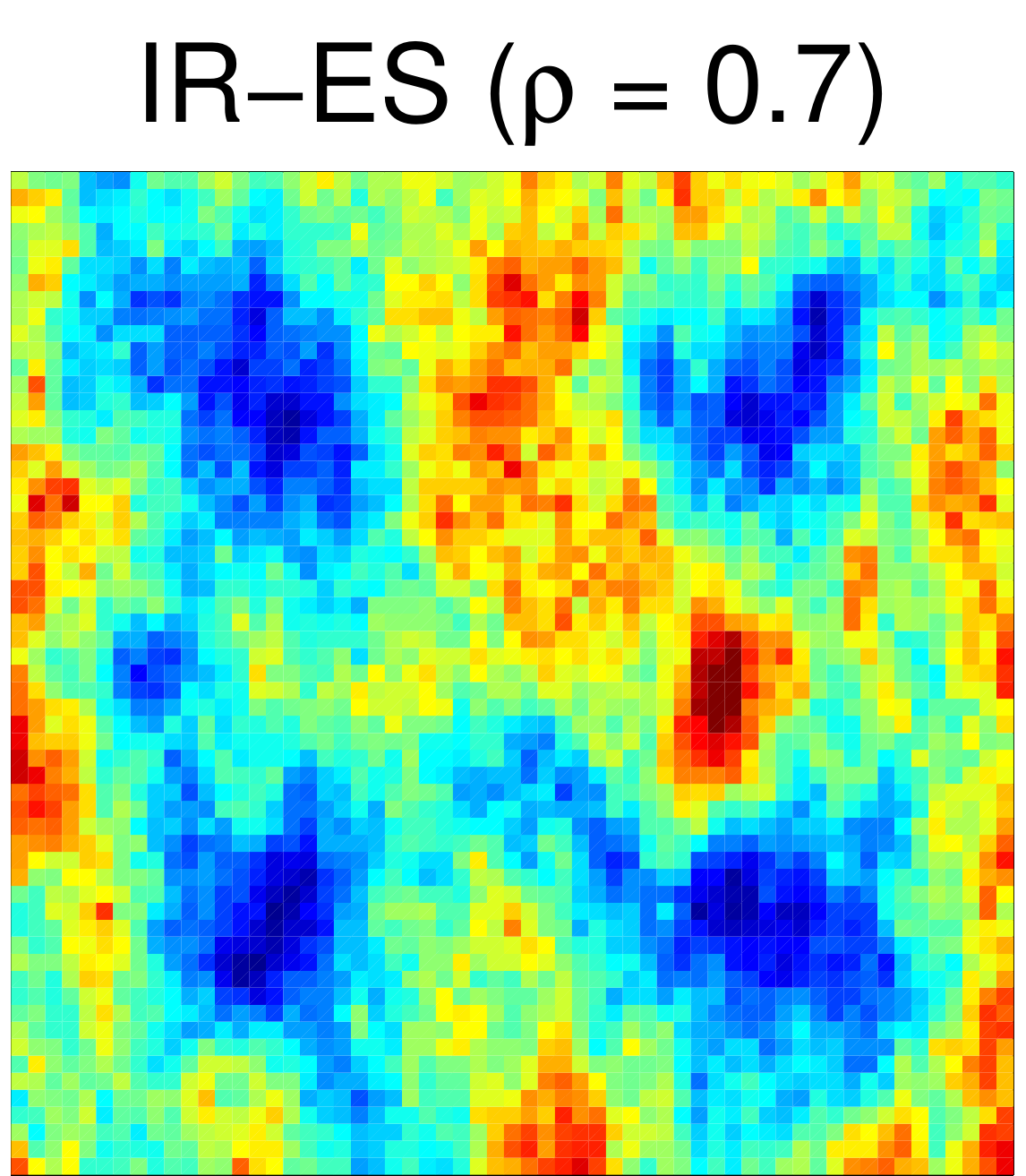}~
\includegraphics[scale=0.2]{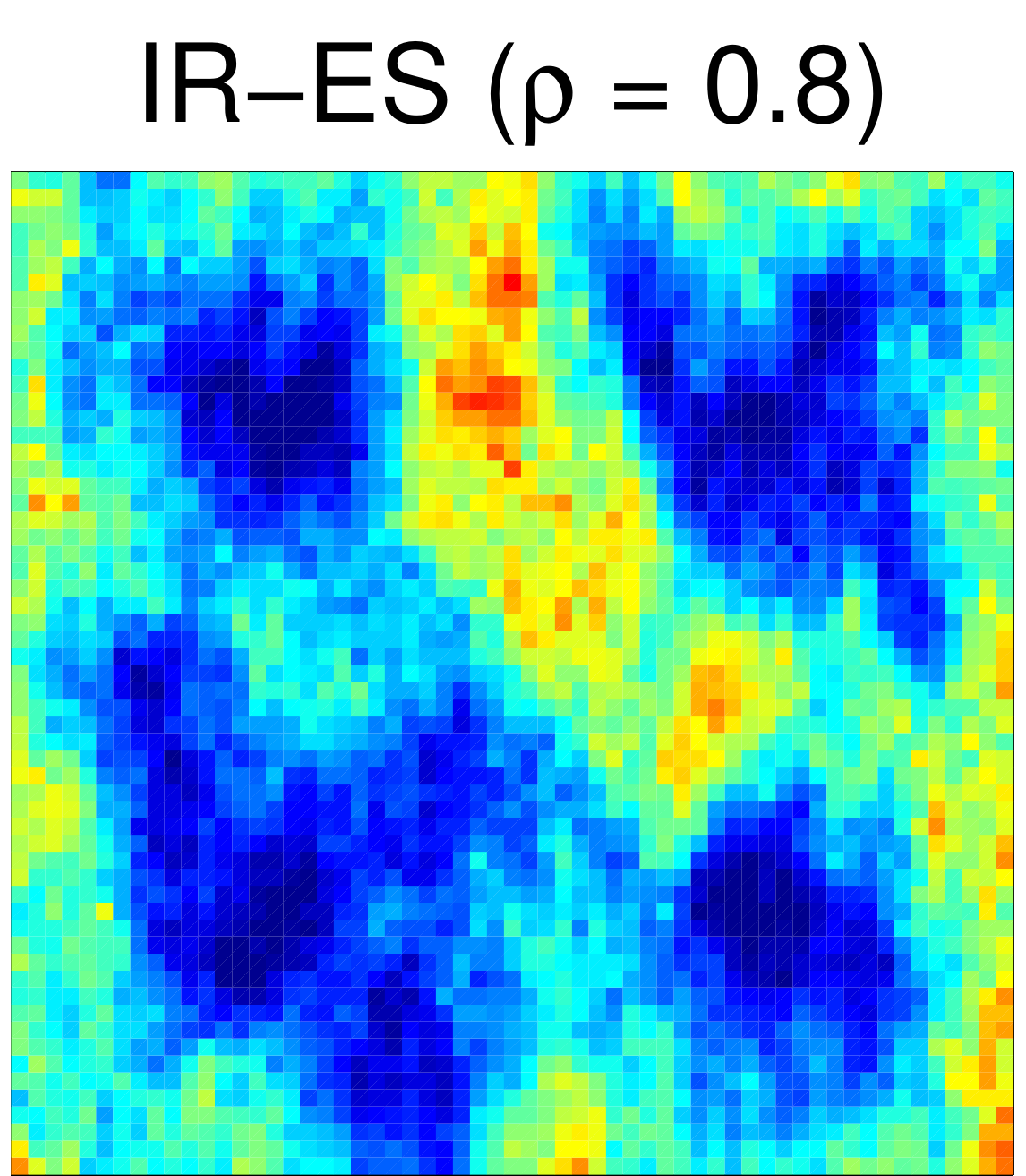}~
\includegraphics[scale=0.2]{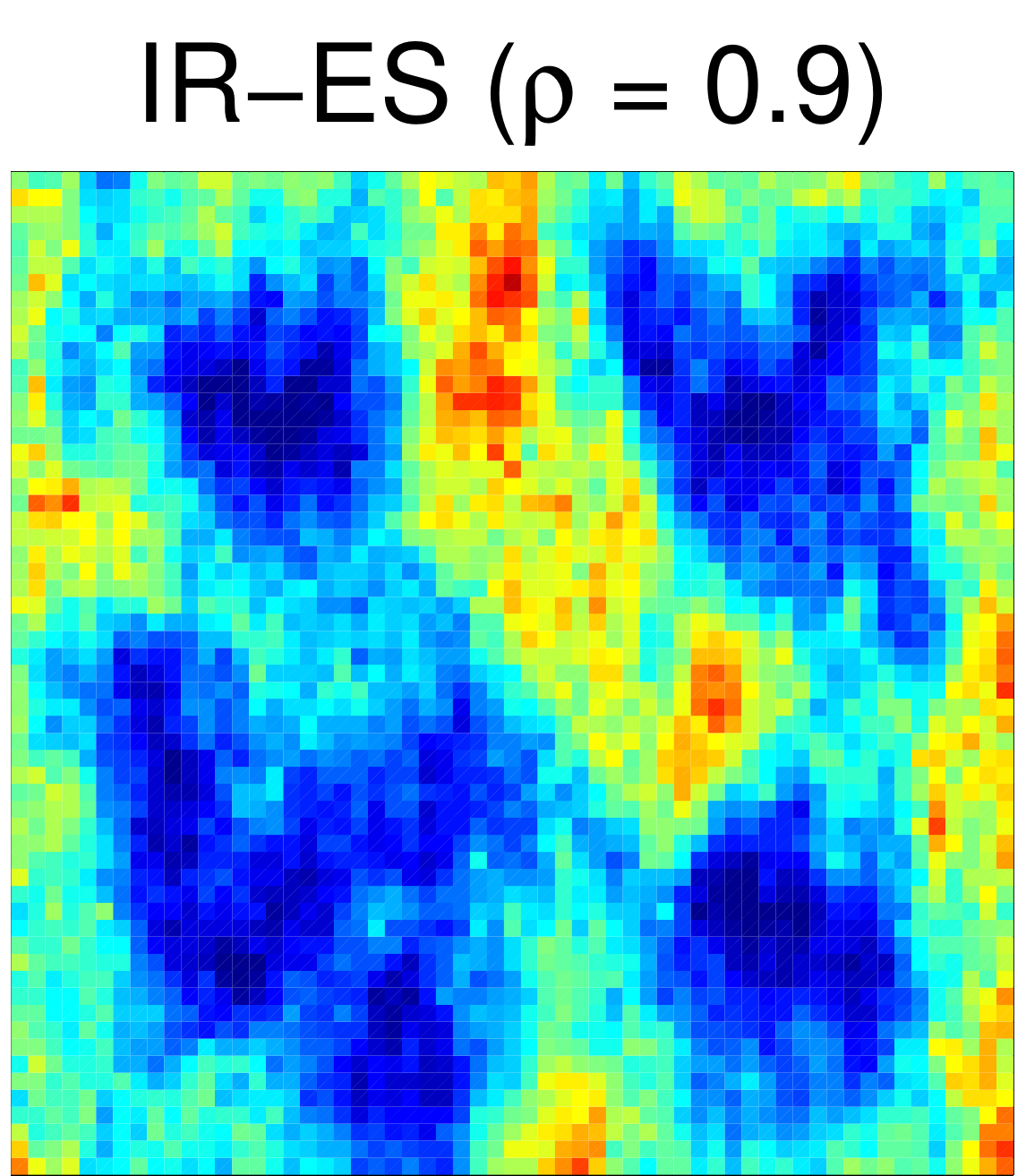}

\end{center}
\caption{ Mean (top) and Variance (bottom) of the posterior distribution $\mu_{B}$ (characterized with MCMC) and ensemble approximations ES and IR-ES with $N_{e}=75$.}  
\label{Figure11B}
\end{figure}

In Table \ref{Table4} we repeat the experiments (applying RLM with unregularized LM) for different choices of $\lambda_{0}$ and different choices of $\kappa$ in (\ref{eq:1.26}). Note that the standard recommendations of parameters $\lambda_{0}=J(u_{0}^{(j)})/N_{D}$ and $\kappa=10$ (that we reported above) corresponds to the most inaccurate approximations of the posterior. In fact, a different selection of the parameters $\lambda$ and $\kappa$ (e.g. $\lambda_{0}=J(u_{0}^{(j)})/N_{D}\times 10^2$, $\kappa=10$) can be selected to produce more accurate approximations of the posterior. Note that the selection of the standard methods are based on the assumption that the computation of the minimizer of (\ref{eq:RMLB}) is well posed. However, in \cite{LM} we have demonstrated that this is not necessarily the case and here we clearly observe that this instabilities in the computation of (\ref{eq:RMLB}) are detrimental to the accuracy in terms of approximating the lean and variance of the posterior. Note, for example, that changing from $\lambda_{0}=J(u_{0}^{(j)})/N_{D}\times 10$ to $\lambda_{0}=J(u_{0}^{(j)})/N_{D}$ has a dramatic (unstable) increase of the error in the mean and variance of the posterior.

Finally, we observe that the criteria (\ref{eq:1.22}) often used to assess the performance of ensemble methods in terms of data mismatch could be misleading. In Figure \ref{Figure8} we present the box plots of the value of the normalized (by the number of measurements) objective function (\ref{eq:RMLB}), evaluated for the ensemble members (for a fixed ensemble) and for different parameters in the RML implementation described above. We note that the standard recommendations for the choices of parameters lead to ensembles that satisfy (\ref{eq:1.22}). However, each of these ensembles provides different and even quite inaccurate approximations of the posterior (see Table \ref{Table4}). In Figure \ref{Figure9} we display the boxplot of the same function evaluated at the ensemble generated with IR-enLM (top row) and IR-ES (bottom) for the approximation of $\mu_{A}$ (left) and $\mu_{B}$ (right), respectively. It is clear that our recommendations for $\tau$ corresponds to small values of the normalized objective functions. However, even though the value of the normalized objective function with the proposed methods was not as small as the one obtained with RML (see Figure \ref{Figure8}), the approximation of the posterior provided by the proposed ensemble methods was, in general, more accurate than the one with RML (computed with an unregularized standard LM method). The numerical results of this subsection suggest that, by avoiding the data overfitting, the proposed methods produce better regularized ensembles which result in more accurate and stable approximations of the posterior.

\begin{table}
\caption{Comparison of methods. $\ast$: ($\tau=1.0$), $\dagger: (\kappa=10,~\lambda_{0}=\Lambda_{0}$)}
\label{Table3A}       
\begin{tabular}{cc|ccc|ccc}
\hline\noalign{\smallskip}
& &  &  Approx. of $\mu_{A}$& & & Approx. of $\mu_{B}$ & \\
\noalign{\smallskip}\hline\noalign{\smallskip}
Method &$N_{e}$&  $\epsilon_{u}$&  $\epsilon_{\sigma}$ & aver. iter. &  $\epsilon_{u}$&  $\epsilon_{\sigma}$ &aver. iter.\\
\noalign{\smallskip}\hline\noalign{\smallskip}
RML$^{\dagger}$ &   25& 0.488 & 0.702 &  10.560 &0.611 & 1.355 &  10.232\\
RML$^{\dagger}$  &  50& 0.466&  0.637 &  10.350 & 0.575&   1.205&10.580 \\
RML$^{\dagger}$  &  75& 0.456&  0.596 &  10.345 & 0.556 &    1.180& 10.636 \\
RML$^{\dagger}$  &  100& 0.452&  0.603 &  10.311 &0.542 & 1.156 & 10.675\\

\hline                                                                               
IR-enLM$^{\ast}$ ($\rho=0.7$) & 25 & 0.295 & 0.467 & 8.616 & 0.381 & 0.640 & 7.364 \\    
IR-enLM$^{\ast}$ ($\rho=0.8$)  & 25 & 0.300 & 0.449 & 13.152 & 0.372 & 0.512 & 11.792 \\    
IR-enLM$^{\ast}$ ($\rho=0.9$) & 25  & 0.321 & 0.452 & 27.796 & 0.395 & 0.469 & 25.344 \\
\hline

IR-enLM ($\rho=0.7$) & 50 &  0.240 & 0.405 & 8.597 & 0.335 & 0.528 & 7.335 \\  
IR-enLM ($\rho=0.8$) & 50 & 0.249 & 0.386 & 13.088 & 0.329 & 0.425 & 11.755 \\
IR-enLM ($\rho=0.9$)  & 50 &  0.277 & 0.371 & 27.476 & 0.362 & 0.381 & 25.511 \\

\hline                              

IR-enLM ($\rho=0.7$) & 75 & 0.224 & 0.365 & 8.523 & 0.324 & 0.500 & 7.347 \\    
IR-enLM ($\rho=0.8$)  & 75 & 0.231 & 0.339 & 12.988 & 0.319 & 0.386 & 11.767 \\    
IR-enLM ($\rho=0.9$) & 75 &  0.265 & 0.341 & 27.461 & 0.347 & 0.351 & 25.443 \\
\hline           

IR-enLM ($\rho=0.7$) & 100 & 0.217 & 0.361 & 8.597 & 0.312 & 0.481 & 7.335 \\  
IR-enLM ($\rho=0.8$)  & 100 & 0.226 & 0.336 & 13.084 & 0.307 & 0.378 & 11.755 \\
IR-enLM ($\rho=0.9$) & 100 & 0.258 & 0.327 & 27.476 & 0.343 & 0.332 & 25.511 \\
\noalign{\smallskip}\hline

\end{tabular}
\end{table}

\begin{table}
\caption{Comparison of methods. $\ast$: ($M_{ES}=10$, $\tau=1/\rho$), $\dagger$: in forward model runs }
\label{Table3B}       
\begin{tabular}{cc|ccc|ccc}
\hline\noalign{\smallskip}
& &  &  Approx. of $\mu_{A}$& & & Approx. of $\mu_{B}$ & \\
\noalign{\smallskip}\hline\noalign{\smallskip}
Method &$N_{e}$&  $\epsilon_{u}$&  $\epsilon_{\sigma}$ & average cost$^{\dagger}$ &  $\epsilon_{u}$&  $\epsilon_{\sigma}$ &average cost$^{\dagger}$ \\
\noalign{\smallskip}\hline\noalign{\smallskip}
ES& 25&  1.617&0.913& 25&1.863& 0.866 &25\\
ES  &   50 &1.185 & 0.617 & 50&1.472& 0.581 &50\\
ES  &   75 &0.914& 0.420 & 75&1.071& 0.412 &75\\
ES  &   100 &0.779 & 0.310&100&0.969& 0.306 &100\\
ES  &   150 &0.630 & 0.220&150&0.795& 0.234 &150\\
ES  &   200 &0.555&  0.193&200&0.701& 0.226 &200\\
\noalign{\smallskip}\hline\hline\noalign{\smallskip}
IR-ES$^{\ast}$ ($\rho=0.7$) &25  & 1.522 & 0.869 &25& 1.520 & 0.741 &25 \\                                           
IR-ES$^{\ast}$ ($\rho=0.8$) &25 & 1.107 & 0.803 & 50& 1.090 & 0.664 & 50\\  
IR-ES$^{\ast}$ ($\rho=0.9$) &25   & 0.969 & 0.672 &75& 1.053 & 0.579 & 75\\
\noalign{\smallskip}\hline\hline\noalign{\smallskip}                                      
IR-ES$^{\ast}$ ($\rho=0.7$) &50  & 0.789 & 0.404 &50 & 0.942 & 0.374 & 50 \\  
IR-ES$^{\ast}$ ($\rho=0.8$) &50 & 0.684 & 0.396 & 100 & 0.718 & 0.335 & 100 \\
IR-ES$^{\ast}$ ($\rho=0.9$) &50  & 0.637 & 0.321 & 150 & 0.689 & 0.272 & 150 \\
\noalign{\smallskip}\hline\hline\noalign{\smallskip}
IR-ES$^{\ast}$ ($\rho=0.7$) &75 & 0.657 & 0.280 & 75 & 0.723 & 0.280 & 75 \\  
IR-ES$^{\ast}$ ($\rho=0.8$) &75 & 0.600 & 0.267 & 150 & 0.609 & 0.234 & 150 \\  
IR-ES$^{\ast}$ ($\rho=0.9$) &75  & 0.563 & 0.244 & 225 & 0.590 & 0.206 & 225 \\
\noalign{\smallskip}\hline\hline\noalign{\smallskip}
IR-ES$^{\ast}$ ($\rho=0.7$) &100 & 0.586 & 0.267 &100 & 0.659 & 0.290 & 100 \\                                            
IR-ES$^{\ast}$ ($\rho=0.8$) &100 & 0.558 & 0.236 & 200 & 0.541 & 0.208 & 200\\
IR-ES$^{\ast}$ ($\rho=0.9$) &100& 0.525 & 0.232 & 300 & 0.525 & 0.206 & 300\\
\noalign{\smallskip}\hline

\end{tabular}
\end{table}

\begin{table}
\caption{Performance of RML (with a standard unregularized LM) for different choices of $\lambda_{0}$ and $\kappa$.}
\label{Table4}       
\begin{tabular}{ccc|ccc|ccc}
\hline\noalign{\smallskip}
& &  & & Approx. of $\mu_{A}$& & & Approx. of $\mu_{B}$ & \\

\noalign{\smallskip}\hline\noalign{\smallskip}
$\lambda_{0}$ & $\kappa$ &  $N_{e}$&$\epsilon_{u}$&  $\epsilon_{\sigma}$ & aver. iter. &  $\epsilon_{u}$&  $\epsilon_{\sigma}$ & aver. iter.\\
\noalign{\smallskip}\hline\noalign{\smallskip}
$\Lambda_{0}\times 10^5$ & 10 & 50 & 0.864 & 1.264 & 3.363 & 0.973 & 0.919 & 2.077 \\   
$\Lambda_{0},\times 10^4$ & 10 & 50 & 0.210 & 0.288 & 12.100 & 0.352 & 0.484 & 13.335 \\
$\Lambda_{0}\times 10^3$ & 10 & 50 & 0.209 & 0.283 & 11.119 & 0.361 & 0.448 & 12.664 \\ 
$\Lambda_{0}\times 10^2$ & 10 & 50 & 0.212 & 0.284 & 10.049 & 0.362 & 0.456 & 11.665 \\ 
$\Lambda_{0}\times 10$ & 10 & 50 & 0.246 & 0.305 & 8.876 & 0.393 & 0.553 & 10.668 \\    
$\Lambda_{0}$ & 10 & 50 & 0.464 & 0.626 & 10.345 & 0.567 & 1.209 & 10.636 \\            
$\Lambda_{0}$ & 5 & 50 & 0.408 & 0.539 & 10.403 & 0.537 & 1.130 & 11.293 \\             
$\Lambda_{0}$ & 2 & 50 & 0.200 & 0.272 & 11.919 & 0.510 & 1.045 & 12.376 \\             
$\Lambda_{0}$ & 1.5 & 50 & 0.201 & 0.267 & 13.915 & 0.502 & 1.012 & 12.613 \\      
\hline
\end{tabular}                                                                
\end{table}

\begin{figure}
\begin{center}
\includegraphics[scale=0.34]{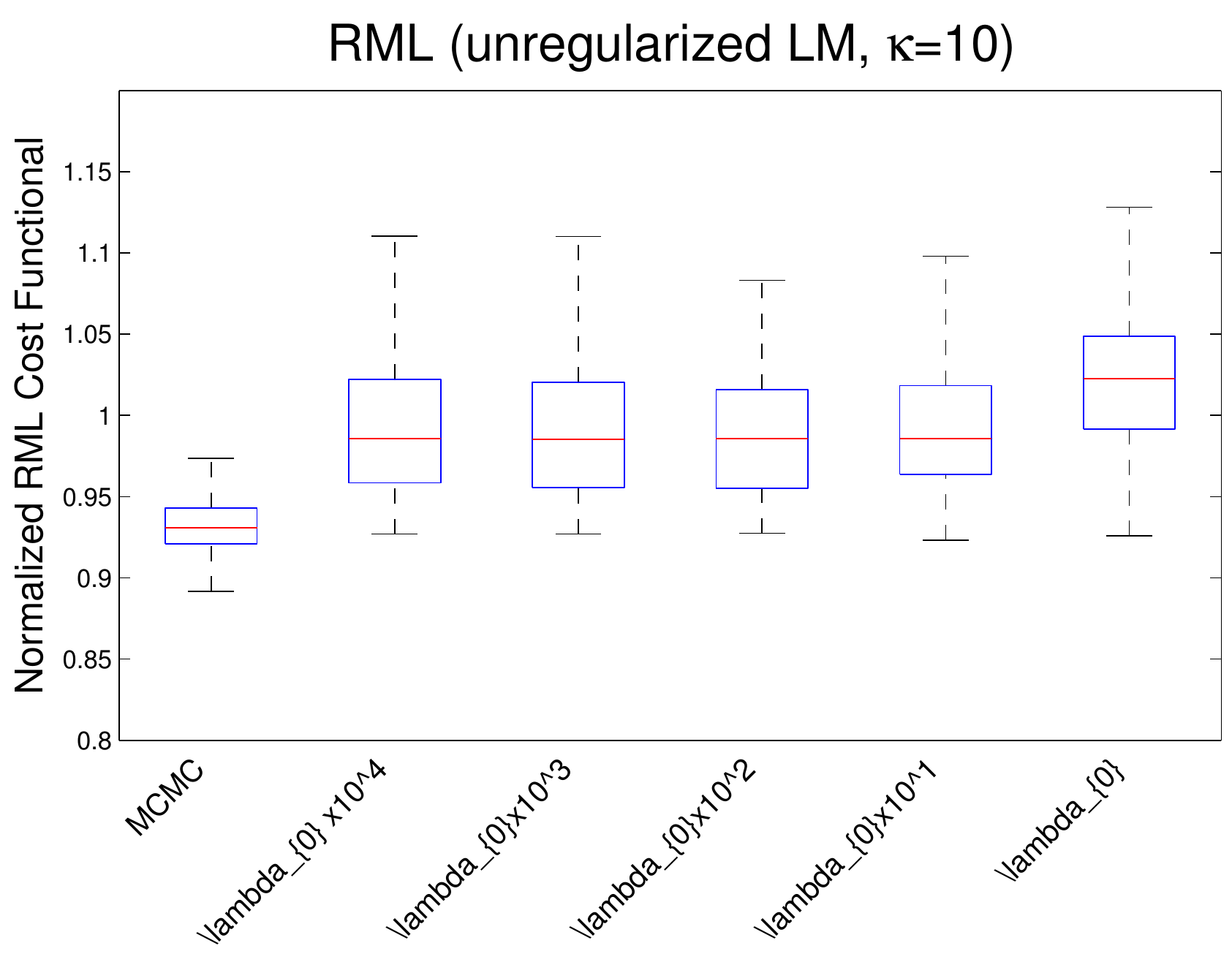}
\includegraphics[scale=0.34]{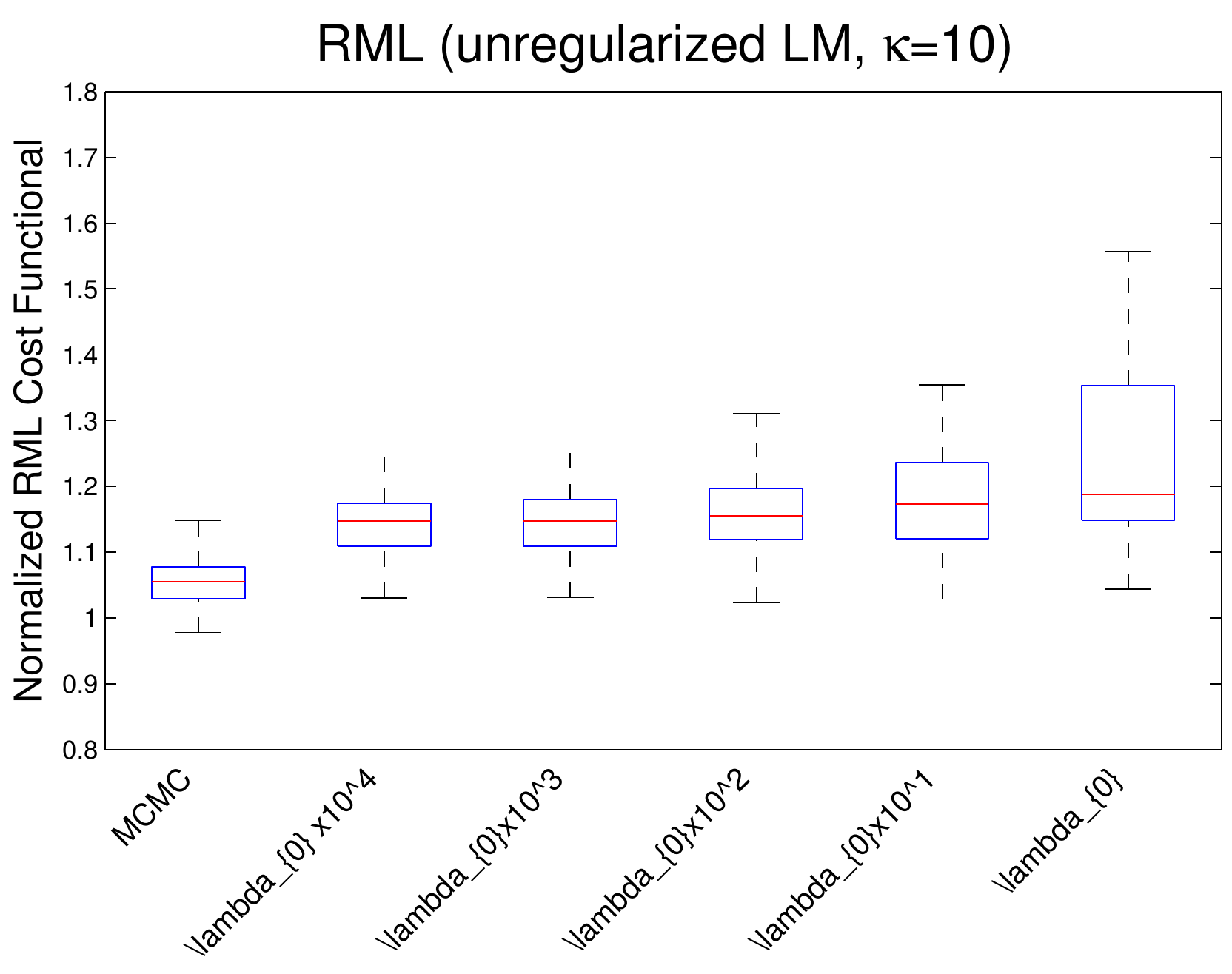}\\
\includegraphics[scale=0.34]{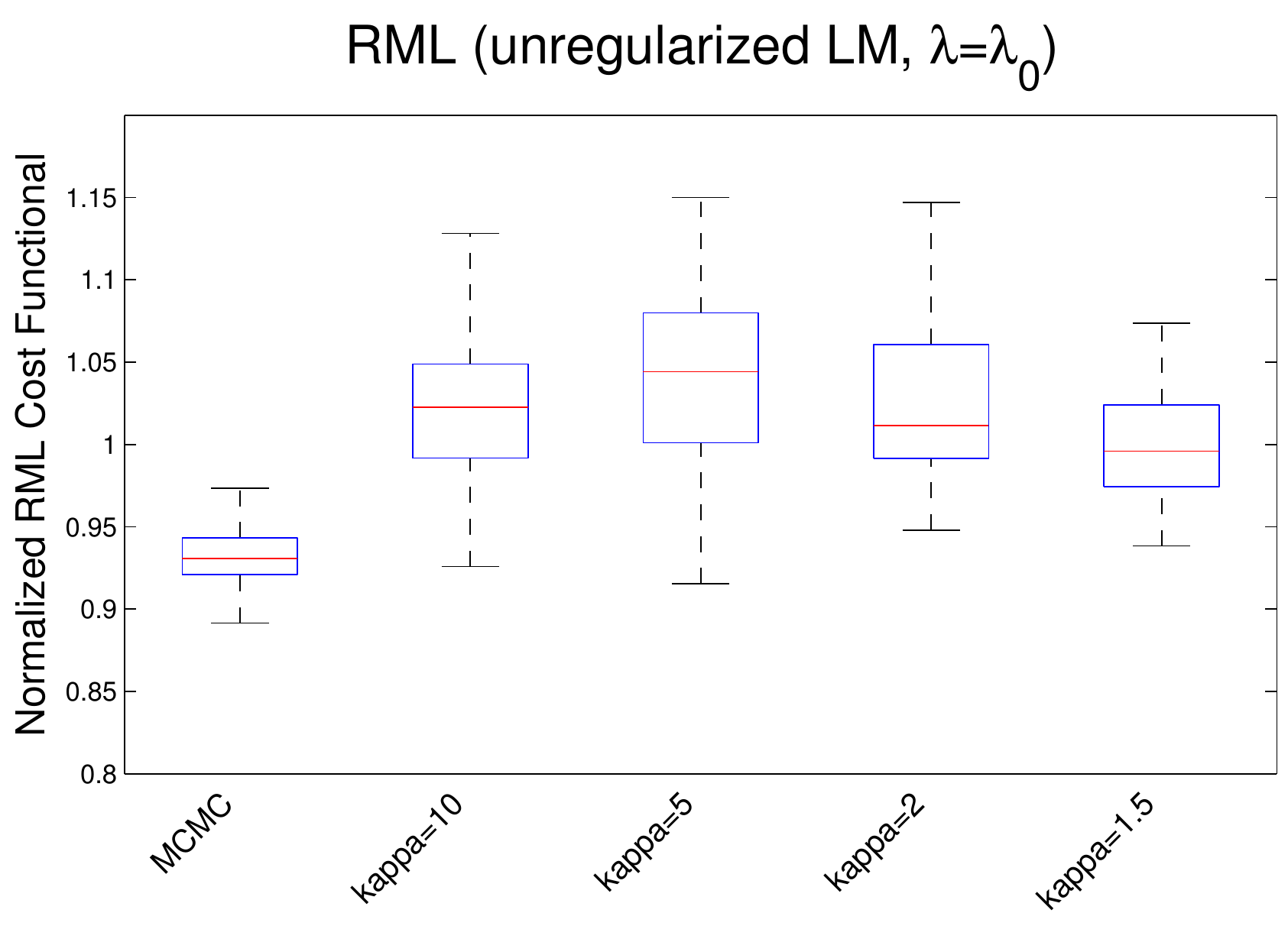}
\includegraphics[scale=0.34]{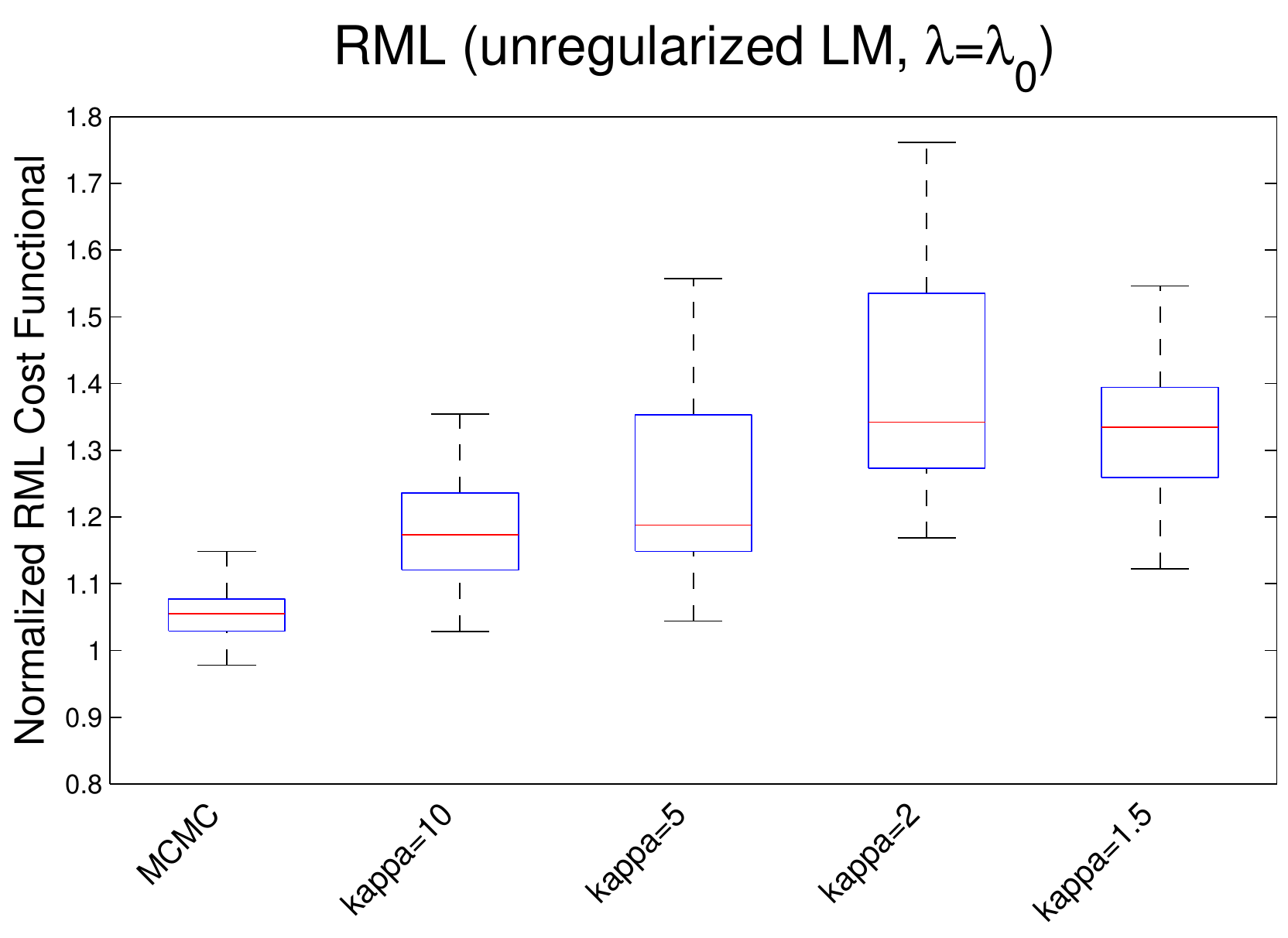}

\caption{Box plots of the normalized data mismatch simulated from $\mu_{A}$ (left) and $\mu_{B}$ (right) with MCMC  and RML  (with unregularized LM) for $N_{e}=50$ and different choices of the regularization parameters} 
 \label{Figure8}
\end{center}
\end{figure}

\begin{figure}
\begin{center}
\includegraphics[scale=0.34]{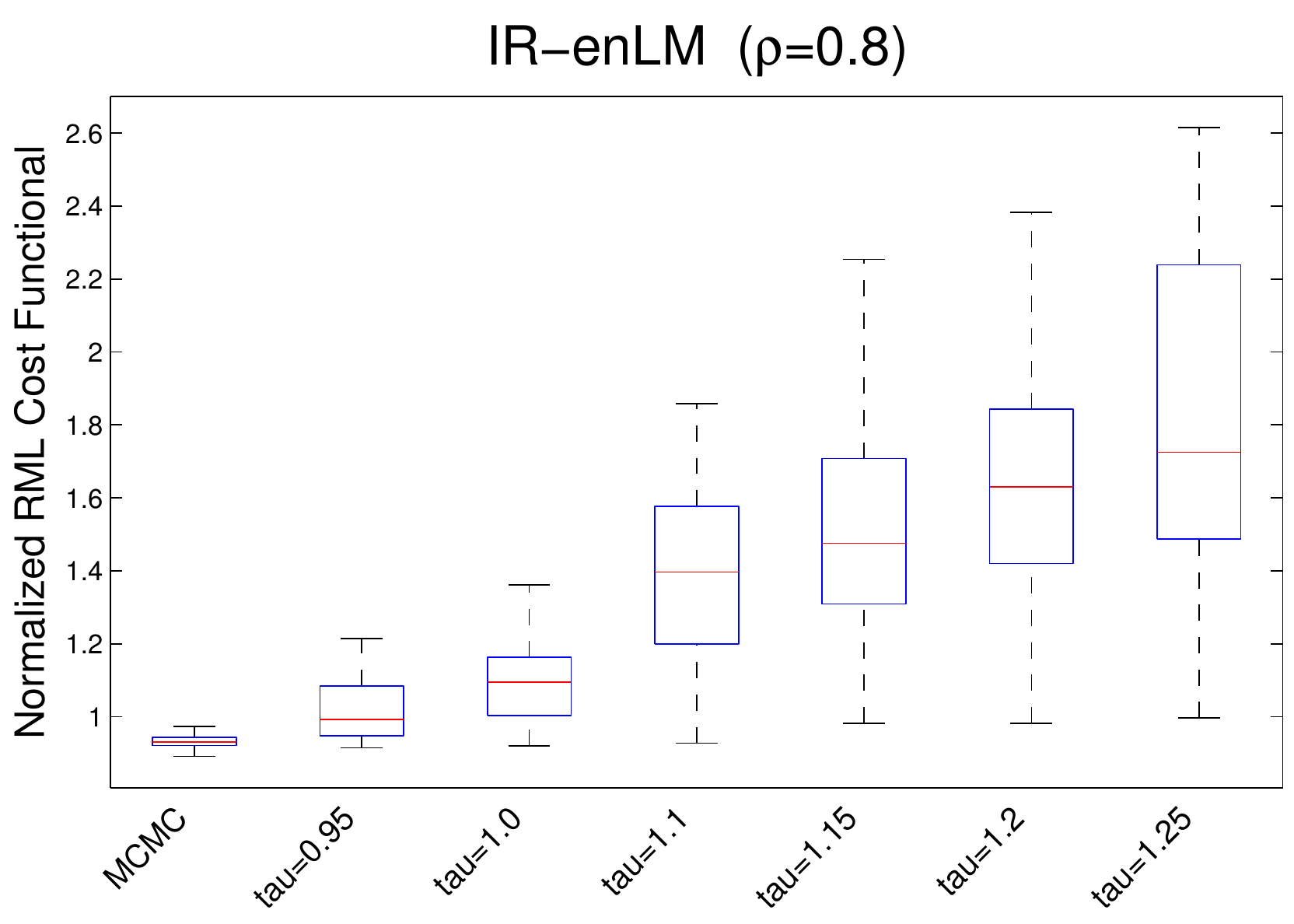}
\includegraphics[scale=0.34]{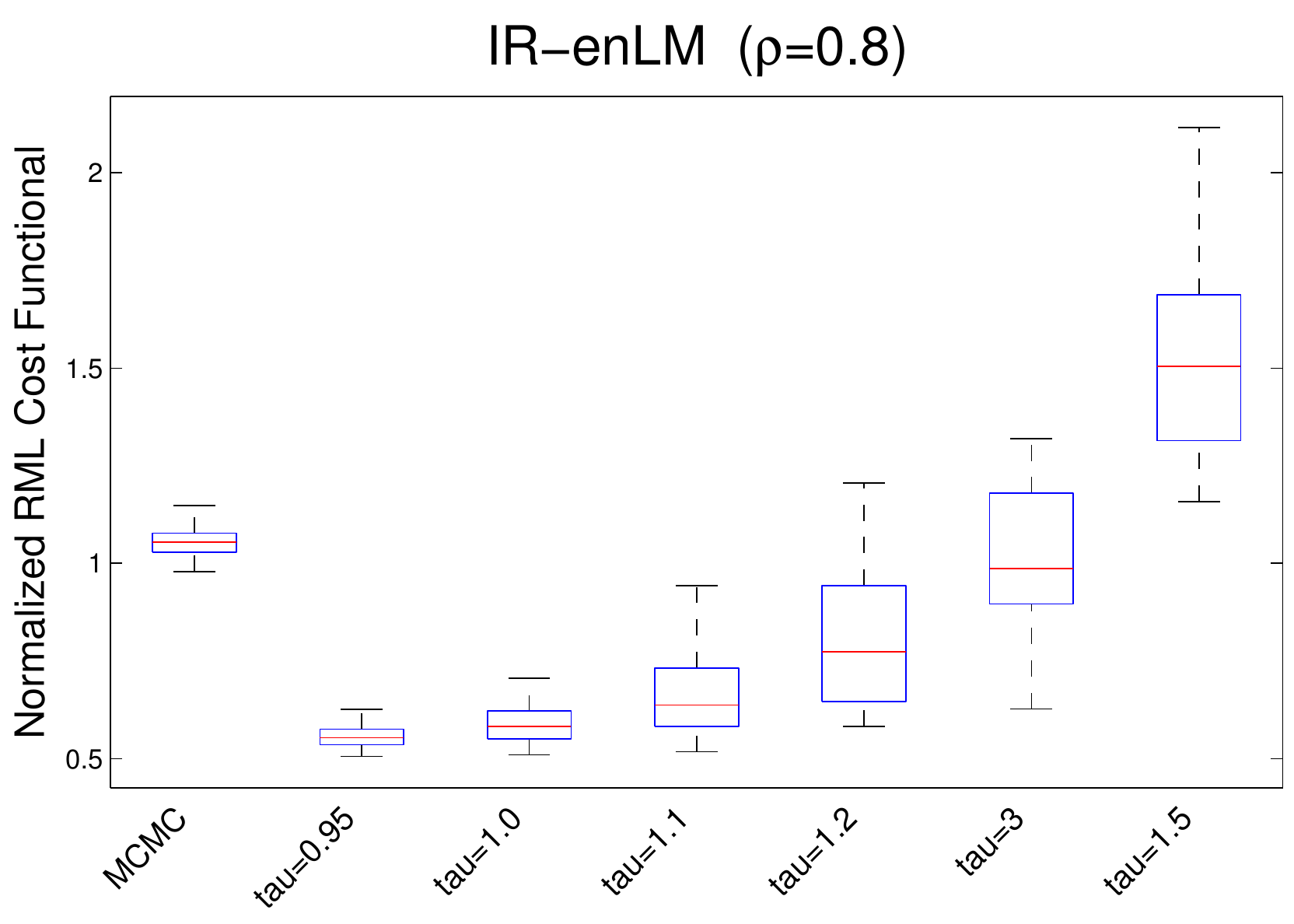}\\
\includegraphics[scale=0.34]{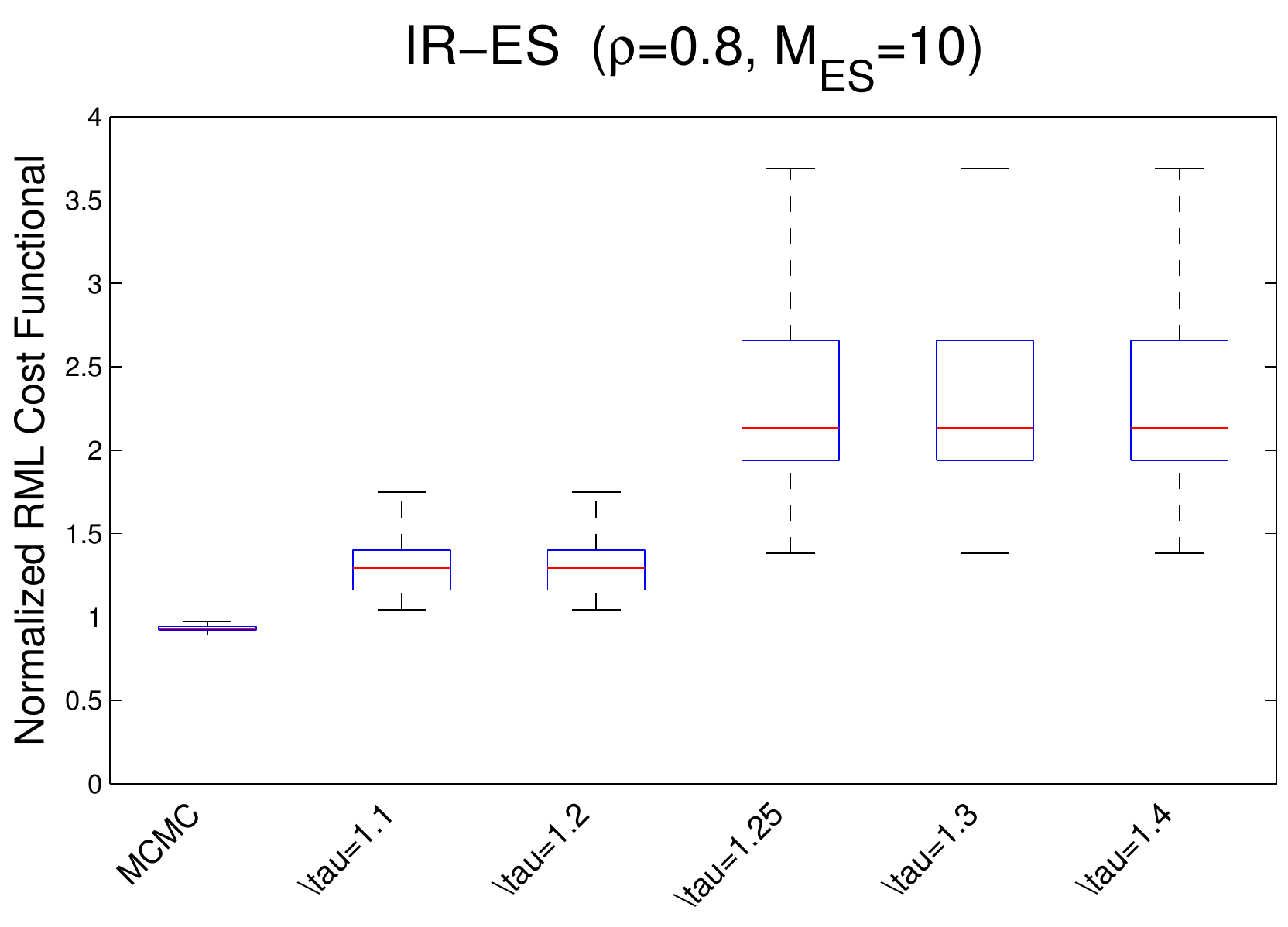}
\includegraphics[scale=0.34]{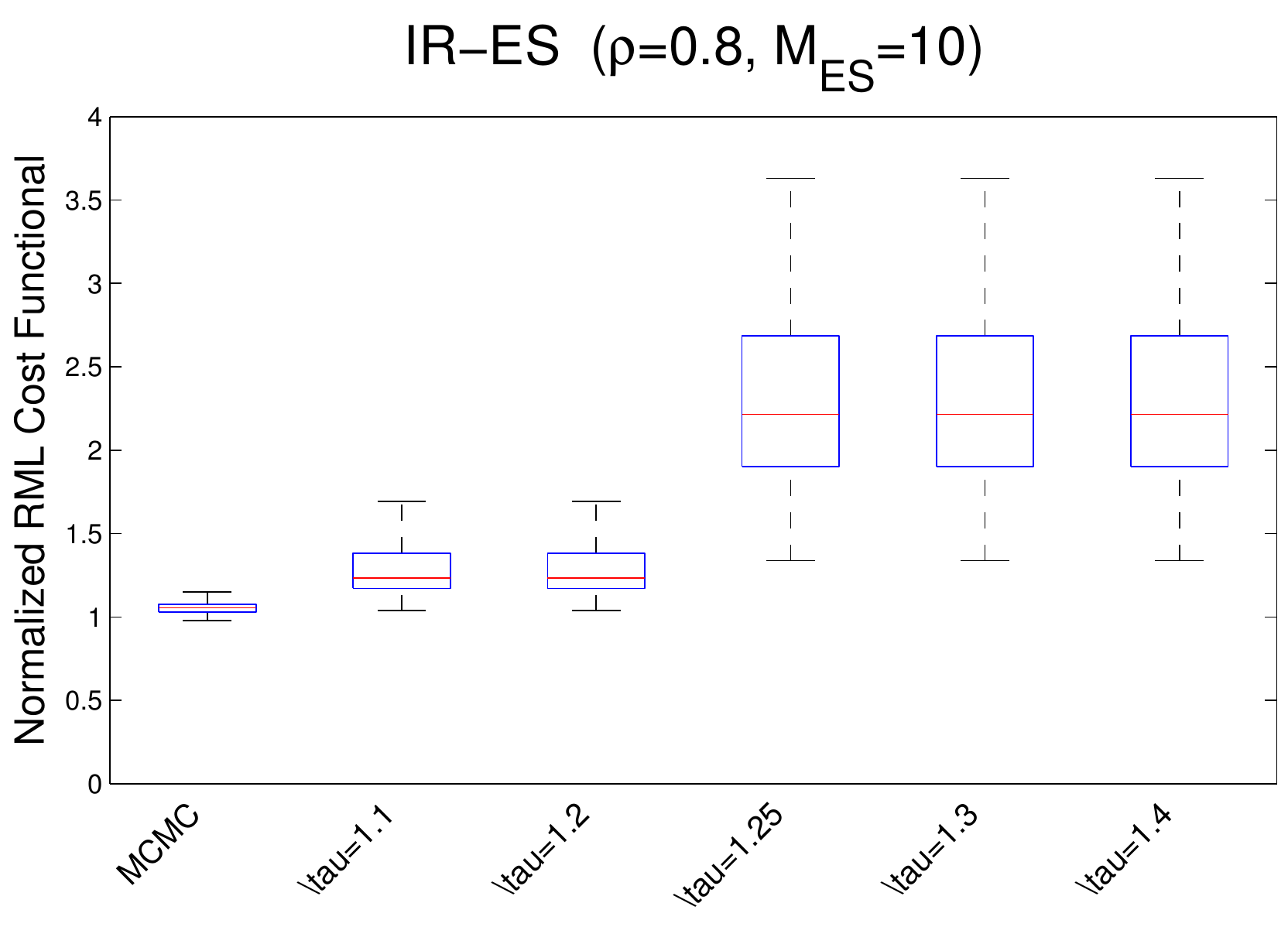}

\caption{Box plots of the normalized data mismatch simulated from $\mu_{A}$ (left) and $\mu_{B}$ (right) with MCMC  and the proposed methods with $N_{e}=50$ and for different choices of the regularization parameters. Top-row: ensemble approximation with IR-enLM for $\rho=0.8$ and different choices of $\tau$. Bottom-row: ensemble approximation with IR-ES for $\rho=0.8$, $M_{ES}=10$ and different choices of $\tau$} \label{Figure9}
\end{center}
\end{figure}

\section{Conclusions}\label{Conclusions}

The proposed IR-enLM and IR-ES are ensemble methods within the RML and Kalman-based frameworks, respectively. The aim of these methods is to provide useful information of the posterior. Both IR-enLM and IR-ES can be derived as an ensemble of iterative solutions to linear inverse problems posed as the minimization of a Tikhonov-regularized functional. The novel feature of the proposed methods is the use of the discrepancy principle for the selection of the Tikhonov parameter that adaptively changes as the iteration progresses. Additionally, the aforementioned principle is applied to control the early termination of the proposed ensemble methods. Crucial to the proposed methods is a set of tunable parameters that arise from the application of the discrepancy principle. In this work we provide extensive numerical experiments to demonstrate the effect of these parameters in the approximation properties of the proposed IR-enLM and IR-ES. Our numerical investigations are based on synthetic experiments where we use a state-of-the-art MCMC method to fully resolve the mean and variance of the resulting Bayesian posterior. For both methods, the tunable parameter $\rho\in (0,1)$ controls the adaptive selection of the regularization parameter which, in turn, determines the size of the increment in the iterative scheme. In standard LM methods for RML-based applications this regularization parameter is typically decreased by an arbitrary factor selected heuristically. In contrast, the regularization parameter in the proposed methods is adaptively chosen according to the discrepancy principle in order to stabilize the computation of each ensemble member. In addition, the size of the parameter $\rho$ defines an additional parameter $\tau$ that controls the early termination of the scheme in the computation of each ensemble member. The theory of iterative regularization methods indicates that the selection of $\tau>1/\rho$ ensures the aforementioned stabilization. On the one hand, smaller values of $\rho$ are associated with smaller values of the regularization parameter and therefore with larger $\tau$ that ensures the early termination of the scheme to avoid unstable computations. On the other hand, larger values of $\rho$ are associated to smaller and more controlled steps and therefore recommended for highly ill-posed nonlinear inverse problems. 

The numerical results indicate that the stability in the computation of each ensemble is reflected in the accuracy of the ensemble approximation of the posterior. While for the IR-enLM the ensemble size had limited effect, the approximation properties of the IR-ES were substantially affected by the ensemble size. Our results suggest that with a size of $N_{e}=75$, IR-ES inherits the regularization properties of the regularizing LM scheme \cite{Hanke}. In addition, for both proposed methods, larger values of the tunable parameter $\rho$ correspond to more accurate approximations of the mean and variance of the posterior although at a larger computational cost. For the present examples we have found that values $\rho=0.7$ and $\rho = 0.8$ are reasonable compromise between accuracy and cost. Finally, for IR-enLM it is important to reiterate that improved estimates of the mean and variance of the posterior were obtained where values of $\tau=1\leq 1/\rho$ were selected. However, the condition of $\tau>1/\rho$ is only sufficient for the stability in the computation of each ensemble member and it is not conclusive of the quality of the ensemble approximation of the Bayesian posterior. However, the present numerical study may potentially provide theoreticians with insight to develop further analysis of the ensemble methods for the solution of Bayesian inverse problems.

\section{Appendix: The forward operator}\label{ReservoirModels}

We recall from the discussion of Section \ref{Numerics} that $G_{A}$ and $G_{B}$ are the forward operators that arise from two different reservoir models that we now describe. We denoted by $D$ the physical domain of the reservoir. We consider an incompressible oil-water reservoir. Water is injected at $N_I$ injection wells located at $\{x_I^{l}\}_{l=1}^{N_{I}}$. $N_P$ production wells are located at$\{x_P^{l}\}_{l=1}^{N_{P}}$. The absolute permeability and porosity are denoted by $K$ and $\phi$ respectively. We consider a waterflood during an interval  of time denoted by $[0,T]$ ($T > 0$). As stated in Section \ref{Numerics}, for simplicity we assume that the only unknown parameter is $u=\log{K}$. The pressure $p(x,t)$ and the saturation $s(x,t)$ ($(x, t ) \in D \times [0, T ]$) are the solutions to  \cite{Chen}
\begin{eqnarray}\label{eq:2.7}
-\nabla \cdot \lambda(s) e^{u}\nabla p= \sum_{l=1}^{N_{I}}q_{I}^l \delta(x-x_I^l)+\sum_{l=1}^{N_{P}}q_{P}^{l}\delta(x-x_P^l)\\
\phi \frac{\partial s}{\partial t} -\nabla \cdot \lambda_w(s) e^{u}\nabla p= \sum_{l=1}^{N_{I}}q_{I}^l \delta (x-x_I^l)+\sum_{l=1}^{N_{P}}\frac{\lambda_{w}}{\lambda}q_{P}^{l}\delta(x-x_P^l)\label{eq:2.7B}
\end{eqnarray}
in $D\times(0,T]$, where $\delta(x-x_P^l)$ and $\delta(x-x_I^l)$ are (possibly mollified) Dirac deltas.  We consider the following expressions
\begin{eqnarray}\label{eq:2.8}
\lambda_w(s)=\frac{0.3s^2}{\mu_{w}},\qquad \lambda(s)=\frac{(1-s)^2}{\mu_{o}}+\lambda_w(s)
\end{eqnarray}
for the water and total mobility, respectively. Furthermore, we choose that $\mu_{w}=5\times 10^{-4}[\textrm{Pa s}]$ and $\mu_{0}= 10^{-2}[\textrm{Pa s}]$. Constant initial conditions for pressure and water saturation are imposed
\begin{eqnarray}\label{eq:2.9}
p=2.5\times 10^{7}\textrm{Pa}, \qquad s=0\qquad \textrm{in }   D\times\{0\} 
\end{eqnarray}
Expressions (\ref{eq:2.7})-(\ref{eq:2.7}) are furnished with no-flow boundary conditions. In addition, we assume that there are $N_{M}$ measurement times denoted by $\{t_{n}\}_{n=1}^{N_{M}}$. $q_{I}^l$ and $q_{P}^{l}$ in (\ref{eq:2.7})-(\ref{eq:2.7}) are chosen according to the well constraints defined for each of the following models.

\subsection{Model A}
For this reservoir model we assume that injection wells are operated under prescribed $q_{I}^l$ rates while production wells are constrained to bottom-hole pressure $P_{bh}^{l}$. In particular, we consider $\{q_{I}^l(t)= 2.6\times 10^3 \textrm{m}^{3}/\textrm{day} \}_{l=1}^{N_I}$ and $\{P_{bh}^{l}(t)=2.7\times 10^{7} \textrm{Pa}\}_{l=1}^{N_P}$. The expression for the total flow rate $q_{P}^{l}$ (at the production wells) in terms of $P_{bh}^{l}$ is given by the following well model \cite{Chen}
\begin{eqnarray}\label{eq:2.10}
q_{P}^{l}(t)=\omega_{P}^{l}\exp(u(x_{P}^{l}))\lambda(s(x_{P}^{l},t))(P_{bh}^{l}(t)-p(x_{P}^l,t)),
\end{eqnarray}
where $\omega_{P}^{l}$ is the well index of the $l$-th production well. 

For Model A we assume that measurements of BHP are collected at the injection wells at $\{t_{n}\}_{n=1}^{N_{M}}$. According to Peaceman well-model \cite{Chen}, BHP is defined by
\begin{eqnarray}\label{eq:2.11}
M_{A,n}^{l,I}(p,s)=\frac{q_{I}^l(t_{n})}{\omega_{I}^{l}\exp(u(x_{I}^{l}))\lambda(s(x_{I}^{l},t_{n}))}+p(x_{I}^l,t_{n})
\end{eqnarray}
for $l=1,\dots,N_{I}$ and $n=1,\dots, N_M$. At the production wells, we collect measurements of water rate
\begin{eqnarray}\label{eq:2.12}
M_{A,n}^{l,P_{w}}(p,s)=q_{P}^{l}(t_{n})=\omega_{P}^{l}\exp(u(x_{P}^{l}))\lambda_{w}(s(x_{P}^{l},t_{n}))(P_{bh}^{l}(t_{n})-p(x_{P}^l,t_{n}))
\end{eqnarray}
for $l=1,\dots, N_{P}$ and $n=1,\dots, N_M$. Let us define the $N_{M}N_{I}$-dimensional vector 
\begin{eqnarray}\label{eq:2.13}
G_{A,I}(u)\equiv (M_{A,1}^{1,I}(p,s), \dots, M_{A,N_{M}}^{1,I}(p,s),\dots,M_{A,1}^{N_{I},I}(p,s), \dots, M_{A,N_{M}}^{N_{I},I}(p,s))
\end{eqnarray}
as well as the $N_{M}N_{P}$-dimensional vector
\begin{eqnarray}\label{eq:2.13B}
G_{A,w}(u) \equiv (M_{A,1}^{1,P_{w}}(p,s), \dots, M_{A,N_{M}}^{1,P_{w}}(p,s)\dots,M_{A,1}^{N_{P},P_{w}}(p,s), \dots, M_{A,N_{M}}^{N_P,P_{w}}(p,s))\nonumber\\
\end{eqnarray}
The total number of measurements is $N=[N_{P}+N_{I}]N_{M}$ and the forward map $G_{A}:X\to \mathbb{R}^{N}$ for Model A is defined by
expression
\begin{eqnarray}\label{eq:2.14}
G_{A}(u)=(G_{A,I}(u),G_{A,w}(u))
\end{eqnarray}
which comprises the production data obtained from production and injection wells at the measurement times.

\subsection{Model B}
In this case the injection wells are operated under prescribed $P_{bh,I}^{l}$ rates and the production wells are constrained to total flow rate $q_{P}^{l}$. In concrete, we select $\{q_{I}^l(t)= 2.6\times 10^3 \textrm{m}^{3}/\textrm{day} \}_{l=1}^{N_I}$ and $\{P_{bh,I}^{l}(t)=2.7\times 10^{7} \textrm{Pa}\}_{l=1}^{N_P}$. The expression for the injection rate $q_{I}^{l}$ in terms of $P_{bh,I}^{l}$ is given by the following expression
\begin{eqnarray}\label{eq:2.15}
q_{I}^{l}(t)=\omega_{I}^{l}\exp(u(x_{I}^{l}))\lambda_{w}(s(x_{I}^{l},t))(P_{bh,I}^{l}(t)-p(x_{I}^l,t)),
\end{eqnarray}
where $\omega_{I}^{l}$ is the well index of the $l$-th injection well. Measurements of water rate are collected the injection wells at $\{t_{n}\}_{n=1}^{N_{M}}$. Therefore, 
\begin{eqnarray}\label{eq:2.16}
M_{B,n}^{l,I}(p,s)=q_{I}^{l}(t_{n})=\omega_{I}^{l}\exp(u(x_{I}^{l}))\lambda_{w}(s(x_{I}^{l},t_{n}))(P_{bh,I}^{l}(t_{n})-p(x_{I}^l,t_{n})),
\end{eqnarray}
for $l=1,\dots,N_{I}$ and $n=1,\dots, N_M$. The production wells are operated under prescribed total flow rates $\{q^{l}(t)\}_{l=1}^{N_{P}}$. At these wells, we collect measurements of water rate
\begin{eqnarray}\label{eq:2.17}
M_{B,n}^{l,P_{w}}(p,s)=\frac{\lambda_{w}(s(x_{P}^{l},t_{n}))}{\lambda(s(x_{P}^{l},t_{n}))}q^{l}(t_{n})
\end{eqnarray}
for $l=1,\dots, N_{P}$ and $n=1,\dots, N_M$. The forward map $G_{B}:X\to \mathbb{R}^{N}$ is defined by
expression
\begin{eqnarray}\label{eq:2.20}
G_{B}(u)=(G_{B,I}(u),G_{B,w}(u))
\end{eqnarray}
with $G_{B,I}(u)$ and $G_{B,w}(u)$ defined with expressions analogous to (\ref{eq:2.13})-(\ref{eq:2.13B}).

\begin{acknowledgements}

The author would like to thank Andrew Stuart for helpful discussions and his generous feedback on the content and structure of the manuscript. 
\end{acknowledgements}

\bibliographystyle{plain}
\bibliography{Ensemble_bib}   

\end{document}